\documentclass[12]{amsart}
\usepackage{amsaddr,amsthm,amsmath,amssymb,stmaryrd,tikz}
\usepackage[numbers]{natbib}

\newtheorem{thm}{Theorem}[section]
\newtheorem{cor}[thm]{Corollary}
\newtheorem{lem}[thm]{Lemma}
\newtheorem{prop}[thm]{Proposition}
\newtheorem{fact}[thm]{Fact}

\theoremstyle{definition}
\newtheorem{defn}[thm]{Definition}

\newtheorem{rmrk}[thm]{Remark}
\newtheorem{rmrks}[thm]{Remarks}

\newtheorem{conv}[thm]{Convention}

\numberwithin{equation}{section}

\newcommand\s{0.5}
\newcommand\scale{1}
\newcommand\scaleIII{1}
\newcommand\scaleIV{0.75}
\newcommand\scaleV{1}

\newcommand{\R}{\mathbb{R}}

\newcommand{\M}{\mathcal{M}}
\newcommand{\se}{\subseteq}
\newcommand{\sm}{\setminus}
\newcommand{\pw}[1]{\left\{\begin{array}{ll} #1 \end{array}\right.}

\renewcommand{\P}{\mathcal{P}}
\renewcommand{\restriction}{\text{\,}\mathord{\upharpoonright}\text{\,}}
\newcommand{\ess}{\mathrm{ess}}
	
\newcommand\gap{\quad\qquad}
\newcommand{\wid}{0.3mm}

\newcommand{\Rm}[1]{\mathrm{\uppercase\expandafter{\romannumeral #1\relax}}}
\newcommand{\D}{\mathcal{D}}
\newcommand{\sh}{\mathrm{Sh}}
\newcommand{\ish}{\mathrm{isoSh}}

\begin{document}
\title{Graph-like Distributions and Types in Ultrapowers}

\author[1]{Michael Wheeler}

\address[1]{Department of Mathematics, University of Colorado Boulder\\michael.wheeler@colorado.edu}

\begin{abstract}
We study types that appear in ultraproducts that have distributions which can be thought of as a sequence of graphs. The property of having distributions that are captured by graphs is motivated by a commonality of $\mathrm{SOP}_2$-types and types corresponding to pre-cuts in ultrapowers of linear orders. Through this study, we come to a simple set-theoretic description of the distributions for these types and to a new description of good ultrafilters in terms of the difference between ``small'' external complete subgraphs and internal complete subgraphs in ultraproducts of particular families of finite graphs arising from the comparability relation on the complete binary tree or the intersection of intervals in an infinite linear order.

\noindent \emph{Key Words}: distribution, good ultrafilter, graph-like type, ultragraph, ultraproduct

\noindent \emph{MSC2020}: 03C20; 03E05, 05C05, 05C63, 06A05
\end{abstract}

\maketitle

\section{Introduction}

The idea of a distribution for a type in an ultrapower using a regular ultrafilter on an infinite cardinal $\lambda$ was introduced by Keisler in \cite{keisler_order}. Distributions arise naturally in the study of types over ultraproducts created using regular ultrafilters. In particular, if $(\M_\alpha)_{\alpha<\lambda}$ is a sequence of structures in a common language and $p(x)=\{\varphi_\beta(\bar{x};\bar{a}_\beta):\beta<\lambda\}$ is a type of the ultraproduct $\M:=\prod_{\alpha<\lambda}\M_\alpha/\D$, then we can use the regularity of $\D$ to show that $p(\bar{x})$ is realized in $\M$ if and only if there is a way to ``distribute'' the conditions $\varphi_\beta(\bar{x};\bar{a}_\beta)$ to the various index structures so that each index structure is only required to satisfy finitely many of the conditions, each index can simultaneously satisfy the conditions placed on it, and every finite subset of conditions is distributed to ultrafilter-many indices. We study types we call graph-like that have the simplifying properties that $\varphi_\beta=\varphi_\gamma$ for all $\beta,\gamma<\lambda$ and that a finite collection of the constraints is simultaneously satisfiable in a given index if and only if every subcollection of two constraints can be satisfied by a single element of the given index structure.

Graph-like types are so called because the structure of which conditions can be simultaneously realized in any given index structure is captured by a graph, where each vertex represents a condition and an edge between the vertices $v_1$ and $v_2$ indicates that the two conditions represented by $v_1$ and $v_2$ are compatible. In general, we would need to worry about whether arbitrarily large finite subsets of conditions are compatible (which one might wish to represent as edges in a hypergraph), but the property of being graph-like allows us to ignore these higher levels of complexity.

A small (by small, we will mean of cardinality no larger than the cardinality of the index set we have chosen a regular ultrafilter on) graph-like type $p(\bar{x})$, along with a collection of representatives for the parameters that appear in $p(\bar{x})$, is represented by a finite graph $\mathbb{G}_\alpha$ in each index structure $\M_\alpha$, and the type $p(\bar{x})$ can be recovered from the sequence of graphs $(\mathbb{G}_\alpha)_{\alpha<\lambda}$ if the collection of representations is also known (see the proof of Corollary~\ref{multiffint}). One might then hope that the ultraproduct (we call this the ultragraph) $\prod_{\alpha<\lambda}\mathbb{G}_\alpha/\D$ carries some information about $p(\bar{x})$. In this vein we show in Corollary~\ref{multiffint} that $p(\bar{x})$ is realized in the ultraproduct $\prod_{\alpha<\lambda}\M_\alpha/\D$ if and only if the image of a natural (set) embedding of $p(\bar{x})$ in the ultragraph is contained within a complete subgraph of the form $\prod_{\alpha<\lambda}K_\alpha/\D$ where $K_\alpha$ is a complete subgraph of $\mathbb{G}_\alpha$. The image of this embedding being a complete subgraph of the ultragraph is equivalent to the fact that $p(\bar{x})$ is a type and says nothing about whether or not the type $p(\bar{x})$ is realized.

Besides allowing us to reframe the question of whether or not a graph-like type $p(\bar{x})$ is realized in an ultrapower, we also show that graph-like types have distributions that must reflect the class of graphs that can appear in the sequence $(\mathbb{G}_\alpha)_{\alpha<\lambda}$. In cases where there is a discernible pattern in which graphs are allowed to appear in $(\mathbb{G}_\alpha)_{\alpha<\lambda}$, we can exploit this information to gain a characterization of the distributions of $p(\bar{x})$ (see Theorem~\ref{realAalphatype}).

Our primary application will be to the study of distributions in theories with $\mathrm{SOP}_n$ for $n\geq 2$ (defined in \cite{sh500}). In particular, the compatibility of conditions coming from an $\mathrm{SOP}_2$-type (as defined in \cite{mmss_pt}) has the same structure as graphs arising from the compatibility relation on finite trees, and the compatibility of conditions that arise from types indicating a cut within an ultraproduct of infinite linear orders has the structure of graphs where the vertices are intervals in the order (or any infinite linear order) and the edges indicate non-empty intersections, allowing a characterization of the distributions for these sorts of types in terms of the corresponding classes of graphs. Moreover, since we know from \cite{sh500} and \cite{mmss_pt} that an ultrafilter $\D$ is good if and only if $\D$ $\lambda^+$-saturates structures having $\mathrm{SOP}_2$, we find a characterization in Theorem~\ref{sop2goodequivalents} of good ultrafilters both in terms of whether a relatively small class of distributions has multiplicative refinements and in terms of whether ultraproducts of comparability graphs of finite trees allow small complete subgraphs to have internal complete extensions.

We also show that every theory with $\mathrm{SOP}_3$ (or with $\mathrm{SOP}_n$ for $n>3$) admits types that mimic arbitrary distributions of types corresponding to (pre-)cuts in ultrapowers of infinite linear orders. The idea is similar to Shelah's proof in \cite[2.11]{sh500} that theories with $\mathrm{SOP}_3$ are maximal in the Keisler order, but avoids use of the bi-interpretability ordering defined in the same paper.

It is worth pointing out that the characterization of $\lambda^+$-good ultrafilters in terms of (external) complete subgraphs of cardinality $\lambda$ being lifted to internal complete subgraphs is related to the result of Douglas Ulrich \cite[Lem.~2.4]{dulrich_pt} showing that small uses of external recursion can be lifted to an internal use of recursion in $\omega$-nonstandard models $\hat{V}$ of set theory where $\mathfrak{p}_{\hat{V}}>\lambda$. The results presented here show that $\lambda^+$-good ultrafilters are characterized by being able to lift small, external, complete subgraphs of an ultraproduct of specific families of graphs to an internal complete subgraph; whereas Lemma 2.4 of \cite{dulrich_pt} can be thought of as showing that, in any ultraproduct of graphs via a $\lambda^+$-good ultrafilter, it is possible to extend all complete subgraphs of cardinality $\lambda$ to internal complete subgraphs. In particular, if we take $\hat{V}$ to be an ultrapower of $V\vDash\mathrm{ZFC}$ via a good ultrafilter, then we can think of finding an internal complete subgraph of an ultragraph $\prod_{\alpha<\lambda}\mathbb{G}_\alpha/\D\in V$ as finding a complete subgraph of the graph represented by the function $\alpha\mapsto\mathbb{G}_\alpha$ \emph{within} $\hat{V}$. This process could be done by lifting a small external induction to an internal one.

\section{Background Definitions and Conventions}
\begin{conv}
	Let $I$ be an infinite set and $\D$ an ultrafilter on $I$. If for all $i\in I$ the structure $\mathcal{M}_i$ is in the language $\mathcal{L}$, then $\mathcal{M}_I^\D:=\prod_{i\in I}\mathcal{M}_i/\D$ is the ultraproduct of the $\mathcal{M}_i$ with respect to $\D$.
\end{conv}

Typically, we will take the $I$ in the above convention to be an infinite cardinal $\lambda$.

\begin{conv}\label{lambdatype}
	If $\lambda$ is an infinite cardinal and for all $\alpha<\lambda$ the structure $\mathcal{M}_\alpha$ is in the language $\mathcal{L}$ and $\D$ is an ultrafilter on $\lambda$, then $p(\bar{x})=\{\varphi_\beta(\bar{x},\bar{a}_\beta):\beta<\lambda\}$ is a $\lambda$-type of $\mathcal{M}_\alpha^\D$ if whenever $(\bar{r}_\beta)_{\beta<\lambda}$ is a sequence of representatives coming from $\prod_{\alpha<\lambda}\mathcal{M}_\alpha$ for the $\bar{a}_\beta$ in $\mathcal{M}_\alpha^\D$ and whenever $\Delta\se \lambda$ is finite it is the case that 
	\[\llbracket\Delta\rrbracket_{p(\bar{x})}^{\D}:=\left\{\alpha:\mathcal{M}_\alpha\vDash\exists x, \bigwedge_{\beta\in\Delta}\varphi_\beta(\bar{x},\bar{r}_\beta(\alpha))\right\}\in\D.\]
\end{conv}

The convention above agrees with the usual definition of type for a structure by the compactness theorem and \L o\'s theorem.

\begin{conv}
	Whenever we use $\alpha$ we will mean an index for the structures $\M_\alpha$ being considered and whenever we use $\beta$ we will mean an index to be used either for the formula $\varphi_\beta$ or for the representative/parameter $\bar{r}_\beta/\bar{a}_\beta$ for a given $\lambda$-type $p(\bar{x})$. We will variously use $\gamma,\delta,$ and $\epsilon$ as indices when more variables are needed.
\end{conv}

\begin{conv}
We fix an infinite cardinal $\lambda$, a regular ultrafilter $\D$ on $\lambda$, a sequence of structures $(\mathcal{M}_\alpha)_{\alpha<\lambda}$ in the common language $\mathcal{L}$, and a $\lambda$-type $p(\bar{x})$ on $\widehat{\mathcal{M}}=\mathcal{M}_\lambda^\D$. We will continue to use these general fixed values throughout unless otherwise specified.
\end{conv}

We use the following notation that frequently appears in the literature for set theory and Ramsey theory.

\begin{defn}
	Let $A$ be any set and $\kappa$ be any cardinal.
	\begin{enumerate}
		\item $[A]^\kappa$ is the set $\{B\se A:|B|=\kappa\}$.
		\item $\P_\omega(A)$ is the set $\{B\se A:|B|<\omega\}$.
	\end{enumerate}
\end{defn}

\begin{defn}\label{losmap}
	Let $(\bar{r}_\beta)_{\beta<\lambda}$ be given as in the definition of $\lambda$-type. The \emph{\L o\'s map} for $p(\bar{x})$ in $\widehat{\M}$ given $(\bar{r}_\beta)_{\beta<\lambda}$ is the function $L\colon\mathcal{P}_\omega(\lambda)\to\D$ that is defined for each $\Delta\in\P_\omega(\lambda)$ by
	\[L(\Delta)=\llbracket\Delta\rrbracket_{p(x)}^\D.\]
\end{defn}

For any finite $\Delta\se \lambda$ we have that $L(\Delta)\in\D$ by the definition of a $\lambda$-type of $\widehat{\M}$.

We will frequently have the sequence of representations $(\bar{r}_\beta)_{\beta<\lambda}$ for the parameters appearing in a $\lambda$-type $p(\bar{x})$ implicit.

\begin{defn}\label{distribution}
	A \emph{distribution} for a $\lambda$-type $p(\bar{x})$ in $\mathcal{M}^\D$ is a function $f\colon\mathcal{P}_\omega(\lambda)\to\D$ such that 
	\begin{enumerate}
		\item for all $\Delta\in\P_\omega(\lambda)$ we have $f(\Delta)\se L(\Delta)$, and
		\item all $\Gamma\in[\P_\omega(\lambda)]^\omega$ satisfy $\bigcap_{\Delta\in\Gamma}f(\Delta)=\emptyset$.
	\end{enumerate}
	A distribution $f$ is called \emph{monotone} if for all $\Delta,\Phi\in\P_\omega(\lambda)$ we have that $f$ has the property that $\Delta\se\Phi$ implies $f(\Delta)\supseteq f(\Phi)$ and the distribution $f$ is called \emph{multiplicative} if $f$ satisfies $f(\Delta\cup\Phi)=f(\Delta)\cap f(\Phi)$.
\end{defn}

\begin{rmrk}
	A distribution that is multiplicative must also be monotone. Throughout, we will assume that all distributions are monotone.
\end{rmrk}

\begin{fact}[Keisler]\label{keislerfact}
	The $\lambda$-type $p(\bar{x})$ is realized in $\widehat\M$ if and only if there is a multiplicative distribution for $p(\bar{x})$ in $\widehat\M$.
\end{fact}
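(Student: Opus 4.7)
The plan is to prove both directions using the regularity of $\D$ in an essential way. For the forward direction, I would start with a realizing tuple $\bar{c} = [\bar{c}_\alpha]_\D$ in $\widehat{\M}$; \L o\'s's theorem then guarantees that for each $\beta<\lambda$, the set $A_\beta = \{\alpha<\lambda : \M_\alpha \vDash \varphi_\beta(\bar{c}_\alpha, \bar{r}_\beta(\alpha))\}$ lies in $\D$. The naive choice $f(\Delta) = \bigcap_{\beta \in \Delta} A_\beta$ is already multiplicative and sits below $L(\Delta)$, but it can fail the finite-multiplicity condition (2) badly. To repair this, I would invoke regularity of $\D$ to fix a family $\{X_\beta : \beta<\lambda\} \subseteq \D$ such that each $\alpha<\lambda$ belongs to only finitely many $X_\beta$, and then set $f(\Delta) = \bigcap_{\beta \in \Delta}(A_\beta \cap X_\beta)$. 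Multiplicativity and containment in $L(\Delta)$ are immediate, and the $X_\beta$ factor ensures that $\alpha \in f(\Delta)$ forces $\Delta \subseteq \{\beta : \alpha \in X_\beta\}$, a finite set, so only finitely many $\Delta$ can contain $\alpha$; this yields condition (2).

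For the converse, suppose $f$ is a multiplicative distribution. The key observation is that multiplicativity lets one track membership one index at a time: for each $\alpha<\lambda$ define $\Delta(\alpha) = \{\beta<\lambda : \alpha \in f(\{\beta\})\}$. Applying condition (2) to any countable subfamily of $\{\{\beta\} : \beta \in \Delta(\alpha)\}$ forces $\Delta(\alpha)$ to be finite, and multiplicativity gives $\alpha \in \bigcap_{\beta \in \Delta(\alpha)} f(\{\beta\}) = f(\Delta(\alpha)) \subseteq L(\Delta(\alpha))$. Hence there exists $\bar{c}_\alpha \in \M_\alpha$ satisfying $\varphi_\beta(\cdot, \bar{r}_\beta(\alpha))$ for every $\beta \in \Delta(\alpha)$; at indices with $\Delta(\alpha) = \emptyset$ choose an arbitrary witness. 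Setting $\bar{c} = [\bar{c}_\alpha]_\D$, for each $\beta<\lambda$ we have $f(\{\beta\}) \subseteq \{\alpha : \beta \in \Delta(\alpha)\} \subseteq \{\alpha : \M_\alpha \vDash \varphi_\beta(\bar{c}_\alpha, \bar{r}_\beta(\alpha))\}$, and since $f(\{\beta\}) \in \D$, \L o\'s's theorem yields $\widehat{\M} \vDash \varphi_\beta(\bar{c}, \bar{a}_\beta)$.

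The main obstacle is the bookkeeping around the finite-multiplicity condition (2): in the forward direction it is what forces us to exploit the regularity of $\D$ beyond what \L o\'s alone supplies, and in the converse direction it is precisely what makes each $\Delta(\alpha)$ finite and hence allows us to pick $\bar{c}_\alpha$ realizing a finite piece of the type at each index $\alpha$. Both halves rely on translating ``$\alpha \in f(\Delta)$'' into ``$\Delta \subseteq \Delta(\alpha)$'' via multiplicativity; outside of that, the proof is a direct application of \L o\'s's theorem.
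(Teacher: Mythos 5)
The paper states this fact without proof, citing Keisler; your argument is the standard one and is correct. Both directions hold up: in the forward direction, intersecting each $A_\beta$ with a regularizing family $\{X_\beta\}$ is exactly what's needed to recover condition~(2) of Definition~\ref{distribution} while preserving multiplicativity and refinement of $L$; in the converse, the observation that $\Delta(\alpha)$ must be finite (else a countable subfamily of singletons would violate condition~(2)) together with $\alpha \in f(\Delta(\alpha)) \subseteq L(\Delta(\alpha))$ lets you build the realizing element index by index, and {\L}o\'s then finishes. One small wording slip: ``only finitely many $\Delta$ can contain $\alpha$'' should read ``only finitely many $\Delta$ have $\alpha \in f(\Delta)$'' --- the argument you give (any such $\Delta$ is a subset of the finite set $\{\beta : \alpha \in X_\beta\}$) is the right one, and that is what actually forces the infinite intersection to be empty.
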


\begin{defn}
	Suppose that $f$ and $g$ are both distributions for the type $p(\bar{x})$. We say that $f$ \emph{refines} $g$ (or $f$ \emph{is a refinement of} $g$) and write $f\se g$ if, for all $\Delta\in\P_\omega(\lambda)$ we have that $f(\Delta)\se g(\Delta)$.
\end{defn}

\begin{rmrk}
	Condition $(1)$ of \ref{distribution} is frequently stated as ``$f$ refines the \L o\'s map $L$.''
\end{rmrk}

One may view the definition of a distribution $f$ as saying that given a finite subset $\Delta$ of a type in the ultrapower the distribution outputs a set of indices $f(\Delta)$ in $\D$ so that if a representative for a potential realization satisfied $\Delta$ in $\M_\alpha$ for each $\alpha\in f(\Delta)$, then the potential realization at least realizes $\Delta$ in the ultrapower (Note: this statement is also true of the \L o\'s map and is based entirely on part $(1)$ of the definition of a distribution). Moreover, a distribution uses the regularity of the ultrafilter to divide up (distribute) the indices where these finite subsets must be realized so that the representative for the potential realization need only realize finitely many conditions in each index. The finiteness aspect of the distribution comes from part $(2)$ of the definition of a distribution.

It may appear that the existence of a distribution is enough to show that a type is realized (if each index structure only needs to satisfy finitely many of the formula in the type and all finite types are realized, just take a realization of the needed finite type in the given index structure). However, we are not guaranteed that the projections of $p(\bar{x})$ to the various index structures remain finitely satisfiable (i.e.\ such projections may fail to be types in the index structures). The type $p(\bar{x})$ may not be finitely satisfiable in \emph{any} of the structures $\M_\alpha$.

In general, distributions are difficult to work with, so in the next section we propose a different way of looking at the distributions of certain ``nice'' types that appear to be important when working with Keisler's order.

\begin{defn}
	A subset $A\se\widehat\M$ is called \emph{internal} if there is a sequence of unary predicates $(P_\alpha)_{\alpha<\lambda}$ so that if $\M_\alpha$ is $\M$ expanded by the predicate $P_\alpha$ in the language $\mathcal{L}\cup\{P\}$ then $A=P(\M_\alpha^\D)$. Equivalently, $A\se\widehat\M$ is \emph{internal} if for each $\alpha<\lambda$ there are sets $A_\alpha\se\M_\alpha$ such that $A=\prod_{\alpha<\lambda}A_\alpha/\D$.
\end{defn}

One can think of internal subsets as being subsets that arise from the product structure on an ultraproduct. It is not obvious that there will be subsets of an ultraproduct that do not arise from the product structure (the quotient may collapse some subsets that do not come from a product to subsets that do). However, any $\aleph_1$-incomplete ultraproduct of any infinite set will have subsets that are not internal (frequently called external subsets).

One class of types that we frequently reference are the $\mathrm{SOP}_2$-types defined by Malliaris and Shelah in \cite{mmss_pt}. We record an equivalent definition here for ease of reference.

\begin{defn}\label{sop2type}
	We say that a structure $\M$ \emph{has an $\mathrm{SOP}_2$-tree} if there is a formula $\varphi(\bar{x};\bar{y})$ and a sequence of tuples $(\bar{a}_\eta)_{\eta\in 2^{<\omega}}$ from $\M$ all the same length as $\bar{y}$ and for all $\Gamma\se 2^{<\omega}$ the sets
	\[\{\varphi(\bar{x};\bar{a}_\eta):\eta\in\Gamma\}\]
	are consistent if and only if $\Gamma$ is a chain in $2^{<\omega}$. We say that the pair $(\varphi(\bar{x};\bar{y}),(\bar{a}_\eta)_{\eta\in 2^{<\omega}})$ is an \emph{$\mathrm{SOP}_2$-tree in $\M$}.
	
	A first-order complete theory $T$ is said to have $\mathrm{SOP}_2$ if there is some $\M\vDash T$ such that $\M$ has an $\mathrm{SOP}_2$-tree.
	
	A $\lambda$-type 
	\[p(\bar{x})=\{\varphi(\bar{x};\bar{a}_\beta):\beta<\lambda\}\]
	is an \emph{$\mathrm{SOP}_2$-type} if there is a sequence $(\bar{r}_\beta)_{\beta<\lambda}$ of tuples from $\prod_{\alpha<\lambda}\M_\alpha$ such that
	\begin{enumerate}
		\item for each $\beta<\lambda$ we have that $\bar{r}_\beta/\D=\bar{a}_\beta$, and
		\item for each $\alpha<\lambda$ there is an $\mathrm{SOP}_2$-tree $(\varphi(\bar{x};\bar{y}),(\bar{b}_\eta)_{\eta\in 2^{<\omega}})$ in $\M_\alpha$ with
		\[\{\bar{r}_\beta(\alpha):\beta<\lambda\}\se\{\bar{b}_\eta:\eta\in 2^{<\omega}\}.\]
	\end{enumerate}

\end{defn}

\section{Graph-like Distributions}

\subsection{Graph-like Distributions and Conjugates}
We begin by identifying a simplifying property that distributions may have. We will later see that $\mathrm{SOP}_2$-types and types corresponding to pre-cuts in ultrapowers of linear orders both have this nice simplifying property.

\begin{defn}
	A distribution $f$ for $p(\bar{x})$ in $\widehat\M$ is called \emph{graph-like} if it satisfies the condition
	\[f(\Delta)=\bigcap_{\Phi\in[\Delta]^2}f(\Phi)\]
	whenever $\Delta\in\P_\omega(\lambda)$ has $|\Delta|\geq 2$.
\end{defn}

A graph-like distribution is a distribution in which the combinatorial information contained in the distribution can be fully captured by a (simple) graph. Put another way, a graph-like distribution $f$ is a distribution where a finite $\Delta\se p(\bar{x})$ is guaranteed to be realized in $\M_\alpha$ by $f$ if and only if each of the two-element subsets of $\Delta$ are guaranteed to be realized in $\M_\alpha$ by $f$. Note: If $\Delta$ is guaranteed to be realized by $f$ then all of its two element subsets are also guaranteed to be realized by the monotonicity of $f$. Thus the content of being a graph-like distribution is that $f$ is both monotone and that finite sets of size strictly grater than $2$ have their simultaneous realizability determined by the two-element subsets of $p(\bar{x})$.

A distribution being graph-like is also an approximation of the distribution being multiplicative.

\begin{lem}\label{multimpgl}
	If $f$ is a distribution such that
		 for all $\Delta,\Gamma\in\mathcal{P}_\omega(\lambda)$ with $|\Delta|,|\Gamma|\geq 2$ we have that $f(\Delta\cup\Gamma)=f(\Delta)\cap f(\Gamma)$ then $f$ is a graph-like distribution.
\end{lem}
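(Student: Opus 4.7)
The plan is to prove the claim by induction on $|\Delta|$, with the hypothesis doing essentially all the work in the inductive step.

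For the base case $|\Delta|=2$, there is nothing to show because $[\Delta]^2=\{\Delta\}$, so $\bigcap_{\Phi\in[\Delta]^2}f(\Phi)=f(\Delta)$ trivially.

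For the inductive step, suppose $|\Delta|=n\geq 3$ and that the graph-like identity holds for all subsets of $\lambda$ of size strictly between $1$ and $n$. Pick any two distinct $x_0,x_1\in\Delta$ and set $\Delta_0=\Delta\sm\{x_0\}$ and $\Delta_1=\Delta\sm\{x_1\}$. Then $|\Delta_0|=|\Delta_1|=n-1\geq 2$, and since $x_0\neq x_1$ we have $\Delta_0\cup\Delta_1=\Delta$. The hypothesis therefore gives $f(\Delta)=f(\Delta_0)\cap f(\Delta_1)$, and the inductive hypothesis applied to each $\Delta_i$ yields
\[f(\Delta)=\bigcap_{\Phi\in[\Delta_0]^2}f(\Phi)\,\cap\,\bigcap_{\Phi\in[\Delta_1]^2}f(\Phi)=\bigcap_{\Phi\in[\Delta_0]^2\cup[\Delta_1]^2}f(\Phi).\]

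To finish, I would show the two containments with $\bigcap_{\Phi\in[\Delta]^2}f(\Phi)$. Since $[\Delta_0]^2\cup[\Delta_1]^2\se[\Delta]^2$, the intersection over the larger index set is contained in the intersection over the smaller, giving $\bigcap_{\Phi\in[\Delta]^2}f(\Phi)\se f(\Delta)$. For the reverse containment, for each $\Phi\in[\Delta]^2$ the standing assumption that distributions are monotone yields $f(\Delta)\se f(\Phi)$, hence $f(\Delta)\se\bigcap_{\Phi\in[\Delta]^2}f(\Phi)$. Combining both inclusions completes the induction.

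There is no real obstacle here: the only mild subtlety is that in the inductive step one must choose the two "nearly full" subsets $\Delta_0$ and $\Delta_1$ so that both have size at least $2$ (hence the restriction to $n\geq 3$) \emph{and} so that $\Delta_0\cup\Delta_1=\Delta$, which is exactly why removing a single distinct element from $\Delta$ to form each piece works. The pair $\{x_0,x_1\}$ itself lies in $[\Delta]^2$ but not in $[\Delta_0]^2\cup[\Delta_1]^2$, which is harmless because the inclusion we need from that pair is furnished automatically by monotonicity.
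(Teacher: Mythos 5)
Your proof is correct, but it takes a different route from the paper's. The paper first extends the hypothesis $f(\Delta\cup\Gamma)=f(\Delta)\cap f(\Gamma)$ (for $|\Delta|,|\Gamma|\geq 2$) by an easy induction to any finite union of sets of cardinality at least $2$, and then simply observes that $\Delta=\bigcup[\Delta]^2$ when $|\Delta|\geq 2$, so $f(\Delta)=f\bigl(\bigcup[\Delta]^2\bigr)=\bigcap_{\Phi\in[\Delta]^2}f(\Phi)$ in one line. You instead induct on $|\Delta|$ directly, splitting $\Delta$ into the two $(n-1)$-element pieces $\Delta\sm\{x_0\}$ and $\Delta\sm\{x_1\}$; this forces you to track which pairs appear in $[\Delta_0]^2\cup[\Delta_1]^2$ and to supply the lone missing pair $\{x_0,x_1\}$ separately via the standing monotonicity assumption. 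The paper's covering-by-pairs decomposition avoids that bookkeeping entirely and never needs monotonicity. A small aside: your appeal to monotonicity is harmless but avoidable even in your decomposition, since $\Delta=\Delta_0\cup\{x_0,x_1\}$ with both pieces of size at least $2$, so the hypothesis already gives $f(\Delta)\se f(\{x_0,x_1\})$ directly.
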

\begin{proof}
	By induction, the property given above can be extended to any finite number of subsets of cardinality at least $2$. In particular, if $\Delta\in\mathcal{P}_\omega(\lambda)$ with $|\Delta|\geq 2$, then $[\Delta]^2$ is a finite collection of two-element subsets of $\lambda$. We thus get that
	\[f(\Delta)=f\left(\bigcup\, [\Delta]^2\right)=\bigcap_{\Phi\in[\Delta]^2} f(\Phi).\qedhere\]
\end{proof}

\begin{lem}\label{partialmultimpglmult}
	If $f$ is a graph-like distribution  then $f$ is multiplicative if and only if $f$ satisfies the property
	\begin{equation}\label{partialmult}
		f(\{\beta,\gamma\})=f(\{\beta\})\cap f(\{\gamma\})
	\end{equation}
	for all $\beta,\gamma\in\lambda$.
\end{lem}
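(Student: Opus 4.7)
The plan is to prove the equivalence by noting that the forward direction is immediate and doing the real work on the reverse direction. For the forward direction, if $f$ is multiplicative, then applying multiplicativity to $\Delta=\{\beta\}$ and $\Phi=\{\gamma\}$ gives exactly the claimed identity (\ref{partialmult}); no use of graph-likeness is needed here.

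For the reverse direction, assume $f$ is graph-like and satisfies (\ref{partialmult}). The key intermediate fact I would prove is that
\[
	f(\Delta)=\bigcap_{\beta\in\Delta}f(\{\beta\})
\]
for every nonempty $\Delta\in\P_\omega(\lambda)$. When $|\Delta|=1$ this is trivial; when $|\Delta|\geq 2$ it follows by combining the graph-like identity with (\ref{partialmult}):
\[
	f(\Delta)=\bigcap_{\Phi\in[\Delta]^2}f(\Phi)=\bigcap_{\{\beta,\gamma\}\in[\Delta]^2}\bigl(f(\{\beta\})\cap f(\{\gamma\})\bigr)=\bigcap_{\beta\in\Delta}f(\{\beta\}).
\]
Once this singleton-decomposition is in hand, multiplicativity on nonempty sets is straightforward: for nonempty $\Delta,\Phi\in\P_\omega(\lambda)$,
\[
	f(\Delta\cup\Phi)=\bigcap_{\beta\in\Delta\cup\Phi}f(\{\beta\})=\Bigl(\bigcap_{\beta\in\Delta}f(\{\beta\})\Bigr)\cap\Bigl(\bigcap_{\beta\in\Phi}f(\{\beta\})\Bigr)=f(\Delta)\cap f(\Phi).
\]

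The only loose end is the degenerate case where $\Delta$ or $\Phi$ is empty; there one needs $f(\emptyset\cup\Phi)=f(\emptyset)\cap f(\Phi)$, i.e.\ $f(\Phi)\subseteq f(\emptyset)$, which is precisely an instance of monotonicity (and $f$ is monotone since it is graph-like, or by the blanket assumption made right after Definition~\ref{distribution}). The case $\Delta=\Phi=\emptyset$ is trivial.

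Honestly, there is no substantive obstacle here: the lemma is essentially a bookkeeping exercise saying that ``graph-like $+$ pairwise multiplicative $=$ multiplicative,'' because graph-likeness reduces everything to pairs and (\ref{partialmult}) reduces pairs to singletons. The only thing worth being careful about is making sure the intersection manipulations above are genuinely chains of equalities (each step either the graph-like definition, the hypothesis (\ref{partialmult}), or splitting a union), and treating the trivial $|\Delta|=1$ and empty-set edge cases separately so that the singleton-decomposition formula is applied only where it has been justified.
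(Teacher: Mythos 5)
Your proof is correct and takes essentially the same route as the paper: establish the singleton-decomposition $f(\Delta)=\bigcap_{\beta\in\Delta}f(\{\beta\})$ by combining graph-likeness with \eqref{partialmult}, then derive multiplicativity by splitting the intersection over $\Delta\cup\Phi$. The only difference is that you explicitly treat the $|\Delta|\leq 1$ and empty-set edge cases, which the paper glosses over; that is a harmless addition, not a different argument.
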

\begin{proof}
	We will first show that the property \eqref{partialmult} can be extended to all finite subsets of $\lambda$ using the fact that $f$ is graph-like, and then derive multiplicativity from this expanded property.
	Suppose that $\Delta\in\mathcal{P}_\omega(\lambda)$ with $|\Delta|>2$. Since $f$ is graph-like, we get that
	\[f(\Delta)=\bigcap_{\Phi\in[\Delta]^2}f(\Phi).\] 
	Because each $\Phi\in[\Delta]^2$ has exactly two elements, we may use the property \eqref{partialmult} to further expand this equation to
	\begin{equation}\label{expandedpartialmult}
	    f(\Delta)=\bigcap_{\Phi\in[\Delta]^2}\bigcap_{\beta\in\Phi}f(\{\beta\})=\bigcap_{\beta\in\Delta}f(\{\beta\}).
	\end{equation}
	Now suppose that $\Gamma,\Theta\in\P_\omega(\lambda)$. Using Equation~\eqref{expandedpartialmult} we derive that
	\begin{align*}
		f(\Gamma\cup\Theta)&=\bigcap_{\beta\in(\Gamma\cup\Theta)}f(\{\beta\})\\
		&=\left(\bigcap_{\beta\in\Gamma}f(\{\beta\})\right)\cap\left(\bigcap_{\gamma\in\Theta}f(\{\gamma\})\right)\\
		&=f(\Gamma)\cap f(\Theta).
	\end{align*}
	
	The other direction is a direct consequence of the definition of a distribution being multiplicative.
\end{proof}

\begin{cor}\label{conjmult}
	If $f$ is a distribution then $f$ is multiplicative if and only if $f$ is graph-like and $f$ satisfies 
	\[
		f(\{\delta,\epsilon\})=f(\{\delta\})\cap f(\{\epsilon\})
	\]
	for all $\delta,\epsilon\in\lambda$.
\end{cor}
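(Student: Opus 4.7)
The plan is to establish the corollary as an immediate consequence of the two preceding lemmas, splitting into the two implications.

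For the forward direction, I would assume that $f$ is multiplicative and derive both halves of the conjunction. Multiplicativity states that $f(\Delta \cup \Phi) = f(\Delta) \cap f(\Phi)$ for all $\Delta, \Phi \in \mathcal{P}_\omega(\lambda)$, with no cardinality restrictions. In particular, this holds whenever $|\Delta|, |\Phi| \geq 2$, so Lemma~\ref{multimpgl} applies directly and gives that $f$ is graph-like. Second, the pairwise identity $f(\{\delta,\epsilon\}) = f(\{\delta\}) \cap f(\{\epsilon\})$ is itself a direct instance of multiplicativity, taking $\Delta = \{\delta\}$ and $\Phi = \{\epsilon\}$ (and noting $\{\delta\} \cup \{\epsilon\} = \{\delta,\epsilon\}$, whether or not $\delta = \epsilon$).

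For the reverse direction, I would assume $f$ is graph-like and that the singleton-pair identity holds. This is precisely the hypothesis of Lemma~\ref{partialmultimpglmult}, whose conclusion is that $f$ is multiplicative. Applying that lemma finishes the proof.

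Since both directions reduce immediately to previously proved statements, there is essentially no obstacle; the corollary is a clean packaging of Lemmas~\ref{multimpgl} and~\ref{partialmultimpglmult}. The only minor subtlety to mention is that the forward direction uses multiplicativity in the singleton-pair case (not covered by the hypothesis of Lemma~\ref{multimpgl}, which is restricted to sets of size at least $2$), but this is available for free from the definition of multiplicativity.
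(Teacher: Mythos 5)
Your proof is correct and follows exactly the paper's approach: the forward direction combines Lemma~\ref{multimpgl} (to get graph-likeness) with the observation that the pairwise identity is an instance of multiplicativity, and the reverse direction is a direct application of Lemma~\ref{partialmultimpglmult}. The only difference is that you spell out the details more explicitly than the paper's terse two-line proof, which is harmless.
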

\begin{proof}
	($\Rightarrow$) A multiplicative distribution $f$ satisfies the assumptions of Lemma~\ref{multimpgl} and the property~\eqref{partialmult}.
	
	($\Leftarrow$) This is Lemma~\ref{partialmultimpglmult}.
\end{proof}

Because the information of a graph-like distribution $f$ is captured by a graph, we wish to explicitly say what these graphs are. In order to do this, it will be convenient to change the question (asked by a distribution) from ``Given a finite subset $\Delta$ of the type $p(\bar{x})$, in what set of indices do we know that $\M_\alpha\vDash\Delta$ by looking at $f$?'' to the question (asked by what we call the conjugate distribution) ``Given an index structure $\M_\alpha$, what are the finite subsets $\Delta$ of $p(\bar{x})$ so that we are guaranteed by $f$ that $\M_\alpha\vDash \Delta$?''

\begin{defn}\label{conjugate}
	Let $f$ be a distribution for $p(\bar{x})$ in $\widehat\M$. The \emph{conjugate of $f$} is the $\omega$-sequence of functions $g_n\colon\lambda\to\P_\omega([\lambda]^n)$ defined for each $\alpha<\lambda$ by 
	\begin{equation}\label{conjeqn}
	    g_n(\alpha)=\{\Delta\in[\lambda]^n:\alpha\in f(\Delta)\}.
	\end{equation}
\end{defn}

\begin{rmrks}\hspace{0em}
    \begin{enumerate}
        \item One could also reasonably define the conjugate as the single function $\hat{g}\colon\displaystyle\alpha\mapsto\bigcup_{n\in\omega} g_n(\alpha)$ for all $\alpha<\lambda$, although this would be more cumbersome for our purposes. The function $\hat{g}$ has crucially appeared in several proofs before but has not been thoroughly studied.
        \item The fact that $g_n(\alpha)$ is finite for each $n\in\omega$ and each $\alpha<\lambda$ is a consequence of the definition of a distribution (in particular, Definition~\ref{distribution}(2)), and the existence of functions satisfying this definition is equivalent to the ultrafilter $\D$ being regular. In short, we are able to make the $g_n(\alpha)$ finite because we are working with a regular ultrafilter.
        \item For elements of $g_1(\alpha)$ we will not be careful about the distinction between elements of $\lambda$ and elements of $[\lambda]^1$. In particular, we will usually think of $g_1(\alpha)$ as a finite subset of $\lambda$.
    \end{enumerate}
\end{rmrks}

The conjugate $(g_n)_{n\in\omega}$ of the distribution $f$ contains all of the information that is contained within the distribution $f$.

\begin{lem}\label{distviaconj}
	If $f$ is a distribution with conjugate $(g_n)_{n\in\omega}$ then
	\begin{equation}\label{disteqconj}
	f(\Delta)=\{\alpha\in\lambda:\Delta\in g_{|\Delta|}(\alpha)\}.
	\end{equation}
\end{lem}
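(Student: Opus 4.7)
The plan is a direct unfolding of Definition~\ref{conjugate}. Fix any $\Delta\in\P_\omega(\lambda)$ and set $n=|\Delta|$, so that $\Delta\in[\lambda]^n$. For every $\alpha<\lambda$, the defining equation~\eqref{conjeqn} asserts precisely that $\Delta\in g_n(\alpha)$ if and only if $\alpha\in f(\Delta)$. Collecting the $\alpha$ satisfying this condition on each side yields
\[\{\alpha\in\lambda:\Delta\in g_{|\Delta|}(\alpha)\}=\{\alpha\in\lambda:\alpha\in f(\Delta)\}=f(\Delta),\]
which is exactly~\eqref{disteqconj}. The edge case $\Delta=\emptyset$ is handled by the convention $[\lambda]^0=\{\emptyset\}$, so $g_0(\alpha)$ is either $\emptyset$ or $\{\emptyset\}$ and the same equivalence applies.

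There is no real obstacle here, since the statement amounts to transposing the two coordinates of the binary relation ``$\alpha\in f(\Delta)$'' on $\lambda\times\P_\omega(\lambda)$: the distribution $f$ groups this relation by its second coordinate, whereas the conjugate $(g_n)_{n\in\omega}$ groups it by its first coordinate (stratified by the cardinality of the second). The lemma's value is conceptual rather than technical: it certifies that no information is lost in passing from $f$ to $(g_n)_{n\in\omega}$, so any later argument may freely adopt whichever perspective is more convenient.
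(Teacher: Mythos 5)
Your proof is correct and is essentially the same unfolding of Definition~\ref{conjugate} that the paper gives: both rewrite $\Delta\in g_{|\Delta|}(\alpha)$ as $\alpha\in f(\Delta)$ and then collect over $\alpha$. The extra remark about $\Delta=\emptyset$ and the closing conceptual gloss are fine but not needed; the core argument matches.
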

\begin{proof}
	The result follows from the string of equalities
	\begin{align*}
	    \{\alpha\in\lambda:\Delta\in g_{|\Delta|}(\alpha)\}&=\big\{\alpha\in\lambda:\Delta\in \{\Gamma\in[\lambda]^{|\Delta|}:\alpha\in f(\Gamma)\}\big\}\\
	    &=\{\alpha\in\lambda:\alpha\in f(\Delta)\}\\
	    &=f(\Delta).\qedhere
	\end{align*}
\end{proof}

\begin{rmrk}
    The previous lemma shows that distributions $f$ are uniquely associated with their conjugate $(g_n)_{n\in\omega}$ as $f$ can be recovered from the conjugate using Equation~\eqref{disteqconj}.
\end{rmrk}

In the case of a graph-like distribution, the behavior of the two-element subsets determines the behavior of the larger subsets, so we may expect that a graph-like distribution and its conjugate are uniquely determined by just $g_1$ and $g_2$, as the following theorem demonstrates.

\begin{thm}\label{uniqglconj}
	If $f$ is a graph-like distribution with conjugate $(g_n)_{n\in\omega}$ then for $n\geq3$ we have that
	\begin{equation}\label{glconjeq}
	    g_n(\alpha)=\{\Delta\in[\lambda]^n:[\Delta]^2\se g_2(\alpha)\}.
	\end{equation}
	In particular, the conjugate $(g_n)_{n\in\omega}$ of a graph-like distribution is determined by $g_1$ and $g_2$.
\end{thm}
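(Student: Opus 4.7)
The plan is to unwind the definitions, then apply the graph-like hypothesis, and finally repackage as a statement about $g_2$. By Definition~\ref{conjugate}, unrolling the membership condition for $g_n$ reduces the identity \eqref{glconjeq} to checking, for each $\Delta\in[\lambda]^n$, that
\[\alpha\in f(\Delta) \iff [\Delta]^2\se g_2(\alpha).\]
So the whole proof amounts to establishing this equivalence under the assumption $n\geq 3$ (and hence $|\Delta|\geq 2$).

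For the forward direction, I would apply the graph-like property directly: since $|\Delta|\geq 2$, we have $f(\Delta)=\bigcap_{\Phi\in[\Delta]^2}f(\Phi)$, so $\alpha\in f(\Delta)$ forces $\alpha\in f(\Phi)$ for every $\Phi\in[\Delta]^2$, i.e.\ $\Phi\in g_2(\alpha)$ for every such $\Phi$. This gives $[\Delta]^2\se g_2(\alpha)$. For the reverse direction, assuming $[\Delta]^2\se g_2(\alpha)$ means $\alpha\in f(\Phi)$ for each $\Phi\in[\Delta]^2$, and again the graph-like identity $f(\Delta)=\bigcap_{\Phi\in[\Delta]^2}f(\Phi)$ immediately yields $\alpha\in f(\Delta)$. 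Combining these two directions gives \eqref{glconjeq}.

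For the final sentence (the ``in particular'' clause), I would observe that \eqref{glconjeq} expresses $g_n(\alpha)$ for $n\geq 3$ purely in terms of the finite set $g_2(\alpha)\se[\lambda]^2$, and that $g_0$ and $g_1$ together with $g_2$ handle the remaining small cases by Definition~\ref{conjugate}. Thus, once $g_1$ and $g_2$ are specified, every $g_n$ is pinned down.

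The proof is essentially a chain of equivalences, so I do not expect any substantive obstacle; the only item requiring minor care is ensuring that the graph-like identity is being applied at the correct level (namely $|\Delta|\geq 2$, which is automatic from $n\geq 3$) and that one does not accidentally need $g_1$ to reconstruct $g_n$ for $n\geq 3$, since the reconstruction goes through $g_2$ alone.
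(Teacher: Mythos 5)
Your proof is correct and follows essentially the same argument as the paper's: both unwind the definition of $g_n(\alpha)$, invoke the graph-like identity $f(\Delta)=\bigcap_{\Phi\in[\Delta]^2}f(\Phi)$ (valid since $n\geq 3$ implies $|\Delta|\geq 2$), and translate membership in $f(\Phi)$ into membership of $\Phi$ in $g_2(\alpha)$. The paper writes this as a chain of set equalities while you phrase it as a biconditional, but the content is identical.
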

\begin{proof}
    The result follows from the following chain of equalities:
    \begin{align*}
	    g_n(\alpha)&=\{\Delta\in[\lambda]^n:\alpha\in f(\Delta)\}\\
	    &=\{\Delta\in[\lambda]^n:\alpha\in \bigcap_{\Gamma\in[\Delta]^2}f(\Gamma)\}\\
	    &=\{\Delta\in[\lambda]^n:\forall \Gamma\in[\Delta]^2,\, \alpha\in f(\Gamma)\}\\
	    &=\{\Delta\in[\lambda]^n:\forall \Gamma\in[\Delta]^2,\, \Gamma\in g_2(\alpha)\}\\
	    &=\{\Delta\in[\lambda]^n:[\Delta]^2\se g_2(\alpha)\}.\qedhere
	\end{align*}
\end{proof}

From the previous two theorems, we see that the conjugate of a distribution $f$ carries with it all the information that is contained in $f$. Fortunately, we can also translate the axioms of a distribution into axioms for a sequence of functions $(g_n)_{n\in\omega}$ of the type ${g_n\colon\lambda\to\P([\lambda]^n)}$ being the conjugate of a distribution or a graph-like distribution. In fact, it will be easier for us to work with the conjugates of graph-like distributions than with graph-like distributions themselves.

\begin{lem}\label{distconjlem}
	Suppose that $f\colon\P_\omega(\lambda)\to\D$ is an arbitrary function and $p(\bar{x})$ is a $\lambda$-type of the ultraproduct $\widehat\M:=\prod_{\alpha<\lambda}\M_\alpha/\D$ where $\D$ is a regular ultrafilter on the infinite cardinal $\lambda$. Suppose that the conjugate of $f$ is defined in the same manner as in Definition~\ref{conjugate} and that the conjugate of $f$ is $(g_n)_{n\in\omega}$ (the codomain of $g_n$ is now $\P([\lambda]^n)$ instead of $\P_\omega([\lambda]^n)$). Then
	\begin{enumerate}
		\item The function $f$ refines the \L o\'s map $L$ of $p(\bar{x})$ if and only if for all $n\in\omega$, all $\alpha\in\lambda$, and all $\Delta\in g_n(\alpha)$ we have that 
		\[\M_\alpha\vDash\exists \bar{x},\bigwedge_{\beta\in \Delta}\varphi_\beta(\bar{x},\bar{r}_\beta(\alpha))\] 
		where the $\bar{r}_\beta$ are the fixed representatives in $\prod_{\alpha<\lambda}\M_\alpha$ of the parameters in $p(\bar{x})$.
		\item The function $f$ satisfies $\bigcap_{\Delta\in\Gamma}f(\Delta)=\emptyset$ for all $\Gamma\in[\P_\omega(\lambda)]^\omega$ if and only if for all $\alpha\in\lambda$ the set $\bigcup_{n\in\omega}g_n(\alpha)$ is finite.
		\item The function $f$ is monotone if and only if for all $n>m$ in $\omega$, all $\alpha\in\lambda$, and all $\Delta\in g_n(\alpha)$ we have that $[\Delta]^m\se g_m(\alpha)$.
	\end{enumerate}
\end{lem}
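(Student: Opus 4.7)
The plan is to handle each of the three clauses separately, in each case by applying the identity
\[f(\Delta)=\{\alpha\in\lambda:\Delta\in g_{|\Delta|}(\alpha)\}\]
from Lemma~\ref{distviaconj}. Although that lemma was stated for genuine distributions, its proof is a pure set-theoretic manipulation of the definition of the conjugate and therefore goes through verbatim in the present setup, where $f$ is only assumed to be a function $\P_\omega(\lambda)\to\D$ and the codomain of $g_n$ is enlarged to $\P([\lambda]^n)$. Each of the three clauses of the lemma then amounts to rephrasing one of the three relevant properties of $f$ in terms of $(g_n)_{n\in\omega}$.

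For (1), I would unpack that $f$ refining $L$ means $f(\Delta)\se L(\Delta)$ for every $\Delta$, which via Convention~\ref{lambdatype} says that for every $\alpha\in f(\Delta)$ we have $\M_\alpha\vDash \exists\bar{x}\,\bigwedge_{\beta\in\Delta}\varphi_\beta(\bar{x},\bar{r}_\beta(\alpha))$. Trading $\alpha\in f(\Delta)$ for $\Delta\in g_{|\Delta|}(\alpha)$ and reindexing by $n=|\Delta|$ gives the stated equivalent. For (3), monotonicity reads $\Delta\se\Phi\Rightarrow f(\Phi)\se f(\Delta)$, which translates directly into the statement that for every $\alpha$, if $\Phi\in g_{|\Phi|}(\alpha)$ and $\Delta\se\Phi$, then $\Delta\in g_{|\Delta|}(\alpha)$; fixing $n=|\Phi|$ and $m=|\Delta|<n$ and letting $\Delta$ range over $[\Phi]^m$ yields $[\Phi]^m\se g_m(\alpha)$, while the converse implication is obtained by reversing these steps and noting that the case $m=n$ is trivial.

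For (2), the key observation is the set identity $\bigcup_{n\in\omega}g_n(\alpha)=\{\Delta\in\P_\omega(\lambda):\alpha\in f(\Delta)\}$, which is immediate from the definition of the conjugate. In the forward direction I would argue contrapositively: if $\bigcup_n g_n(\alpha)$ is infinite for some $\alpha$, then picking a countably infinite family of distinct $\Delta_i$ from this union produces $\Gamma=\{\Delta_i:i\in\omega\}\in[\P_\omega(\lambda)]^\omega$ with $\alpha\in\bigcap_i f(\Delta_i)$. Conversely, if every $\bigcup_n g_n(\alpha)$ is finite, then no $\alpha$ can witness an infinite intersection, since that would place infinitely many distinct finite subsets of $\lambda$ into $\bigcup_n g_n(\alpha)$. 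The main (and essentially only) obstacle in the whole argument is the mild bookkeeping here in (2): one must use that the members of a $\Gamma\in[\P_\omega(\lambda)]^\omega$ are pairwise distinct so that they contribute $\omega$ distinct elements to $\bigcup_n g_n(\alpha)$. The remaining clauses are routine quantifier rearrangements once Lemma~\ref{distviaconj} is in hand.
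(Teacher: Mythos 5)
Your proposal is correct and takes essentially the same route as the paper: both hinge on the identity relating $f(\Delta)$ to $\Delta\in g_{|\Delta|}(\alpha)$ (your Lemma~\ref{distviaconj} appeal is just an explicit invocation of what the paper unwinds in place), and in particular your ``key observation'' for clause~(2) is exactly the paper's intermediate claim that $\Gamma\se\bigcup_{n}g_n(\alpha)$ iff $\alpha\in\bigcap_{\Delta\in\Gamma}f(\Delta)$, with both directions handled by the same contrapositive argument. The remaining quantifier rearrangements for (1) and (3) match the paper's treatment.
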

\begin{proof}
	$(1)$: ($\Rightarrow$) Fix an $\alpha\in\lambda$ and $n\in\omega$ and suppose that $\Delta\in g_n(\alpha)$. We then have that $\alpha\in f(\Delta)\se\llbracket \Delta\rrbracket_{p(\bar{x})}^\D$. That is,
	\[\M_\alpha\vDash\exists\bar{x},\bigwedge_{\beta\in\Delta}\varphi_\beta(\bar{x},\bar{r}_\beta(\alpha)).\]
	
	($\Leftarrow$) Suppose that $\Delta\in[\lambda]^n$ and $\alpha\in f(\Delta)$. This setup guarantees that $\Delta\in g_n(\alpha)$ so 
	\[\M_\alpha\vDash\exists\bar{x},\bigwedge_{\beta\in\Delta}\varphi_\beta(\bar{x},\bar{r}_\beta(\alpha)).\]
	That is, $\alpha\in\llbracket\Delta\rrbracket_{p(\bar{x})}^\D=L(\Delta)$.
	
	$(2)$: Both directions will be easier using the claim that $\Gamma$ is a subset of $\bigcup_{n\in\omega}g_n(\alpha)$ of any cardinality if and only if $\alpha$ is an element of $\bigcap_{\Delta\in\Gamma}f(\Delta)$. The claim follows from the equivalences
	\begin{align*}
	\Gamma\se\bigcup_{n\in\omega}	g_n(\alpha)&\iff \forall \Delta\in\Gamma,\, \alpha\in f(\Delta)\\
	&\iff \alpha\in\bigcap_{\Delta\in\Gamma} f(\Delta)
	\end{align*}
	where the first equivalence comes from the definition of the conjugate and the fact that $\P_\omega(\lambda)=\bigcup_{n\in\omega}[\lambda]^n$.

	($\Leftarrow$) Proceeding using the contrapositive, pick $\Gamma\in[\P_\omega(\lambda)]^\omega$ so that ${A:=\bigcap_{\Delta\in \Gamma}f(\Delta)}$ is non-empty. Choose $\alpha\in A$. By the above claim, $\Gamma\se\bigcup_{n\in\omega}g_n(\alpha)$ so \[\left|\bigcup_{n\in\omega}g_n(\alpha)\right|\geq|\Gamma|=\aleph_0.\]
	
	($\Rightarrow$) Again proceeding using the contrapositive, pick an $\alpha\in\lambda$ so that $\Gamma'=\bigcup_n g_n(\alpha)$ is infinite and define $\Gamma$ to be a countable infinite subset of $\Gamma'$. By the above claim, we have that $\alpha\in\bigcap_{\Delta\in\Gamma}f(\Delta)$ where $\Gamma\in[\P_\omega(\lambda)]^\omega$.
	
	$(3)$: ($\Rightarrow$) Suppose that $\alpha\in f(\Delta)$ and $|\Delta|=n$ (i.e.\ $\Delta\in g_n(\alpha)$). If $\Gamma\in[\Delta]^m$ with $m<n$ then $f(\Gamma)\supseteq f(\Delta)$ by monotonicity, so $\alpha\in f(\Gamma)$. That is, $\Gamma\in g_m(\alpha)$.
	
	($\Leftarrow$) Suppose that $\Gamma\se\Delta$ where $|\Gamma|=m$ and $|\Delta|=n$. Then $\alpha\in f(\Delta)$ if and only if $\Delta\in g_n(\alpha)$. As $\Gamma\in[\Delta]^m$, we get that $\Gamma\in g_m(\alpha)$ which is equivalent to saying $\alpha\in f(\Gamma)$. Thus $f(\Delta)\se f(\Gamma)$.
\end{proof}

We will mostly use the preceding lemma in order to show that a given sequence of functions $(g_n)_{n\in\omega}$ where $g_n\colon \lambda\to\P_\omega([\lambda]^n)$ is the conjugate sequence of some distribution $f$. Generally, we will use this strategy to create a graph-like distribution out of $g_1,g_2$, so we present the theory both in the general case and in the specialized case that we are most concerned with.

\begin{lem}\label{distconj}
	If $(g_n)_{n\in\omega}$ is a sequence of functions with $g_n\colon \lambda\to\P_\omega([\lambda]^n)$ then $(g_n)_{n\in\omega}$ is the conjugate of a monotone distribution $f$ for the $\lambda$-type $p(\bar{x})$ in $\widehat{M}=\prod_{\alpha<\lambda}\M_\alpha/\D$ with the representatives $(\bar{r}_\beta)_{\beta\in\lambda}$ if and only if the sequence $(g_n)_{n\in\omega}$ satisfies the following conditions:
	\begin{enumerate}
		\item For all $n\in\omega$, all $\alpha\in\lambda$, and all $\Delta\in g_n(\alpha)$
			\[\M_\alpha\vDash\exists\bar{x},\bigwedge_{\beta\in\Delta}\varphi_\beta(\bar{x},\bar{r}_\beta(\alpha)).\]
		\item For all $n>m$ in $\omega$, all $\alpha\in\lambda$, and all $\Delta\in g_n(\alpha)$ it is the case that $[\Delta]^m\se g_m(\alpha)$.
		\item For all $n\in\omega$ and all $\Delta\in[\lambda]^n$ it is the case that $\{\alpha\in\lambda:\Delta\in g_n(\alpha)\}$ is an element of $\D$.
	\end{enumerate}
\end{lem}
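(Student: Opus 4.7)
The plan is to apply Lemma~\ref{distconjlem} in both directions, with the candidate distribution defined via Equation~\eqref{disteqconj}. The bulk of the work is verifying one condition from Definition~\ref{distribution} that is not handed to us as a direct hypothesis, namely the countable-intersection condition.

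For the forward direction, suppose $(g_n)_{n\in\omega}$ is the conjugate of a monotone distribution $f$. Then condition~(1) is exactly Lemma~\ref{distconjlem}(1), condition~(2) is exactly Lemma~\ref{distconjlem}(3), and condition~(3) follows because Lemma~\ref{distviaconj} gives $\{\alpha\in\lambda:\Delta\in g_{|\Delta|}(\alpha)\}=f(\Delta)$, and $f$ takes values in $\D$ by definition of a distribution.

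For the backward direction, define $f\colon\P_\omega(\lambda)\to\P(\lambda)$ by $f(\Delta)=\{\alpha\in\lambda:\Delta\in g_{|\Delta|}(\alpha)\}$. Condition~(3) makes $f$ a function into $\D$, condition~(1) gives $f(\Delta)\se L(\Delta)$ by the argument of Lemma~\ref{distconjlem}(1), and condition~(2) delivers monotonicity by Lemma~\ref{distconjlem}(3). That the conjugate of $f$ is $(g_n)_{n\in\omega}$ is an immediate unpacking of the definition of conjugate once $f$ has been defined this way.

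The main obstacle is verifying Definition~\ref{distribution}(2), since the hypotheses do not explicitly assert that $\bigcup_{n\in\omega}g_n(\alpha)$ is finite (which is what Lemma~\ref{distconjlem}(2) requires). The key observation is that this finiteness is forced automatically from the other data: $g_1(\alpha)$ is finite because the codomain of $g_1$ is $\P_\omega([\lambda]^1)$, and by condition~(2) applied with $m=1$, any $\Delta\in g_n(\alpha)$ satisfies $[\Delta]^1\se g_1(\alpha)$, i.e.\ $\Delta\se g_1(\alpha)$ under the identification of $[\lambda]^1$ with $\lambda$. Hence $g_n(\alpha)\se[g_1(\alpha)]^n$ for every $n$, so $\bigcup_{n\in\omega}g_n(\alpha)\se\P(g_1(\alpha))$ is finite, and Lemma~\ref{distconjlem}(2) then supplies the countable-intersection condition, completing the verification that $f$ is a monotone distribution with conjugate $(g_n)_{n\in\omega}$.
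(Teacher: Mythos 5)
Your proposal is correct and follows essentially the same route as the paper: both directions reduce to Lemma~\ref{distconjlem} after defining $f(\Delta)=\{\alpha:\Delta\in g_{|\Delta|}(\alpha)\}$, and the crucial finiteness of $\bigcup_n g_n(\alpha)$ is derived exactly as in the paper from $g_n(\alpha)\se[g_1(\alpha)]^n$ via condition~(2) with $m=1$. The only step you compress is the explicit check that the conjugate of the candidate $f$ really is $(g_n)_{n\in\omega}$, which the paper writes out as a short chain of equalities, but your observation that this is an immediate unpacking of the definitions is accurate.
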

\begin{proof}
	($\Leftarrow$): In order to use Lemma \ref{distconjlem}, we need to show that $(g_n)_{n\in\omega}$ is the conjugate of a function $f\colon \P_\omega(\lambda)\to\D$. We claim that the function $\Delta\mapsto \{\alpha\in\lambda:\Delta\in g_{|\Delta|}(\alpha)\}$ for all $\Delta\in\P_\omega(\lambda)$ is the desired $f$. That the image of $f$ is a subset of $\D$ follows from condition $(3)$ above. If $(h_n)_{n\in\omega}$ is the conjugate of $f$ then 
	\begin{align*}
		h_n(\alpha)&=\{\Delta\in[\lambda]^n:\alpha\in f(\Delta)\}\\
		&=\{\Delta\in[\lambda]^n:\alpha\in\{\gamma\in\lambda:\Delta\in g_n(\gamma)\}\}\\
		&=\{\Delta\in[\lambda]^n:\Delta\in g_n(\alpha)\}\\
		&=g_n(\alpha)
	\end{align*}
	so $(g_n)_{n\in\omega}$ is the conjugate of $f$.
	
	We now immediately get from Lemma \ref{distconjlem} that $f$ refines the \L o\'s map using condition $(1)$ of this lemma and that $f$ is monotone using condition $(2)$ for this lemma. It remains to check that $f$ satisfies the condition in Lemma \ref{distconjlem}$(2)$.
	
	Fix $n\in\omega$. We claim that $g_n(\alpha)\se [g_1(\alpha)]^n$. This claim will be enough because $g_1(\alpha)$ is finite so the sequence $([g_1(\alpha)]^m)_{m\in\omega}$ is a sequence of finite sets with the property that $[g_1(\alpha)]^k=\emptyset$ whenever $k>|g_1(\alpha)|$. By using the claim, we get that 
	\[\bigcup_{m\in\omega}g_m(\alpha)\se\bigcup_{m\in\omega}[g_1(\alpha)]^m=\bigcup_{m=0}^{|g_1(\alpha)|}[g_1(\alpha)]^m=\P(g_1(\alpha))\] 
	where the right-hand side is a finite union of finite sets, so $\bigcup_{m\in\omega}g_m(\alpha)$ must also be finite. By Lemma \ref{distconjlem}$(2)$, the sequence $(f(\Delta))_{\Delta\in\P_\omega(\lambda)}$ is regularizing in $\D$.
	
	To prove the claim: Suppose that $\Delta\in g_n(\alpha)$. Using condition $(2)$ with $m=1$ we get that $[\Delta]^1\se g_1(\alpha)$. Under the identification of $[\Delta]^1$ with $\Delta$, this says that $\Delta\se g_1(\alpha)$ which is equivalent to $\Delta\in[g_1(\alpha)]^n$ as $|\Delta|=n$ by assumption.
	
	($\Rightarrow$): From Lemma~\ref{distconjlem}(1) and (3) we immediately have that $(g_n)_{n\in\omega}$ satisfies conditions (1) and (2) above. The condition (3) of this lemma is satisfied because $\{\alpha\in\lambda:\Delta\in g_n(\alpha)\}=f(\Delta)$ (Equation~\eqref{disteqconj}) and because the codomain of $f$ is $\D$.
\end{proof}

\begin{lem}\label{gldistconj}
	If $g_1,g_2$ are functions $g_i\colon \lambda\to\P_\omega([\lambda]^i)$ then $g_1,g_2$ determine the conjugate of a monotone graph-like distribution $f$ for the $\lambda$-type $p(\bar{x})$ in $\widehat{M}=\prod_{\alpha<\lambda}\M_\alpha/\D$ with the representatives $(\bar{r}_\beta)_{\beta\in\lambda}$ if and only if the functions $g_1,g_2$ satisfy the following conditions:
	\begin{enumerate}
		\item $\beta\in g_1(\alpha)$ implies that $\M_\alpha\vDash\exists \bar{x},\,\varphi_\beta(\bar{x},\bar{r}_\beta(\alpha))$.
		\item For all $n\geq 2$ in $\omega$ and all $\Delta$ with $|\Delta|=n$ and $[\Delta]^2\se g_2(\alpha)$ it is the case that 
		\[\M_\alpha\vDash\exists\bar{x},\bigwedge_{\beta\in\Delta}\varphi_\beta(\bar{x},\bar{r}_\beta(\alpha)).\]
		\item For all $\alpha\in\lambda$ it is the case that $g_2(\alpha)\se [g_1(\alpha)]^2$.
		\item For each $\Delta\in\P_\omega(\lambda)$ with $|\Delta|\geq 2$ the set $\{\alpha\in\lambda:[\Delta]^2\se g_2(\alpha)\}$ is an element of $\D$. Equivalently, for each $\Delta\in[\lambda]^2$ the set $\{\alpha\in\lambda:\Delta\in g_2(\alpha)\}$ is in $\D$.
	\end{enumerate}
\end{lem}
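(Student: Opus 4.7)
The plan is to leverage Theorem~\ref{uniqglconj} by first extending the candidate pair $(g_1,g_2)$ to a full sequence $(g_n)_{n\in\omega}$ via the forced formula $g_n(\alpha):=\{\Delta\in[\lambda]^n:[\Delta]^2\se g_2(\alpha)\}$ for $n\geq 3$ (and $g_0(\alpha):=\{\emptyset\}$), then apply Lemma~\ref{distconj} to the extended sequence to manufacture a monotone distribution $f$, and finally verify that $f$ is graph-like by unwinding the recovery formula of Equation~\eqref{disteqconj}.

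For the ($\Leftarrow$) direction the task reduces to checking the three hypotheses of Lemma~\ref{distconj}. Condition (1) of Lemma~\ref{distconj} splits by the size of $\Delta$: for $|\Delta|=1$ it is our hypothesis (1); for $|\Delta|=2$ we have $[\Delta]^2=\{\Delta\}\se g_2(\alpha)$ so our (2) applies; for $|\Delta|\geq 3$ the defining identity $g_n(\alpha)=\{\Delta:[\Delta]^2\se g_2(\alpha)\}$ again lets our (2) conclude. Condition (2) of Lemma~\ref{distconj} (monotonicity of the conjugate) splits by the target size $m$: when $m\geq 2$ any $\Gamma\in[\Delta]^m$ has $[\Gamma]^2\se[\Delta]^2\se g_2(\alpha)$, so $\Gamma\in g_m(\alpha)$ by construction; when $m=1$ each $\beta\in\Delta$ can be paired with some $\gamma\in\Delta\sm\{\beta\}$ (possible since $n\geq 2$), and our (3) forces $\beta\in g_1(\alpha)$. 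Condition (3) of Lemma~\ref{distconj} for $n\geq 2$ is our (4), either directly (for $n=2$) or after rewriting $\{\alpha:\Delta\in g_n(\alpha)\}$ as the finite intersection $\bigcap_{\Phi\in[\Delta]^2}\{\alpha:\Phi\in g_2(\alpha)\}$.

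The subtle case — and the point I would flag as the main obstacle — is Condition (3) of Lemma~\ref{distconj} at $n=1$, since our hypothesis (4) is silent about singletons. Here I would invoke the infinitude of $\lambda$ to pick some $\gamma\in\lambda\sm\{\beta\}$, apply our (4) to the pair $\{\beta,\gamma\}$ to place $\{\alpha:\{\beta,\gamma\}\in g_2(\alpha)\}$ in $\D$, and then use our (3) to observe the containment $\{\alpha:\{\beta,\gamma\}\in g_2(\alpha)\}\se\{\alpha:\beta\in g_1(\alpha)\}$; upward closure of $\D$ completes the argument. Once Lemma~\ref{distconj} produces $f$, graph-likeness is immediate: for $|\Delta|\geq 3$ the construction gives $f(\Delta)=\{\alpha:[\Delta]^2\se g_2(\alpha)\}=\bigcap_{\Phi\in[\Delta]^2}f(\Phi)$, and for $|\Delta|=2$ the equality is tautological.

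The ($\Rightarrow$) direction is essentially bookkeeping through the earlier lemmas: (1) is Lemma~\ref{distconjlem}(1) applied to singletons; (2) follows from Lemma~\ref{distconjlem}(1) together with Theorem~\ref{uniqglconj}, which identifies $g_n(\alpha)$ for $n\geq 2$ with $\{\Delta\in[\lambda]^n:[\Delta]^2\se g_2(\alpha)\}$; (3) is the instance $n=2,m=1$ of Lemma~\ref{distconjlem}(3); and (4) holds because $f$ takes values in $\D$ and Theorem~\ref{uniqglconj} rewrites $\{\alpha:[\Delta]^2\se g_2(\alpha)\}$ as $f(\Delta)$ for $|\Delta|\geq 2$. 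Beyond the singleton wrinkle above, the only real bookkeeping challenge is to enumerate the case split in Lemma~\ref{distconj}(1)--(2) cleanly enough that the reader sees $(g_1,g_2)$ genuinely encode the whole sequence.
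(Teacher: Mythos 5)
Your proof is correct and follows the same route as the paper: both extend $(g_1,g_2)$ to the full sequence via the forced formula from Theorem~\ref{uniqglconj}, run the extended sequence through Lemma~\ref{distconj}, handle the singleton case of Lemma~\ref{distconj}(3) via hypothesis~(4) on a pair $\{\beta,\gamma\}$ together with hypothesis~(3) and filter upward closure, and invoke Lemma~\ref{distconjlem}/\ref{distconj} plus Equation~\eqref{glconjeq} for the converse. The only cosmetic difference is that you spell out the final verification that the resulting $f$ is graph-like, which the paper's proof leaves implicit in its opening appeal to Theorem~\ref{uniqglconj}.
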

\begin{proof}
	($\Leftarrow$): As in the proof of Theorem \ref{uniqglconj}, we know that if there is a graph-like $f$ with $g_1$ and $g_2$ part of its conjugate then the entire conjugate is $(g_n)_{n\in\omega}$ where $g_n(\alpha)=\{\Delta\in[\lambda]^n:[\Delta]^2\se g_2(\alpha)\}$ for $n>2$ and $g_0$ is the constant function with output $\{\emptyset\}$. Thus it will be enough to show that $(g_n)_{n\in\omega}$ so defined is the conjugate of a graph-like distribution $f$.
	
	We will proceed by showing that $(g_n)_{n\in\omega}$ satisfies the conditions in Lemma \ref{distconj} and that the distribution $f$ for which $(g_n)_{n\in\omega}$ is the conjugate must be graph-like. The condition of Lemma \ref{distconj}(1) is satisfied with $n=1$ by condition $(1)$ and for $n\geq 2$ by condition $(2)$ (because $\Delta$ is an element of $g_n(\alpha)$ if and only if $|\Delta|=n$ and $[\Delta]^2\se g_2(\alpha)$).
	
	The condition of Lemma \ref{distconj}(2) is satisfied for $n=2,m=1$ by condition $(3)$ above. The cases with $m=0$ are trivial, so it remains to check the cases with $n>2$, $m>0$. When $m=2$, this follows by the definition of the $g_n$. For $m=1$, this follows from the fact that one element subsets of $\Delta$ are one element subsets of two element subsets of $\Delta$ and using the cases $m=2$ and $n=2$, $m=1$. For $m>2$, we note that $\Delta\in g_n(\alpha)$ is equivalent to $[\Delta]^2\se g_2(\alpha)$ and if $\Gamma\in[\Delta]^m$ then $[\Gamma]^2\se g_2(\alpha)$ which is equivalent to $\Gamma\in g_m(\alpha)$.
	
	The condition of Lemma \ref{distconj}(3) is satisfied for $n\geq 2$ by condition $(4)$ and the definition of $g_n$. Because we know that $(g_n)_{n\in\omega}$ satisfies the condition $(3)$, we have that if $\beta,\gamma$ are distinct elements of $\lambda$ and $\{\beta,\gamma\}\in g_2(\alpha)$ then $\beta\in g_1(\alpha)$. That is, 
	\[\{\alpha\in\lambda:\beta\in g_1(\alpha)\}\supseteq\{\alpha\in\lambda:\{\beta,\gamma\}\in g_2(\alpha)\}\in\D\] 
	so $\{\alpha:\beta\in g_1(\alpha)\}\in\D$ because $\D$ is a filter.
	
	($\Rightarrow$): Conditions (1) and (2) of this lemma are both implied by Lemma~\ref{distconj}(1). Condition (3) of this lemma is a direct consequence of Lemma~\ref{distconj}(2).
	Condition 4 of this lemma follows from Lemma~\ref{distconj}(3) and Equation~\eqref{glconjeq}.
\end{proof}

\section{Graphs Arising from Graph-like Distributions}

We now assume that $p(\bar{x})$ has a graph-like distribution given by $f:\P_\omega(\lambda)\to \D$ and that the conjugate of $f$ is $(g_n)_{n\in\omega}$.

As mentioned before, the information carried by a graph-like distribution can be represented in each index by a finite simple graph. This idea is made formal in the following definition.

\begin{defn}
	The \emph{distribution graph sequence of $f$} is the $\lambda$-sequence of graphs $(\mathbb{G}_\alpha)_{\alpha\in\lambda}$ such that the vertex set of $\mathbb{G}_\alpha$ is $G_\alpha:=g_1(\alpha)$ and the edge relation in $\mathbb{G}_\alpha$ is $E_\alpha:=g_2(\alpha)$.
\end{defn}

\begin{rmrks}\label{distgraphseqrmrks}\hspace{1 em}
\begin{enumerate}
	\item The distribution $f$ and conjugate sequence can be easily recovered from the graph sequence as $g_1(\alpha)$ is the set of vertices of $\mathbb{G}_\alpha$ and $g_2(\alpha)$ is the set of edges in $\mathbb{G}_\alpha$. Using Theorem \ref{uniqglconj}, this information is enough to determine $f$ as a graph-like distribution.
	\item If $\Delta\in\mathcal{P}_\omega(\lambda)$ then $\alpha\in f(\Delta)$ if and only if $\Delta\se g_1(\alpha)$ and $\Delta$ induces a complete subgraph of $\mathbb{G}_\alpha$. One consequence of this is that we know that the set of formulas $\{\varphi_\beta(x,\bar{r}_\beta(\alpha)):\beta\in\Delta\}$ is satisfiable in $\M_\alpha$ if (but usually not only if) $\Delta$ induces a complete subgraph of $\mathbb{G}_\alpha$.
\end{enumerate}
\end{rmrks}

It will be useful to record some translations of concepts coming from distributions in terms of the graph sequence. As the distribution graph sequence contains all of the information of a graph-like distribution, one of the translations we will get is a version of Lemma \ref{gldistconj} in terms of the distribution graph sequence instead of the conjugate. We will also see how to characterize refinements and multiplicative distributions in terms of the distribution graph sequence.

\begin{lem}\label{glseqdist}
	A sequence of finite graphs $(\mathbb{G}_\alpha)_{\alpha<\lambda}$ where the set of vertices of each $\mathbb{G}_\alpha=(G_\alpha,E_\alpha)$ is a subset of $\lambda$ is a distribution graph sequence of a monotone graph-like distribution $f$ for the $\lambda$-type $p(\bar{x})$ in $\widehat{M}=\prod_{\alpha<\lambda}\M_\alpha/\D$ with the representatives $(\bar{r}_\beta)_{\beta\in\lambda}$ if and only if the sequence $(\mathbb{G}_\alpha)_{\alpha<\lambda}$ satisfies the following conditions:
	\begin{enumerate}
		\item If $\Delta\se G_\alpha$ induces a complete subgraph of $\mathbb{G}_\alpha$ then
		\[\M_\alpha\vDash\exists\bar{x},\bigwedge_{\beta\in\Delta}\varphi_\beta(\bar{x},\bar{r}_\beta(\alpha)).\]
		\item For all $\Delta\in[\lambda]^2$ the set $\{\alpha\in\lambda:\Delta\in E_\alpha\}$ is an element of $\D$.
	\end{enumerate}
\end{lem}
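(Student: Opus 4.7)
The plan is to reduce this lemma directly to Lemma \ref{gldistconj} by identifying the graph data $(\mathbb{G}_\alpha)_{\alpha<\lambda}$ with the pair of functions $g_1(\alpha) := G_\alpha$ and $g_2(\alpha) := E_\alpha$. Because each $\mathbb{G}_\alpha$ is a finite graph whose vertex set is a subset of $\lambda$, these functions have codomains contained in $\P_\omega(\lambda) \cong \P_\omega([\lambda]^1)$ and $\P_\omega([\lambda]^2)$ respectively, so Lemma \ref{gldistconj} applies. By the definition of a distribution graph sequence together with Remarks~\ref{distgraphseqrmrks}(1), the sequence $(\mathbb{G}_\alpha)_{\alpha<\lambda}$ is the distribution graph sequence of a monotone graph-like distribution $f$ if and only if the pair $g_1,g_2$ so defined determines the conjugate of such an $f$.

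Given this reduction, I then verify that the four conditions in Lemma~\ref{gldistconj} are equivalent to the two conditions stated here. Condition (3) of Lemma~\ref{gldistconj}, namely $g_2(\alpha) \subseteq [g_1(\alpha)]^2$, is automatic from the definition of a graph, since edges are two-element subsets of the vertex set. Condition (4) of Lemma~\ref{gldistconj}, in the equivalent form for $\Delta \in [\lambda]^2$, is exactly condition (2) of this lemma. It remains only to match conditions (1) and (2) of Lemma~\ref{gldistconj} with condition (1) of this lemma.

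For this last step, I observe that condition (1) of the present lemma — that every $\Delta \subseteq G_\alpha$ inducing a complete subgraph satisfies the relevant existential — naturally splits by $|\Delta|$. The case $|\Delta|=1$ is exactly condition (1) of Lemma~\ref{gldistconj}, since any single vertex $\{\beta\}$ with $\beta \in g_1(\alpha)$ trivially induces a complete subgraph. The cases $|\Delta|\geq 2$ are exactly condition (2) of Lemma~\ref{gldistconj}, since for such $\Delta$ the property of inducing a complete subgraph of $\mathbb{G}_\alpha$ is precisely $[\Delta]^2 \subseteq E_\alpha = g_2(\alpha)$. (The case $|\Delta| = 0$ yields a trivially satisfied existential and contributes no content.) Combining these observations establishes the equivalence in both directions.

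No single step is genuinely difficult; the proof is a dictionary translation. The main point to be careful about is the subsumption of the single-vertex condition of Lemma~\ref{gldistconj} into the complete-subgraph hypothesis, and the observation that condition (3) of Lemma~\ref{gldistconj} need not be stated explicitly here because it is built into the definition of a graph.
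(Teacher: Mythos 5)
Your proposal is correct and follows essentially the same route as the paper: set $g_1(\alpha)=G_\alpha$, $g_2(\alpha)=E_\alpha$, and then translate the four conditions of Lemma~\ref{gldistconj} into the two stated here (conditions (1)--(2) of Lemma~\ref{gldistconj} packed into the complete-subgraph condition, (3) automatic from the definition of a graph, (4) matching condition (2) verbatim).
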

\begin{proof}
	Set $g_1(\alpha)=G_\alpha$ and $g_2(\alpha)=E_\alpha$. We claim that $g_1$ and $g_2$ satisfy the conditions of Lemma \ref{gldistconj} to determine a graph-like distribution for $p(\bar{x})$. The conditions of Lemma \ref{gldistconj}$(1)$--$(2)$ are satisfied because of condition $(1)$ above. That is, single vertices induce complete subgraphs and any $\Delta\se g_1(\alpha)$ induces a complete subgraph of $\mathbb{G}_\alpha$ with $|\Delta|\geq 2$ if and only if every pair of elements of $\Delta$ has an edge between them in $\mathbb{G}_\alpha$ if and only if $[\Delta]^2\se g_2(\alpha)$.
	The condition Lemma \ref{gldistconj}(3) is satisfied as the edge set of a graph must be a subset of the two element subsets of the set of vertices. The condition Lemma \ref{gldistconj}(4) is implied by the condition $(2)$ above using the fact that $E_\alpha=g_2(\alpha)$.
\end{proof}

\begin{lem}\label{refviaseq}
	Suppose that $(\mathbb{G}_\alpha)_{\alpha<\lambda}$ is the distribution graph sequence of a monotone graph-like distribution $f$ for the type $p(\bar{x})$. Then a sequence of finite graphs $(\mathbb{H}_\alpha)_{\alpha<\lambda}$ is the graph distribution sequence of a graph-like refinement of $f$ if and only if $\mathbb{H}_\alpha$ is a subgraph of $\mathbb{G}_\alpha$ for all $\alpha<\lambda$ and for all $\Delta\in[\lambda]^2$ the set $\{\alpha\in\lambda:\Delta\in E^{\mathbb{H}_\alpha}\}$ is an element of $\D$.
\end{lem}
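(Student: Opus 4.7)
The plan is to translate the refinement inequality $h \se f$ into vertex and edge containment at each coordinate $\alpha$, using that a graph-like distribution is entirely determined by its values on singletons and pairs. Only the cases $|\Delta|\le 2$ carry content, and these correspond respectively to containment of vertex sets and of edge sets of $\mathbb{H}_\alpha$ inside $\mathbb{G}_\alpha$.

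For the forward direction, assume $(\mathbb{H}_\alpha)_{\alpha<\lambda}$ is the distribution graph sequence of a graph-like refinement $h$ of $f$. The ultrafilter condition on pairs is Lemma~\ref{glseqdist}(2) applied to $h$. For the subgraph containment, apply $h(\{\beta\}) \se f(\{\beta\})$ and rewrite both sides via Equation~\eqref{disteqconj}; because the resulting inclusion of subsets of $\lambda$ holds uniformly in $\beta$, it is equivalent to the assertion that the vertex set of $\mathbb{H}_\alpha$ is contained in the vertex set of $\mathbb{G}_\alpha$ for every $\alpha$. An identical argument applied to $h(\{\beta,\gamma\}) \se f(\{\beta,\gamma\})$ yields $E^{\mathbb{H}_\alpha} \se E^{\mathbb{G}_\alpha}$ for every $\alpha$, so that $\mathbb{H}_\alpha$ is a subgraph of $\mathbb{G}_\alpha$.

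For the reverse direction, assume the two hypotheses on $(\mathbb{H}_\alpha)$. First verify via Lemma~\ref{glseqdist} that $(\mathbb{H}_\alpha)$ is the distribution graph sequence of some graph-like distribution $h$: condition~(2) is hypothesized, and for condition~(1), any $\Delta$ inducing a complete subgraph of $\mathbb{H}_\alpha$ also induces a complete subgraph of $\mathbb{G}_\alpha$ by the subgraph hypothesis, so the existential statement that $f$'s graph sequence satisfies transfers to $\mathbb{H}_\alpha$. To check $h \se f$, the cases $|\Delta| \le 2$ follow directly from Equation~\eqref{disteqconj} together with the subgraph inclusion (and the trivial case $\Delta=\emptyset$), and for $|\Delta| \ge 3$ one reduces to pairs via the graph-like identity:
\[
h(\Delta) \;=\; \bigcap_{\Phi \in [\Delta]^2} h(\Phi) \;\se\; \bigcap_{\Phi \in [\Delta]^2} f(\Phi) \;=\; f(\Delta).
\]

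The whole argument is bookkeeping verifying that the correspondence between graph-like distributions and their graph sequences respects the refinement partial order; there is no substantial obstacle. The only care needed is to treat $|\Delta| \le 2$ separately from $|\Delta| \ge 3$, since the defining graph-like identity is only stipulated for the latter range, and to remember that the single phrase ``$\mathbb{H}_\alpha$ is a subgraph of $\mathbb{G}_\alpha$'' packages together the two containments that the singleton and pair cases of refinement produce.
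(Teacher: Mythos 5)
Your proof is correct and follows essentially the same route as the paper's: both directions reduce to Lemma~\ref{glseqdist} for establishing that $(\mathbb{H}_\alpha)$ is a distribution graph sequence, and both translate the refinement inequality into vertex/edge containment via the conjugate and Equation~\eqref{disteqconj}, handling $|\Delta|\geq 3$ through the graph-like identity. The only cosmetic difference is that the paper traces a single coordinate $\epsilon$ through the chain of implications, whereas you phrase the same computation at the level of sets, which reads a bit more cleanly.
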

\begin{proof}
	($\Leftarrow$) The condition in Lemma \ref{glseqdist}(1) is preserved by subgraphs and the condition in Lemma \ref{glseqdist}(2) is assumed to be true of $(\mathbb{H}_\alpha)_{\alpha<\lambda}$, so $(\mathbb{H}_\alpha)_{\alpha<\lambda}$ is the distribution graph sequence of some distribution $\ell$. We wish to check that $\ell$ is a refinement of $f$. Let $(h_n)_{n\in\omega}$ be the conjugate of $\ell$ and $\Delta$ a fixed element of $\P_\omega(\lambda)$. Suppose that $\epsilon\in \ell(\Delta)$ or, equivalently, $\Delta\in h_{|\Delta|}(\epsilon)$. If $|\Delta|\geq 2$, then $[\Delta]^2\se h_2(\epsilon)$ by Lemma \ref{distconjlem}(2) and $h_2(\epsilon)\se g_2(\epsilon)$ because the set of edges in $\mathbb{H}_\epsilon$ is a subset of the set of edges in $\mathbb{G}_\epsilon$. Now using Lemma \ref{distconjlem}(2) on $(g_n)_{n\in\omega}$ yields $\Delta\in g_{|\Delta|}(\epsilon)$, which is equivalent to $\epsilon\in f(\Delta)$, so $\ell(\Delta)\se f(\Delta)$. The case where $|\Delta|=1$ follows from the set of vertices of $\mathbb{H}_\epsilon$ being a subset of the set of vertices of $\mathbb{G}_\epsilon$. 
	
	($\Rightarrow$) We know that if $(\mathbb{H}_\alpha)_{\alpha<\lambda}$ is the distribution graph sequence of  graph-like distribution then $(\mathbb{H}_\alpha)_{\alpha<\lambda}$ must satisfy Lemma \ref{glseqdist}(2), so for any $\Delta\in\P_\omega(\lambda)$ we have that $\{\alpha\in\lambda:\Delta\in E^{\mathbb{H}_\alpha}\}$ is an element of $\D$. Let $\ell\colon\P_\omega(\lambda)\to\D$ be the distribution whose distribution graph-sequence is $(\mathbb{H}_\alpha)_{\alpha<\lambda}$. Then for all $\Delta\in\P_\omega(\lambda)$ we have that $\ell(\Delta)\se f(\Delta)$, so $\Delta\in h_{|\Delta|}(\alpha)$ implies $\Delta\in g_{|\Delta|}(\alpha)$. Thus $h_n(\alpha)\se g_n(\alpha)$ for all $\alpha\in\lambda$ and $n\in\omega$, which, when applied with $n=1,2$, implies that $\mathbb{H}_\alpha$ is a subgraph of $\mathbb{G}_\alpha$.
\end{proof}

\begin{lem}\label{multviaseq}
	Suppose that $(\mathbb{G}_\alpha)_{\alpha<\lambda}$ is the distribution graph sequence of a graph-like distribution $f$ for the type $p(\bar{x})$. Then $f$ is multiplicative if and only if $\mathbb{G}_\alpha$ is complete for every $\alpha\in\lambda$.
\end{lem}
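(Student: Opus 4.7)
The plan is to reduce the statement to a condition about pairs via Corollary~\ref{conjmult} and then translate between the distribution $f$ and its distribution graph sequence. Since $f$ is assumed graph-like, Corollary~\ref{conjmult} tells us that $f$ is multiplicative if and only if
\[
f(\{\delta,\epsilon\}) = f(\{\delta\}) \cap f(\{\epsilon\})
\]
for all $\delta,\epsilon \in \lambda$. The case $\delta=\epsilon$ is automatic (both sides equal $f(\{\delta\})$), so only distinct $\delta,\epsilon$ need to be considered.

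Next I would unpack each side in graph-theoretic language using Definition~\ref{conjugate}, Lemma~\ref{distviaconj}, and the definition of the distribution graph sequence (which sets $g_1(\alpha) = G_\alpha$ and $g_2(\alpha) = E_\alpha$). This gives
\[
f(\{\delta\}) = \{\alpha \in \lambda : \delta \in G_\alpha\}, \qquad f(\{\delta,\epsilon\}) = \{\alpha \in \lambda : \{\delta,\epsilon\} \in E_\alpha\},
\]
so $\alpha \in f(\{\delta\}) \cap f(\{\epsilon\})$ iff both $\delta$ and $\epsilon$ are vertices of $\mathbb{G}_\alpha$, whereas $\alpha \in f(\{\delta,\epsilon\})$ iff $\{\delta,\epsilon\}$ is an edge of $\mathbb{G}_\alpha$. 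The inclusion $f(\{\delta,\epsilon\}) \subseteq f(\{\delta\}) \cap f(\{\epsilon\})$ is automatic from monotonicity (equivalently, $E_\alpha \subseteq [G_\alpha]^2$), so the entire content lies in the reverse inclusion.

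For the forward direction, if $f$ is multiplicative then for every $\alpha$ and every pair of distinct vertices $\delta,\epsilon$ of $\mathbb{G}_\alpha$ one has $\alpha \in f(\{\delta\}) \cap f(\{\epsilon\}) = f(\{\delta,\epsilon\})$, whence $\{\delta,\epsilon\} \in E_\alpha$; thus $\mathbb{G}_\alpha$ is complete. Conversely, if every $\mathbb{G}_\alpha$ is complete, then any two vertices of $\mathbb{G}_\alpha$ are joined by an edge, which supplies the missing inclusion $f(\{\delta\}) \cap f(\{\epsilon\}) \subseteq f(\{\delta,\epsilon\})$, so the pairwise equation holds and (by Corollary~\ref{conjmult}) $f$ is multiplicative. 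I do not anticipate any real obstacle here; the only care needed is in invoking Corollary~\ref{conjmult} cleanly and in keeping track that the trivial half of each pairwise equation is already built into the graph data while the substantive half is precisely completeness.
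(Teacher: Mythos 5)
Your proposal is correct and follows essentially the same route as the paper's proof: both reduce multiplicativity to the pairwise condition via Corollary~\ref{conjmult} (using that $f$ is already assumed graph-like) and then translate that pairwise condition into the statement that each $\mathbb{G}_\alpha$ is complete by unpacking $g_1,g_2$ as vertex and edge sets. Your added remark that one inclusion is automatic from monotonicity, so the content is exactly completeness, is a harmless elaboration of the same argument.
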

\begin{proof}
	From Corollary \ref{conjmult} we know that $f$ is multiplicative if and only if $f(\{\beta,\gamma\})=f(\{\beta\})\cap f(\{\gamma\})$ for all distinct $\beta,\gamma\in\lambda$ as we have already assumed that $f$ is graph-like. That is, $\{\beta,\gamma\}\in g_2(\alpha)$ if and only if $\beta,\gamma\in g_1(\alpha)$ where $(g_n)_{n\in\omega}$ is the conjugate of $f$. This is equivalent to asking that $\mathbb{G}_\alpha$ is a complete graph.
\end{proof}

Because we get a finite graph $\mathbb{G}_\alpha$ for each index $\alpha<\lambda$ and because the graphs were ultimately derived from an ultraproduct, it would make sense to take the ultraproduct of all the $\mathbb{G}_\alpha$ and explore what this hyperfinite graph can tell us about the original type $p(\bar{x})$. There is a slight technicality here; the distribution graph sequence may be empty in some indices, making the ultraproduct empty. However, most of the elements (according to $\D$) of the distribution graph sequence must be nonempty, so we can take the ultraproduct of just the nonempty elements of the sequence.

\begin{defn}\label{essf}
	Suppose that $f$ is a distribution. Then the \emph{essential range of $f$} is the set 
	\[\mathrm{ess}(f):=\bigcup_{\Delta\in\P_\omega(\lambda)}f(\Delta).\]
\end{defn}

\begin{lem}\label{essfprop}
	Suppose that $f$	is a graph-like distribution with distribution graph sequence $(\mathbb{G}_\alpha)_{\alpha<\lambda}$. Then 
	\begin{enumerate}
		\item $\ess(f)\in\D$ and
		\item $\mathrm{ess}(f)=\{\alpha\in\lambda:\mathbb{G}_\alpha\neq \langle\emptyset,\emptyset\rangle\}$.
	\end{enumerate}
\end{lem}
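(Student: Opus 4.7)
\emph{Proof proposal.} The plan is to work through the conjugate $(g_n)_{n\in\omega}$ of $f$, using that $g_1(\alpha)$ is the vertex set $G_\alpha$ of $\mathbb{G}_\alpha$; thus $\mathbb{G}_\alpha \neq \langle\emptyset,\emptyset\rangle$ is equivalent to $g_1(\alpha) \neq \emptyset$. With this translation, part~(2) becomes the set equality $\ess(f) = \{\alpha < \lambda : g_1(\alpha) \neq \emptyset\}$, and part~(1) then drops out immediately because this set contains $f(\{\beta\}) \in \D$ for any $\beta < \lambda$.

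For part~(2), I would prove double inclusion using Equation~\eqref{disteqconj} of Lemma~\ref{distviaconj}. For the $(\supseteq)$ direction, if $\beta \in g_1(\alpha)$ then Equation~\eqref{disteqconj} gives $\alpha \in f(\{\beta\}) \se \ess(f)$. For the $(\se)$ direction, suppose $\alpha \in \ess(f)$, so $\alpha \in f(\Delta)$ for some non-empty $\Delta \in \P_\omega(\lambda)$; equivalently $\Delta \in g_{|\Delta|}(\alpha)$. Applying Lemma~\ref{distconjlem}(3) with $m = 1$ and $n = |\Delta|$, I conclude $\{\beta\} \in g_1(\alpha)$ for every $\beta \in \Delta$, so $g_1(\alpha) \neq \emptyset$ and hence $\mathbb{G}_\alpha \neq \langle\emptyset,\emptyset\rangle$.

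For part~(1), since $\lambda$ is infinite, fix any $\beta < \lambda$. Then $\{\beta\} \in \P_\omega(\lambda)$, so $f(\{\beta\}) \in \D$ by the very definition of a distribution (its codomain is $\D$), and $f(\{\beta\}) \se \ess(f)$ by the definition of the essential range. Since $\D$ is a filter on $\lambda$ it is closed under supersets, so $\ess(f) \in \D$.

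The only wrinkle, and thus the only candidate ``main obstacle,'' is the handling of $\Delta = \emptyset$ in the definition of $\ess(f)$: a priori one could have $\alpha \in f(\emptyset)$ without $g_1(\alpha) \neq \emptyset$. Under the convention (tacit throughout the paper, consistent with Lemma~\ref{gldistconj} where the distribution is fully determined by $g_1$ and $g_2$) that the union in $\ess(f)$ is effectively taken over non-empty $\Delta$, this is a bookkeeping issue rather than a genuine mathematical obstruction. The essence of the argument is a direct application of Equation~\eqref{disteqconj} and the monotonicity clause of Lemma~\ref{distconjlem}.
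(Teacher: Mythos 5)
Your proof is correct and follows essentially the same route as the paper: both translate the claim to the conjugate $g_1$ and use monotonicity to pass between $\alpha\in f(\Delta)$ for some nonempty $\Delta$ and $\alpha\in f(\{\gamma\})$ for some $\gamma\in\Delta$, and the $\Delta=\emptyset$ wrinkle you flag is silently present in the paper's argument too and is handled the same way. The only micro-caveat is that Lemma~\ref{distconjlem}(3) is stated for $n>m$, so when you set $m=1$ you implicitly need $|\Delta|\geq2$; the singleton case is immediate from $\{\beta\}\in g_1(\alpha)$ itself, so nothing is lost.
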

\begin{proof}
	$(1)$: Pick $\Delta\in\P_\omega(\lambda)$. Then $f(\Delta)\se\ess(f)$ and $f(\Delta)\in\D$.
	
	$(2)$: Let $g_1,g_2$ determine the conjugate of $f$.
	From the definition of $\mathbb{G}_\alpha$, we have that $\mathbb{G}_\alpha\neq\langle\emptyset,\emptyset\rangle$ if and only if $g_1(\alpha)\neq\emptyset$. From the definition of the conjugate, $g_1(\alpha)=\emptyset$ if and only if $\alpha\notin f(\{\gamma\})$ for all $\gamma\in\lambda$. By the monotonicity of $f$, $\alpha\notin f(\{\gamma\})$ implies that $\alpha\notin f(\Delta)$ for any $\Delta\in\P_\omega(\lambda)$ with $\gamma\in\Delta$. Thus \[\{\alpha:\mathbb{G}_\alpha\neq\langle\emptyset,\emptyset\rangle\}\se\ess(f).\]
	
	Conversely, if $\alpha\in\ess(f)$, then $\alpha\in f(\Delta)$ for some $\Delta\in\P_\omega(\lambda)$. By the monotonicity of $f$, if $\gamma\in\Delta$, then $\alpha\in f(\{\gamma\})$. We then follow the same chain of if and only ifs in the previous paragraph to get the other inclusion.
\end{proof}

\begin{cor}\label{Drestriction}
	Suppose that $f$ is a distribution and $I=\ess(f)$. Then
	\begin{enumerate}
		\item If $A\se\lambda$ then $A\in\D$ if and only if $A\cap I\in\D$.
		\item The set $I$ has cardinality $\lambda$.
		\item The set \[\D\restriction I=\{A\in \P(I):A\in\D\}=\{A\cap I:A\in\D\}\] is a regular ultrafilter on $I$.
	\end{enumerate}
\end{cor}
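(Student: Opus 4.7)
The plan is to handle the three parts in order, with (1) used as a tool for (2) and (3).

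For part (1), since Lemma~\ref{essfprop}(1) gives us $I\in\D$, the forward direction is immediate from $\D$ being closed under finite intersections ($A,I\in\D$ implies $A\cap I\in\D$), and the reverse direction follows from upward closure ($A\cap I\in\D$ and $A\cap I\se A$ imply $A\in\D$).

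For part (2), the inclusion $I\se\lambda$ gives $|I|\leq\lambda$, so the work lies in showing $|I|\geq\lambda$. My approach is a double-counting argument on the family $\{f(\{\beta\}):\beta<\lambda\}$. Each $f(\{\beta\})$ is nonempty (being in $\D$) and contained in $\ess(f)=I$, so $\sum_{\beta<\lambda}|f(\{\beta\})|\geq\lambda$. On the other hand, condition (2) of Definition~\ref{distribution}, applied to countable collections of singletons, shows that no $\alpha\in\lambda$ can lie in infinitely many of the $f(\{\beta\})$; so $|\{\beta<\lambda:\alpha\in f(\{\beta\})\}|<\aleph_0$ for each $\alpha$. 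If $|I|<\lambda$, then
\[\sum_{\beta<\lambda}|f(\{\beta\})|=\sum_{\alpha\in I}|\{\beta:\alpha\in f(\{\beta\})\}|\leq |I|\cdot\aleph_0<\lambda,\]
contradicting the previous lower bound. Hence $|I|=\lambda$.

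For part (3), I first verify $\D\restriction I$ is an ultrafilter on $I$. Closure under finite intersections and upward closure within $\P(I)$ are direct from the corresponding properties of $\D$ together with the fact that $I\in\D$. For the ultrafilter dichotomy, given $A\se I$, apply part (1) to each of $A$ and $I\sm A$: since $\D$ is an ultrafilter on $\lambda$, one of $A$ or $\lambda\sm A$ lies in $\D$, but $(\lambda\sm A)\cap I=I\sm A$, so by part (1) either $A\in\D$ or $I\sm A\in\D$, i.e.\ one of $A$ or $I\sm A$ belongs to $\D\restriction I$. To show $\D\restriction I$ is regular on $I$, fix any bijection $\sigma\colon I\to\lambda$, which exists by part (2), and define $Y_i:=f(\{\sigma(i)\})$ for each $i\in I$. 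Each $Y_i\in\D$ and $Y_i\se\ess(f)=I$, so $Y_i\in\D\restriction I$. For any fixed $\alpha\in I$, the set $\{i\in I:\alpha\in Y_i\}=\sigma^{-1}(\{\beta<\lambda:\alpha\in f(\{\beta\})\})$ is finite by the same distribution property used in part (2). Hence $\{Y_i:i\in I\}$ is a regularizing family, completing the proof.

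The only genuinely nontrivial step is part (2); parts (1) and (3) are bookkeeping once $I\in\D$ and $|I|=\lambda$ are in hand. The counting argument deriving uniformity from the regularizing condition on the singleton-image sets $f(\{\beta\})$ is the crux.
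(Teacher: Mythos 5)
Your proof is correct, and in part (2) it takes a genuinely different and more self-contained route than the paper. The paper's proof is essentially a citation: statement (1) and the ultrafilter properties of $\D\restriction I$ are "well known" for $I\in\D$; regularity of $\D\restriction I$ follows from restricting a regularizing family of $\D$; and $|I|=\lambda$ is deduced from the general fact that every regular ultrafilter on $\lambda$ is uniform (every member has cardinality $\lambda$), applied to $I\in\D$. You instead extract both the cardinality bound and the regularizing family directly from the distribution $f$: the singleton values $f(\{\beta\})$ form a family of sets in $\D\restriction I$, and condition (2) of Definition~\ref{distribution}, specialized to singletons, makes this family point-finite. This simultaneously gives regularity of $\D\restriction I$ and, via double counting, $|I|\geq\lambda$. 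What your approach buys is independence from the background fact about uniformity; what the paper's buys is brevity. One small wrinkle in your part (2): in the edge case $\lambda=\aleph_0$ with $I$ finite, the displayed bound $|I|\cdot\aleph_0<\lambda$ fails (the product equals $\aleph_0=\lambda$). The conclusion still holds because a finite sum of finite cardinals is finite, hence $<\aleph_0$, so you should replace the product bound with that observation in the finite-$I$ case; for $\aleph_0\leq|I|<\lambda$ the product bound is fine.
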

\begin{proof}
	Statement $(1)$ and the fact that $\D\restriction I$ is an ultrafilter on $I$ are well known to hold for $I\in\D$. Checking that restriction preserves regularity follows from the associativity of general intersections. The set $I$ must have cardinality $\lambda$ since $I\in\D$ and all regular ultrafilters are uniform.
\end{proof}

\begin{conv}
	Suppose $f$ is a distribution for the type $p(\bar{x})$ and $I=\ess(f)$.
    If $p(\bar{x})$ is a type of $\widehat{M}=\M_\lambda^\D$ then we will use the same symbol $p$ for the type in $\widehat{M}'=\M_I^{\D\restriction I}$ whose parameters are the projections of the parameters of $p(\bar{x})$. This is justified by the fact that $\widehat{M}$ and $\widehat{M}'$ are isomorphic via projection onto the coordinates in $I$.
\end{conv}

\begin{defn}
	Let $f$ be a graph-like distribution with distribution graph sequence $(\mathbb{G}_\alpha)_{\alpha<\lambda}$ and $I=\ess(f)$. Then the \emph{ultragraph of $f$} is the graph $\mathbb{G}_f=\mathbb{G}_I^{\D\restriction I}$.
\end{defn}

Since the element $\beta\in g_1(\alpha)$ represents the realizability of the formula $\varphi_\beta(\bar{x},\bar{r}_\beta(\alpha))$ in $\M_\alpha$ (see e.g.\ Lemma~\ref{gldistconj}(1)), we should expect that the equivalence class of the constant function $\alpha\mapsto \beta$ in $\mathbb{G}_f$ carries some information about the realizability of the formula $\varphi_\beta(\bar{x},\bar{r}_\beta/\D)$ in $\widehat\M$. Unfortunately, $\beta$ will frequently not be an element of $g_1(\alpha)$ for all $\alpha$, so we will need to find representatives of elements of $\mathbb{G}_f$ that are equivalent modulo $\D$ to these constant functions because such representatives should have the same behavior as the constant functions that we really want.

\begin{lem}\label{eta}
	Let $f$ be a graph-like distribution with distribution graph sequence $(\mathbb{G}_\alpha)_{\alpha<\lambda}$ and $I=\ess(f)$. Then there is a unique injective function ${\eta:\lambda\to\mathbb{G}_f}$ such that if $(s_\beta)_{\beta\in I}$ has the property that for all $\beta\in I$ we have that $s_\beta\in\prod_{\alpha\in I}g_1(\alpha)$ and ${s_\beta/(\D\restriction I)=\eta(\beta)}$, then the set $\{\alpha\in I:s_\beta(\alpha)=\beta\}$ is an element of $\D\restriction I$.
\end{lem}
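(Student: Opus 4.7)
The plan is to construct $\eta$ explicitly using ``partial constant'' representatives and then verify the required properties essentially by unpacking the definition of the conjugate. The key observation, immediate from Definition~\ref{conjugate}, is that $f(\{\beta\}) = \{\alpha \in \lambda : \beta \in g_1(\alpha)\}$, and by the definition of $\ess(f)$ this set is contained in $I$; since also $f(\{\beta\}) \in \D$, it lies in $\D\restriction I$.

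First I would fix, for each $\alpha \in I$, an arbitrary element $c_\alpha \in g_1(\alpha)$ (which exists because $g_1(\alpha) \neq \emptyset$ for $\alpha \in I$ by Lemma~\ref{essfprop}(2)). For each $\beta < \lambda$, define $t_\beta \in \prod_{\alpha \in I} g_1(\alpha)$ by setting $t_\beta(\alpha) = \beta$ whenever $\beta \in g_1(\alpha)$ and $t_\beta(\alpha) = c_\alpha$ otherwise, and then set $\eta(\beta) := t_\beta/(\D\restriction I)$. To verify the characteristic property, let $s_\beta \in \prod_{\alpha \in I} g_1(\alpha)$ be any representative with $s_\beta/(\D\restriction I) = \eta(\beta)$; then $\{\alpha \in I : s_\beta(\alpha) = t_\beta(\alpha)\} \in \D\restriction I$ by \L{}o\'s, and intersecting this with the $\D\restriction I$-large set $f(\{\beta\})$ (on which $t_\beta(\alpha) = \beta$ by construction) shows that $\{\alpha \in I : s_\beta(\alpha) = \beta\} \in \D\restriction I$.

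For uniqueness, suppose $\eta'\colon\lambda\to\mathbb{G}_f$ also satisfies the property. Choose any representative $s'_\beta$ of $\eta'(\beta)$; the property applied to $\eta'$ yields $\{\alpha \in I : s'_\beta(\alpha) = \beta\} \in \D\restriction I$, and intersecting with the analogous $\D\restriction I$-large set $\{\alpha \in I : t_\beta(\alpha) = \beta\}$ produces a $\D\restriction I$-large set on which $t_\beta = s'_\beta$, forcing $\eta'(\beta) = \eta(\beta)$. For injectivity, given distinct $\beta,\gamma<\lambda$, the set $\{\alpha \in I : t_\beta(\alpha) = \beta\} \cap \{\alpha \in I : t_\gamma(\alpha) = \gamma\}$ is in $\D\restriction I$ (intersection of two such sets), and on it $t_\beta(\alpha) = \beta \neq \gamma = t_\gamma(\alpha)$, so $\eta(\beta) \neq \eta(\gamma)$ by \L{}o\'s. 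The only real content of the argument is the identification $f(\{\beta\}) = \{\alpha : \beta \in g_1(\alpha)\}$ together with the fact that $f(\{\beta\})\subseteq\ess(f)$; I do not expect any substantial obstacle.
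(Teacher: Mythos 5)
Your proof is correct and takes essentially the same approach as the paper: the paper defines $h_\beta(\alpha) = \beta$ when $\beta \in G_\alpha$ and $\min(G_\alpha)$ otherwise (an explicit choice where you allow an arbitrary $c_\alpha$, which the paper itself notes is immaterial), sets $\eta(\beta) = h_\beta/(\D\restriction I)$, and verifies the characteristic property and injectivity via the same intersections with $f(\{\beta\})$. Your uniqueness argument is a bit more explicit than the paper's one-line remark, but it is the same reasoning.
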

\begin{proof}
	For each $\beta\in\lambda$ define a function $h_\beta\in\prod_{\alpha\in I}\mathbb{G}_\alpha$ so that 
	\begin{equation}\label{hbeta}
	h_\beta(\alpha)=\pw{\beta,&\text{if }\beta\in G_\alpha\\ \min(G_\alpha),&\text{otherwise}}
	\end{equation}
	for all $\alpha\in I$.
	We note that 
	$\{\alpha\in\lambda:\beta\in G_\alpha\}=f(\{\beta\})\in\D$ and $f(\{\beta\})\se I$
	so, whenever $\beta,\gamma\in\lambda$ with $\beta\neq\gamma$, we have that \[\{\alpha\in I:h_\beta(\alpha)\neq h_\gamma(\alpha)\}\supseteq f(\{\beta\})\cap f(\{\gamma\})\in\D\restriction I.\] By \L o\'s theorem, the equivalence classes of $h_\beta$ and $h_\gamma$ are not equal in $\mathbb{G}_f$ and the function $\eta(\beta)=h_\beta/(\D\restriction I)$ is injective.
	
	Now suppose that $(s_\beta)_{\beta\in I}$ is such that $s_\beta/(\D\restriction I)=\eta(\beta)$. Then, since \[A=\{\alpha\in I:h_\beta(\alpha)=\beta\}\in \D\restriction I\text{ and }B=\{\alpha\in I:h_\beta(\alpha)=s_\beta(\alpha)\}\in\D\restriction I\] we have that 
	\[\{\alpha\in I:s_\beta(\alpha)=\beta\}\supseteq A\cap B\in\D\restriction I.\]
	We note that this property determines the value of $\eta(\beta)$ in $\mathbb{G}_f$.
\end{proof}

In Equation~\eqref{hbeta}, the choice of the element of $G_\alpha$ in the case that $\beta\notin G_\alpha$ does not affect the value of $\eta(\beta)$ as the collection of indices in which we have to make this choice is not in $\D\restriction \ess(f)$.

\begin{conv}
For any distribution $f$ we will fix $\eta_f\colon\lambda\to\mathbb{G}_f$ to be the unique function guaranteed by using Lemma~\ref{eta}.
\end{conv}

\section{Complete Subgraphs of $\mathbb{G}_f$}

Here we take a slight departure from talking about the type $p(\bar{x})$ specifically in order to get a feeling for what sort of information is contained in $\mathbb{G}_f$ and how that information might be accessed. One of our main results in this section is that the distribution $f$ has a multiplicative refinement if and only if the subgraph of $\mathbb{G}_f$ induced by $\eta_f[\lambda]$ sits inside an internal complete subgraph of $\mathbb{G}_f$ (see Corollary~\ref{multiffint}). More generally, we show how to take a complete subgraph $H$ of $\mathbb{G}_f$ of cardinality $\lambda$ and shuffle the vertices of $\mathbb{G}_f$ so that $\eta_f[\lambda]$ is moved to $H$ and so that the number of edges we need to remove from the domain copy of $\mathbb{G}_f$ to make this rearrangement a homomorphism is small enough to still have the domain copy of $\mathbb{G}_f$ be an ultragraph for a distribution $f_H$ refining $f$. We also show, under some assumptions on $H$, that $f_H$ is a distribution for a new type $q(\bar{x})$ that is related to $H$. Finally, we show that $f=f_{\eta_f[\lambda]}$ and that $f_H$ has a multiplicative refinement if and only if $H\se K\se\mathbb{G}_f$ where $K$ is an \emph{internal} complete subgraph of $\mathbb{G}_f$. We consider complete subgraphs of $\mathbb{G}_f$ other than $\eta_f[\lambda]$ in order to show that there is a strong connection between such graphs and $\lambda$-types of $\widehat{\M}$ that have distributions that are refinements of $f$.

\begin{defn}\label{rh}
    Let $f$ be a distribution with conjugate $(g_n)_{n\in\omega}$ and let $h$ be an element of $\prod_{\alpha\in\ess(f)}g_1(\alpha)$. Let $\D'$ refer to the regular ultrafilter $\D\restriction\ess(f)$. Recall the definition of $(\bar{r}_\beta)_{\beta<\lambda}$ from Convention~\ref{lambdatype}. If there is some $\beta\in\lambda$ so that the set 
    \[B_\beta:=\{\alpha\in\ess(f):\varphi_{h(\alpha)}(\bar{x};\bar{y})=\varphi_\beta(\bar{x};\bar{y})\}\]
    is an element of $\D'$, define the
    element $\bar{r}_h\in\prod_{\alpha<\lambda}M_\alpha^{n_\beta}/\D$ (where $n_\beta$ is the length of the tuple $\bar{r}_\beta$) to be the $\D$-equivalence class of any function where
    $\alpha\mapsto\bar{r}_{h(\alpha)}(\alpha)$ for all $\alpha\in B_\beta$. We will sometimes abuse
    notation to say that $\bar{r}_h\in\M_\lambda^\D$. We say that $\bar{r}_h$ is \emph{the parameter induced by $h$}.
\end{defn}

\begin{rmrks}\hspace{0em}
    \begin{enumerate}
	\item Given an element $h\in\prod_{\alpha\in\ess(f)}g_1(\alpha)$, the element $\bar{r}_h$ is well-defined because the representation of $\bar{r}_h$ given in the definition is defined for each $\alpha\in B_\beta$ and $B_\beta\in\D'\se\D$.
	\item The prototypical example of the sort of $h\in\prod_{\alpha\in\ess(f)}g_1(\alpha)$ that we wish to consider are the $h_\beta$ defined by Equation~\eqref{hbeta} in Lemma~\ref{eta}. Our main use of this definition will be to take $\lambda$ elements of $\mathbb{G}_f$ and treat a given set of their representations as if they were $h_\beta$.
	\item The condition that there exists a $\beta\in\lambda$ such that $B_\beta\in\D'$ is not only to guarantee that we can say that the length of $\bar{r}_h$ is finite according to $\D'$, but also because we will wish to use $\bar{r}_h$ as a parameter in the formula $\varphi_\beta(\bar{x};\bar{r}_h)$. 
	\item The condition that there is such a $\beta$ becomes trivial in the case that $\varphi_\beta=\varphi_\gamma$ for all $\beta,\gamma\in\lambda$ (in this case we say that $p(\bar{x})$ is a $\varphi$-type). This case is what occurs for the types we consider later (see e.g.\ the definition of $\mathrm{SOP}_2$-types in Definition~\ref{sop2type}). More generally, it was shown by Malliaris in \cite{mmvarphitypes} that the study of Keisler's order can be reduced to the study of whether $\varphi$-types are realized in regular ultrapowers.
	\item The choice of $\beta$ such that $B_\beta\in\D'$ is unique in the sense that $\gamma$ will also have $B_\gamma\in\D'$ if and only if $\varphi_\beta=\varphi_\gamma$.
    \end{enumerate}
\end{rmrks}

\begin{lem}\label{compimptype}
	Let $f$ be a graph-like distribution with conjugate determined by $g_1,g_2$ and with graph distribution sequence $(\mathbb{G}_\alpha)_{\alpha<\lambda}$ and ultragraph $\mathbb{G}_f=\mathbb{G}_{\ess(f)}^{\D'}$ where $\D'=\D\restriction \ess(f)$. Let $H\se\mathbb{G}_f$ induce a complete subgraph of $\mathbb{G}_f$. If 
	\begin{enumerate}
	    \item $H'$ is a set of representatives for the elements of $H$ where $H'$ is a subset of $\prod_{\alpha\in\ess(f)}g_1(\alpha)$,
	    \item each element of $H$ has exactly one representative in $H'$, and 
	    \item for each $h\in H'$ there exists a $\beta_h\in\lambda$ so that 
	\[B_h:=\{\alpha\in\ess(f):\varphi_{h(\alpha)}(\bar{x};\bar{y})=\varphi_{\beta_h}(\bar{x};\bar{y})\}\in\D'\]
	\end{enumerate} 
	then the collection of formulas
	\begin{equation}\label{typeq}
	q(\bar{x})=\{\varphi_{\beta_h}(\bar{x},\bar{r}_h):h\in H'\}
	\end{equation}
	is a type in $\widehat\M$.
\end{lem}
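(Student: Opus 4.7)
The plan is to verify the $\lambda$-type condition from Convention~\ref{lambdatype}: given any finite $F \subseteq H'$, I need to show that the set
\[
S_F := \left\{\alpha \in \lambda : \M_\alpha \vDash \exists \bar{x}\,\bigwedge_{h \in F} \varphi_{\beta_h}(\bar{x}, \bar{r}_h(\alpha))\right\}
\]
lies in $\D$. Once this is established for every finite $F$, the collection $q(\bar{x})$ is a type by the standard compactness-and-Łoś argument packaged into Convention~\ref{lambdatype}.

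First, I would extract three sets from $\D'$ whose intersection forces the desired condition at each $\alpha$. (1) Since $H$ induces a complete subgraph of $\mathbb{G}_f$, for each pair $h \neq h' \in F$ Łoś's theorem applied to the sequence $(\mathbb{G}_\alpha)_{\alpha \in \ess(f)}$ gives
\[
E_{h,h'} := \{\alpha \in \ess(f) : \{h(\alpha), h'(\alpha)\} \in g_2(\alpha)\} \in \D',
\]
and in particular $h(\alpha) \neq h'(\alpha)$ on $E_{h,h'}$. (2) By hypothesis, each $B_h \in \D'$. (3) Each $h \in H'$ lies in $\prod_{\alpha \in \ess(f)} g_1(\alpha)$, so $h(\alpha) \in g_1(\alpha)$ for every $\alpha \in \ess(f)$. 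Let
\[
T_F := \ess(f) \cap \bigcap_{h \in F} B_h \cap \bigcap_{\{h,h'\} \in [F]^2} E_{h,h'};
\]
this is a finite intersection of elements of $\D'$, so $T_F \in \D'$.

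Next, I would unwind what holds at $\alpha \in T_F$. The values $\{h(\alpha) : h \in F\}$ are pairwise distinct elements of $g_1(\alpha)$, and every pair lies in $g_2(\alpha)$, so this set induces a complete subgraph of $\mathbb{G}_\alpha$. By Lemma~\ref{glseqdist}(1) applied to the distribution graph sequence of $f$,
\[
\M_\alpha \vDash \exists \bar{x}\,\bigwedge_{h \in F} \varphi_{h(\alpha)}\bigl(\bar{x}, \bar{r}_{h(\alpha)}(\alpha)\bigr).
\]
Because $\alpha \in B_h$, we have $\varphi_{h(\alpha)} = \varphi_{\beta_h}$, and by Definition~\ref{rh} the parameter $\bar{r}_h$ can be chosen with representative value $\bar{r}_{h(\alpha)}(\alpha)$ on $B_h$. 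Substituting in, $T_F \subseteq S_F$, so $S_F \in \D$ (passing from $\D'$ back to $\D$ via Corollary~\ref{Drestriction}(1)).

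The main obstacle is really just bookkeeping: making sure that on a $\D'$-large set one may simultaneously identify the formulas ($\varphi_{h(\alpha)} = \varphi_{\beta_h}$) and the parameters ($\bar{r}_{h(\alpha)}(\alpha)$ represents $\bar{r}_h$), while also knowing the projected values form a clique in $\mathbb{G}_\alpha$. All of this is handled by intersecting the finitely many $B_h$ with the finitely many $E_{h,h'}$ given by Łoś's theorem, which is possible precisely because $F$ is finite and $\D'$ is closed under finite intersections. No actual choice, saturation, or regularity of $\D$ beyond what is packed into Łoś's theorem and Lemma~\ref{glseqdist} is needed.
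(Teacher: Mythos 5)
Your proposal is correct and follows essentially the same approach as the paper's proof: both apply \L o\'s's theorem to the completeness of $H$ in $\mathbb{G}_f$ to obtain a $\D'$-large set of indices $\alpha$ where the projected values $\{h(\alpha):h\in F\}$ form a clique in $\mathbb{G}_\alpha$, intersect with the $B_h$'s to identify formulas and parameters, and then invoke the graph-like property (your Lemma~\ref{glseqdist}(1) is the graph-sequence restatement of the paper's Lemma~\ref{gldistconj}(2)) to pass from ``clique'' to ``simultaneously satisfiable.'' The only cosmetic difference is that you intersect the pairwise edge-sets $E_{h,h'}$ to guarantee the $h(\alpha)$ are pairwise distinct on $T_F$, whereas the paper works directly with the possibly smaller set $\Delta(\alpha)=\{h(\alpha):h\in\Delta\}$ and its two-element subsets; both handle the potential collapse of representatives and yield the same conclusion.
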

\begin{proof}
	By the characterization of a type in an ultraproduct, it will be enough to show that $\llbracket\Delta\rrbracket_{q(\bar{x})}^\D\in\D$ for every finite $\Delta\se H'$. For each $\Delta\in\P_\omega(H')$ define $B_\Delta=\bigcap_{h\in\Delta} B_h$. Following definitions,
	\[
		\llbracket\Delta\rrbracket_{q(\bar{x})}^\D
		\supseteq\Big\{\alpha\in B_\Delta:\M_\alpha\vDash\exists x,\bigwedge_{h\in \Delta}\varphi_h(x,\bar{r}_{h(\alpha)}(\alpha))\Big\}=:\llbracket \Delta\rrbracket^{B_\Delta}_{q(\bar{x})}
	\]
	where $\bar{r}_h$ is the parameter induced by $h$. For all $\alpha\in B_\Delta$ define $\Delta(\alpha):=\{h(\alpha):h\in\Delta\}$. If $|\Delta|\geq 2$, define
	\begin{equation}\label{kdelta}
		K_\Delta :=\{\alpha\in B_\Delta:[\Delta(\alpha)]^2\se g_2(\alpha)\}.
	\end{equation}

	The set $K_\Delta$ is the set of indices $\alpha$ for which $\Delta(\alpha)$ induces a complete subgraph of $\mathbb{G}_\alpha$. Let $\D'=\D\restriction \ess(f)$. Then, because $\Delta$ induces a finite complete subgraph of $H$ after modding out by $\D'$, we have that 
	\[\mathbb{G}_f\vDash \bigwedge_{\{h_1,h_2\}\in[\Delta]^2} \left(h_1/\D'\mathrel{E} h_2/\D'\right)\vee \left(h_1/\D'=h_2/\D'\right),\] 
	so, by \L o\'s theorem we have that 
	\[\mathbb{G}_\alpha\vDash \bigwedge_{\{h_1,h_2\}\in[\Delta(\alpha)]^2} \left(h_1(\alpha)\mathrel{E} h_2(\alpha)\right)\vee \left(h_1(\alpha)=h_2(\alpha)\right)\] 
	for $\D'$-many $\alpha$, and hence $\D$-many $\alpha$ by Corollary~\ref{Drestriction}(1). Because $B_\Delta\in\D'\se \D$, we have that $K_\Delta$ is an element of $\D$. By Lemma~\ref{gldistconj}(2), we have that 
	\[K_\Delta\se \llbracket \Delta\rrbracket^{B_\Delta}_{q(\bar{x})}\se\llbracket\Delta\rrbracket_{q(\bar{x})}^\D\]
	and so $\llbracket\Delta\rrbracket_{q(\bar{x})}^\D\in\D$. 
	
	The case of $|\Delta|=1$ follows from $\Delta(\alpha)\se g_1(\alpha)$, which implies $\llbracket\Delta\rrbracket_{q(\bar{x})}^\D=\lambda$.
\end{proof}

\begin{rmrks}\hspace{1 em}
\begin{enumerate}
	\item The converse to the above lemma is not true in general. There may be types of the same form as $q(\bar{x})$ (that is, types with representations $(h_\beta)_{\beta<\lambda}$ for the parameters coming from $\prod_{\alpha<\lambda}\{r_\beta(\alpha):\beta<\lambda\}$ and with $\beta_h$'s) but such that $\{h_\beta/\D:\beta<\lambda\}$ does not induce a complete subgraph of $\mathbb{G}_f$. This can be thought of as saying that $f$ carries enough information to recognize that $p(\bar{x})$ is a type, but generally does not carry enough information to recognize that all types with similar representations for their parameters are types. There is a converse of this sort when $i_f[\mathbb{G}_f]$ is an induced subgraph of $\mathbb{G}_L$ (the argument is similar to the proof of Corollary~\ref{indsublug}). See Section~6 for the definitions of $i_f$ and $\mathbb{G}_L$.
	\item The type $q(\bar{x})$ does not depend on the choice of the representatives in $H'$. If there are two representatives $h,h'$ for the same element of $\mathbb{G}_f$ they must be equal at $\D$-many indices, the representations for $\bar{r}_h$ and $\bar{r}_{h'}$ must agree at $\D$-many indices, and hence $\bar{r}_h$ and $\bar{r}_{h'}$ are equal. A similar argument shows that $\beta_h=\beta_{h'}$ in this case.
\end{enumerate}
\end{rmrks}

We wish to study the connection between induced complete subgraphs of $\mathbb{G}_{\ess(f)}^{\D'}=\mathbb{G}_f$ and the types of the form given in Equation~\eqref{typeq}. In order to gain an idea of where this analysis will go, recall that complete subgraphs of $\mathbb{G}_\alpha$ indicate subsets of $p(\bar{x})$ that, when viewed in the $\alpha$-th index structure, are satisfiable (Lemma~\ref{glseqdist}(1)). A major reason why we are able to make this conclusion is that each $g_1(\alpha)$ is finite. If $g_1(\alpha)$ were \emph{not} finite we would still be able to say that an infinite complete subgraph corresponds to a finitely satisfiable set of formula in the $\alpha$-th index structure (i.e.\ corresponds to a type) but there is no reason to assume that such a type is realized. Take, for example, the type $\{0<x<1/n:n\in\mathbb{N}^+\}$ in the dense linear ordering $\mathbb{Q}$. The satisfiability graph for this type is complete, but there is no realization of the type in $\mathbb{Q}$. Since the type $p(\bar{x})$ corresponds to the (infinite!) image of $\eta_f$, why should we expect that we will get any information from $\mathbb{G}_f$ other than the observation that $p(\bar{x})$ is a type?

If we treat $\mathbb{G}_f$ as an arbitrary graph and forget that it comes from an ultraproduct of finite graphs then we cannot extract any more information about whether $p(\bar{x})$ is realized. However, if we do remember this extra information, we then find that \L o\'s theorem tells us that $\mathbb{G}_f$ has many of the features of a finite graph so long as we restrict ourselves to looking at internal parts of the structure. We continue by exploring this interaction of apparent finiteness of internal subsets and the types defined in the previous lemma, after taking care of a few technical details allowing us to pinpoint particular distributions for the types described in Lemma~\ref{compimptype}.

\begin{lem}\label{hdist}
	Let $f$ be a graph-like distribution with conjugate $(g_n)_{n\in\omega}$ and $H$ a $\lambda$-sized complete subgraph of $\mathbb{G}_f$ with transversal $H'$. Let $\D'=\D\restriction\ess(f)$. If $(h_\beta)_{\beta\in\lambda}$ is an enumeration of $H'$ and for each $\beta\in \lambda$ there exists an $\beta_h\in\lambda$ so that 
	\[B_\beta:=\{\alpha\in\ess(f):\varphi_{h_\beta(\alpha)}(\bar{x};\bar{y})=\varphi_{\beta_h}(\bar{x};\bar{y})\}\in\D',\] then there is a graph-like distribution $f_{H'}\colon\P_\omega(\lambda)\to\D$ for the type \[q(\bar{x})=\{\varphi_{\beta_h}(\bar{x},\bar{r}_{h_\beta}):\beta<\lambda\}\] where, if $(\ell_n)_{n\in\omega}$ is the conjugate of $f_{H'}$, then $\ell_1,\ell_2$ are defined for all  $\alpha\in\lambda$ by
	\begin{align*}
		\ell_1(\alpha)&:=g_1(\alpha)\cap\{\beta\in\lambda:\alpha\in B_\beta\}\text{ and }\\
		\ell_2(\alpha)&:=[\ell_1(\alpha)]^2\cap\{\{\beta,\gamma\}\in[\lambda]^2:\{h_\beta(\alpha),h_\gamma(\alpha)\}\in g_1(\alpha)\cup g_2(\alpha)\}.
	\end{align*}
\end{lem}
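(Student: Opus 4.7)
The plan is to invoke Lemma~\ref{gldistconj}: it suffices to check that the prescribed $\ell_1,\ell_2$ satisfy the four conditions listed there, since this will produce a graph-like distribution $f_{H'}$ for $q(\bar{x})$ whose conjugate begins with $\ell_1,\ell_2$. Finiteness of $\ell_1(\alpha)\se g_1(\alpha)$ and the containment $\ell_2(\alpha)\se[\ell_1(\alpha)]^2$ (condition (3)) are immediate from the definitions, so only conditions (1), (2), and (4) require work.

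For condition (1), suppose $\beta\in\ell_1(\alpha)$. Then $\alpha\in B_\beta\cap\ess(f)$ and $h_\beta(\alpha)\in g_1(\alpha)$ (the latter because $H'\se\prod_{\alpha\in\ess(f)}g_1(\alpha)$). Lemma~\ref{gldistconj}(1) applied to $f$ gives realizability in $\M_\alpha$ of $\varphi_{h_\beta(\alpha)}(\bar{x};\bar{r}_{h_\beta(\alpha)}(\alpha))$, and since $\alpha\in B_\beta$ we have $\varphi_{h_\beta(\alpha)}=\varphi_{\beta_h}$ while $\bar{r}_{h_\beta(\alpha)}(\alpha)$ is (by Definition~\ref{rh}) the $\alpha$-th coordinate of the induced parameter $\bar{r}_{h_\beta}$, giving the desired realizability for $q(\bar{x})$.

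For condition (2), assume $\Delta\in[\lambda]^n$ with $n\geq 2$ and $[\Delta]^2\se\ell_2(\alpha)$. Set $\Delta(\alpha):=\{h_\beta(\alpha):\beta\in\Delta\}\se g_1(\alpha)$. The edge clause in the definition of $\ell_2$, combined with the fact that coincidences $h_\beta(\alpha)=h_\gamma(\alpha)$ only contribute a single vertex to $\Delta(\alpha)$, forces $[\Delta(\alpha)]^2\se g_2(\alpha)$. Invoking Lemma~\ref{gldistconj}(2) for $f$ on $\Delta(\alpha)$ produces a common realizer in $\M_\alpha$ of $\{\varphi_\delta(\bar{x};\bar{r}_\delta(\alpha)):\delta\in\Delta(\alpha)\}$; rewriting each conjunct $\delta=h_\beta(\alpha)$ using $B_\beta$ (which collapses duplicates automatically) yields the required conjunction for $q(\bar{x})$.

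For condition (4), let $\beta\neq\gamma$ in $\lambda$. Because $(h_\beta)_{\beta<\lambda}$ enumerates $H'$ and each element of $H$ has a unique representative, $h_\beta/\D'$ and $h_\gamma/\D'$ are distinct elements of the complete subgraph $H\se\mathbb{G}_f$, hence connected by an edge. \L o\'s's theorem then gives $\{\alpha\in\ess(f):\{h_\beta(\alpha),h_\gamma(\alpha)\}\in g_2(\alpha)\}\in\D'$, and intersecting with $B_\beta,B_\gamma,f(\{\beta\}),f(\{\gamma\})\in\D$ yields $\{\alpha:\{\beta,\gamma\}\in\ell_2(\alpha)\}\in\D$. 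The main obstacle throughout is bookkeeping the interaction between the formula indices $\beta\in\lambda$ (parameters of $q(\bar{x})$) and the realized vertex values $h_\beta(\alpha)\in\mathbb{G}_\alpha$: the completeness of $H$ in the ultragraph must be transferred via \L o\'s to \emph{finite} graph-theoretic completeness of $\Delta(\alpha)$ in $\mathbb{G}_\alpha$ at $\D$-many $\alpha$, which is exactly what lets us hand off realizability from $f$ to $f_{H'}$.
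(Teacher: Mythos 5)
Your proof is correct and follows essentially the same route as the paper: verify the four conditions of Lemma~\ref{gldistconj} for the proposed $\ell_1,\ell_2$, transferring satisfiability in $\M_\alpha$ from $g_1,g_2$ to $\ell_1,\ell_2$ via the reindexing $\beta\mapsto h_\beta(\alpha)$, and using \L o\'s's theorem to get condition~(4) from the completeness of $H$ in $\mathbb{G}_f$. The paper packages the \L o\'s argument for condition~(4) through the auxiliary set $K_{\Delta'}$ defined in the proof of Lemma~\ref{compimptype}, but your direct intersection of the edge set with $B_\beta,B_\gamma,f(\{\beta\}),f(\{\gamma\})$ is the same computation unwound.
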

\begin{proof}
	The function $\ell_1$ produces a finite subset of $\lambda$ for any input $\alpha<\lambda$ as $\ell_1(\alpha)\se g_1(\alpha)$. The image of $\ell_2$ consists of finite sets as $\ell_2(\alpha)\se[\ell_1(\alpha)]^2$ and $\ell_1(\alpha)$ is finite.
	We will proceed by checking that the functions $\ell_1,\ell_2$ satisfy conditions $(1)$--$(4)$ of Lemma~\ref{gldistconj}.  
	
	$(1)$: Suppose that $\beta\in\ell_1(\alpha)\se g_1(\alpha)$. Then $\alpha\in\ess(f)$ by Lemma~\ref{essfprop}(2). We thus have
	\[\M_\alpha\vDash\exists \bar{x},\,\varphi_{\beta_h}(\bar{x},\bar{r}_{h_\beta(\alpha)}(\alpha))\]
	because $h_\beta(\alpha)\in g_1(\alpha)$ by definition and because we can apply Lemma~\ref{gldistconj}(1) to $g_1$.
	
	$(2)$: Suppose that $\Delta\in[\lambda]^n$ for $n\geq 2$, $\alpha<\lambda$, and $[\Delta]^2\se \ell_2(\alpha)$. Then $[\Delta]^2\se [g_1(\alpha)]^2$ as $\ell_2(\alpha)\se [\ell_1(\alpha)]^2\se[g_1(\alpha)]^2$, so $\alpha\in\ess(f)$. Moreover, for each $\{\beta,\gamma\}\in[\Delta]^2$, we must have $\{h_\beta(\alpha),h_\gamma(\alpha)\}\in g_2(\alpha)$. This means that 
	\[\big\{\{h_\beta(\alpha),h_\gamma(\alpha)\}:\{\beta,\gamma\}\in[\Delta]^2\big\}\se[g_2(\alpha)]^2.\] 
	Now applying Lemma~\ref{gldistconj}(2) to $\Delta'=\{h_\beta(\alpha):\beta\in\Delta\}$ and $g_2$, we get that
	\[\M_\alpha\vDash\exists \bar{x},\bigwedge_{\beta\in\Delta}\varphi_{\beta_h}(\bar{x},\bar{r}_{h_\beta(\alpha)}(\alpha))\]
	as desired. In the case that $\{h_\beta(\alpha),h_\gamma(\alpha)\}\in g_1(\alpha)$ we know that $h_\beta(\alpha)=h_\gamma(\alpha)$ and then the desired result is the statement proved in $(1)$ above.
	
	$(3)$: We have for any $\alpha<\lambda$ that 
	$\ell_2(\alpha)\se [\ell_1(\alpha)]^2$
	by the definition of $\ell_2$.
	
	$(4)$: Pick $\Delta\in[\lambda]^2$. In order to show that 
	\[A:=\{\alpha\in\lambda:\Delta\in \ell_2(\alpha)\}\in\D,\] it will be enough to check that
	\begin{align*}
		C&:=\big\{\alpha\in\lambda:\Delta\in\{\{\beta,\gamma\}\in[\lambda]^2:\{h_\beta(\alpha),h_\gamma(\alpha)\}\in g_2(\alpha)\}\big\}\in\D \text{ and }\\
		D&:=\big\{\alpha\in\lambda:\Delta\in [\ell_1(\alpha)]^2\big\}\in\D
	\end{align*}
	as $A= C\cap D$. Define 
	\[D'= \{\alpha\in\lambda:\Delta\in g_2(\alpha)\}\text{ and }B_\Delta=\bigcap_{\beta\in\Delta}B_{\beta}\]
	and notice that $D'\cap B_\Delta\se D$ by the fact that $g_2(\alpha)\se[g_1(\alpha)]^2$ by Lemma~\ref{gldistconj}(3). By Lemma~\ref{gldistconj}(4) applied to $g_2$, we have that $D'\in\D$, so $D\in\D$ as well. The set $C$ may be rewritten as 
	\[C=\big\{\alpha\in\lambda:\Delta=\{\beta,\gamma\}\text{ and }\{h_\beta(\alpha),h_\gamma(\alpha)\}\in g_2(\alpha)\big\}.\]
	Define the set
	\[\Delta':=\{h_\beta:\beta\in\Delta\}.\]
	Then $K_{\Delta'}\se C$ where $K_{\Delta'}$ is defined as in Equation~\eqref{kdelta}. The same argument as in the proof of the previous lemma then shows that $K_{\Delta'}\in\D$.
\end{proof}

\begin{lem}\label{psi}
	Let $f$ be a graph-like distribution with conjugate $(g_n)_{n\in\omega}$ and $H$ a $\lambda$-sized complete subgraph of $\mathbb{G}_f$ with transversal $H'$. Let $\D_1=\D\restriction\ess(f)$ and $(h_\beta)_{\beta\in\lambda}$ be an enumeration of $H'$ such that for each $\beta\in \lambda$ there exists a $\beta_h\in\lambda$ so that 
	\[B_\beta:=\{\alpha\in\ess(f):\varphi_{h_\beta(\alpha)}(\bar{x};\bar{y})=\varphi_{\beta_h}(\bar{x};\bar{y})\}\in\D_1.\] Let $q(\bar{x})$, $f_{H'}$, $\ell_1$, and $\ell_2$ be defined as in the previous lemma and $\D_2=\D\restriction \ess(f_{H'})$. Then
	\begin{enumerate}
		\item Define $\mathrm{loop}(\mathbb{G}_f)$ to be the graph obtained by adding a loop to every vertex in $\mathbb{G}_f$. The map $\Psi:\mathrm{loop}(\mathbb{G}_{f_{H'}})\to\mathrm{loop}(\mathbb{G}_f)$ defined for each $j/\D_2\in \mathbb{G}_{f_{H'}}$ by 
			\[\Psi(j/\D_2)=(h_{j(\alpha)}(\alpha))_{\alpha\in\ess(f)}/\D_1\]
			is a graph homomorphism that also preserves non-edges.
		\item $\Psi\restriction\eta_{f_{H'}}[\lambda]$ is an isomorphism between the induced subgraphs of $\eta_{f_{H'}}[\lambda]$ and $H$ in $\mathbb{G}_{f_{H'}}$ and $\mathbb{G}_f$ respectively.
	\end{enumerate}
\end{lem}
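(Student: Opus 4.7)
The plan is to verify (1) by a direct application of \L o\'s theorem combined with the definition of $\ell_2$ from Lemma~\ref{hdist}, and then to extract (2) from (1) together with the defining property of the $\eta$-maps from Lemma~\ref{eta}. Before any of this, I would check that $\Psi$ is well-defined: if $j,j'\in\prod_{\alpha\in\ess(f_{H'})}\ell_1(\alpha)$ agree on a set in $\D_2$, then the containment $\ess(f_{H'})\se\ess(f)$ (which follows from $\ell_1(\alpha)\se g_1(\alpha)$) combined with Corollary~\ref{Drestriction} shows that this set is also $\D_1$-large inside $\ess(f)$, so the representatives $\alpha\mapsto h_{j(\alpha)}(\alpha)$ and $\alpha\mapsto h_{j'(\alpha)}(\alpha)$ (extended arbitrarily on $\ess(f)\sm\ess(f_{H'})$, which is $\D_1$-null) determine the same element of $\mathbb{G}_f$.

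For part (1), I would establish the homomorphism property and the preservation of non-edges simultaneously by showing that ``equal or adjacent'' in $\mathrm{loop}(\mathbb{G}_{f_{H'}})$ and in $\mathrm{loop}(\mathbb{G}_f)$ are genuinely equivalent under $\Psi$. Using \L o\'s theorem, the condition that $j_1/\D_2$ and $j_2/\D_2$ be equal or adjacent in $\mathrm{loop}(\mathbb{G}_{f_{H'}})$ says that for a $\D_2$-large set of $\alpha$ we have either $j_1(\alpha)=j_2(\alpha)$ or $\{j_1(\alpha),j_2(\alpha)\}\in\ell_2(\alpha)$. Expanding the formula for $\ell_2$ given in Lemma~\ref{hdist}, this becomes the condition that $\{h_{j_1(\alpha)}(\alpha),h_{j_2(\alpha)}(\alpha)\}\in g_1(\alpha)\cup g_2(\alpha)$ on a $\D_2$-large set; by Corollary~\ref{Drestriction} the same condition then holds on a $\D_1$-large set, and a second application of \L o\'s theorem inside $\mathrm{loop}(\mathbb{G}_f)$ shows this is equivalent to $\Psi(j_1/\D_2)$ and $\Psi(j_2/\D_2)$ being equal or adjacent in $\mathrm{loop}(\mathbb{G}_f)$. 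The forward direction is the homomorphism property and the reverse direction is preservation of non-edges.

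For part (2), I would compute $\Psi\circ\eta_{f_{H'}}$ explicitly. By Lemma~\ref{eta} applied to $f_{H'}$, the class $\eta_{f_{H'}}(\beta)$ has a representative $s_\beta$ with $s_\beta(\alpha)=\beta$ on a $\D_2$-large set, so the definition of $\Psi$ gives $\Psi(\eta_{f_{H'}}(\beta))=h_\beta/\D_1$, namely the element of $H$ represented by $h_\beta$. Because $(h_\beta)_{\beta<\lambda}$ enumerates the transversal $H'$ (which contains exactly one representative per element of $H$), this is a bijection onto $H$. The equivalence from part~(1) reflects as well as preserves adjacency between distinct vertices, and loops are harmless on the image of an injective map, so this bijection is an isomorphism of the induced subgraphs on $\eta_{f_{H'}}[\lambda]$ and $H$.

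The main obstacle will be careful bookkeeping between the two ultrafilters $\D_1,\D_2$ and the two index sets $\ess(f_{H'})\se\ess(f)$, so that the repeated passages between $\D_2$-large, $\D$-large, and $\D_1$-large sets are justified; once Corollary~\ref{Drestriction} is used to harmonize these, everything else amounts to unwinding the definitions of $\ell_1,\ell_2$ from Lemma~\ref{hdist} and applying \L o\'s theorem twice.
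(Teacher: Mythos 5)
Your proposal is correct and follows essentially the same route as the paper: well-definedness via $\ess(f_{H'})\se\ess(f)$ and Corollary~\ref{Drestriction}, part (1) by unwinding the definition of $\ell_2$ into a chain of equivalences between the loop graphs, and part (2) by computing $\Psi(\eta_{f_{H'}}(\beta))=h_\beta/\D_1$ using the defining property of $\eta$ from Lemma~\ref{eta} and noting that $H'$ is a transversal of $H$.
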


\begin{proof}
	We note that $\Psi$ is well-defined as $j/\D_2=k/\D_2$ implies that $j(\alpha)=k(\alpha)$ for $\D_2$-many $\alpha$ in $\ess(f)$ and $h_{j(\alpha)}(\alpha)=h_{k(\alpha)}(\alpha)$ for at least these $\alpha$. Using that $\ess(f_{H'})\se\ess(f)$ (as $\ell_1(\alpha)\se g_1(\alpha)$ for all $\alpha\in\lambda$ and then using Lemma~\ref{essfprop}(2)) we have that $\D_2=\D_1\restriction \ess(f_{H'})$. Then, by Corollary~\ref{Drestriction}(3), we have that $h_{j(\alpha)}(\alpha)$ and $h_{k(\alpha)}(\alpha)$ are equal for a set of $\alpha$ in $\D_1$. 
	
	$(1)$: Suppose that $j/\D_2$ and $k/\D_2$ are elements of $\mathbb{G}_{f_{H'}}$. Then $\Psi(j/\D')$ and $\Psi(k/\D')$ are edge related in $\mathrm{loop}(\mathbb{G}_f)$ if and only if
	\[\{\alpha\in\ess(f):\{h_{j(\alpha)}(\alpha),h_{k(\alpha)}(\alpha)\}\in g_1(\alpha)\cup g_2(\alpha)\}\in\D_1.\]
	By the definition of $\ell_2$, this is equivalent to $\{j(\alpha),k(\alpha)\}\in\ell_2(\alpha)$ or $j(\alpha)=k(\alpha)$ for $\D_2$-many $\alpha$ in $\ess(f)$ (by the definition of $\mathbb{G}_{f_{H'}}$ we have $j(\alpha)$, $k(\alpha)\in\ell_1(\alpha)$), which is equivalent to $j/\D_2$ and $k/\D_2$ being edge related in $\mathrm{loop}(\mathbb{G}_{f_{H'}})$. As each step in this argument is an equivalence, we must have that $\Psi(j/\D_2)$ is edge related to $\Psi(k/\D_2)$ in $\mathrm{loop}(\mathbb{G}_f)$ if and only if $j/\D_2$ and $k/\D_2$ are edge related in $\mathrm{loop}(\mathbb{G}_{f_{H'}})$.
	
	$(2)$ Now that we know that $\Psi$ is a graph homomorphism that also preserves non-edges, showing that $\Psi\restriction\eta_{f_{H'}}[\lambda]$ is an isomorphism amounts to showing that the restriction is bijective. Having that $j/\D_2=\eta_{f_{H'}}(\beta)$ for some $\beta\in\lambda$ is equivalent, by Lemma~\ref{eta}, to having that 
	\[\{\alpha\in\ess(f):j(\alpha)=\beta\}\in\D_2.\]
	Then the statement $k/\D_1=\Psi(j/\D_2)$ is equivalent to 
	\[\{\alpha\in\ess(f):k(\alpha)=h_\beta(\alpha)\}\in\D_1,\] 
	which is equivalent by \L o\'s theorem to $k/\D_1=h_\beta/\D_1$. As this argument consists of a series of ``if and only if'' statements, we have that $\Psi$ is bijective when restricted to $\eta_{f_{H'}}[\lambda]$.
\end{proof}

\begin{conv}
    We continue to make the distinction between $\D_1$ and $\D_2$ as defined above. However, since $\D_2\se \D_1$ and $A\in \D_1$ if and only if $A\cap\ess(f_{H'})\in\D_2$, we will treat the statements $A\in\D_1$ and $A\in\D_2$ as if they were equivalent and do not spend time worrying about how to translate between the statements.
\end{conv}

In order to get to the result that $\Psi$ allows some transfer of information between the types $p(\bar{x})$ and $q(\bar{x})$, we will first prove that ``nice enough'' homomorphisms between ultraproducts of graphs using essentially the same ultrafilter preserve internal subsets and complete subsets. The ``nice enough'' referers to functions that are what non-standard analysts would call ``internal functions'' that are either homomorphisms or preserve non-edges. The following proof is well-known in non-standard analysis, but we feel that it is worth presenting here.

\begin{lem}\label{inthomandcomp}
    Suppose that $I$ and $J$ are sets with $J\se I$ and $(\mathbb{G}_i)_{i\in I}$ and $(\mathbb{H}_j)_{j\in J}$ are sequences of graphs. Further suppose that $\D_I$ is an ultrafilter on $I$ and $\D_J$ is an ultrafilter on $J$ with $\D_J\se\D_I$ and that there is a sequence of functions $(\theta_j)_{j\in J}$ where $\theta_j\colon \mathbb{H}_j\to\mathbb{G}_j$ for each $j\in J$. Define 
    \[\mathbb{G}:=\prod_{i\in I}\mathbb{G}_i/\D_I\,\text{ and }\,\mathbb{H}:=\prod_{j\in J}\mathbb{H}_j/\D_J.\]
    Then
    \begin{enumerate}
        \item The function $\Theta:\mathbb{H}\to\mathbb{G}$ defined for all $p/\D_J$ in $\mathbb{H}$ by
        \[\Theta(p/\D_J):=\left[i\mapsto\pw{\theta_i(p(i));&\text{if }i\in J\\\text{any element of }\mathbb{G}_i;&\text{otherwise}}\right]\bigg/\D_I\]
        is well-defined and unique,
        \item if $A\se \mathbb{G}$ and $B\se\mathbb{H}$ are internal then so are $\Theta[B]$ and $\Theta^{-1}[A]$,
        \item if $\{j\in J:\theta_j\text{ is a graph homomorphism}\}$ is an element of $\D_J$ and $K\se \mathbb{H}$ induces a complete subgraph of $\mathbb{H}$ then $\Theta[K]$ induces a complete subgraph of $\mathbb{G}$, and
        \item if $\{j\in J:\theta_j\text{ preserves non-edges as a map }\mathbb{H}_j\to\mathrm{loop}(\mathbb{G}_j)\}$ is an element of $\D_J$ and $K\se\mathbb{G}$ induces a complete subgraph of $\mathbb{G}$ then $\Theta^{-1}[K]$ is a complete subgraph of $\mathbb{H}$.
    \end{enumerate}
\end{lem}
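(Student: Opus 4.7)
The plan is to exploit \L o\'s theorem throughout, together with the key preliminary observation that $\D_J = \D_I \restriction J$. Indeed, $\D_J \se \D_I \restriction J$ is part of the hypothesis; conversely, if $A \se J$ lies in $\D_I$ but not in $\D_J$, then $J \sm A \in \D_J \se \D_I$, giving $\emptyset = A \cap (J \sm A) \in \D_I$, a contradiction. This identification lets us freely transfer between $\D_I$-large subsets of $I$ that happen to lie in $J$ and $\D_J$-large subsets. It also records that $J \in \D_I$, which is needed for the extension-by-anything in the definition of $\Theta$.

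For (1), well-definedness is immediate: if $p/\D_J = p'/\D_J$, then $p$ and $p'$ agree on a $\D_J$-large, hence $\D_I$-large, set, and the choice of extension on $I \sm J$ is immaterial since $I \sm J \notin \D_I$. For (2), I would verify the explicit identifications
\[\Theta\Big[\prod_{j \in J} B_j / \D_J\Big] = \prod_{i \in I} C_i / \D_I, \qquad \Theta^{-1}\Big[\prod_{i \in I} A_i / \D_I\Big] = \prod_{j \in J} \theta_j^{-1}[A_j] / \D_J,\]
where $C_i = \theta_i[B_i]$ for $i \in J$ and $C_i = \mathbb{G}_i$ otherwise. The forward containments are direct from the definition of $\Theta$; the reverse containments require, on a $\D_J$-large set, choosing a preimage under $\theta_j$ index-by-index and then extending arbitrarily on $J$ to a full representative.

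For (3), given distinct $q_1, q_2 \in \Theta[K]$, pick representatives $p_s/\D_J \in K$ with $\Theta(p_s/\D_J) = q_s$. These must be distinct in $\mathbb{H}$ (their images differ), so completeness of $K$ gives $p_1(j) \mathrel{E} p_2(j)$ on a $\D_J$-large set. Intersecting with the $\D_J$-large set where $\theta_j$ is a graph homomorphism and applying \L o\'s theorem yields $q_1 \mathrel{E} q_2$ in $\mathbb{G}$. For (4), I would argue by contradiction: if distinct $p_1/\D_J, p_2/\D_J \in \Theta^{-1}[K]$ were non-adjacent in $\mathbb{H}$, then on a $\D_J$-large set we would have $p_1(j) \neq p_2(j)$ and $\neg(p_1(j) \mathrel{E} p_2(j))$ simultaneously. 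Intersecting with $J_0$, the non-edge preservation into $\mathrm{loop}(\mathbb{G}_j)$ forces $\theta_j(p_1(j)) \neq \theta_j(p_2(j))$ and non-adjacency in $\mathbb{G}_j$. By \L o\'s theorem, $\Theta(p_1/\D_J)$ and $\Theta(p_2/\D_J)$ are then distinct and non-adjacent in $\mathbb{G}$, contradicting completeness of $K$.

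The main delicacy is the bookkeeping between $\D_I$ and $\D_J$, which is precisely why establishing $\D_J = \D_I \restriction J$ at the outset is valuable. The other subtle point is the role of $\mathrm{loop}(\mathbb{G}_j)$ in (4): passing to the looped graph bundles \emph{distinct images} together with \emph{non-adjacent images} into a single preservation condition, exactly matching the two properties that completeness of $K$ forces on $\Theta(p_1/\D_J)$ and $\Theta(p_2/\D_J)$. A plain homomorphism would not suffice for the reverse-direction statement because it could collapse non-adjacent pairs.
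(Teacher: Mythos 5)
Your proof is correct and follows essentially the same route as the paper: establish $\D_J = \D_I\restriction J$, then run index-wise \L o\'s arguments for each part, with the internality in (2) witnessed by explicit factor-by-factor formulas and (3)--(4) handled via the homomorphism / non-edge-preservation hypotheses on a $\D_J$-large set. Your part (3) is in fact slightly more careful than the paper's (you correctly start from distinct elements of $\Theta[K]$ and pull back, rather than starting from distinct elements of $K$, whose images need not be distinct), and your explicit $\Theta^{-1}\bigl[\prod_i A_i/\D_I\bigr]=\prod_j\theta_j^{-1}[A_j]/\D_J$ is cleaner than the paper's ``swap $A$ and $B$'' remark.
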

\begin{proof}
    $(1)$: If $p/\D_J=p'/\D_J$, then $p(i)=p'(i)$ for $\D_j$-many $i\in J$, so $\theta_i(p(i))=\theta_i(p'(i))$ for $\D_J$-many $i\in J$ as well. Since $\D_J\se \D_I$, this is enough to show well-definedness and uniqueness.
    
    $(2)$: We start with $\Theta[B]$. As $B$ is internal, we can write $B$ as an ultraproduct of the form
    \[\prod_{j\in J}B_j/\D_J\]
    where each $B_j\se H_j$. Define for each $i\in J$ the set
	\[B'_i=\{\theta_i(h):h\in B_i\}\] and for $i\in I\sm J$ define $B'_i=G_i$.
	We claim that $\Theta[B]=\prod_{i\in I}B'_i/\D_I=:\widehat{B}$. Suppose that $p\in\prod_{i\in I}B'_i$. Then 
	\[(\theta_i(p(i)))_{i\in I}/\D_I=\Theta((p\restriction J)/\D_J).\]
	As every element of $\mathbb{H}$ can be written as $\Theta((p\restriction J)/\D_J)$ for some $p\in\prod_{i\in I}B'_i$, we have that $\Theta[B]=\widehat{B}$ and so is internal.
	
	The proof that $\Theta^{-1}[A]$ is internal in $\mathbb{G}$ is essentially the same after swapping $A$ and $B$, $I$ and $J$, and $\Theta$ and $\Theta^{-1}$.
	
	$(3)$: Suppose that $k_1,k_2$ are representatives for distinct elements of $K$. Then $\theta_j(k_1(j))$ and $\theta_j(k_2(j))$ are edge related for $\D_J$-many $j\in J$. Then the representatives given for $\Theta(k_1)$ and $\Theta(k_2)$ are edge related for $\D_I$-many $i\in I$. In particular, $\Theta$ is a graph homomorphism.
	
	$(4)$: As in $(3)$, $\Theta\colon\mathbb{H}\to\mathrm{loop}(\mathbb{G})$ will inherit the property that non-edges are preserved. Suppose that $k_1,k_2$ are distinct elements of $\Theta^{-1}[K]$. Then either there is an edge or a loop between $\Theta(k_1)$ and $\Theta(k_2)$, so $k_1$ and $k_2$ must be edge related in $\mathbb{H}$.
\end{proof}

\begin{thm}\label{hmult}
	Let $f$ be a graph-like distribution for the $\lambda$-type $p(\bar{x})$ with graph distribution sequence $(\mathbb{G}_\alpha)_{\alpha<\lambda}$, let $\D_1=\D\restriction\ess(f)$, and let $H\se\mathbb{G}_f=\mathbb{G}_{\ess(f)}^{\D_1}$ induce a complete subgraph with $|H|=\lambda$. If $H'=\{h_\beta:\beta<\lambda\}$ is a transversal of $H$ and for each $\beta\in \lambda$ there exists a $\beta_h\in\lambda$ so that 
	\[B_{\beta'}:=\{\alpha\in\ess(f):\varphi_{h_\beta(\alpha)}(\bar{x};\bar{y})=\varphi_{\beta_h}(\bar{x};\bar{y})\}\in\D_1,\]
	then the distribution $f_{H'}$ for the type \[q(\bar{x})=\{\varphi_{\beta_h}(\bar{x},\bar{r}_{h_\beta}):\beta<\lambda\}\] has a multiplicative refinement if and only if there is an internal $K\se\mathbb{G}_f$ that induces a complete subgraph with $H\se K$.
\end{thm}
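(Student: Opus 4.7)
The plan is to prove each direction by an explicit construction, leveraging Lemma~\ref{refviaseq} (refinements correspond to subgraph sequences with every pair still in $\D$-many of the graphs) and Lemma~\ref{multviaseq} (multiplicativity is equivalent to every graph in the distribution graph sequence being complete).

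For the direction ($\Leftarrow$), write the internal complete $K$ as $\prod_{\alpha\in\ess(f)}K_\alpha/\D_1$ and let $S\in\D_1$ be the set of $\alpha$ on which $K_\alpha$ already induces a complete subgraph of $\mathbb{G}_\alpha$ (exists by \L o\'s). For each $\alpha\in S$ define $\mathbb{H}_\alpha$ to be complete on the vertex set $V_\alpha:=\{\beta\in\ell_1(\alpha):h_\beta(\alpha)\in K_\alpha\}$, and let $\mathbb{H}_\alpha$ be empty for $\alpha\notin S$. Each $\mathbb{H}_\alpha$ is a subgraph of $\mathbb{G}_{f_{H'},\alpha}$: on $S$, any edge $\{\beta,\gamma\}$ of $\mathbb{H}_\alpha$ has $h_\beta(\alpha),h_\gamma(\alpha)\in K_\alpha$ equal or adjacent in $\mathbb{G}_\alpha$, so $\{h_\beta(\alpha),h_\gamma(\alpha)\}\in g_1(\alpha)\cup g_2(\alpha)$, placing $\{\beta,\gamma\}$ in $\ell_2(\alpha)$ by the definition of $\ell_2$ from Lemma~\ref{hdist}. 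The hypothesis $H\se K$ means $\{\alpha:h_\beta(\alpha)\in K_\alpha\}\in\D_1$ for each $\beta<\lambda$, so for every $\{\beta,\gamma\}\in[\lambda]^2$ the set $\{\alpha:\{\beta,\gamma\}\in E^{\mathbb{H}_\alpha}\}$ contains the $\D$-set $S\cap f_{H'}(\{\beta\})\cap f_{H'}(\{\gamma\})\cap\{\alpha:h_\beta(\alpha)\in K_\alpha\}\cap\{\alpha:h_\gamma(\alpha)\in K_\alpha\}$. Lemma~\ref{refviaseq} then identifies $(\mathbb{H}_\alpha)_{\alpha<\lambda}$ with a graph-like refinement of $f_{H'}$, and Lemma~\ref{multviaseq} upgrades it to a multiplicative one because each $\mathbb{H}_\alpha$ is complete by construction.

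For the direction ($\Rightarrow$), let $\ell$ be a multiplicative refinement with distribution graph sequence $(\mathbb{H}_\alpha)_{\alpha<\lambda}$, so each $\mathbb{H}_\alpha$ is complete by Lemma~\ref{multviaseq}. For $\alpha\in\ess(f_{H'})$ set $K_\alpha:=\{h_\beta(\alpha):\beta\in V(\mathbb{H}_\alpha)\}\se g_1(\alpha)$, assign an arbitrary value for the remaining $\alpha\in\ess(f)$, and let $K:=\prod_{\alpha\in\ess(f)}K_\alpha/\D_1$; this $K$ is internal by construction. On $\ess(f_{H'})$ the set $K_\alpha$ induces a complete subgraph of $\mathbb{G}_\alpha$: distinct $h_\beta(\alpha),h_\gamma(\alpha)$ in $K_\alpha$ come from distinct $\beta,\gamma\in V(\mathbb{H}_\alpha)$ whose edge lies in $\ell_2(\alpha)$, forcing $\{h_\beta(\alpha),h_\gamma(\alpha)\}\in g_2(\alpha)$ by the definition of $\ell_2$. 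Hence $K$ is a complete subgraph of $\mathbb{G}_f$ by \L o\'s. For each $h_\beta/\D_1\in H$, the set $\{\alpha:h_\beta(\alpha)\in K_\alpha\}$ contains $\ell(\{\beta\})=\{\alpha:\beta\in V(\mathbb{H}_\alpha)\}\in\D$ (which is itself forced into $\D$ by applying Lemma~\ref{refviaseq}'s pair condition to any pair containing $\beta$), so $h_\beta/\D_1\in K$ and $H\se K$.

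The main subtlety is bookkeeping the relevant ultrafilters $\D$, $\D_1=\D\restriction\ess(f)$, and the restriction to $\ess(f_{H'})$, together with the coincidence case $h_\beta(\alpha)=h_\gamma(\alpha)$, which is precisely why $g_1(\alpha)\cup g_2(\alpha)$ (rather than just $g_2(\alpha)$) appears in the definition of $\ell_2$ in Lemma~\ref{hdist}; once that definition is unpacked correctly, both constructions above are really just translations between ``$\beta\in V_\alpha$'' and ``$h_\beta(\alpha)\in K_\alpha$.''
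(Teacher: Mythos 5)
Your proof is correct, and it takes a noticeably more hands-on route than the paper's. The paper's argument factors through the homomorphism $\Psi\colon\mathrm{loop}(\mathbb{G}_{f_{H'}})\to\mathrm{loop}(\mathbb{G}_f)$ from Lemma~\ref{psi} together with the internal-preservation machinery of Lemma~\ref{inthomandcomp}: for $(\Leftarrow)$ it pulls $K$ back to $\Psi^{-1}[K]\subseteq\mathbb{G}_{f_{H'}}$, writes that as $\prod K_\alpha/\D_2$, and builds a multiplicative refinement from the complete factors; for $(\Rightarrow)$ it pushes the internal complete ultragraph of the refinement forward through $\Psi$. Your proof performs exactly these pullbacks and pushforwards \emph{coordinate-wise} --- defining $V_\alpha=\{\beta\in\ell_1(\alpha):h_\beta(\alpha)\in K_\alpha\}$ is the fiber of $\Psi^{-1}$ at $\alpha$, and $K_\alpha=\{h_\beta(\alpha):\beta\in V(\mathbb{H}_\alpha)\}$ is the fiber of $\Psi$ --- so you never need to name $\Psi$ or cite Lemma~\ref{inthomandcomp}; you substitute direct use of Łoś, Lemma~\ref{refviaseq}, Lemma~\ref{multviaseq}, and the explicit formula for $\ell_2$ from Lemma~\ref{hdist}. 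What the paper's approach buys is modularity: $\Psi$ and Lemma~\ref{inthomandcomp} are used again (e.g.\ in Corollary~\ref{indsublug}), so isolating them keeps the later arguments short. What your approach buys is self-containment and transparency: the reader can see concretely where the hypothesis $H\subseteq K$ enters (it gives $\{\alpha:h_\beta(\alpha)\in K_\alpha\}\in\D$) and why the $g_1(\alpha)\cup g_2(\alpha)$ in $\ell_2$'s definition matters (it absorbs the coincidence case $h_\beta(\alpha)=h_\gamma(\alpha)$). One small caution: in the $(\Rightarrow)$ direction you use the letter $\ell$ both for the multiplicative refinement and implicitly for the conjugate functions $\ell_1,\ell_2$ of $f_{H'}$; rename the refinement (say $k$, as the paper does) to avoid that collision.
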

\begin{proof}
	($\Rightarrow$) Suppose that $k:\P_\omega(\lambda)\to\D$ is a multiplicative refinement of $f_{H'}$. We know from Lemma~\ref{multviaseq} that the distribution graph sequence for $k$ consists of complete graphs. The ultragraph $\mathbb{G}_k$ is complete as completeness is a first-order property of a graph and hence preserved by ultraproducts. From Lemma~\ref{refviaseq} we know that the $\alpha$-th graph of the graph distribution sequence for $k$ is a subgraph of the $\alpha$-th graph of the graph distribution sequence for $f_{H'}$. Using Lemma~\ref{inthomandcomp}(1)--(3) where $I=\ess(f_{H'})$, $J=\ess(k)$, $A=\mathbb{G}_k$, and $(\theta_\alpha)_{\alpha\in J}$ where $\theta_\alpha$ is the inclusion map from the $\alpha$-th graph in the distribution graph sequence of $k$ to the distribution graph sequence of $f_{H'}$, we have that the inclusions maps induce a graph homomorphism from $\mathbb{G}_k$ to $\mathbb{G}_{f_{H'}}$ whose image is an internal complete subgraph, $K'$. Furthermore, this homomorphism sends $\eta_k[\lambda]$ to $\eta_{f_{H'}}[\lambda]$ so $\eta_{f_{H'}}\se K'$. We claim that $\Psi[K']$ is the desired $K\se\mathbb{G}_f$.
	
	From the definition of $\Psi$, we know that we can use Lemma~\ref{inthomandcomp}(2) and (3). That is, $\Psi[K']$ induces an internal complete subgraph of $\mathbb{G}_f$, and, by Lemma~\ref{psi}(2), $\Psi[K']$ contains $H$.
	
	($\Leftarrow$) Suppose now that $H\se K$ where $K$ is an internal subset of $G_f$ that induces a complete subgraph of $\mathbb{G}_f$. Then, by using Lemma~\ref{inthomandcomp}, we have that $\mathbb{G}_K:=\Psi^{-1}[K]$ is internal in $G_{f_{H'}}$ and induces a complete subgraph of $\mathbb{G}_{f_{H'}}$. We will use the fact that $\mathbb{G}_K$ is internal and complete to construct a multiplicative refinement of $f_{H'}$.
	
	Let $\D_2=\D\restriction\ess(f_{H'})$.
	As $\mathbb{G}_K$ is internal in $\mathbb{G}_{f_{H'}}$ we can express $\mathbb{G}_K$ in the form
	\[\prod_{\alpha\in\ess(f_{H'})}K_\alpha/\D_2\] 
	where $K_\alpha\se\ell_1(\alpha)$ for all $\alpha\in\ess(f_{H'})$. For each $\alpha\in\ess(f_{H'})$, define $C_\alpha$ to be $K_\alpha$ if $[K_\alpha]^2\se \ell_2(\alpha)$ and to be $\{\min K_\alpha\}$ otherwise (so that $[C_\alpha]^2=\emptyset\se\ell_2(\alpha)$ in this case). Then $C_\alpha$ is complete for all $\alpha$ in $\ess(f_{H'})$ and $C_\alpha=K_\alpha$ for $\D_2$-many $\alpha$ (the ultraproduct of the $K_\alpha$ is the complete graph $\mathbb{G}_K$ so $\D_2$-many $K_\alpha$ must induce complete subgraphs in $(\ell_1(\alpha),\ell_2(\alpha))$ by \L o\'s theorem).
	We claim that the functions defined for $\alpha\in\ess(f_{H'})$ by $s_1(\alpha):=C_\alpha$ and $s_2(\alpha):=[s_1(\alpha)]^2$ belong to the conjugate for a multiplicative refinement of $f_{H'}$ (multiplicative distributions must be graph-like by Lemma~\ref{multimpgl}). By Lemma~\ref{refviaseq}, $s_1$ and $s_2$ determine the conjugate of a refinement of $f_{H'}$ if and only if $(s_1(\alpha),s_2(\alpha))$ is a subgraph of $(\ell_1(\alpha),\ell_2(\alpha))$ for all $\alpha$ in $\ess(f_{H'})$ and $\eta_{f_{H'}}[\lambda]\se s_1(\ess(f_{H'}))^{\D_2}$ induces a complete subgraph of 
	\[\prod_{\alpha\in\ess(f_{H'})}(s_1(\alpha),s_2(\alpha))/\D_2\cong\mathbb{G}_K\]
	(note that this is a rewriting of the condition in Lemma~\ref{refviaseq}(2)). These follow from 
	\[s_1(\alpha)=C_\alpha\se K_\alpha\se\ell_1(\alpha)\text{ and }s_2(\alpha)\se\ell_2(\alpha)\]
	and the fact that $C_\alpha=K_\alpha$ for $\D_2$-many $\alpha$ in $\ess(f_{H'})$ so 
	\begin{align*}
	    C_{\ess(f)}^{\D_2}&=\mathbb{G}_K\\
	    &\supseteq \Psi^{-1}[H]\\
	    &= \eta_{f_{H'}}[\lambda]
	\end{align*}
	where the last equality follows from Lemma~\ref{psi}(2).
	Further, by Lemma~\ref{multviaseq}, we know that the distribution determined by $s_1,s_2$ is multiplicative as $s_2(\alpha)=[s_1(\alpha)]^2$, making the graph $(s_1(\alpha),s_2(\alpha))$ complete for each $\alpha$ in $\ess(f_{H'})$.
	Thus the distribution determined by $s_1,s_2$ is a multiplicative refinement of $f_{H'}$ and has $\mathbb{G}_K$ as its ultragraph.
\end{proof}

\begin{cor}\label{multiffint}
	 A graph-like distribution $f$ for $p(\bar{x})$ has a multiplicative refinement if and only if the image of $\eta_f$ in $\mathbb{G}_f$ is a subset of an internal complete subgraph.
\end{cor}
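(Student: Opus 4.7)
The plan is to specialize Theorem~\ref{hmult} to the choice $H=\eta_f[\lambda]$ equipped with the canonical transversal $H'=\{h_\beta:\beta<\lambda\}$ coming from Lemma~\ref{eta}, and to verify that the associated type $q(\bar{x})$ and distribution $f_{H'}$ coincide with $p(\bar{x})$ and $f$ respectively. Once these identifications are made, the corollary is an immediate translation of the theorem.

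First I would check that $H=\eta_f[\lambda]$ satisfies the hypotheses of Theorem~\ref{hmult}. Injectivity of $\eta_f$ (Lemma~\ref{eta}) yields $|H|=\lambda$. To see that $H$ induces a complete subgraph of $\mathbb{G}_f$, fix distinct $\beta,\gamma<\lambda$: by monotonicity $f(\{\beta,\gamma\})\se f(\{\beta\})\cap f(\{\gamma\})$, on this $\D$-large set we have $h_\beta(\alpha)=\beta$ and $h_\gamma(\alpha)=\gamma$, and $\{\beta,\gamma\}\in g_2(\alpha)$ because $\alpha\in f(\{\beta,\gamma\})$; by \L o\'s theorem $\eta_f(\beta)$ and $\eta_f(\gamma)$ are edge-related in $\mathbb{G}_f$. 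Choosing $\beta_h:=\beta$ for each $\beta$, the set $B_\beta=\{\alpha\in\ess(f):\varphi_{h_\beta(\alpha)}=\varphi_\beta\}$ contains $f(\{\beta\})\in\D$, so $B_\beta\in\D_1$ as required.

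Next I would identify $q(\bar{x})$ with $p(\bar{x})$ and $f_{H'}$ with $f$. The induced parameter $\bar{r}_{h_\beta}$ of Definition~\ref{rh} agrees with $\bar{r}_\beta$ on $B_\beta\in\D_1$, so $\bar{r}_{h_\beta}=\bar{a}_\beta$ in $\widehat{\M}$, giving $q(\bar{x})=\{\varphi_\beta(\bar{x},\bar{a}_\beta):\beta<\lambda\}=p(\bar{x})$. For the distribution, the key observation is that $h_\beta(\alpha)=\beta$ whenever $\beta\in g_1(\alpha)$. Then the formulas in Lemma~\ref{hdist} simplify: $\ell_1(\alpha)=g_1(\alpha)\cap\{\beta:\alpha\in B_\beta\}=g_1(\alpha)$, and a pair $\{\beta,\gamma\}\in[\ell_1(\alpha)]^2=[g_1(\alpha)]^2$ satisfies $\{h_\beta(\alpha),h_\gamma(\alpha)\}=\{\beta,\gamma\}$, so $\ell_2(\alpha)=g_2(\alpha)$. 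Since a graph-like distribution is determined by $g_1,g_2$ (Theorem~\ref{uniqglconj}), this yields $f_{H'}=f$.

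With these identifications in hand, Theorem~\ref{hmult} applied to $H=\eta_f[\lambda]$ states precisely that $f$ has a multiplicative refinement if and only if there is an internal complete subgraph $K\se\mathbb{G}_f$ with $\eta_f[\lambda]\se K$, which is the statement of the corollary. There is no real obstacle here; the only slightly delicate point is bookkeeping the representatives so that the computations $\ell_1=g_1$, $\ell_2=g_2$, and $\bar{r}_{h_\beta}=\bar{a}_\beta$ all line up, and this follows directly from the defining property of $\eta_f$ in Lemma~\ref{eta}.
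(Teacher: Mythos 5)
Your proposal is correct and follows essentially the same route as the paper's proof: specialize Theorem~\ref{hmult} to $H=\eta_f[\lambda]$ with transversal $H'=\{h_\beta\}$ from Equation~\eqref{hbeta}, observe $B_\beta\supseteq f(\{\beta\})\in\D$, and verify $\ell_1=g_1$, $\ell_2=g_2$, $\bar{r}_{h_\beta}=\bar{r}_\beta$ so that $f_{H'}=f$ and $q(\bar{x})=p(\bar{x})$. Your extra step of directly checking that $\eta_f[\lambda]$ induces a complete subgraph is a harmless addition that the paper leaves implicit.
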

\begin{proof}
	Let $H=\eta_f[\lambda]$ and $H'=\{h_\beta:\beta\in\lambda\}$ where the $h_\beta$ are the functions defined in Equation~\eqref{hbeta}. Then $H'$ is a transversal of $H$ and 
	\[B_\beta=\{\alpha\in\ess(f):\varphi_{h_\beta(\alpha)}(\bar{x};\bar{y})=\varphi_\beta(\bar{x},\bar{y})\}\supseteq f(\{\beta\})\in\D\]
	by the definitions of the $h_\beta$ and $g_1(\alpha)$.
	We claim that the distribution $f_{H'}$ and type $q(\bar{x})$ defined in Lemma~\ref{hdist} are equal to $f$ and $p(\bar{x})$ respectively. The desired result then follows from Theorem~\ref{hmult}. To check that $f_{H'}=f$, it will be enough to show that $\ell_1=g_1$ and $\ell_2=g_2$ by Theorem~\ref{uniqglconj}. Having $\beta\in g_1(\alpha)$ is equivalent to $h_\beta(\alpha)=\beta$, which implies $\alpha\in B_\beta$. That is, 
	\[\ell_1(\alpha)=g_1(\alpha)\cap\{\beta\in\lambda:\alpha\in B_\beta\}=g_1(\alpha).\]
	Having $\{\beta,\gamma\}\in\ell_2(\alpha)$ is equivalent to $\{h_\beta(\alpha),h_\gamma(\alpha)\}\in g_2(\alpha)$ and $\beta,\gamma\in\ell_1(\alpha)= g_1(\alpha)$, so $h_\beta(\alpha)=\beta$ and $h_\gamma(\alpha)=\gamma$ and $\{\beta,\gamma\}\in g_2(\alpha)$. The reverse argument follows the same steps in reverse.
	
	The type $q(\bar{x})$ is 
	\[\{\varphi_\beta(\bar{x},\bar{r}_{h_\beta}):\beta<\lambda\}.\] Since $h_\beta(\alpha)=\beta$ for $\D_2$-many $\alpha$ in $\ess(f_{H'})$, we have that $\bar{r}_{h_\beta}=\bar{r}_\beta$, so $q(\bar{x})=p(\bar{x})$.
\end{proof}

\section{Graph-like Types}

The particular types ($\mathrm{SOP}_2$-types and types expressing that cuts in linear orders are filled) that we wish to study are not only special in that there are distributions for the type that are graph-like but that, for certain choices of the representations $\bar{r}_\beta\in\prod_{\alpha<\lambda}\M_\alpha$, \emph{every} distribution for the type can be extended to one that is graph-like. As we will see in Lemma~\ref{glextensions}, this special property is related to the fact that the \L o\'s map exhibits graph-like behavior.

\begin{conv}
    From now on, we will assume that $p(\bar{x})$ is a $(\varphi,\lambda)$-type (that is, both a $\varphi$-type and a $\lambda$-type) unless otherwise stated.
\end{conv}

\begin{defn}\label{gltype}
	The type $p(\bar{x})$ is called a \emph{graph-like type} if there is a choice of representatives $(\bar{r}_\beta)_{\beta<\lambda}$ for the parameters of $p(\bar{x})$ so that $L$, the \L o\'s map of $p(\bar{x})$ with this choice of representatives, satisfies 
	\[L(\Delta)=\bigcap_{\Phi\in[\Delta]^2}L(\Phi)\]
	for all finite $\Delta\se\lambda$ with $|\Delta|\geq 2$.
\end{defn}

\begin{prop}
    Every type of the form $p(\bar{x})=\{\varphi(\bar{x},\bar{a}_\beta):\beta<\lambda\}$ of the ultraproduct $\widehat{\M}=\prod_{\beta<\lambda}\M_\alpha/\D$ with a multiplicative distribution (i.e.\ $p(\bar{x})$ is realized in $\widehat\M$) is graph-like.
\end{prop}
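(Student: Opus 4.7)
The plan is to use Fact~\ref{keislerfact} to obtain a realization $\bar{c}=[\bar{c}']/\D$ of $p(\bar{x})$ in $\widehat\M$ and then hand-craft the representatives $(\bar{r}_\beta)_{\beta<\lambda}$ so that, at each index $\alpha$, the single element $\bar{c}'(\alpha)$ (or a substitute for it) witnesses every formula $\varphi(\bar{x},\bar{r}_\beta(\alpha))$ simultaneously. The inclusion $L(\Delta)\se\bigcap_{\Phi\in[\Delta]^2}L(\Phi)$ is automatic from the definition of the \L o\'s map; the content of being graph-like is the reverse inclusion, and that is what a uniform per-index witness will deliver.

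First, I would pick any initial representatives $(\bar{r}'_\beta)_{\beta<\lambda}$ of the parameters $\bar{a}_\beta$, and set
\[U:=\{\alpha<\lambda:\M_\alpha\vDash\exists\bar{y},\,\varphi(\bar{c}'(\alpha),\bar{y})\}.\]
Since $\widehat\M\vDash\varphi(\bar{c},\bar{a}_\beta)$ for each $\beta$, \L o\'s theorem gives $U\in\D$. Let $A_\beta:=\{\alpha:\M_\alpha\vDash\varphi(\bar{c}'(\alpha),\bar{r}'_\beta(\alpha))\}\in\D$; note $A_\beta\se U$. I would then define the new representatives by
\[\bar{r}_\beta(\alpha)=\pw{\bar{r}'_\beta(\alpha),&\alpha\in A_\beta;\\ \text{some }\bar{b}\text{ with }\M_\alpha\vDash\varphi(\bar{c}'(\alpha),\bar{b}),&\alpha\in U\sm A_\beta;\\ \bar{b}_\alpha,&\alpha\notin U,}\]
where, for each $\alpha\notin U$, $\bar{b}_\alpha$ is a single tuple in $\M_\alpha$ chosen once and for all (independent of $\beta$). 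The existence of a witness $\bar{b}$ on $U\sm A_\beta$ is guaranteed by the definition of $U$, and $\{\alpha:\bar{r}_\beta(\alpha)=\bar{r}'_\beta(\alpha)\}\supseteq A_\beta\in\D$, so $\bar{r}_\beta/\D=\bar{a}_\beta$.

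Next I would verify the graph-like identity for any $\Delta\in\P_\omega(\lambda)$ with $|\Delta|\geq 2$. For $\alpha\in U$, by construction $\bar{c}'(\alpha)$ satisfies $\varphi(\bar{x},\bar{r}_\beta(\alpha))$ for every $\beta\in\lambda$, so $\alpha\in L(\Delta)$. For $\alpha\notin U$, all the $\bar{r}_\beta(\alpha)$ coincide with $\bar{b}_\alpha$, so the conjunction $\bigwedge_{\beta\in\Delta}\varphi(\bar{x},\bar{r}_\beta(\alpha))$ collapses to the single formula $\varphi(\bar{x},\bar{b}_\alpha)$, which is equivalent (for any nonempty finite set of indices, in particular any pair $\Phi\in[\Delta]^2$) to $\varphi(\bar{x},\bar{b}_\alpha)$ itself. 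Thus at such $\alpha$, membership in $L(\Delta)$ and in every $L(\Phi)$ coincide. In both cases $\bigcap_{\Phi\in[\Delta]^2}L(\Phi)\se L(\Delta)$, which combined with the trivial inclusion gives the required equality, showing $p(\bar{x})$ is graph-like.

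I do not anticipate a genuine obstacle. The only care needed is in managing what happens on $U^c$: we cannot make $\bar{c}'(\alpha)$ into a common witness there, so instead we force the representatives to be constant in $\beta$ on that set, which trivializes the graph-like condition at those indices. Everything else is bookkeeping with $\D$-large sets, and the key step --- that $U\in\D$ --- is an immediate consequence of the assumption that $p(\bar{x})$ is realized in $\widehat\M$.
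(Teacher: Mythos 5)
Your proof is correct and captures the same core idea as the paper's: redefine the parameter representatives so that the \L o\'s map becomes constant, by arranging a single per-index common witness on a $\D$-large set and collapsing all representatives to one fixed tuple on its complement, which trivializes the graph-like identity pointwise. The only difference in execution is that you obtain the per-index witness directly from a realization $\bar{c}$ of $p(\bar{x})$, whereas the paper derives it from the completeness of the distribution graph sequence of a multiplicative distribution $f$ (Lemma~\ref{multviaseq}) together with the functions $h_\beta$ of Lemma~\ref{eta}, but the two constructions produce the same constant \L o\'s map.
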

\begin{proof}
    If $f$ is a multiplicative distribution of $p(\bar{x})$ with parameters given by $(\bar{r}_\beta)_{\beta<\lambda}$, then $f$ is graph-like by Lemma~\ref{multimpgl} and every graph in its distribution graph sequence is complete by Lemma~\ref{multviaseq}. A choice of representatives that witness $p(\bar{x})$ being a graph-like type is given by 
    \[\bar{s}_\beta(\alpha)=\pw{\bar{r}_{h_\beta(\alpha)}(\alpha);&\alpha\in\ess(f)\\\bar{\gamma}_\alpha;&\text{otherwise}}\] 
    where the $h_\beta$ are the functions in $\prod_{\alpha\in\ess(f)}\M_\alpha$ defined in Equation~\eqref{hbeta} and $\bar{\gamma}_\alpha$ is a fixed tuple of parameters from $\M_\alpha$. The \L o\'s map under this choice of representatives is
    \[L(\Delta)=\ess(f)\cup\big\{\alpha\in\lambda\sm\{\ess(f)\}:\M_\alpha\vDash \exists\bar{x},\, \varphi(\bar{x};\bar{\gamma}_\alpha)\big\}\]
    for all $\Delta\in\P_\omega(\lambda)$, which satisfies the requirement for $p(\bar{x})$ to be graph-like because $L$ is constant.
\end{proof}

In particular, over a $\lambda$-good ultrafilter $\D$ \emph{every $\varphi$-type that is also a $\lambda$-type is graph-like}. Thus, if being graph-like is not a trivial property, it must depend on the ultrafilter $\D$ being considered. We are most interested in types that are graph-like independent of the choice of $\D$ as this would appear to reveal something about the underlying structure of the type and the complete theory from which it came.

To continue the theme of investigating the ways in which the \L o\'s map acts in ways similar to a distribution and to rephrase Lemma~\ref{gldistconj}, we define the conjugate of the \L o\'s map.

\begin{defn}\label{losconj}
	The \emph{\L o\'s conjugate} for the type $p(\bar{x})$ is the sequence of functions $k_n\colon\lambda\to\mathcal{P}([\lambda]^n)$ such that 
	\[k_n(\alpha)=\{\Delta\in[\lambda]^n: \alpha\in L(\Delta)\}.\]
\end{defn}

\begin{lem}\label{gldistconjk}
    If $g_1$ and $g_2$ are functions $g_i\colon\lambda\to\P_\omega([\lambda]^i)$ then $g_1$ and $g_2$ belong to the conjugate of a (unique) monotone graph-like distribution $f$ for the $\lambda$-type $p(\bar{x})$ in $\widehat{\M}=\prod_{\alpha<\lambda}\M_\alpha/\D$ with the representatives $(\bar{r}_\beta)_{\beta<\lambda}$ if and only if the functions satisfy the conditions:
    \begin{enumerate}
        \item Let $(k_n)_{n\in\omega}$ be the \L o\'s conjugate for $p(\bar{x})$. For all $\alpha\in\lambda$ we have that $g_1(\alpha)\se k_1(\alpha)$.
        \item For all $\alpha<\lambda$ and all $\Delta\in\P_\omega(\lambda)$ with $n:=|\Delta|\geq2$ and $[\Delta]^2\se g_2(\alpha)$ we have that $\Delta\in k_n(\alpha)$.
        \item For all $\alpha\in\lambda$ it is the case that $g_2(\alpha)\se[g_1(\alpha)]^2$.
        \item For each $\Delta\in[\lambda]^2$ the set $\{\alpha\in\lambda:\Delta\in g_2(\alpha)\}$ is an element of $\D$.
    \end{enumerate}
\end{lem}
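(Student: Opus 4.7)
The plan is to observe that this lemma is essentially a direct rephrasing of Lemma~\ref{gldistconj} through the definition of the \L o\'s conjugate. The only conceptual ingredient I would use is the dictionary provided by Definitions~\ref{losmap} and~\ref{losconj}: for any $\alpha<\lambda$ and any $\Delta\in[\lambda]^n$, one has $\Delta\in k_n(\alpha)$ if and only if $\alpha\in L(\Delta)$, which unfolds to
\[\M_\alpha\vDash\exists\bar{x},\bigwedge_{\beta\in\Delta}\varphi_\beta(\bar{x},\bar{r}_\beta(\alpha)).\]
Everything else is bookkeeping.

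First I would verify the two translations. Condition (1) of the current lemma, $g_1(\alpha)\se k_1(\alpha)$, unfolds via the dictionary to say that $\beta\in g_1(\alpha)$ implies $\M_\alpha\vDash\exists\bar{x},\varphi_\beta(\bar{x},\bar{r}_\beta(\alpha))$, which is exactly Lemma~\ref{gldistconj}(1). Similarly, the implication $[\Delta]^2\se g_2(\alpha)\Rightarrow\Delta\in k_n(\alpha)$ in condition (2) unfolds to Lemma~\ref{gldistconj}(2). Conditions (3) and (4) are literally identical to the corresponding conditions in Lemma~\ref{gldistconj}, using the equivalence recorded parenthetically in Lemma~\ref{gldistconj}(4) to match (4).

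Once the conditions are matched, I would appeal directly to Lemma~\ref{gldistconj} to conclude that $g_1,g_2$ satisfy (1)--(4) if and only if they determine the conjugate of a monotone graph-like distribution $f$ for $p(\bar{x})$. For the uniqueness clause, I would invoke Theorem~\ref{uniqglconj}, which shows that the full conjugate $(g_n)_{n\in\omega}$ of a graph-like distribution is recovered from $g_1$ and $g_2$ via Equation~\eqref{glconjeq}, and then Lemma~\ref{distviaconj} (Equation~\eqref{disteqconj}) recovers $f$ uniquely from its conjugate. There is no serious obstacle here; the lemma exists to repackage Lemma~\ref{gldistconj} in a form that is easier to compare with the \L o\'s conjugate, and the only thing to watch is correctly matching up quantifier scopes and indexings when unfolding the definitions.
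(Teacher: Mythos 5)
Your proposal is correct and follows essentially the same route as the paper's proof, which also reduces the statement to Lemma~\ref{gldistconj} by unfolding the \L o\'s map and \L o\'s conjugate definitions. The one extra point you add, justifying the uniqueness clause via Theorem~\ref{uniqglconj} and Equation~\eqref{disteqconj}, is consistent with what the paper establishes earlier and is a reasonable explicit inclusion.
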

\begin{proof}
    By comparison with Lemma~\ref{gldistconj}, we see that it will be enough to show that condition~(1) of this lemma is equivalent to the combination of condition~(1) of Lemma~\ref{gldistconj}, and that condition~(2) of this lemma is equivalent to condition~(2) in Lemma~\ref{gldistconj}. Both of these follow from the definition of the \L o\'s map and the \L o\'s conjugate.
\end{proof}

\begin{lem}\label{glextensions}
	If $p(\bar{x})$ is a graph-like type then every distribution (with representatives witnessing $p(\bar{x})$ being graph-like) is a refinement of a graph-like distribution.
\end{lem}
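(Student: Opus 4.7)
The plan is to construct, from the given distribution $f$ together with representatives witnessing the graph-like property of $p(\bar{x})$, a graph-like distribution $f^*$ such that $f(\Delta) \se f^*(\Delta)$ for every $\Delta \in \P_\omega(\lambda)$. Let $(g_n)_{n \in \omega}$ be the conjugate of $f$ and $L$ the \L o\'s map (with \L o\'s conjugate $(k_n)_{n \in \omega}$) for the fixed representatives. By Theorem~\ref{uniqglconj}, a graph-like distribution is determined by the first two terms of its conjugate, so I would define
\[g_1^*(\alpha) := g_1(\alpha), \qquad g_2^*(\alpha) := \{\Phi \in [g_1(\alpha)]^2 : \alpha \in L(\Phi)\}.\]
These outputs are finite because $g_1(\alpha)$ is, and $g_2(\alpha) \se g_2^*(\alpha)$: if $\Phi \in g_2(\alpha)$, then monotonicity of $f$ (Lemma~\ref{distconjlem}(3)) forces $\Phi \se g_1(\alpha)$, while $\alpha \in f(\Phi) \se L(\Phi)$ follows from condition (1) of Lemma~\ref{gldistconj}.

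The main task is to verify that $g_1^*, g_2^*$ satisfy conditions (1)--(4) of Lemma~\ref{gldistconjk}, which then produces the desired graph-like distribution $f^*$. Condition (1) follows from $g_1 \se k_1$, a consequence of $f(\{\beta\}) \se L(\{\beta\})$. Condition (3) is immediate from the definition of $g_2^*$. For condition (4) I would note that
\[\{\alpha \in \lambda : \{\beta, \gamma\} \in g_2^*(\alpha)\} \supseteq f(\{\beta, \gamma\}) \in \D,\]
again using $f \se L$ on pairs together with monotonicity. The critical step is condition (2): if $[\Delta]^2 \se g_2^*(\alpha)$, then $\alpha \in L(\Phi)$ for every $\Phi \in [\Delta]^2$, and the graph-like-type hypothesis on $p(\bar{x})$ gives
\[L(\Delta) = \bigcap_{\Phi \in [\Delta]^2} L(\Phi) \ni \alpha,\]
which is exactly $\Delta \in k_{|\Delta|}(\alpha)$. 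This is the only place where the graph-like-type assumption is actually invoked; everything else is bookkeeping about how conjugates and \L o\'s maps interact.

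To finish, I would confirm that $f(\Delta) \se f^*(\Delta)$ for every $\Delta \in \P_\omega(\lambda)$. The case $|\Delta| = 1$ is trivial since $g_1^* = g_1$. For $|\Delta| \geq 2$, suppose $\alpha \in f(\Delta)$; by monotonicity of $f$, every $\Phi \in [\Delta]^2$ lies in $g_2(\alpha) \se g_2^*(\alpha)$, and since $f^*$ is graph-like (Theorem~\ref{uniqglconj}) this yields $\alpha \in f^*(\Delta)$. The main obstacle is less computational than conceptual: choosing $g_2^*$ so that the graph-like identity satisfied by $L$ (a property of the type) directly supplies the graph-like identity needed for $f^*$ (a property of the distribution). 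The definitions above are calibrated precisely so that this translation is a single application of the graph-like identity for $L$, with the edges of $g_2^*(\alpha)$ obtained by using $L$ to complete the pair structure on the vertex set $g_1(\alpha)$ that $f$ may not have fully populated.
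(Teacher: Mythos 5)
Your proof is correct and structurally parallel to the paper's: both construct the desired graph-like extension by specifying two functions $(g_1^{\ast},g_2^{\ast})$, verify the conditions of Lemma~\ref{gldistconjk}, and then check that $f$ refines the resulting distribution. The difference is the choice of $g_2^{\ast}$. The paper takes $g_2^{\ast} := g_2$ directly from the conjugate of $f$; condition (2) of Lemma~\ref{gldistconjk} then follows because $g_2(\alpha)\se k_2(\alpha)$ (from $f\se L$) and the graph-like identity for $L$ does the rest. You instead enlarge $g_2$ all the way to $\{\Phi\in[g_1(\alpha)]^2 : \alpha\in L(\Phi)\}$, which makes $\langle g_1^{\ast}(\alpha),g_2^{\ast}(\alpha)\rangle$ the \emph{induced} subgraph of $\langle k_1(\alpha),k_2(\alpha)\rangle$ on the vertex set $g_1(\alpha)$. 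Both constructions satisfy Lemma~\ref{gldistconjk}, and your refinement check is sound. What your version buys is a slightly stronger conclusion for the same effort: $f$ is shown to refine a graph-like distribution whose ultragraph sits as an induced subgraph of $\mathbb{G}_L$, which is precisely the content the paper isolates later in Corollary~\ref{typelikeexts}. The paper's choice keeps this lemma minimal and defers the induced-subgraph upgrade. One small citation slip: the containment $\alpha\in f(\Phi)\se L(\Phi)$ is immediate from Definition~\ref{distribution}(1) (a distribution refines $L$), not from Lemma~\ref{gldistconj}(1); the logic is unaffected.
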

\begin{proof}
	Let $f$ be a distribution for $p(\bar{x})$ and $(g_n)_{n\in\omega}$ the conjugate of $f$. We claim that $g_1,g_2$ belong to the conjugate of a graph-like distribution $f'$ with the property that $f\se f'$. We proceed by checking the conditions of Lemma~\ref{gldistconjk} and then checking that $f$ is a refinement of the unique graph-like distribution having $g_1$ and $g_2$ in its conjugate.
	
	$(1)$: This is equivalent to Lemma~\ref{distconj}(1) (for $n=1$) by the definitions of the \L o\'s map and the \L o\'s conjugate. We know that $g_1$ satisfies the referenced condition as it belongs to the conjugate of $f$.
	
	$(2)$: If $\Delta\in\P_\omega(\lambda)$ and $n:=|\Delta|\geq 2$ then
	\[[\Delta]^2\se g_2(\alpha)\Rightarrow [\Delta]^2\se k_2(\alpha)\Rightarrow \Delta\in k_n(\alpha)\] where the last implication follows from $L$ being graph-like (see Theorem~\ref{uniqglconj}).
	
	$(3)$: Follows from Lemma~\ref{distconj}(2) applied to $f$ with $n=2$ and $m=1$.
	
	$(4)$: For $\Delta\in[\lambda]^2$, we have that
	\[
	    \{\alpha\in\lambda:\Delta\in g_2(\alpha)\}\in\D
	\]
	by Lemma~\ref{distconj}(3) applied with $n=2$.
	
	It remains to show that $f\se f'$. Let $(\ell_n)_{n\in\omega}$ be the conjugate of $f'$. By Lemma~\ref{distconj}(2) applied to $f$, we get if $\Delta\in\P_\omega(\lambda)$ and $\alpha\in\lambda$ with $\Delta\in g_n(\alpha)$ (equivalently, $\alpha\in f(\Delta)$) for $n\geq 2$, then \[[\Delta]^2\se g_2(\alpha)=\ell_2(\alpha).\] In particular, this implies that $\Delta\in\ell_n(\alpha)$ by Equation~\eqref{glconjeq} (equivalently, $\alpha\in f'(\Delta)$). We thus have that $f(\Delta)\se f'(\Delta)$ for all $\Delta\in\P_\omega(\lambda)$ with $|\Delta|\geq 2$. As $\ell_1=g_1$, we also have that $f(\{\beta\})\se f'(\{\beta\})$ for all $\beta\in\lambda$ by Equation~\eqref{disteqconj}.
\end{proof}

\begin{cor}\label{glgood}
	If $p(\bar{x})$ is a graph-like $\varphi$-type of cardinality $\lambda$, then $p(\bar{x})$ is realized whenever $\D$ has multiplicative refinements for all graph-like functions $\P_\omega(\lambda)\to\D$.
\end{cor}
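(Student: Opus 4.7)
The plan is to exhibit a multiplicative distribution for $p(\bar{x})$ and then invoke Fact~\ref{keislerfact}. First, I would fix a choice of representatives $(\bar{r}_\beta)_{\beta<\lambda}$ witnessing that $p(\bar{x})$ is graph-like. Because $\D$ is a regular ultrafilter on $\lambda$ and $|p(\bar{x})|=\lambda$, some distribution $f\colon\P_\omega(\lambda)\to\D$ for $p(\bar{x})$ exists: for any regularizing family $(X_\beta)_{\beta<\lambda}\se\D$, the function $\Delta\mapsto L(\Delta)\cap\bigcap_{\beta\in\Delta}X_\beta$ works (it lands in $\D$, refines $L$, is monotone, and satisfies the finiteness condition since each $\alpha$ lies in only finitely many $X_\beta$).

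Next, I would feed this $f$ into Lemma~\ref{glextensions} to obtain a graph-like distribution $f'\colon\P_\omega(\lambda)\to\D$ for $p(\bar{x})$ with $f\se f'$. In particular $f'$ is a graph-like function into $\D$, so the hypothesis on $\D$ yields a multiplicative refinement $g\colon\P_\omega(\lambda)\to\D$ of $f'$.

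The final step is to verify that $g$ is itself a distribution for $p(\bar{x})$ and then apply Keisler's criterion. Condition~(1) of Definition~\ref{distribution} is immediate because $g(\Delta)\se f'(\Delta)\se L(\Delta)$ for every $\Delta\in\P_\omega(\lambda)$. Condition~(2) is inherited from $f'$: for any $\Gamma\in[\P_\omega(\lambda)]^\omega$ we have
\[
\bigcap_{\Delta\in\Gamma}g(\Delta)\se\bigcap_{\Delta\in\Gamma}f'(\Delta)=\emptyset.
\]
Hence $g$ is a multiplicative distribution for $p(\bar{x})$, and Fact~\ref{keislerfact} gives that $p(\bar{x})$ is realized in $\widehat{\M}$.

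I do not expect any serious obstacle: the corollary is essentially a short chase through Lemma~\ref{glextensions}, the hypothesis on $\D$, and Fact~\ref{keislerfact}. The only point one must be careful about is that the multiplicative \emph{function} supplied by the hypothesis is genuinely a distribution for the type, which is automatic because both defining properties of a distribution pass to refinements; the graph-like upgrade from Lemma~\ref{glextensions} is what lets the hypothesis on $\D$ be applied at all.
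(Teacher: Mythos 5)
Your proof is correct and is exactly the argument the paper intends: the corollary is stated with no proof body immediately after Lemma~\ref{glextensions}, and your chain (existence of some distribution via regularity, upgrade to graph-like via Lemma~\ref{glextensions}, apply the hypothesis to get a multiplicative refinement, observe that the refinement inherits both distribution axioms, conclude by Fact~\ref{keislerfact}) is the natural unpacking. Your parenthetical check that a distribution exists at all, and your explicit verification that the multiplicative function $g$ is in fact a distribution, are the right things to spell out even though the paper leaves them silent.
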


One consequence of the \L o\'s map having graph-like behavior is that $L$ behaves almost as if it were a maximal graph-like distribution. In particular, we can construct an ultraproduct of graphs using the \L o\'s map as a template just as we did with graph-like distributions, and this ultragraph will contain all of the distribution ultragraphs in a compatible way (i.e.\ all of the embeddings of $\lambda$ (representing the type $p(\bar{x})$) in the various ultragraphs (see Lemma~\ref{eta}) will coincide in the \L o\'s ultragraph). By using Corollary~\ref{multiffint} and Lemma~\ref{glextensions}, this will allow us to conclude that the graph-like type $p(\bar{x})$ is realized precisely in the case that the image of $\lambda$ in the \L o\'s ultragraph is contained within an internal complete subgraph.

\begin{defn}\label{losug}
	If $p(\bar{x})$ is graph-like we say that the \emph{\L o\'s ultragraph} of $p(\bar{x})$ is
	\[\mathbb{G}_L:=\prod_{\alpha\in\ess(L)}(k_1(\alpha),k_2(\alpha))/\D'\]
	where $\ess(L)$ is defined analogously to $\ess(f)$ and $\D'=\D\restriction \ess(L)$.
\end{defn}

\begin{lem}\label{ugselg}
	Suppose that $p(\bar{x})$ is a graph-like $(\varphi,\lambda)$-type and that $f$ is a graph-like distribution for $p(\bar{x})$ with conjugate $(g_n)_{n\in\omega}$. Then $\mathbb{G}_f$ is an internal (not necessarily induced) subgraph of $\mathbb{G}_L$. In particular, the map $i_f:\mathbb{G}_f\to\mathbb{G}_L$ induced by the inclusions $g_1(\alpha)\to k_1(\alpha)$ for each $\alpha\in\ess(f)$ is an injective graph homomorphism with internal image.
\end{lem}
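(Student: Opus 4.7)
The plan is to apply Lemma~\ref{inthomandcomp} directly with the fiberwise inclusions $\theta_\alpha : (g_1(\alpha), g_2(\alpha)) \to (k_1(\alpha), k_2(\alpha))$ at each index $\alpha \in \ess(f)$. Setting this up requires three ingredients: that each $\theta_\alpha$ is a well-defined graph homomorphism (so that an inclusion makes sense), that the essential ranges nest as $\ess(f) \se \ess(L)$ with the corresponding nesting of restricted ultrafilters, and a separate injectivity check at the end.

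For each $\alpha \in \ess(f)$, the containment $g_1(\alpha) \se k_1(\alpha)$ is exactly Lemma~\ref{gldistconjk}(1). For $g_2(\alpha) \se k_2(\alpha)$: given $\{\beta,\gamma\} \in g_2(\alpha)$, one applies Lemma~\ref{gldistconj}(2) to $\Delta = \{\beta,\gamma\}$ (noting that $[\Delta]^2 = \{\Delta\} \se g_2(\alpha)$) to conclude that $\M_\alpha \vDash \exists \bar{x},\, \varphi(\bar{x},\bar{r}_\beta(\alpha)) \wedge \varphi(\bar{x},\bar{r}_\gamma(\alpha))$, which is precisely the defining condition for $\{\beta,\gamma\}$ to lie in $k_2(\alpha)$. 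So each $\theta_\alpha$ is a (set-theoretic) subgraph inclusion, hence an injective graph homomorphism. For the index setup, since $f$ refines $L$, any $\alpha \in \ess(f)$ lies in some $f(\Delta) \se L(\Delta) \se \ess(L)$, so $\ess(f) \se \ess(L)$, and correspondingly $\D_1 = \D \restriction \ess(f) \se \D' = \D \restriction \ess(L)$.

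With $I = \ess(L)$, $J = \ess(f)$, $\D_I = \D'$, $\D_J = \D_1$, and the $(\theta_\alpha)_{\alpha \in J}$ as above, Lemma~\ref{inthomandcomp}(1) produces the map $i_f = \Theta : \mathbb{G}_f \to \mathbb{G}_L$, part~(3) gives that $i_f$ is a graph homomorphism (each $\theta_\alpha$ being one), and part~(2) applied with $B = \mathbb{G}_f$ (trivially internal in itself) yields that $i_f[\mathbb{G}_f]$ is internal in $\mathbb{G}_L$. Injectivity is not supplied by the lemma, so we verify it by hand: if $p/\D_1 \neq q/\D_1$ in $\mathbb{G}_f$, then $A := \{\alpha \in \ess(f) : p(\alpha) \neq q(\alpha)\} \in \D_1$, and on $A$ we have $\theta_\alpha(p(\alpha)) = p(\alpha) \neq q(\alpha) = \theta_\alpha(q(\alpha))$; since $A \in \D_1 \se \D'$, \L o\'s theorem gives $i_f(p/\D_1) \neq i_f(q/\D_1)$. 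The one point that is easy to get wrong is the bookkeeping between the ultrafilters $\D_1$ and $\D'$ on the different index sets $\ess(f) \se \ess(L)$, but this is precisely the hypothesis $\D_J \se \D_I$ of Lemma~\ref{inthomandcomp} and so the argument goes through mechanically.
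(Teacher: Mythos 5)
Your proof is correct and follows essentially the same route as the paper: derive the fibrewise subgraph inclusions $g_1(\alpha)\se k_1(\alpha)$, $g_2(\alpha)\se k_2(\alpha)$ from $f$ refining $L$, then invoke Lemma~\ref{inthomandcomp} for the induced map $i_f$, its homomorphism property, and the internality of its image. You are in fact slightly more careful than the paper on one point: you correctly observe that Lemma~\ref{inthomandcomp} does not literally assert injectivity of $\Theta$ and supply that check by a short \L o\'s-theorem argument, whereas the paper's proof elides this by citing the lemma for it.
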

\begin{proof}
	Let the conjugate of $f$ be $(g_n)_{n\in\omega}$. We have that $g_1(\alpha)\se k_1(\alpha)$ and $g_2(\alpha)\se k_2(\alpha)$ for all $\alpha\in\ess(f)$ as $f\se L$. For each $\alpha\in\ess(f)$, let $i_\alpha:g_1(\alpha)\to k_1(\alpha)$ be the inclusion map and note that it is a graph homomorphism. For each $\alpha\in\ess(L)$, choose an element $\gamma_\alpha$ in $k_1(\alpha)$ (recall that $\alpha\in\ess(L)$ if and only if $k_1(\alpha)\neq\emptyset$). Let $\D_1=\D\restriction \ess(L)$ and $\D_1'=\D\restriction\ess(f)$. Define the map $i_f:\mathbb{G}_f\to\mathbb{G}_L$ given for all $j/\D_1'\in\mathbb{G}_f$ by the formula
	\begin{equation}\label{coreinclmap}
	    i_f(j/\D_1')=\left[\alpha\mapsto \pw{j(\alpha); &\text{if }\alpha\in\ess(f)\\ \gamma_\alpha; &\text{otherwise}}\right]\bigg/\D_1.
	\end{equation}
	As $\D_1'\se\D_1$, the function $i_f$ is well-defined and injective with internal image (see Lemma~\ref{inthomandcomp}).
\end{proof}

The value of $i_f(j/\D_1')$ in the above proof does not depend on the choice of $\gamma_\alpha$ as $\ess(f)\in\D_1$.

\begin{lem}\label{etaequiv}
    Let $f,f'$ be two graph-like distributions for the graph-like $\varphi$-type $p(\bar{x})$ using a common sequence of representatives $(\bar{r}_\beta)_{\beta<\lambda}$ for the parameters of $p(\bar{x})$. Recall the definitions of $\eta_f$ (Lemma~\ref{eta}) and $i_f$ (Lemma~\ref{ugselg}). Then the maps $i_f\circ\eta_f$ and $i_{f'}\circ\eta_{f'}$ are equal injective functions from $\lambda$ to $\mathbb{G}_L$.
\end{lem}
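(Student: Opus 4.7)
The plan is to unwind the defining property of $\eta_f$ (Lemma~\ref{eta}) and the formula for $i_f$ in Equation~\eqref{coreinclmap}, and check that both compositions, evaluated at a fixed $\beta \in \lambda$, produce an element of $\mathbb{G}_L$ with a representative that equals $\beta$ on a set in $\D_1 := \D\restriction\ess(L)$. Since such an element of $\mathbb{G}_L$ is unique by \L o\'s theorem, this forces $i_f \circ \eta_f = i_{f'} \circ \eta_{f'}$. Injectivity then follows from either side of the equality by combining injectivity of $\eta_f$ (Lemma~\ref{eta}) with injectivity of $i_f$ (Lemma~\ref{ugselg}).

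First I would verify the compatibility of the essential ranges: if $(g_n)_{n\in\omega}$ is the conjugate of $f$, then $f\subseteq L$ gives $g_1(\alpha)\subseteq k_1(\alpha)$ for every $\alpha$, so $g_1(\alpha)\neq\emptyset$ implies $k_1(\alpha)\neq\emptyset$. Hence $\ess(f)\subseteq\ess(L)$ and likewise $\ess(f')\subseteq\ess(L)$. By Corollary~\ref{Drestriction}(1), any set in $\D\restriction\ess(f)$ is also in $\D_1$, and analogously for $f'$.

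Next, fix $\beta\in\lambda$. By Lemma~\ref{eta} applied to $f$, any representative $s_\beta\in\prod_{\alpha\in\ess(f)}g_1(\alpha)$ of $\eta_f(\beta)$ satisfies
\[
A_f := \{\alpha\in\ess(f) : s_\beta(\alpha) = \beta\}\in\D\restriction\ess(f)\subseteq\D_1.
\]
By the formula in Equation~\eqref{coreinclmap}, the element $(i_f\circ\eta_f)(\beta)\in\mathbb{G}_L$ is represented by the function $\tilde s_\beta\in\prod_{\alpha\in\ess(L)}k_1(\alpha)$ that agrees with $s_\beta$ on $\ess(f)$ and equals the auxiliary $\gamma_\alpha$ off of $\ess(f)$. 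In particular, $\tilde s_\beta(\alpha) = \beta$ for every $\alpha\in A_f$. Running the same argument with $f'$ in place of $f$ produces a representative $\tilde s_\beta'$ of $(i_{f'}\circ\eta_{f'})(\beta)$ and a set $A_{f'}\in\D_1$ on which $\tilde s_\beta'(\alpha) = \beta$.

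Finally, on the intersection $A_f\cap A_{f'}\in\D_1$ we have $\tilde s_\beta(\alpha) = \beta = \tilde s_\beta'(\alpha)$, so \L o\'s theorem yields $\tilde s_\beta/\D_1 = \tilde s_\beta'/\D_1$ in $\mathbb{G}_L$; that is, $(i_f\circ\eta_f)(\beta) = (i_{f'}\circ\eta_{f'})(\beta)$. Since $\beta$ was arbitrary, the two maps $\lambda\to\mathbb{G}_L$ coincide, and injectivity of the common map follows from injectivity of $\eta_f$ and $i_f$. I do not anticipate any real obstacle here; the only minor bookkeeping is confirming the containment $\ess(f)\subseteq\ess(L)$ so that the various ultrafilter restrictions interact correctly.
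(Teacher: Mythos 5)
Your proof is correct and takes essentially the same approach as the paper: both arguments fix $\beta$, observe via the defining property of $\eta_f$ and the formula in Equation~\eqref{coreinclmap} that representatives of $(i_f\circ\eta_f)(\beta)$ and $(i_{f'}\circ\eta_{f'})(\beta)$ each equal $\beta$ on a $\D_1$-large set, and conclude equality in $\mathbb{G}_L$; the paper just works directly with the specific $h_\beta$ representatives from Equation~\eqref{hbeta} rather than invoking Lemma~\ref{eta}'s characterization for an arbitrary representative. Your explicit verification of $\ess(f)\subseteq\ess(L)$ is a sensible bookkeeping addition that the paper leaves implicit.
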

\begin{proof}
    Let the conjugates of $f$ and $f'$ be given by $(g_n)_{n\in\omega}$ and $(g'_n)_{n\in\omega}$ respectively.
    Fix some $\beta\in\lambda$ and let \[h_\beta\in\prod_{\alpha\in\ess(f)}g_1(\alpha)\;\text{ and }\;h'_\beta\in\prod_{\alpha\in\ess{f'}}g'_1(\alpha)\] be the representatives for $\eta_f(\beta)$ and $\eta_{f'}(\beta)$ given by Equation~\eqref{hbeta} (applied to $\eta_f,h_\beta$ and $\eta_{f'},h_\beta'$ respectively). Define $\D_1=\D\restriction \ess(f)$ and $\D_1'=\D\restriction\ess(f')$. Then we have that
    \begin{align*}
        \{\alpha\in\ess(L):h_\beta(\alpha)=h'_\beta(\alpha)\}&\supseteq \{\alpha\in\ess(L):\beta\in g_1(\alpha)\cap g'_1(\alpha)\}\\
        &=f(\{\beta\})\cap f'(\{\beta\})\\
        &\in\D.
    \end{align*}
    Applying \L o\'s theorem to the representatives for $i_f(h_\beta/\D_1)$ and $i_{f'}(h'_\beta/\D_1')$ given in Equation~\eqref{coreinclmap} shows that $i_f(\eta_f(\beta))=i_{f'}(\eta_{f'}(\beta))$. $i_f\circ\eta_f$ is injective because both $i_f$ and $\eta_f$ are.
\end{proof}

\begin{defn}\label{etal}
    If $p(\bar{x})$ is a graph-like $\varphi$-type, we will denote by $\eta_L$ the unique map $\lambda\to\mathbb{G}_L$ that can be written as $i_f\circ\eta_f$ for any graph-like distribution $f$ of $p(\bar{x})$.
\end{defn}

Identifying the subgraph induced by $\eta_L[\lambda]$ in $\mathbb{G}_L$ allows us to capture much of the information contained within distributions by looking at internal subgraphs of $\mathbb{G}_L$ that contain $\eta_L[\lambda]$ as a complete subgraph and that are \emph{hyperfinite}. We record a proof of this correspondence here but will not use it until the next section.

\begin{defn}
    A subset $A$ of an ultraproduct $\prod_{\alpha<\lambda}\M_\alpha/\D$ is called \emph{hyperfinite} if it can be expressed as an ultraproduct $\prod_{\alpha<\lambda}A_\alpha/\D$ where each $A_\alpha$ is a finite subset of $M_\alpha$. We say that a structure is \emph{hyperfinite} if its underlying set is hyperfinite.
\end{defn}

A hyperfinite subset of an ultraproduct $\widehat\M=\prod_{\alpha<\lambda}\M_\alpha/\D$ is necessarily an internal subset of $\widehat\M$ as every hyperfinite subset is expressible as an ultraproduct of subsets of $M_\alpha$.

\begin{thm}\label{extendingetalequalsdistribution}
    Let $p(\bar{x})$ be a graph-like type with \L o\'s conjugate $(k_n)_{n\in\omega}$ and let $H\se\mathbb{G}_L$. Then the following are equivalent:
    \begin{enumerate}
        \item  $H$ is hyperfinite with $\eta_L[\lambda]\se H$.
        \item There is a graph-like distribution $f$ for $p(\bar{x})$ such that $i_f[G_f]=H$.
    \end{enumerate}
    Moreover, if $H=\prod_{\alpha<\lambda}H_\alpha/\D$ with each $H_\alpha$ finite, then $f$ can be taken so that $i_f[\mathbb{G}_f]$ is the subgraph of $\mathbb{G}_L$ induced by $H$ and so that, if $(\mathbb{G}_\alpha)_{\alpha<\lambda}$ is the distribution graph sequence of $f$, the graph $\mathbb{G}_\alpha$ is the subgraph of $\langle k_1(\alpha),k_2(\alpha)\rangle$ induced by $H_\alpha$ for all $\alpha<\lambda$.
\end{thm}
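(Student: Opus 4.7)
The plan is to prove both directions separately and use Lemma~\ref{gldistconjk} to construct the required distribution in the hard direction. For the easy direction $(2)\Rightarrow(1)$: given a graph-like distribution $f$ with $i_f[\mathbb{G}_f]=H$, the vertex set of $\mathbb{G}_f$ is $\prod_{\alpha\in\ess(f)}g_1(\alpha)/(\D\restriction\ess(f))$ with each $g_1(\alpha)$ finite, so $\mathbb{G}_f$ is hyperfinite. Since $\ess(f)\in\D$, we may re-express $i_f[\mathbb{G}_f]$ as an ultraproduct of finite sets (using the description of internal images from the proof of Lemma~\ref{inthomandcomp}(2), padding with $\emptyset$ outside $\ess(f)$), so $H$ is hyperfinite. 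The containment $\eta_L[\lambda]\se H$ is immediate from $\eta_L=i_f\circ\eta_f$ (Definition~\ref{etal}) and $\eta_f[\lambda]\se\mathbb{G}_f$.

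For the harder direction $(1)\Rightarrow(2)$, write $H=\prod_{\alpha<\lambda}H_\alpha/\D$ with each $H_\alpha$ finite, and after replacing $H_\alpha$ by $H_\alpha\cap k_1(\alpha)$ (which does not change $H$ since $H\se\mathbb{G}_L$) assume $H_\alpha\se k_1(\alpha)$ for all $\alpha$. Define
\[
g_1(\alpha)=H_\alpha,\qquad g_2(\alpha)=[H_\alpha]^2\cap k_2(\alpha).
\]
The plan is to verify the four conditions of Lemma~\ref{gldistconjk}, producing a unique graph-like distribution $f$ whose conjugate begins with $g_1,g_2$. Conditions (1) and (3) are immediate from the construction. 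Condition (2) follows from $p(\bar{x})$ being graph-like: if $[\Delta]^2\se g_2(\alpha)\se k_2(\alpha)$, then the graph-like identity for $L$ (formulated via the \L o\'s conjugate as in Theorem~\ref{uniqglconj}) gives $\Delta\in k_{|\Delta|}(\alpha)$.

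The main obstacle is condition (4): for each $\{\beta,\gamma\}\in[\lambda]^2$, we need $\{\alpha:\{\beta,\gamma\}\in g_2(\alpha)\}\in\D$. Here is where $\eta_L[\lambda]\se H$ is used. By Lemma~\ref{eta} and the description of $i_f$ in Equation~\eqref{coreinclmap}, for any preliminary graph-like distribution (which exists by Lemma~\ref{glextensions}) the element $\eta_L(\beta)\in\mathbb{G}_L$ admits a representative in $\prod_{\alpha}k_1(\alpha)$ equal to $\beta$ on a $\D$-large set. On the other hand, $\eta_L(\beta)\in H$ yields a representative in $\prod_{\alpha}H_\alpha$. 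By \L o\'s theorem these representatives agree on a $\D$-large set, so $A_\beta:=\{\alpha:\beta\in H_\alpha\}\in\D$. Together with the analogous $A_\gamma\in\D$ and $L(\{\beta,\gamma\})\in\D$ (from $p(\bar{x})$ being a type), the intersection $A_\beta\cap A_\gamma\cap L(\{\beta,\gamma\})$ lies in $\D$ and is contained in $\{\alpha:\{\beta,\gamma\}\in g_2(\alpha)\}$, completing condition (4).

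Applying Lemma~\ref{gldistconjk} yields the graph-like distribution $f$. It remains to identify $i_f[\mathbb{G}_f]$ with the subgraph of $\mathbb{G}_L$ induced by $H$. As vertex sets, $i_f[\mathbb{G}_f]=\prod_{\alpha}g_1(\alpha)/\D=\prod_{\alpha}H_\alpha/\D=H$ by (the proof of) Lemma~\ref{inthomandcomp}(2); as edge sets, $g_2(\alpha)=[H_\alpha]^2\cap k_2(\alpha)$ gives exactly the edges of $\mathbb{G}_L$ on pairs in $H_\alpha$, so the image is the induced subgraph. This simultaneously verifies the moreover clause, since $\mathbb{G}_\alpha=(g_1(\alpha),g_2(\alpha))$ is by construction the subgraph of $\langle k_1(\alpha),k_2(\alpha)\rangle$ induced by $H_\alpha$.
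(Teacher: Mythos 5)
Your proof is correct and takes essentially the same approach as the paper: both directions reduce to verifying the conditions of Lemma~\ref{gldistconjk} for the choice $g_1(\alpha)=H_\alpha$, $g_2(\alpha)=[H_\alpha]^2\cap k_2(\alpha)$, with the graph-like identity for $L$ handling condition~(2) and the assumption $\eta_L[\lambda]\se H$ handling condition~(4). Your unwinding of condition~(4) via Lemma~\ref{eta} and Equation~\eqref{coreinclmap} is more explicit than the paper's terse remark that condition~(4) is equivalent to $\eta_L[\lambda]$ inducing a complete subgraph of $\mathbb{H}$, but the underlying argument is the same.
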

\begin{proof}
    $(2)\Rightarrow(1)$: The ultragraph $\mathbb{G}_f$ is hyperfinite because the distribution graph sequence of $f$ consists of finite graphs. Let $i_\alpha$ be the inclusion $g_1(\alpha)\to k_1(\alpha)$. Then the image of $i_f$ is equal to $\prod_{\alpha<\lambda} i_\alpha[G_\alpha]/\D$, so we have that the set $H=i_f[G_f]$ is hyperfinite. This is a specific case of internal functions preserving hyperfinite sets.
    
    $(1)\Rightarrow(2)$: Let $\mathbb{H}=\langle H,E^{\mathbb{H}}\rangle$ be the subgraph of $\mathbb{G}_L$ induced by $H$ and pick finite $H_\alpha\se k_1(\alpha)$ so that $H=\prod_{\alpha<\lambda}H_\alpha/\D$. Then $\mathbb{H}=\prod_{\alpha<\lambda}\mathbb{H}_\alpha/\D$ where each $\mathbb{H}_\alpha$ is the finite subgraph of $\mathbb{K}_\alpha:=\langle k_1(\alpha),k_2(\alpha)\rangle$ induced by $H_\alpha$. We claim that $(\mathbb{H}_\alpha)_{\alpha<\lambda}$ is the distribution graph sequence of a graph-like distribution $f$ of $p(\bar{x})$. We verify that $(\mathbb{H}_\alpha)_{\alpha<\lambda}$ satisfies the conditions of Lemma~\ref{gldistconjk} ($g_1(\alpha)=H_\alpha$ and $g_2(\alpha)=E^{\mathbb{H}_\alpha}$ following the definition of a graph distribution sequence).
    
    Conditions $(1)$-$(3)$ of Lemma~\ref{gldistconjk} are equivalent to $\mathbb{H}_\alpha$ being a subgraph of $\mathbb{K}_\alpha$ given that $p(\bar{x})$ is graph-like. Condition $(4)$ is equivalent to $\eta_L[\lambda]$ inducing a complete subgraph in $\mathbb{H}$, which is equivalent to $\eta_L[\lambda]$ inducing a complete subgraph of $\mathbb{G}_L$.
    
    The result now follows from the fact that $i_f$ is the inclusion of $\mathbb{H}$ in $\mathbb{G}_L$.
\end{proof}

\begin{thm}\label{realiffint}
	Suppose that $p(\bar{x})$ is a graph-like $\varphi$- and $\lambda$-type with {\L}o{\'s} map $L$. Then $p(\bar{x})$ is realized in $\widehat\M$ if and only if the image of $\eta_L:\lambda\to\mathbb{G}_L$ is a subset of an internal $A\se G_L$ that induces a complete subgraph.
\end{thm}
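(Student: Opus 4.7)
The plan is to combine Corollary~\ref{multiffint} with the structural relationship between $\mathbb{G}_L$ and the various distribution ultragraphs provided by Lemma~\ref{ugselg} and Theorem~\ref{extendingetalequalsdistribution}. The point is that $\mathbb{G}_L$ serves as an ``ambient'' ultragraph into which every distribution ultragraph $\mathbb{G}_f$ embeds (via the internal homomorphism $i_f$) in such a way that $\eta_L = i_f \circ \eta_f$ (Lemma~\ref{etaequiv} and Definition~\ref{etal}). This compatibility lets us translate the multiplicative-refinement criterion of Corollary~\ref{multiffint} from statements about $\mathbb{G}_f$ into statements about $\mathbb{G}_L$.

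For the forward direction, suppose $p(\bar{x})$ is realized. By Fact~\ref{keislerfact}, using the representatives $(\bar{r}_\beta)$ that witness graph-likeness, there is a multiplicative distribution $f$ for $p(\bar{x})$. By Lemma~\ref{multimpgl} such an $f$ is graph-like, and by Lemma~\ref{multviaseq} every graph in its distribution graph sequence is complete, so $\mathbb{G}_f$ is a complete graph. Lemma~\ref{ugselg} then says that $i_f[\mathbb{G}_f]$ is an internal subset of $\mathbb{G}_L$, and since $i_f$ is a graph homomorphism out of a complete graph, $i_f[\mathbb{G}_f]$ induces a complete subgraph of $\mathbb{G}_L$ which contains $i_f[\eta_f[\lambda]] = \eta_L[\lambda]$ by Definition~\ref{etal}. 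Take $A := i_f[\mathbb{G}_f]$.

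For the reverse direction, suppose $\eta_L[\lambda] \se A$ where $A$ is an internal complete subgraph of $\mathbb{G}_L$. The main technical step, and the place where regularity of $\D$ does the essential work, is to produce a \emph{hyperfinite} subset $H \se A$ containing $\eta_L[\lambda]$; we cannot feed the internal set $A$ directly into Theorem~\ref{extendingetalequalsdistribution}, which requires a hyperfinite input. Writing $A = \prod_{\alpha \in \ess(L)} A_\alpha/\D'$, choose for each $\beta < \lambda$ a representative $h_\beta \in \prod_\alpha A_\alpha$ of $\eta_L(\beta)$, and fix a regularizing family $(X_\beta)_{\beta<\lambda} \se \D$. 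Define
\[H_\alpha := \{h_\beta(\alpha) : \alpha \in X_\beta\}.\]
Regularity makes each $H_\alpha$ finite; each $\eta_L(\beta)$ lies in $H := \prod_\alpha H_\alpha / \D$ because $X_\beta \in \D$; and $H \se A$ because every $h_\beta(\alpha)$ lies in $A_\alpha$. Since $H \se A$ and $A$ induces a complete subgraph of $\mathbb{G}_L$, so does $H$.

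Now apply Theorem~\ref{extendingetalequalsdistribution} to the hyperfinite $H$ to obtain a graph-like distribution $f$ for $p(\bar{x})$ whose distribution graph sequence $(\mathbb{G}_\alpha)_{\alpha<\lambda}$ consists of the induced subgraphs of $\langle k_1(\alpha),k_2(\alpha)\rangle$ on $H_\alpha$ and whose ultragraph $\mathbb{G}_f$ is isomorphic to the induced subgraph of $\mathbb{G}_L$ on $H$, which is complete. Then $G_f$ itself is an internal subset of $\mathbb{G}_f$ (taking $A_\alpha = g_1(\alpha)$ in the definition of internal) inducing a complete subgraph and containing $\eta_f[\lambda]$, so by Corollary~\ref{multiffint} $f$ has a multiplicative refinement, and by Fact~\ref{keislerfact} $p(\bar{x})$ is realized in $\widehat\M$.
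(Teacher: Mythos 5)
Your proof of the forward direction is essentially the paper's argument: pass from a multiplicative (hence graph-like) distribution to its complete ultragraph, embed via $i_f$, and take $A = i_f[\mathbb{G}_f]$. For the reverse direction, however, you take a genuinely different and more careful route, and the comparison is worth spelling out. The paper's $(\Leftarrow)$ is compressed into two sentences: it observes via Lemma~\ref{etaequiv} that all the $\eta_f$'s agree with $\eta_L$ under $i_f$, then asserts that Corollary~\ref{multiffint} gives every graph-like distribution a multiplicative refinement. But Corollary~\ref{multiffint} asks for an internal complete subgraph of $\mathbb{G}_f$ containing $\eta_f[\lambda]$, while the hypothesis hands us an internal complete subgraph $A$ of $\mathbb{G}_L$; transporting $A$ back to $\mathbb{G}_f$ via $i_f^{-1}$ preserves internality (Lemma~\ref{inthomandcomp}(2)) but does \emph{not} in general preserve completeness, because $i_f$ is only guaranteed to be an injective homomorphism, not one that preserves non-edges. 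Your proof sidesteps this transfer problem entirely: rather than fixing an arbitrary $f$, you \emph{construct} one whose ultragraph is already complete, by first shrinking $A$ to a hyperfinite $H$ containing $\eta_L[\lambda]$ (using regularity to keep the $H_\alpha$ finite), then invoking Theorem~\ref{extendingetalequalsdistribution} to realize $H$ as $i_f[\mathbb{G}_f]$ with $i_f$ an induced embedding, whence $\mathbb{G}_f$ itself is complete and Corollary~\ref{multiffint} applies trivially with the whole vertex set $G_f$ as the internal complete subgraph. Two things you get from this approach: it makes explicit where regularity is needed (an internal subset of $\mathbb{G}_L$ is not automatically hyperfinite, since the $k_1(\alpha)$ may be infinite), and it produces the required graph-like distribution with a multiplicative refinement rather than asserting that \emph{all} graph-like distributions have one, which is false in general (cf.\ Corollary~\ref{typelikeexts}, where distributions merely \emph{extend to} ones with multiplicative refinements). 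Two minor points: your display $H_\alpha := \{h_\beta(\alpha) : \alpha \in X_\beta\}$ should quantify over $\beta$, i.e.\ $H_\alpha := \{h_\beta(\alpha) : \beta<\lambda,\ \alpha\in X_\beta\}$; and you should replace any empty $H_\alpha$ by a singleton (this happens only on a $\D$-null set and does not affect $H$), or at least note that the construction tolerates it.
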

\begin{proof}
	($\Rightarrow$): The type $p(\bar{x})$ is realized if and only if there is a multiplicative distribution $j$ for $p(\bar{x})$ by Fact~\ref{keislerfact}. Then, by Corollary~\ref{conjmult}, $j$ is a graph-like distribution for $p(\bar{x})$ and, by Lemma~\ref{multviaseq} and Lemma~\ref{ugselg}, with $\mathbb{G}_j$ being an internal complete subgraph of $\mathbb{G}_L$. Moreover, by Lemma~\ref{etaequiv}, $i_j\circ\eta_j=\eta_L$ so $i_j[\mathbb{G}_j]$ contains the image of $\eta_L$. Thus $i_j[\mathbb{G}_j]$ is an internal subset of $\mathbb{G}_L$ extending $\eta_L[\lambda]$ and inducing a complete subgraph.
	
	($\Leftarrow$): By Lemma~\ref{etaequiv} we have that $\eta_f[\lambda]=\eta_L[\lambda]$ for any graph-like distribution $f$ of $p(\bar{x})$. Using Corollary~\ref{multiffint} then shows that every graph-like distribution has a multiplicative refinement, which is equivalent to $p(\bar{x})$ being realized by Fact~\ref{keislerfact}.
\end{proof}

\begin{cor}\label{indsublug}
    If $p(\bar{x})$ is a graph-like type and $f$ is a graph-like distribution such that $i_f[\mathbb{G}_f]$ is an induced subgraph of $\mathbb{G}_L$ then the type $p(\bar{x})$ is realized if and only if $f$ has a multiplicative refinement.
\end{cor}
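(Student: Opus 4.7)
The plan is to reduce the statement to Corollary~\ref{multiffint} on one side and to Fact~\ref{keislerfact} on the other side, using Theorem~\ref{realiffint} as the bridge and exploiting the hypothesis that $i_f[\mathbb{G}_f]$ is an \emph{induced} subgraph of $\mathbb{G}_L$ in an essential way.

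The reverse direction is immediate: any multiplicative refinement of $f$ is itself a multiplicative distribution for $p(\bar{x})$, so Fact~\ref{keislerfact} already gives realization. So the content is in the forward direction. Assume $p(\bar{x})$ is realized. By Theorem~\ref{realiffint} there is an internal $K\subseteq G_L$ inducing a complete subgraph of $\mathbb{G}_L$ with $\eta_L[\lambda]\subseteq K$. I want to pull $K$ back along $i_f$ to produce an internal complete subgraph of $\mathbb{G}_f$ containing $\eta_f[\lambda]$, at which point Corollary~\ref{multiffint} finishes the job.

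The three things to check about $i_f^{-1}[K]\subseteq G_f$ are internality, completeness, and that it contains $\eta_f[\lambda]$. Internality is Lemma~\ref{inthomandcomp}(2) applied to the internal function $i_f$ (which is an internal injective graph homomorphism by Lemma~\ref{ugselg}). That $\eta_f[\lambda]\subseteq i_f^{-1}[K]$ follows from Lemma~\ref{etaequiv}, which says $i_f\circ\eta_f=\eta_L$, together with $\eta_L[\lambda]\subseteq K$. The step that uses the extra hypothesis is completeness: for distinct $u,v\in i_f^{-1}[K]$, the images $i_f(u),i_f(v)$ are distinct elements of $K$ (by injectivity of $i_f$) and are therefore edge-related in $\mathbb{G}_L$; because $i_f[\mathbb{G}_f]$ is an \emph{induced} subgraph of $\mathbb{G}_L$, the edge between $i_f(u)$ and $i_f(v)$ in $\mathbb{G}_L$ lifts to an edge between $u$ and $v$ in $\mathbb{G}_f$.

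The main obstacle, such as it is, is exactly this last step: without the induced-subgraph hypothesis, $i_f$ only preserves edges in the forward direction, so a complete subgraph downstairs in $\mathbb{G}_L$ need not pull back to a complete subgraph upstairs in $\mathbb{G}_f$. This is why the general statement in Theorem~\ref{realiffint} must be phrased in terms of $\mathbb{G}_L$ rather than $\mathbb{G}_f$, and it is precisely the induced-subgraph assumption in the corollary that lets us transfer the internal complete subgraph witnessing realization back across $i_f$ to obtain the internal complete extension of $\eta_f[\lambda]$ required by Corollary~\ref{multiffint}.
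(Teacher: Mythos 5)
Your proposal is correct and follows the same route as the paper: both directions are handled identically, with the forward direction obtained by applying Theorem~\ref{realiffint} to get an internal complete $K\supseteq\eta_L[\lambda]$, pulling back along the injective, non-edge-preserving (because its image is induced) map $i_f$ to obtain an internal complete subgraph of $\mathbb{G}_f$ containing $\eta_f[\lambda]$, and then invoking Corollary~\ref{multiffint}. The only cosmetic difference is that you verify completeness of $i_f^{-1}[K]$ by hand where the paper cites Lemma~\ref{inthomandcomp}(4); the underlying argument is the same.
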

\begin{proof}
    $(\Rightarrow)$: If $p(\bar{x})$ is realized, then, by Theorem~\ref{realiffint}, there is a complete internal subgraph $\mathbb{K}$ of $\mathbb{G}_L$ containing $\eta_L[\lambda]=i_f\circ\eta_f[\lambda]$. We must have that $i_f$ preserves non-edges because the image of $i_f$ is an induced subgraph and we already know that $i_f$ is injective. By Lemma~\ref{inthomandcomp}(2) and (4) and the internal definition of $i_f$, we have that $\mathbb{K}':=i_f^{-1}[\mathbb{K}]$ is a complete internal subgraph of $\mathbb{G}_f$ containing $\eta_f[\lambda]$. Thus, by Corollary~\ref{multiffint}, the distribution $f$ has a multiplicative refinement.

    $(\Leftarrow)$: If $f$ has a multiplicative refinement, then $p(\bar{x})$ is realized by Fact~\ref{keislerfact}.
\end{proof}

\begin{cor}\label{typelikeexts}
    For a graph-like type $p(\bar{x})$ with representatives witnessing $p(\bar{x})$ being graph-like, $p(\bar{x})$ is realized in $\widehat{\M}$ if and only if every distribution $f$ extends to a distribution $\hat{f}$ that has a multiplicative refinement. Moreover, the extension $\hat{f}$ can be taken so that if $(g_n)_{n\in\omega}$ is the conjugate of $f$ and $(\hat{g}_n)_{n\in\omega}$ is the conjugate of $\hat{f}$ then
    \begin{enumerate}
        \item $g_1(\alpha)=\hat{g}_1(\alpha)$ and
        \item $\langle \hat{g}_1(\alpha),\hat{g}_2(\alpha)\rangle$ is an induced subgraph of $\langle k_1(\alpha),k_2(\alpha)\rangle$ where $(k_n)_{n\in\omega}$ is the \L o\'s conjugate of $p(\bar{x})$.
    \end{enumerate} 
\end{cor}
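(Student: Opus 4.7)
The plan is to prove the $(\Leftarrow)$ direction immediately from Fact~\ref{keislerfact}, since distributions for a $\lambda$-type over a regular ultrafilter always exist and a multiplicative refinement of any one of them witnesses realization. The $(\Rightarrow)$ direction requires an explicit construction of $\hat{f}$ from $f$ that is simultaneously large enough to extend $f$, structured enough to satisfy moreover clause (2), and recognizable to Corollary~\ref{indsublug}.

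Given any distribution $f$ with conjugate $(g_n)_{n\in\omega}$, I would set
\[
\hat{g}_1(\alpha):=g_1(\alpha),\qquad \hat{g}_2(\alpha):=k_2(\alpha)\cap[g_1(\alpha)]^2,
\]
where $(k_n)_{n\in\omega}$ is the \L o\'s conjugate of $p(\bar{x})$, and then verify that $\hat{g}_1,\hat{g}_2$ determine a graph-like distribution $\hat{f}$ via the four conditions of Lemma~\ref{gldistconjk}. Condition~(1) holds since $f$ refines $L$ on singletons; condition~(3) is immediate from the definition of $\hat{g}_2$; condition~(4) follows from $\{\alpha:\{\beta,\gamma\}\in\hat{g}_2(\alpha)\}\supseteq L(\{\beta,\gamma\})\cap f(\{\beta\})\cap f(\{\gamma\})\in\D$; and condition~(2) is the only place where graph-likeness of $p(\bar{x})$ is essential, supplying the identity $k_n(\alpha)=\{\Delta\in[\lambda]^n:[\Delta]^2\se k_2(\alpha)\}$ for $n\ge 2$.

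To check that $\hat{f}$ extends $f$ and satisfies the moreover clauses, I would note that monotonicity of $f$ combined with $f$ refining $L$ yields $g_2(\alpha)\se k_2(\alpha)\cap[g_1(\alpha)]^2=\hat{g}_2(\alpha)$, so $f(\Delta)\se\hat{f}(\Delta)$ for $|\Delta|\le 2$. For $|\Delta|\ge 3$, if $\alpha\in f(\Delta)$ then monotonicity of $f$ gives $[\Delta]^2\se g_2(\alpha)\se\hat{g}_2(\alpha)$, and graph-likeness of $\hat{f}$ via Theorem~\ref{uniqglconj} forces $\alpha\in\hat{f}(\Delta)$. The moreover clauses hold by construction: $\hat{g}_1=g_1$, and the equality $\hat{g}_2(\alpha)=k_2(\alpha)\cap[\hat{g}_1(\alpha)]^2$ is precisely the statement that $\langle\hat{g}_1(\alpha),\hat{g}_2(\alpha)\rangle$ is an induced subgraph of $\langle k_1(\alpha),k_2(\alpha)\rangle$.

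Finally, I would apply Corollary~\ref{indsublug} to $\hat{f}$: since each $\langle\hat{g}_1(\alpha),\hat{g}_2(\alpha)\rangle$ is an induced subgraph of $\langle k_1(\alpha),k_2(\alpha)\rangle$, \L o\'s theorem lifts this to $i_{\hat{f}}[\mathbb{G}_{\hat{f}}]$ being an induced subgraph of $\mathbb{G}_L$, and the assumption that $p(\bar{x})$ is realized then produces the multiplicative refinement of $\hat{f}$. The main subtlety is pinning down the right edge set: the naive choice $\hat{g}_2:=g_2$ from Lemma~\ref{glextensions} makes $\hat{f}$ graph-like but generally fails the induced-subgraph condition, whereas $\hat{g}_2:=k_2\cap[g_1]^2$ is maximal among graph-like extensions with vertex sets $g_1(\alpha)$ and so makes Corollary~\ref{indsublug} applicable.
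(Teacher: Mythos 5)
Your proof follows the paper's argument exactly: you take $\hat{g}_1(\alpha)=g_1(\alpha)$, $\hat{g}_2(\alpha)=k_2(\alpha)\cap[g_1(\alpha)]^2$, observe this makes $i_{\hat f}[\mathbb{G}_{\hat f}]$ an induced subgraph of $\mathbb{G}_L$, invoke Corollary~\ref{indsublug} for the forward direction, and invoke Fact~\ref{keislerfact} for the reverse. The only difference is that you spell out in full detail the verification of Lemma~\ref{gldistconjk}'s conditions and the check that $\hat f$ refines-extends $f$, whereas the paper leaves these implicit.
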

\begin{proof}
    ($\Rightarrow$): Let $\hat{g}_1(\alpha)=g_1(\alpha)$ and $\hat{g}_2(\alpha)=k_2(\alpha)\cap[g_1(\alpha)]^2$. Then $\mathbb{G}_{\hat{f}}$ is an induced subgraph of $\mathbb{G}_L$ and we can use Corollary~\ref{indsublug}.
    
    ($\Leftarrow$): The existence of any distribution having a multiplicative refinement is equivalent to $p(\bar{x})$ having a multiplicative refinement by Fact~\ref{keislerfact}.
\end{proof}

\section{Graph-like Formula Sets and their Shapes}

It is frequently useful (as in the definition of $\mathrm{SOP}_2$-types) to restrict attention to certain subsets of the possible parameters for a $\varphi$-type by specifying where the representatives for the parameters can come from in each index structure $\mathcal{M}_\alpha$. We will generally choose these parameters to be such that the resulting $\varphi$-types are guaranteed to be graph-like. We begin with some definitions that allow us to talk about this situation more concretely.

\begin{defn}\label{varphiset}
	Given a formula $\varphi(\bar{x};\bar{y})$ in a language $\mathcal{L}$, a \emph{$\varphi$-set} is a nonempty collection $A\se M^n$ where $M$ is the universe of some structure $\mathcal{M}$ in the language $\mathcal{L}$ and the length of the tuple $\bar{y}$ is $n$.
\end{defn}

Note: unless explicitly stated, we will not require $\M$ to be an ultraproduct.

\begin{defn}\label{glvarphiset}
	If $\varphi(\bar{x},\bar{y})$ is a formula and $A$ is a $\varphi$-set in some structure $\mathcal{M}$ then we call $A$ a \emph{graph-like $\varphi$-set} if for all $B\se A$ such that $2\leq|B|<\aleph_0$ we have that the set $\{\varphi(\bar{x};\bar{a}):\bar{a}\in B\}$ is consistent if and only if for all $C\in[B]^2$ the set $\{\varphi(\bar{x};\bar{a}):\bar{a}\in C\}$ is consistent.
\end{defn}

We study what the collection of graphs arising from these types look like and what these collections can tell us about the possible range of types that can be created by using parameters from $A$.

\begin{defn}\label{shape}
	If $A$ is a graph-like $\varphi$-set coming from the structure $\mathcal{M}$ then \emph{the shape of $A$}, denoted $\sh(A)$, is defined to be the set of pairs $(\mathbb{G},\theta)$ where 
	\begin{enumerate}
	    \item there is an $m\in\omega\sm\{0\}$ so that $\mathbb{G}$ is a finite simple graph with vertices $\{1,2,\ldots,m\}$,
	    \item $\theta$ is a function $G\to A$,
	    \item for each $i\in G$ we have that $\M\vDash \exists \bar{x},\,\varphi(\bar{x};\theta(i))$, and
	    \item there is an edge between distinct $i$ and $j$ in $\mathbb{G}$ if and only if
	    \[\M\vDash\exists\bar{x},\,\varphi(\bar{x};\theta(i))\wedge\varphi(\bar{x};\theta(j)).\]
	\end{enumerate}
\end{defn}

\begin{defn}\label{Aalphatype}
	Let $(A_\alpha)_{\alpha<\lambda}$ be a sequence of $\varphi$-sets in the structures $(\mathcal{M}_\alpha)_{\alpha<\lambda}$. We say that a $\varphi$-type $p(\bar{x})$ of $\widehat{M}=\prod_{\alpha<\lambda}\M_\alpha/\D$ is an \emph{$(A_\alpha)$-type} when there is a sequence $(\bar{r}_\beta)_{\beta<\lambda}$ of representatives for the parameters in $p(\bar{x})$ such that for each index $\alpha<\lambda$ the set $\{\bar{r}_\beta(\alpha):\beta<\lambda\}$ is a subset of $A_\alpha$. That is, $p(\bar{x})$ is an $(A_\alpha)$-type precisely when the parameters for $p(\bar{x})$ come from the set $A_\alpha^\D$.
\end{defn}

The next result is a demonstration of the fact that $\sh(A_\alpha)$ (where $(A_\alpha)_{\alpha<\lambda}$ is a sequence of $\varphi$-sets) captures the notion of which graphs are induced subgraphs of the corresponding graph in the distribution graph sequence for the \L o\'s ultragraph of an $(A_\alpha)$-type. This will allow us to apply Corollary~\ref{indsublug} to distributions $f$ of the $(A_\alpha)$-type $p(\bar{x})$ whose distribution graph sequence comes from $\sh(A_\alpha)$ for each $\alpha<\lambda$. That is, all such distributions $f$ having multiplicative refinements will be equivalent to all $(A_\alpha)$-types in $\widehat\M$ being realized in $\widehat\M$. The remainder of this section will be dedicated to determining the form that such distributions must take and how to interpret these statements using the conjugate distribution and ultragraph.

\begin{lem}\label{shapeinducedlos}
    Let $(A_\alpha)_{\alpha<\lambda}$ be a sequence of graph-like $\varphi$-sets coming from structures $(\M_\alpha)_{\alpha<\lambda}$ all in the common language $\mathcal{L}$ and let $p(\bar{x})$ be an $(A_\alpha)$-type. Let $f$ be a distribution for $p(\bar{x})$ with representatives $(\bar{r}_\beta)_{\beta<\lambda}$ witnessing that $p(\bar{x})$ is an $(A_\alpha)$-type and let $(g_n)_{n\in\omega}$ and $(\mathbb{G}_\alpha)_{\alpha<\lambda}$ be the conjugate and distribution graph sequence of $f$. Fix $\alpha<\lambda$. Then there is an element $(\mathbb{G},\theta)$ of $\sh(A_\alpha)$ and a graph isomorphism $\psi\colon\mathbb{G}_\alpha\to\mathbb{G}$ with the property that $\theta(\psi(\beta))=\bar{r}_\beta(\alpha)$ for each $\beta\in\mathbb{G}_\alpha$ if and only if $\mathbb{G}_\alpha$ is an induced subgraph of $\langle k_1(\alpha),k_2(\alpha)\rangle$ where $(k_n)_{n\in\omega}$ is the \L o\'s conjugate for $p(\bar{x})$.
\end{lem}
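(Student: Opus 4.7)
The plan is to unpack both sides of the biconditional directly in terms of the definitions of $g_1$, $g_2$, $k_1$, $k_2$, and $\sh(A_\alpha)$, noting that the only substantive content concerns the edge relation (the vertex containment $g_1(\alpha)\se k_1(\alpha)$ and the edge containment $g_2(\alpha)\se k_2(\alpha)$ are automatic from $f\se L$). The fact that $p(\bar{x})$ is an $(A_\alpha)$-type with representatives $(\bar{r}_\beta)_{\beta<\lambda}$ guarantees $\bar{r}_\beta(\alpha)\in A_\alpha$ whenever $\beta\in g_1(\alpha)$, which is what lets us land inside $\sh(A_\alpha)$ on one direction and use the defining property of shapes on the other.

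For the forward direction, starting from $(\mathbb{G},\theta)\in\sh(A_\alpha)$ with an isomorphism $\psi\colon\mathbb{G}_\alpha\to\mathbb{G}$ such that $\theta(\psi(\beta))=\bar{r}_\beta(\alpha)$, I would pick $\beta,\gamma\in g_1(\alpha)$ distinct with $\{\beta,\gamma\}\in k_2(\alpha)$. Unwinding $k_2$ yields $\M_\alpha\vDash\exists\bar{x},\varphi(\bar{x};\bar{r}_\beta(\alpha))\wedge\varphi(\bar{x};\bar{r}_\gamma(\alpha))$, which by Definition~\ref{shape}(4) applied to $\theta(\psi(\beta))$ and $\theta(\psi(\gamma))$ produces an edge between $\psi(\beta)$ and $\psi(\gamma)$ in $\mathbb{G}$. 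Pulling back via the graph isomorphism $\psi$ gives $\{\beta,\gamma\}\in g_2(\alpha)$, which is exactly the condition needed for $\mathbb{G}_\alpha$ to be induced in $\langle k_1(\alpha),k_2(\alpha)\rangle$.

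For the reverse direction, assume $\mathbb{G}_\alpha$ is induced in $\langle k_1(\alpha),k_2(\alpha)\rangle$. Enumerate $g_1(\alpha)=\{\beta_1,\ldots,\beta_m\}$, let $\mathbb{G}$ be the graph on $\{1,\ldots,m\}$ obtained by transporting the edge relation of $\mathbb{G}_\alpha$ across the bijection $\psi\colon\beta_i\mapsto i$, and define $\theta(i)=\bar{r}_{\beta_i}(\alpha)$. Then $\psi$ is a graph isomorphism by construction, and $\theta$ lands in $A_\alpha$ because $p(\bar{x})$ is an $(A_\alpha)$-type. To check $(\mathbb{G},\theta)\in\sh(A_\alpha)$, condition (3) follows from $\beta_i\in g_1(\alpha)\se k_1(\alpha)$ (so the corresponding instance of $\varphi$ is satisfiable), and condition (4) is where I would use the induced-subgraph hypothesis together with the definition of $k_2$: an edge in $\mathbb{G}$ between $i,j$ corresponds to $\{\beta_i,\beta_j\}\in g_2(\alpha)$, which, inside $g_1(\alpha)$, agrees with $\{\beta_i,\beta_j\}\in k_2(\alpha)$, which is precisely the simultaneous satisfiability condition.

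I do not expect a serious obstacle here; the lemma is essentially a definition-chase, and each of the two directions reduces to matching the three layers of information (parameters in $A_\alpha$, the \L o\'s conjugate at $\alpha$, and the distribution conjugate at $\alpha$) via the translation between existential satisfiability of $\varphi$-instances and edges in the relevant graphs. The only mild subtlety is keeping track of what the induced-subgraph condition buys beyond the automatic inclusions $g_1(\alpha)\se k_1(\alpha)$, $g_2(\alpha)\se k_2(\alpha)$, namely the backwards implication $\{\beta,\gamma\}\in k_2(\alpha)\Rightarrow\{\beta,\gamma\}\in g_2(\alpha)$ for $\beta,\gamma\in g_1(\alpha)$, and this is exactly what condition (4) of Definition~\ref{shape} encodes on the shape side.
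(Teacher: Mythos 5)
Your proposal is correct and uses essentially the same approach as the paper: fix the canonical enumeration of $\mathbb{G}_\alpha$, read off $\theta$ and $\psi$, and observe that the existence/isomorphism claim reduces to matching condition~(4) of Definition~\ref{shape} against the $k_2(\alpha)$-level satisfiability condition, which is precisely the induced-subgraph requirement. The paper compresses this by noting that $\mathbb{G}$, $\theta$, and $\psi$ are uniquely determined up to the enumeration and that the biconditional is then immediate; your version spells out the two directions separately but relies on the same definition-chase.
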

\begin{proof}
    Let $\beta_0,\ldots,\beta_m$ enumerate the vertices of $\mathbb{G}_\alpha$ and define $G:=\{1,\ldots,m\}$,
    \[\theta(i):=\bar{r}_{\beta_i}(\alpha)\]
    for each $i\in G$, and $\psi(\beta_i):=i$. Let $\mathbb{G}$ be the unique graph with underlying set $G$ and such that $(\mathbb{G},\theta)\in\sh(A_\alpha)$. Then $\mathbb{G}_\alpha$ being an induced subgraph of $\langle k_1(\alpha),k_2(\alpha)\rangle$ is the same as saying that there is an edge between $\beta,\gamma$ in $\mathbb{G}_\alpha$ if and only if
    \[\M_\alpha\vDash \exists\bar{x},\,\varphi(\bar{x},\bar{r}_\beta(\alpha))\wedge\varphi(\bar{x},\bar{r}_\gamma(\alpha))\]
    which is equivalent to saying that $\psi(\beta)$ and $\psi(\gamma)$ are edge related in $\mathbb{G}$. Note that we did not have any choice (up to permuting the enumeration given for $\mathbb{G}_\alpha$) in picking $G$, $\theta$, and $\psi$.
\end{proof}

It will be useful in characterizing the distributions of $(A_\alpha)$-types to know which sequences of graphs $(\mathbb{G}_\alpha)_{\alpha<\lambda}$ occurring in $\sh(A_\alpha)$ can appear as the distribution graph sequence of an $(A_\alpha)$-type (see Theorem~\ref{realAalphatype}). For such a graph sequence, it is necessary that we can find an induced subgraph $\mathbb{H}$ of $\mathbb{G}:=\prod_{\alpha<\lambda}\mathbb{G}_\alpha/\D$ that is complete, has cardinality $\lambda$, and has maps $\iota_\alpha\colon G_\alpha\to \lambda$ such that the induced map $\iota: \mathbb{G}\to \lambda^\D$ maps $\mathbb{H}$ bijectively to $\lambda\se\lambda^\D$ (this $\mathbb{H}$ will be the image of $\eta_f$ where $f$ is the distribution under consideration). As the following lemma shows, this is enough.

\begin{lem}\label{shtodistgraphseq}
	Let $(A_\alpha)_{\alpha<\lambda}$ be a sequence of graph-like $\varphi$-sets coming from the structures $(\M_\alpha)_{\alpha<\lambda}$ all in the common language $\mathcal{L}$. Suppose that
	\begin{enumerate}
	    \item there is a set $B\in\D$ and a sequence $(\mathbb{G}_\alpha,\theta_\alpha)_{\alpha\in B}$ such that for all $\alpha\in B$ we have that $(\mathbb{G}_\alpha,\theta_\alpha)\in\sh(A_\alpha)$,
	    \item for each $\alpha\in B$ there is a subset $Y_\alpha$ of $\lambda$ with $|Y_\alpha|=|\mathbb{G}_\alpha|$ and for each $\beta\in\lambda$ the set $X_\beta:=\{\alpha\in B:\beta\in Y_\alpha\}$ is an element of $\D$, and
	    \item there are fixed bijections $\iota_\alpha\colon Y_\alpha\to \mathbb{G}_\alpha$ for each $\alpha\in B$ such that the sequence $(\iota_\alpha)_{\alpha\in B}$ satisfies 
	    \[\{\alpha\in B:\mathbb{G}_\alpha\vDash \iota_\alpha(\beta)\mathrel{E}\iota_\alpha(\gamma)\}\in\D\] 
	    for each $\{\beta,\gamma\}\in[\lambda]^2$.
	\end{enumerate} 
	Then there is a graph-like $\varphi$-type $p(\bar{x})$ in $\widehat\M=\M_\alpha^\D$ and a distribution $f$ for $p(\bar{x})$ for which
	\begin{enumerate}
        \setcounter{enumi}{3}
	    \item  $(\mathbb{H}_\alpha)_{\alpha<\lambda}$, the distribution graph sequence of $f$, is such that $\mathbb{H}_\alpha\cong\mathbb{G}_\alpha$ for all $\alpha\in B$,
	    \item $\ess(f)= B$, and
	    \item for all $\alpha\in B$ we have that $\mathbb{H}_\alpha$ is an induced subgraph of $\langle k_1(\alpha),k_2(\alpha)\rangle$ where $(k_n)_{n\in\omega}$ is the \L o\'s conjugate.
	\end{enumerate}
\end{lem}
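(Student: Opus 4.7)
The plan is to build the type $p(\bar{x})$ from the given graph data and then read the distribution $f$ directly off of the sequence $(\mathbb{G}_\alpha)_{\alpha\in B}$.

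First, define representatives $(\bar{r}_\beta)_{\beta<\lambda}$ in $\prod_{\alpha<\lambda}\M_\alpha$ by setting $\bar{r}_\beta(\alpha):=\theta_\alpha(\iota_\alpha(\beta))$ whenever $\alpha\in X_\beta$ and picking $\bar{r}_\beta(\alpha)$ to be any element of $A_\alpha$ otherwise. Since each $X_\beta\in\D$, the resulting $\bar{r}_\beta$ represents a well-defined $\bar{a}_\beta:=\bar{r}_\beta/\D\in\widehat{\M}$ and $\{\bar{r}_\beta(\alpha):\beta<\lambda\}\se A_\alpha$ for every $\alpha$. Let $p(\bar{x}):=\{\varphi(\bar{x};\bar{a}_\beta):\beta<\lambda\}$. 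To check $p$ is a $\lambda$-type, fix a finite $\Delta\se\lambda$ and observe that
\[C_\Delta := \bigcap_{\beta\in\Delta}X_\beta\;\cap\;\bigcap_{\{\beta,\gamma\}\in[\Delta]^2}\{\alpha\in B:\iota_\alpha(\beta)\mathrel{E}\iota_\alpha(\gamma)\}\]
is a finite intersection of elements of $\D$ and thus lies in $\D$. For $\alpha\in C_\Delta$, the points $\{\iota_\alpha(\beta):\beta\in\Delta\}$ induce a complete subgraph of $\mathbb{G}_\alpha$, so clause (4) of Definition~\ref{shape} and the graph-like property of $A_\alpha$ give consistency of $\{\varphi(\bar{x};\bar{r}_\beta(\alpha)):\beta\in\Delta\}$ in $\M_\alpha$. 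Hence $C_\Delta\se\llbracket\Delta\rrbracket_{p(\bar{x})}^\D$, witnessing that $\llbracket\Delta\rrbracket_{p(\bar{x})}^\D\in\D$. The same two-element argument applied at the level of the \L o\'s map yields $L(\Delta)=\bigcap_{\Phi\in[\Delta]^2}L(\Phi)$ for $|\Delta|\geq 2$, so $p(\bar{x})$ is graph-like under these representatives.

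Next, I would specify the distribution $f$ via its distribution graph sequence $(\mathbb{H}_\alpha)_{\alpha<\lambda}$: for $\alpha\in B$, let $V(\mathbb{H}_\alpha):=Y_\alpha$ and declare $\{\beta,\gamma\}$ an edge of $\mathbb{H}_\alpha$ exactly when $\iota_\alpha(\beta)\mathrel{E}\iota_\alpha(\gamma)$ in $\mathbb{G}_\alpha$; for $\alpha\notin B$, set $\mathbb{H}_\alpha:=\langle\emptyset,\emptyset\rangle$. By construction $\iota_\alpha:\mathbb{H}_\alpha\to\mathbb{G}_\alpha$ is a graph isomorphism for each $\alpha\in B$, giving~(4). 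To confirm that $(\mathbb{H}_\alpha)_{\alpha<\lambda}$ really is a distribution graph sequence for $p(\bar{x})$, I invoke Lemma~\ref{glseqdist}: condition~(1) holds because any $\Delta\se Y_\alpha$ inducing a complete subgraph of $\mathbb{H}_\alpha$ is sent by $\iota_\alpha$ to a complete subgraph of $\mathbb{G}_\alpha$, after which the graph-like property of $A_\alpha$ combined with $\bar{r}_\beta(\alpha)=\theta_\alpha(\iota_\alpha(\beta))$ for $\beta\in\Delta$ delivers the required consistency; condition~(2) is exactly hypothesis~(3) of the lemma, restricted using hypothesis~(2).

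The remaining items are immediate. For (5), Lemma~\ref{essfprop}(2) identifies $\ess(f)$ with $\{\alpha:\mathbb{H}_\alpha\neq\langle\emptyset,\emptyset\rangle\}$; since Definition~\ref{shape} forces each $\mathbb{G}_\alpha$ to have at least one vertex, $Y_\alpha$ is nonempty precisely on $B$, so $\ess(f)=B$. For (6), apply Lemma~\ref{shapeinducedlos} with witness $(\mathbb{G}_\alpha,\theta_\alpha)\in\sh(A_\alpha)$ and isomorphism $\iota_\alpha$: the identity $\theta_\alpha(\iota_\alpha(\beta))=\bar{r}_\beta(\alpha)$ holds for every $\beta\in V(\mathbb{H}_\alpha)=Y_\alpha$ by the very definition of $\bar{r}_\beta$, so $\mathbb{H}_\alpha$ is an induced subgraph of $\langle k_1(\alpha),k_2(\alpha)\rangle$. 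The main care needed is keeping the three indexing conventions ($\lambda$, $Y_\alpha$, and $V(\mathbb{G}_\alpha)$) consistently bookkept; everything else is a direct unfolding against the previously established lemmas.
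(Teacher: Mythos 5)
Your proof is correct, and it takes a genuinely more direct route than the paper's. After defining the representatives $\bar{r}_\beta$ exactly as the paper does and establishing that $p(\bar{x})$ is an $(A_\alpha)$-type (hence graph-like), the paper packages the data $(Y_\alpha)_{\alpha\in B}$ into a hyperfinite subset $H=\prod H_\alpha/\D$ of the \L o\'s ultragraph $\mathbb{G}_L$ and invokes Theorem~\ref{extendingetalequalsdistribution} to produce a distribution $\hat f$ whose distribution graph sequence consists of the induced subgraphs $\langle k_1(\alpha),k_2(\alpha)\rangle\restriction H_\alpha$; because $H_\alpha$ is padded to $\{0\}$ off $B$, the paper then must restrict by setting $f(\Delta)=\hat f(\Delta)\cap B$ and appeal to Corollary~\ref{multiffint} to see nothing of substance was lost. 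You instead write down the candidate graph sequence $(\mathbb{H}_\alpha)_{\alpha<\lambda}$ by hand (transferring edges along $\iota_\alpha$, empty off $B$), verify the two conditions of Lemma~\ref{glseqdist} directly, and then read off conclusions~(4)--(6): (4) is built in, (5) falls out from Lemma~\ref{essfprop}(2) and the fact that each $\mathbb{G}_\alpha$ has at least one vertex, and (6) follows by applying the forward direction of Lemma~\ref{shapeinducedlos} with the witness $(\mathbb{G}_\alpha,\theta_\alpha,\iota_\alpha)$, using $\bar{r}_\beta(\alpha)=\theta_\alpha(\iota_\alpha(\beta))$ on $Y_\alpha$. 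The trade-off is that the paper's route reuses the $\mathbb{G}_L$/hyperfinite-subset machinery developed in Section~5 and incidentally records the equivalence with realizability via Corollary~\ref{indsublug}, whereas your route is more elementary and avoids the padding-and-restriction step entirely, getting $\ess(f)=B$ for free. Both are valid; yours is arguably the cleaner proof of precisely the stated conclusions, while the paper's exhibits how the lemma sits inside the earlier ultragraph framework.
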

\begin{conv}
    In order to make sense of (3) in the lemma statement and several similar statements made in the proof below, we say that $\mathbb{G}_\alpha\vDash \psi(\bar{a})$ is false if any of the elements of $\bar{a}$ are not in the underlying set of $\mathbb{G}_\alpha$ or are undefined values of a function. 
\end{conv}
\begin{proof}[Proof of Lemma~\ref{shtodistgraphseq}]
	We begin by defining the type $p(\bar{x})$ and the distribution $f$, and then check that these values of $p(\bar{x})$ and $f$ satisfy conditions $(4)$--$(6)$. For each $\beta\in\lambda$ fix a function $\bar{r}_\beta$ defined by $\alpha\mapsto \theta_\alpha(\iota_\alpha(\beta))$ when $Y_\alpha$ is defined and $\beta\in Y_\alpha$, and $\bar{r}_\beta(\alpha)$ is an arbitrary value of $A_\alpha$ for all other values of $\alpha$ in $\lambda$. Define a collection of formulas $p(\bar{x})$ by \[p(\bar{x}):=\{\varphi\left(\bar{x},\bar{r}_\beta/\D\right):\beta<\lambda\}.\] The collection $p(\bar{x})$ is a type because, for any finite $\Delta\se\lambda$ with $|\Delta|\geq 2$, we have that 
	\begin{align*}
		\llbracket\Delta\rrbracket^\D_{p(\bar{x})} &=
		\{\alpha\in\lambda:\forall \{\beta,\gamma\}\in[\Delta]^2,\,\M_\alpha\vDash\exists \bar{x},\,\varphi(\bar{x},\bar{r}_\beta(\alpha))\wedge\varphi(\bar{x},\bar{r}_\gamma(\alpha))\}\\
		&\supseteq \{\alpha\in B:\forall \{\beta,\gamma\}\in[\Delta]^2,\, \mathbb{G}_\alpha\vDash \iota_\alpha(\beta)\mathrel{E}\iota_\alpha(\gamma)\}\\
		&=\bigcap_{\{\beta,\gamma\}\in[\Delta]^2}\{\alpha\in B:\mathbb{G}_\alpha\vDash \iota_\alpha(\beta)\mathrel{E}\iota_\alpha(\gamma)\}\\
		&\in\D
	\end{align*}
	by condition~$(3)$ of the lemma statement as well as the fact that $\Delta$ (and hence $[\Delta]^2$) is finite. Moreover, $p(\bar{x})$ is an $(A_\alpha)$-type and therefore a graph-like type.
	
	We use Theorem~\ref{extendingetalequalsdistribution} to find a distribution $\hat{f}$ of $p(\bar{x})$ and then show that there is a refinement $f$ of $\hat{f}$ satisfying conditions (4)--(6). Define 
	\[H_\alpha:=\pw{Y_\alpha; &\alpha\in B\\ \{0\}; &\text{otherwise}}\]
	and consider the hyperfinite set $H:=\prod_{\alpha<\lambda}H_\alpha/\D$. In particular, $H_\alpha\se k_1(\alpha)$, so $H$ is a hyperfinite subset of $G_L$. Furthermore, condition (2) guarantees that $\eta_L[\lambda]\se H$. By Theorem~\ref{extendingetalequalsdistribution}, there is a graph-like distribution $\hat{f}$ such that $i_{\hat f}[\mathbb{G}_{\hat f}]$ is the subgraph of $\mathbb{G}_L$ induced by $H$ and such that the distribution graph sequence $(\hat{\mathbb{G}}_\alpha)_{\alpha<\lambda}$ of $\hat{f}$ consists of the subgraphs of $\langle k_1(\alpha),k_2(\alpha)\rangle$ induced by $H_\alpha$. We claim that $\iota_\alpha\colon\hat{\mathbb{G}}_\alpha\to\mathbb{G}_\alpha$ is a graph isomorphism for all $\alpha\in B$. Let $\beta,\gamma$ be distinct elements of $\hat{\mathbb{G}}_\alpha$. Then
	\begin{align*}
	    \beta \mathrel{E}^{\hat{\mathbb{G}}_\alpha}\gamma&\iff \{\beta,\gamma\}\in k_2(\alpha)\\
	    &\iff \M_\alpha\vDash \exists \bar{x},\, \varphi(\bar{x},\bar{r}_\beta(\alpha))\wedge\varphi(\bar{x},\bar{r}_\gamma(\alpha))\\
	    &\iff \M_\alpha\vDash \exists \bar{x},\, \varphi(\bar{x},\theta_\alpha(\iota_\alpha(\beta)))\wedge\varphi(\bar{x},\theta_\alpha(\iota_\alpha(\gamma)))\\
	    &\iff \iota_\alpha(\beta) \mathrel{E}^{\mathbb{G}_\alpha}\iota_\alpha(\gamma).
	\end{align*}
	This is enough to show that $\iota_\alpha$ is a graph isomorphism as $\iota_\alpha$ is defined to be a bijection between the underlying sets of $\hat{\mathbb{G}}_\alpha$ and $\mathbb{G}_\alpha$. Moreover, by Corollary~\ref{indsublug}, the type $p(\bar{x})$ is realized in $\widehat\M$ if and only if $\hat{f}$ has a multiplicative refinement.
	
	As suggested by the above results, the desired refinement $f$ of $\hat{f}$ is the same distribution but with the information carried by $\mathbb{G}_\alpha$ removed when $\alpha\in\lambda\sm B$. This can be achieved by setting $f(\Delta)=\hat{f}(\Delta)\cap B$ for all $\Delta\in\P_\omega(\lambda)$. As $B\in\D$, the ultragraph of $f$ will be isomorphic to the ultragraph of $\hat{f}$. This isomorphism sends $\eta_f[\lambda]$ to $\eta_{\hat{f}}[\lambda]$ so  Corollary~\ref{multiffint} implies that $f$ has a multiplicative refinement if and only if $\hat{f}$ has a multiplicative refinement.
\end{proof}

For most of our applications we will not need to keep careful track of the function $\theta$ associated with an element of $\sh(A)$, so we focus instead on the isomorphism types of the graphs $\mathbb{G}$ that appear in $\sh(A)$.

\begin{defn}
    If $A$ is a graph-like $\varphi$-set coming from the structure $\mathcal{M}$ then the \emph{isomorphism shape of $A$}, denoted $\ish(A)$, is the collection 
    \[\{\mathrm{isoType}(\mathbb{G}):(\mathbb{G},\theta)\in\sh(A)\}\]
    of graph isomorphism types that arise from $\sh(A)$, where $\mathrm{isoType}(\mathbb{G})$ is the isomorphism type of the graph $\mathbb{G}$.
\end{defn}

\begin{lem}\label{graphseqfromshtype}
    Let $(A_\alpha)_{\alpha<\lambda}$ be a sequence of graph-like $\varphi$-sets coming from the structures $(\M_\alpha)_{\alpha<\lambda}$ and let $f\colon\P_\omega(\lambda)\to\D$ be a graph-like function such that
    \begin{enumerate}
        \item $f$ is monotone and satisfies condition $(2)$ in the definition of a distribution~(Definition~\ref{distribution}) and
        \item the graph sequence of $f$, given by $(\mathbb{G}_\alpha)_{\alpha<\lambda}$, is such that the isomorphism type of $\mathbb{G}_\alpha$ is an element of $\ish(A_\alpha)$ for all $\alpha\in\ess(f)$.
    \end{enumerate}
    Then there is an $(A_\alpha)$-type $p(\bar{x})$ of $\widehat{\M}:=\prod_{\alpha<\lambda}\M_\alpha/\D$ and representatives $(\bar{r}_\beta)_{\beta<\lambda}$ such that $f$ has a multiplicative refinement if and only if $p(\bar{x})$ is realized.
\end{lem}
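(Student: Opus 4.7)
The plan is to mimic the construction in Lemma~\ref{shtodistgraphseq}: lift each $\mathbb{G}_\alpha$ (for $\alpha\in\ess(f)$) to a pair in $\sh(A_\alpha)$ using the hypothesis on isomorphism types, and read off the representatives for the parameters directly from the associated labelling. Let $(g_n)_{n\in\omega}$ be the conjugate of $f$, so $\mathbb{G}_\alpha=\langle g_1(\alpha),g_2(\alpha)\rangle$. For each $\alpha\in\ess(f)$, use assumption~(2) to pick $(\mathbb{G}'_\alpha,\theta_\alpha)\in\sh(A_\alpha)$ together with a graph isomorphism $\sigma_\alpha\colon\mathbb{G}_\alpha\to\mathbb{G}'_\alpha$, and set $\tilde\theta_\alpha:=\theta_\alpha\circ\sigma_\alpha\colon g_1(\alpha)\to A_\alpha$. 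For each $\beta<\lambda$, define $\bar r_\beta\in\prod_{\alpha<\lambda}\M_\alpha$ by $\bar r_\beta(\alpha):=\tilde\theta_\alpha(\beta)$ whenever $\beta\in g_1(\alpha)$, and by picking $\bar r_\beta(\alpha)$ to be an arbitrary element of $A_\alpha$ otherwise; then set $p(\bar x):=\{\varphi(\bar x,\bar r_\beta/\D):\beta<\lambda\}$. Since every $\bar r_\beta(\alpha)\in A_\alpha$, once we show $p(\bar x)$ is a type at all it will automatically be an $(A_\alpha)$-type and hence a graph-like type.

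Next I would verify that $p(\bar x)$ is a type and that $f$ is a distribution for it. For singletons $\Delta=\{\beta\}$, if $\alpha\in f(\{\beta\})$ then $\beta\in g_1(\alpha)$, so Definition~\ref{shape}(3) gives $\M_\alpha\vDash\exists\bar x,\,\varphi(\bar x,\bar r_\beta(\alpha))$, whence $f(\{\beta\})\se L(\{\beta\})$. For $\Delta$ with $|\Delta|\geq 2$, graph-likeness of $f$ yields $f(\Delta)=\bigcap_{\{\beta,\gamma\}\in[\Delta]^2}f(\{\beta,\gamma\})\in\D$; for any $\alpha\in f(\Delta)$ and any $\{\beta,\gamma\}\in[\Delta]^2$ we have $\{\sigma_\alpha(\beta),\sigma_\alpha(\gamma)\}\in E^{\mathbb{G}'_\alpha}$, so by Definition~\ref{shape}(4) the pair $\{\varphi(\bar x,\bar r_\beta(\alpha)),\varphi(\bar x,\bar r_\gamma(\alpha))\}$ is consistent in $\M_\alpha$, and the graph-like property of $A_\alpha$ promotes pairwise to full consistency over $\Delta$. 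Thus $f(\Delta)\se L(\Delta)$, which is therefore in $\D$, confirming that $p(\bar x)$ is a type and $f$ refines its \L o\'s map.

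Finally, I would show that $\mathbb{G}_\alpha$ is an \emph{induced} subgraph of $\langle k_1(\alpha),k_2(\alpha)\rangle$ for every $\alpha\in\ess(f)$, where $(k_n)_{n\in\omega}$ is the \L o\'s conjugate of $p(\bar x)$. The inclusions above already give the subgraph direction; for the reverse, if $\beta,\gamma\in g_1(\alpha)$ with $\{\beta,\gamma\}\in k_2(\alpha)$, then $\M_\alpha\vDash\exists\bar x,\,\varphi(\bar x,\tilde\theta_\alpha(\beta))\wedge\varphi(\bar x,\tilde\theta_\alpha(\gamma))$, so by Definition~\ref{shape}(4) we have $\sigma_\alpha(\beta)\mathrel{E}\sigma_\alpha(\gamma)$ in $\mathbb{G}'_\alpha$, and since $\sigma_\alpha$ is an isomorphism, $\{\beta,\gamma\}\in g_2(\alpha)$. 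Applying \L o\'s theorem index-wise transfers this to show that $i_f[\mathbb{G}_f]$ is an induced subgraph of $\mathbb{G}_L$. Corollary~\ref{indsublug} then yields that $p(\bar x)$ is realized in $\widehat\M$ if and only if $f$ has a multiplicative refinement.

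The main technical hurdle is the induced-subgraph verification in the last paragraph: arranging the representatives so that the edge set of $\mathbb{G}_\alpha$ exactly matches the pairwise-consistency relation in $\langle k_1(\alpha),k_2(\alpha)\rangle$. This is precisely where selecting $\tilde\theta_\alpha:=\theta_\alpha\circ\sigma_\alpha$ along a graph \emph{isomorphism} (rather than merely a homomorphism or embedding) is essential, and where the graph-like property of each $A_\alpha$ is indispensable for collapsing the hypergraph of finite-consistency data down to a graph and thereby invoking Corollary~\ref{indsublug}.
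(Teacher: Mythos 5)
Your proof is correct and uses the same underlying construction the paper uses, but you carry it out directly rather than invoking Lemma~\ref{shtodistgraphseq} as a black box. The paper's proof is very short: it checks that $f$ furnishes the data $(B=\ess(f)$, $Y_\alpha=G_\alpha$, $\iota_\alpha$ an isomorphism onto a representative of $\mathrm{isoType}(\mathbb{G}_\alpha)$ in $\sh(A_\alpha))$ needed to apply Lemma~\ref{shtodistgraphseq}, and then appeals to the fact that a graph-like function is determined by its graph sequence to identify the resulting distribution with $f$. You instead inline the relevant pieces of that construction: you pick $\tilde\theta_\alpha=\theta_\alpha\circ\sigma_\alpha$ to define the $\bar r_\beta$ directly, verify that $f$ itself refines the \L o\'s map of the resulting $p(\bar x)$ (and hence is already a distribution for it), establish the induced-subgraph property $\mathbb{G}_\alpha\leq\langle k_1(\alpha),k_2(\alpha)\rangle$ coordinatewise, and then close with Corollary~\ref{indsublug}. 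Both routes pass through the same two facts (the shape-based representative construction, and Corollary~\ref{indsublug}); yours has the minor advantage of making fully explicit the identification between $f$ and the distribution produced, whereas the paper leaves that identification to the one-line remark about graph-like functions being determined by their graph sequence. This is a stylistic difference, not a mathematical one.
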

\begin{proof}
    We proceed by checking that $f$ gives us enough information to satisfy conditions (1)--(3) of Lemma~\ref{shtodistgraphseq}.
    
    \ref{shtodistgraphseq}(1): $\ess(f)\in\D$ and, as $\mathbb{G}_\alpha$ has its isomorphism type in $\ish(A_\alpha)$, we may choose some $(\mathbb{G}'_\alpha,\theta_\alpha)\in\sh(A_\alpha)$ and an isomorphism $\iota_\alpha\colon\mathbb{G}_\alpha\to\mathbb{G}_\alpha'$ for each $\alpha\in\ess(f)$.
    
    \ref{shtodistgraphseq}(2): For each $\alpha\in\ess(f)$, let $Y_\alpha=G_\alpha$. Then
    \[X_\beta=\{\alpha\in\lambda:\beta\in G_\alpha\}=f(\{\beta\})\in\D\]
    by Equation~\eqref{disteqconj}.
    
    \ref{shtodistgraphseq}(3): For each $\{\beta,\gamma\}\in[\lambda]^2$ we have that
    \[\{\alpha\in \ess(f):\mathbb{G}'_\alpha\vDash \iota_{\alpha}(\beta) \mathrel{E} \iota_\alpha(\gamma)\}=f(\{\beta,\gamma\})\in\D\]
    by the same reasoning as above.
    
    We then have that there is an $(A_\alpha)$-type $p(\bar{x})$ for which $(\mathbb{G}_\alpha)_{\alpha<\lambda}$ is the distribution graph sequence of a distribution $f'$ that has a multiplicative refinement if and only if $p(\bar{x})$ is realized. Since graph-like functions are determined by their graph sequence, we must have that $f$ has a multiplicative refinement if and only if $p(\bar{x})$ is realized.
\end{proof}

Sometimes we will want to talk about induced subgraphs of graphs $\mathbb{G}$ arising in $\sh(A)$ but will not care about the specific mapping $\theta$ that tell us what element of $A$ each vertex corresponds to. In order to do this, we introduce some terminology to speak about induced subgraphs of isomorphism types.

\begin{conv}
    Given an isomorphism type $\mathrm{isoType}(\mathbb{G})$, we will say that a graph $\mathbb{H}$ (or isomorphism type $\mathrm{isoType}(\mathbb{H})$) is an induced subgraph of $\mathrm{isoType}(\mathbb{G})$ and write $\mathbb{H}\leq\mathrm{isoType}(\mathbb{G})$ if there is an injective graph homomorphism $\psi\colon\mathbb{H}\to\mathbb{G}$ that also preserves non-edges.
\end{conv}

We will characterize the isomorphism shape not by which isomorphism types belong to the isomorphism shape, but by the minimal graphs that cannot appear as induced subgraphs of elements of the isomorphism shape. This follows, roughly speaking, from the fact that induced subgraphs of $\mathbb{G}$ with $(\mathbb{G},\theta)\in\sh(A)$ are easily obtained by taking a subset of the vertices in $\mathbb{G}$ and the corresponding restriction to the function $\theta$ and from the well-used fact in graph combinatorics that classes $\mathcal{C}$ of graphs that are closed under taking subgraphs are characterized by the minimal graphs not occurring in $\mathcal{C}$. 

\begin{lem}\label{shclosedindsubg}
	Suppose that $A$ is a graph-like $\varphi$-set in the structure $\M$ and $\mathbb{H}$ a graph such that $\mathrm{isoType}(\mathbb{H})$ is not an element of $\ish(A)$. If $\mathrm{isoType}(\mathbb{G})\in\ish(A)$ then $\mathbb{H}\nleq\mathrm{isoType}(\mathbb{G})$.
\end{lem}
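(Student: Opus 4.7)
The plan is to prove the contrapositive: if $\mathbb{H}\leq\mathrm{isoType}(\mathbb{G})$ and $\mathrm{isoType}(\mathbb{G})\in\ish(A)$, then $\mathrm{isoType}(\mathbb{H})\in\ish(A)$. In other words, $\ish(A)$ is closed under taking induced subgraphs. The content is essentially bookkeeping with Definition~\ref{shape}: given a pair $(\mathbb{G}',\theta)\in\sh(A)$ witnessing $\mathrm{isoType}(\mathbb{G})\in\ish(A)$, we will restrict $\theta$ to an appropriately labeled copy of $\mathbb{H}$ and show that the resulting pair lies in $\sh(A)$.

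First I would unpack the hypotheses. Since $\mathrm{isoType}(\mathbb{G})\in\ish(A)$, by definition there is some $(\mathbb{G}',\theta)\in\sh(A)$ with $\mathbb{G}'\cong\mathbb{G}$; by condition (1) of Definition~\ref{shape}, the vertex set of $\mathbb{G}'$ is $\{1,\ldots,n\}$ for some $n\in\omega\setminus\{0\}$. Since $\mathbb{H}\leq\mathrm{isoType}(\mathbb{G})$, I get an injective graph homomorphism $\psi\colon\mathbb{H}\to\mathbb{G}$ that also preserves non-edges. Composing $\psi$ with the isomorphism $\mathbb{G}\to\mathbb{G}'$ yields an injective map $\psi'\colon\mathbb{H}\to\mathbb{G}'$ that preserves edges \emph{and} non-edges between distinct vertices. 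Let $m:=|\mathbb{H}|$ and fix any bijection $\sigma\colon\{1,\ldots,m\}\to\mathbb{H}$; let $\mathbb{H}'$ be the unique graph on $\{1,\ldots,m\}$ making $\sigma$ a graph isomorphism, so $\mathbb{H}'\cong\mathbb{H}$.

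Next I would define the candidate function $\theta'\colon\{1,\ldots,m\}\to A$ by $\theta'(i):=\theta(\psi'(\sigma(i)))$ and verify that $(\mathbb{H}',\theta')$ satisfies all four conditions of Definition~\ref{shape}. Conditions (1) and (2) are immediate from the construction. For condition (3), each $\theta'(i)$ equals $\theta(j)$ for some $j\in\mathbb{G}'$, so $\M\vDash\exists\bar{x},\varphi(\bar{x};\theta'(i))$ follows from condition (3) applied to $(\mathbb{G}',\theta)$. For condition (4), fix distinct $i,j\in\{1,\ldots,m\}$. Then
\[
i\mathrel{E}^{\mathbb{H}'}j \iff \sigma(i)\mathrel{E}^{\mathbb{H}}\sigma(j) \iff \psi'(\sigma(i))\mathrel{E}^{\mathbb{G}'}\psi'(\sigma(j))
\]
where the second equivalence uses both that $\psi'$ preserves edges (homomorphism) and that it preserves non-edges. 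Applying condition (4) for $(\mathbb{G}',\theta)$ to the distinct vertices $\psi'(\sigma(i))$ and $\psi'(\sigma(j))$ of $\mathbb{G}'$ converts the right-hand side to $\M\vDash\exists\bar{x},\varphi(\bar{x};\theta'(i))\wedge\varphi(\bar{x};\theta'(j))$, as required.

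Thus $(\mathbb{H}',\theta')\in\sh(A)$, so $\mathrm{isoType}(\mathbb{H})=\mathrm{isoType}(\mathbb{H}')\in\ish(A)$, contradicting the hypothesis and proving the contrapositive. There is no serious obstacle here; the only thing to be careful about is that $\psi$ is merely assumed to preserve non-edges \emph{between distinct vertices} (the definition of $\leq$ involves an injective homomorphism preserving non-edges), which is exactly what is needed since Definition~\ref{shape}(4) also only speaks of distinct vertices. Note that the graph-like hypothesis on $A$ is not used in this argument; it guarantees that $\sh(A)$ is well-defined as a combinatorial object (edges capture all higher compatibility), but the inheritance of membership under induced subgraphs is purely formal.
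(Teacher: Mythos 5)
Your proof is correct and takes essentially the same approach as the paper: the paper passes to $C:=\psi[\mathbb{H}]$, relabels so that $C$ is an initial segment of $\{1,\ldots,m\}$, and checks that $(\mathbb{G}_C,\theta\restriction C)\in\sh(A)$ directly, which is the same as your $(\mathbb{H}',\theta')$ construction with the relabeling made explicit via $\sigma$. Your closing observation that the graph-like hypothesis on $A$ is not actually used here is correct (the paper does not comment on it, but the argument is purely combinatorial bookkeeping with Definition~\ref{shape}).
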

\begin{proof}
	Suppose that there were an element $(\mathbb{G},\theta)\in\sh(A)$ with $\psi\colon\mathbb{H}\to\mathbb{G}$ an injective homomorphism preserving non-edges. Let $C$ be $\psi[H]$. We may assume without loss of generality that $C$ is an initial segment of $\{1,2,\ldots,m\}=G$. Define $\theta\restriction C$ be the restriction of $\theta$ to $C$ and $\mathbb{G}_C$ to be the subgraph of $\mathbb{G}$ induced by $C$. Then $(\mathbb{G}_C,\theta\restriction C)\in\sh(A)$ with $\mathbb{G}_C\cong\mathbb{H}$, a contradiction.
\end{proof}

\begin{thm}\label{charshapes}
	Suppose that $A$ is a graph-like $\varphi$-set in the structure $\M$ and that $\Xi$ is a collection of isomorphism types of finite graphs such that both
	\begin{enumerate}
		\item $\Xi\cap\ish(A)=\emptyset$ and 
		\item whenever the $\mathrm{isoType}\mathbb{H}$ is not an element of $\ish(A)$ there is some $\xi\in\Xi$ for which $\xi\leq\mathrm{isoType}(\mathbb{H}$).
	\end{enumerate}
	Then \[\ish(A)=\{\mathrm{isoType}(\mathbb{G}):\forall \xi\in\Xi,\, \xi\nleq\mathrm{isoType}(\mathbb{G})\}.\]
\end{thm}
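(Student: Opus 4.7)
The plan is to prove the claimed set equality by establishing both inclusions, each of which reduces directly to one of the two hypotheses on $\Xi$ combined with Lemma~\ref{shclosedindsubg}. Let $S := \{\mathrm{isoType}(\mathbb{G}) : \forall \xi \in \Xi,\, \xi \nleq \mathrm{isoType}(\mathbb{G})\}$ denote the right-hand side.

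For the inclusion $\ish(A) \subseteq S$, I would fix $\mathrm{isoType}(\mathbb{G}) \in \ish(A)$ and an arbitrary $\xi \in \Xi$, and show $\xi \nleq \mathrm{isoType}(\mathbb{G})$. Choose any representative graph $\mathbb{H}$ of $\xi$. By hypothesis~(1), $\xi = \mathrm{isoType}(\mathbb{H}) \notin \ish(A)$, so Lemma~\ref{shclosedindsubg} applied to $\mathbb{H}$ and $\mathbb{G}$ gives $\mathbb{H} \nleq \mathrm{isoType}(\mathbb{G})$, i.e.\ $\xi \nleq \mathrm{isoType}(\mathbb{G})$. Since $\xi$ was arbitrary, $\mathrm{isoType}(\mathbb{G}) \in S$.

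For the reverse inclusion $S \subseteq \ish(A)$, I would argue by contrapositive: assume $\mathrm{isoType}(\mathbb{G}) \notin \ish(A)$ and deduce that $\mathrm{isoType}(\mathbb{G}) \notin S$. But this is exactly the content of hypothesis~(2), which supplies some $\xi \in \Xi$ with $\xi \leq \mathrm{isoType}(\mathbb{G})$, witnessing that $\mathrm{isoType}(\mathbb{G})$ fails the defining condition of $S$.

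There is no genuine obstacle here; the theorem is essentially the abstraction of the standard observation that a class closed under induced subgraphs is determined by its minimal forbidden induced subgraphs, and the two hypotheses on $\Xi$ are precisely tailored so that each inclusion becomes a one-line application. The only thing to be mildly careful about is treating isomorphism types rather than concrete graphs, but since the relation $\leq$ is defined on isomorphism types via existence of an injective homomorphism preserving non-edges, the choice of representative $\mathbb{H}$ for $\xi$ is immaterial.
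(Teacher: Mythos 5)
Your proof is correct and follows the same route as the paper's: both inclusions reduce to the two hypotheses on $\Xi$, with Lemma~\ref{shclosedindsubg} doing the work for the $\ish(A)\subseteq S$ direction and hypothesis~(2) read contrapositively giving $S\subseteq\ish(A)$. The paper's version is just a more compressed phrasing of the same argument.
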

\begin{proof}
	Suppose that $\mathrm{isoType}(\mathbb{H})$ is not in $\ish(A)$. Then, by assumption~$(2)$, there exists $\xi$ in $\Xi$ with $\xi\leq\mathrm{isoType}(\mathbb{H})$. If $\mathrm{isoType}(\mathbb{G})\in\ish(A)$, then by assumption~$(1)$ and Lemma~\ref{shclosedindsubg} it cannot be that any element of $\Xi$ is an induced subgraph of $\mathrm{isoType}(\mathbb{G})$.
\end{proof}

Knowing the set of $\leq$-minimal isomorphism types of graphs that cannot appear in $\ish(A)$ (call this set $\Gamma$) is enough to determine all of $\ish(A)$. We will extract some extra information that is implicit in this characterization of isomorphism shapes in order to characterize the functions that can appear as distributions for $(A_\alpha)$-types. We know that graphs $\mathbb{H}$ whose isomorphism types appear in $\Gamma$ cannot be \emph{induced} subgraphs of graphs arising in $\sh(A)$, but $\mathbb{H}$ may still appear as a subgraph of graphs arising in $\sh(A)$. More precisely, $\mathrm{isoType}(\mathbb{G})$ is an element of $\ish(A)$ exactly in the case that whenever a graph $\mathbb{H}$, representing an element of $\Gamma$, is a subgraph of $\mathbb{G}$, we must have that $\mathbb{H}$ is not an \emph{induced} subgraph of $\mathbb{G}$ (that is, $\mathbb{G}$ must contain edges that do not occur in $\mathbb{H}$).

\begin{defn}\label{necsets}
	Let $S$ be a set of graphs (or graph isomorphism types) and $\mathbb{H}$ a graph such that for each $\mathbb{G}\in S$ we have that $\mathbb{H}\nleq\mathrm{isoType}(\mathbb{G})$. A collection $B\se[H]^2\sm E^\mathbb{H}$ is called \emph{necessary for $\mathbb{H}$ in $S$} if whenever $\mathbb{G}\in S$ and there is an injective graph homomorphism $\psi\colon\mathbb{H}\to\mathbb{G}$ then some element of 
	\[\{\{\psi(a),\psi(b)\}:(a,b)\in B\}\]
	appears in the edge relation for $\mathbb{G}$.
\end{defn}

\begin{rmrk}
    The definition of necessary sets for $\mathbb{H}$ in $S$ depends only on the isomorphism types of graphs that occur in $S$. Because of this, we sometimes speak of necessary sets for $\mathbb{H}$ in a collection of isomorphism types of graphs.
\end{rmrk}

We will typically be interested in necessary sets that are minimal in cardinality or minimal under the $\se$ relation. Our applications are in the particular case that $S=\ish(A)$ for some graph-like $\varphi$-set $A$ and $\mathbb{H}$ is a minimal graph not appearing as an induced subgraph of any graph arising in $\sh(A)$. In this situation, we will be able to use Theorem~\ref{charshapes} to gain some information about the structure of the conjugate for a distribution $f$ of an $(A_\alpha)$-type, which can then be translated into information about the distribution $f$ and will allow us to recognize what sorts of distributions are allowed for $(A_\alpha)$-types.

\begin{lem}\label{necviadist}
	Suppose that $p(\bar{x})$ is a $\lambda$-type in $\widehat{\M}=\prod_{\alpha<\lambda}\M_\alpha/\D$ and that $f$ is a graph-like distribution for $p(\bar{x})$ with distribution graph sequence $(\mathbb{G}_\alpha)_{\alpha<\lambda}$. Let $\mathbb{H}$ be a graph that is not an induced subgraph of any of the $\mathrm{isoType}(\mathbb{G}_\alpha)$ and let $B_\mathbb{H}\se[H]^2\sm E^\mathbb{H}$. Then $f$ satisfies 
	\[\bigcap_{\{a,b\}\in E^{\mathbb{H}}}f(\{x(a),x(b)\})\se\bigcup_{\{c,d\}\in B_{\mathbb{H}}}f(\{x(c),x(d)\})\] for all injective functions $x\colon H\to\lambda$ if and only if the set $B_\mathbb{H}$ is necessary for $\mathbb{H}$ in $\{\mathbb{G}_\alpha:{\alpha\in\ess(f)}\}$.
\end{lem}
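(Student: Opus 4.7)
The plan is to translate both sides of the displayed inclusion into statements about the conjugate $(g_n)_{n\in\omega}$ of $f$ and then read off the definition of a necessary set. Throughout I will use Lemma~\ref{distviaconj}, which tells us $\alpha\in f(\{\beta,\gamma\})$ if and only if $\{\beta,\gamma\}\in g_2(\alpha)$, together with the fact that the edge set of $\mathbb{G}_\alpha$ is precisely $g_2(\alpha)$. In particular, membership in an $f(\{x(a),x(b)\})$ is identical to the pair $\{x(a),x(b)\}$ being an edge of $\mathbb{G}_\alpha$.

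For the forward direction, I would assume the displayed inclusion holds for all injective $x\colon H\to\lambda$. Given $\alpha\in\ess(f)$ and an injective graph homomorphism $\psi\colon\mathbb{H}\to\mathbb{G}_\alpha$, the fact that $V(\mathbb{G}_\alpha)=g_1(\alpha)\subseteq\lambda$ lets me reinterpret $\psi$ as an injective function $x\colon H\to\lambda$. Because $\psi$ sends edges to edges, for each $\{a,b\}\in E^{\mathbb{H}}$ the pair $\{x(a),x(b)\}$ lies in $g_2(\alpha)$, so $\alpha\in f(\{x(a),x(b)\})$; hence $\alpha$ sits in the left-hand intersection. Applying the assumed inclusion yields $\{c,d\}\in B_{\mathbb{H}}$ with $\alpha\in f(\{x(c),x(d)\})$, i.e.\ $\{\psi(c),\psi(d)\}\in E^{\mathbb{G}_\alpha}$, which is exactly the clause in Definition~\ref{necsets}.

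For the reverse direction, assume $B_{\mathbb{H}}$ is necessary for $\mathbb{H}$ in $\{\mathbb{G}_\alpha:\alpha\in\ess(f)\}$. Fix an injective $x\colon H\to\lambda$ and take $\alpha$ in the left-hand intersection. For each $\{a,b\}\in E^{\mathbb{H}}$, the condition $\alpha\in f(\{x(a),x(b)\})$ combined with Lemma~\ref{distconj}(2) (applied with $n=2$, $m=1$) gives $\{x(a),x(b)\}\in g_2(\alpha)$ and $x(a),x(b)\in g_1(\alpha)=V(\mathbb{G}_\alpha)$. Thus $x$ defines an injective graph homomorphism $\mathbb{H}\to\mathbb{G}_\alpha$ (on all non-isolated vertices it lands in $V(\mathbb{G}_\alpha)$ and preserves edges by definition; any isolated vertices can be accommodated harmlessly since there are no edge constraints to verify). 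Since $\alpha\in\ess(f)$, the necessity hypothesis produces some $\{c,d\}\in B_{\mathbb{H}}$ with $\{x(c),x(d)\}$ an edge of $\mathbb{G}_\alpha$, i.e.\ $\{x(c),x(d)\}\in g_2(\alpha)$, which translates back to $\alpha\in f(\{x(c),x(d)\})$, placing $\alpha$ in the right-hand union.

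The argument is essentially a direct unwinding of definitions; the only non-cosmetic step is the forward-direction observation that an abstract injective graph homomorphism $\psi\colon\mathbb{H}\to\mathbb{G}_\alpha$ may be reinterpreted as an injective function $H\to\lambda$ (because the vertex set of $\mathbb{G}_\alpha$ has been arranged to be a subset of $\lambda$), which is what lets the necessity condition be tested against the universally-quantified family of injections appearing in the inclusion.
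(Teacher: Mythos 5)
Your proof is correct and takes essentially the same route as the paper's: both directions reduce the set-theoretic inclusion to the observation that $\alpha\in f(\{\beta,\gamma\})$ is equivalent to $\{\beta,\gamma\}\in E^{\mathbb{G}_\alpha}$, and then interpret an injective $x\colon H\to\lambda$ landing in $G_\alpha$ as an injective graph homomorphism $\mathbb{H}\to\mathbb{G}_\alpha$ against which the definition of a necessary set is read off. The paper packages the translation slightly differently (it displays the equivalence with a propositional formula satisfied by $\mathbb{G}_\alpha$ before invoking necessity), but the content is identical; your parenthetical about isolated vertices is exactly as informal as the paper's ``if $x$ can be thought of as an injective graph homomorphism,'' and is harmless for the graphs $\mathbb{H}$ to which the lemma is applied.
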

\begin{proof}
	($\Leftarrow$): Let $x\colon H\to \lambda$ be an injective function. We wish to show that if 
	\begin{equation}\label{necback}
	\alpha\in\bigcap_{\{a,b\}\in E^\mathbb{H}} f(\{x(a),x(b)\})\,\text{ then }\,\alpha\in\bigcup_{\{c,d\}\in B_\mathbb{H}}f(\{x(c),x(d)\}).
	\end{equation}
	By the definition of the distribution graph sequence, saying $\alpha$ is an element of $f(\{x(a),x(b)\})$ is equivalent to the statement that $x(a)$ and $x(b)$ are edge related in $\mathbb{G}_\alpha$. We then have that \eqref{necback} is equivalent to \[\mathbb{G}_\alpha\vDash\bigwedge_{\{a,b\}\in E^\mathbb{H}}x(a)\mathrel{E} x(b)\rightarrow \bigvee_{\{c,d\}\in B_\mathbb{H}} x(c)\mathrel{E} x(d)\] for all $\alpha<\lambda$. This says that if $x$ can be thought of as an injective graph homomorphism $\mathbb{H}\to\mathbb{G}_\alpha$, then one of the edges in $B_\mathbb{H}$ must also appear in $\mathbb{G}_\alpha$. This follows from the definition of $B_\mathbb{H}$ being necessary for $\mathbb{H}$ in the collection of $\mathbb{G}_\alpha$.
	
	$(\Rightarrow)$: Suppose that $x:\mathbb{H}\to\mathbb{G}_\alpha$ is an injective graph homomorphism. As $G_\alpha\se \lambda$ and $x$ is a homomorphism, we have that
	\[\alpha\in\bigcap_{\{a,b\}\in E^{\mathbb{H}}}f(\{x(a),x(b)\})=\big\{\alpha\in\lambda:\forall\{a,b\}\in E^{\mathbb{H}},\, \{x(a),x(b)\}\in E^{\mathbb{G}_\alpha}\big\}.\]
	By assumption we then have that
	\[\alpha\in\bigcup_{\{c,d\}\in B_\mathbb{H}}f(\{x(c),x(d)\})=\big\{\alpha\in\lambda:\exists\{c,d\}\in B_{\mathbb{H}},\, \{x(c),x(d)\}\in E^{\mathbb{G}_\alpha}\big\}.\]
	That is, there is some pair $\{c,d\}\in B_\mathbb{H}$ for which $x(c)$ and $x(d)$ are edge related in $\mathbb{G}_\alpha$, as required.
\end{proof}

\begin{thm}\label{realAalphatype}
	Suppose that the sequence $(A_\alpha)_{\alpha<\lambda}$ is a sequence of graph-like $\varphi$-sets in the structures $(\M_\alpha)_{\alpha<\lambda}$ having a common language and such that for all $\alpha,\delta<\lambda$ we have that $\ish(A_\alpha)=\ish(A_\delta)$. Let $\Gamma$ be the collection of $\leq$-minimal isomorphsim types of graphs not contained in $\ish(A_0)$, and for each graph $\mathbb{H}\in\Gamma$, let $B_{\mathbb{H}}$ be a necessary set for $\mathbb{H}$ in $\ish(A_0)$. Then every $\lambda$-type that is also an $(A_\alpha)$-type is realized in $\widehat{\M}$ if and only if every graph-like function $f\colon\P_\omega(\lambda)\to\D$ satisfying
	\begin{equation}\label{necdisteqn}
		\bigcap_{\{a,b\}\in E^{\mathbb{H}}}f(\{x(a),x(b)\})\se\bigcup_{\{c,d\}\in B_{\mathbb{H}}}f(\{x(c),x(d)\})
	\end{equation}
	for all $\mathbb{H}\in\Gamma$ and all injective functions $x\colon H\to\lambda$ has a multiplicative refinement.
\end{thm}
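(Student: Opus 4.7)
The plan is to translate between three equivalent conditions on a graph-like function $f\colon\P_\omega(\lambda)\to\D$: (i) the inequality (\ref{necdisteqn}); (ii) the absence of any $\mathbb{H}\in\Gamma$ as an induced subgraph of any graph $\mathbb{G}_\alpha$ in the distribution graph sequence of $f$; and (iii) the isomorphism type of each such $\mathbb{G}_\alpha$ lying in $\ish(A_0)$. The bridge between (i) and (ii) is Lemma~\ref{necviadist}, and the bridge between (ii) and (iii) is Theorem~\ref{charshapes}. Once those translations are in place, Lemma~\ref{graphseqfromshtype} and Corollary~\ref{typelikeexts} (together with Lemma~\ref{shapeinducedlos}) convert the question of multiplicative refinements of $f$ into the question of realization of $(A_\alpha)$-types, using that $\ish(A_\alpha)=\ish(A_0)$ for all $\alpha<\lambda$ by hypothesis.

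For the forward direction, assume every $\lambda$-type that is also an $(A_\alpha)$-type is realized, and let $f$ be a graph-like function satisfying (\ref{necdisteqn}), with distribution graph sequence $(\mathbb{G}_\alpha)_{\alpha<\lambda}$. Applying Lemma~\ref{necviadist} to each pair $(\mathbb{H},B_{\mathbb{H}})$ with $\mathbb{H}\in\Gamma$ shows that $B_{\mathbb{H}}$ is necessary for $\mathbb{H}$ in $\{\mathbb{G}_\alpha:\alpha\in\ess(f)\}$. Since $B_{\mathbb{H}}\se[H]^2\sm E^{\mathbb{H}}$, any injective graph homomorphism $\psi\colon\mathbb{H}\to\mathbb{G}_\alpha$ that also preserved non-edges would send each pair of $B_{\mathbb{H}}$ to a non-edge of $\mathbb{G}_\alpha$, contradicting necessity; hence no $\mathbb{H}\in\Gamma$ is an induced subgraph of any $\mathbb{G}_\alpha$ with $\alpha\in\ess(f)$. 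Theorem~\ref{charshapes} then gives $\mathrm{isoType}(\mathbb{G}_\alpha)\in\ish(A_0)=\ish(A_\alpha)$ for every $\alpha\in\ess(f)$, so Lemma~\ref{graphseqfromshtype} produces an $(A_\alpha)$-type $p(\bar{x})$ (with suitable representatives) such that $p(\bar{x})$ is realized if and only if $f$ has a multiplicative refinement. By assumption $p(\bar{x})$ is realized, so $f$ has a multiplicative refinement.

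For the reverse direction, assume every graph-like function satisfying (\ref{necdisteqn}) has a multiplicative refinement, and let $p(\bar{x})$ be an $(A_\alpha)$-type; $p(\bar{x})$ is graph-like by its form. Pick any graph-like distribution $f$ for $p(\bar{x})$ and, following the construction in the proof of Corollary~\ref{typelikeexts}, extend it to a graph-like distribution $\hat f$ with the same $g_1$ and with $\hat g_2(\alpha)=k_2(\alpha)\cap[g_1(\alpha)]^2$, so that each graph $\hat{\mathbb{G}}_\alpha$ in the distribution graph sequence of $\hat f$ is an \emph{induced} subgraph of the \L o\'s conjugate graph $\langle k_1(\alpha),k_2(\alpha)\rangle$. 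Lemma~\ref{shapeinducedlos} then yields $\mathrm{isoType}(\hat{\mathbb{G}}_\alpha)\in\ish(A_\alpha)=\ish(A_0)$ for all $\alpha\in\ess(\hat f)$, so by Theorem~\ref{charshapes} no $\mathbb{H}\in\Gamma$ occurs as an induced subgraph of any $\hat{\mathbb{G}}_\alpha$. Lemma~\ref{necviadist} then shows that $\hat f$ satisfies (\ref{necdisteqn}), so by hypothesis $\hat f$ admits a multiplicative refinement, whence $p(\bar{x})$ is realized by Corollary~\ref{typelikeexts} and Fact~\ref{keislerfact}.

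The main obstacle lies in the reverse direction: an arbitrary graph-like distribution for $p(\bar{x})$ need not have its distribution graph sequence \emph{induced} inside the \L o\'s conjugate, and without the induced property Lemma~\ref{shapeinducedlos} cannot be used to read off membership in $\ish(A_\alpha)$. This is precisely the issue that the extension step of Corollary~\ref{typelikeexts} resolves — by enlarging the edge set at each index to the intersection of the \L o\'s conjugate's edges with $[g_1(\alpha)]^2$, one obtains a graph-like distribution whose graph sequence is induced in the \L o\'s conjugate while preserving equivalence of realization of $p(\bar x)$ with the existence of a multiplicative refinement. From there the structural information packaged in $\ish(A_\alpha)$ becomes accessible, and the remainder of the argument reduces to unwinding the definitions of necessary sets and of shapes.
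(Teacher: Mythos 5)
Your proposal is correct and follows essentially the same argument as the paper: the forward direction uses Lemma~\ref{necviadist} together with Theorem~\ref{charshapes} and Lemma~\ref{graphseqfromshtype}, and the reverse direction uses the induced-subgraph extension from Corollary~\ref{typelikeexts} together with Lemma~\ref{shapeinducedlos} and Lemma~\ref{necviadist}. The only cosmetic difference is that you are slightly more explicit about the empty-$B_{\mathbb{H}}$ case and about restricting attention to $\alpha\in\ess(f)$, neither of which changes the substance.
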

\begin{proof}
	($\Rightarrow$): Let $f$ be a graph-like function satisfying \eqref{necdisteqn}, let $g_1,g_2$ belong to the conjugate of $f$, and fix $\mathbb{H}\in\Gamma$. By Lemma~\ref{necviadist}, the $B_\mathbb{H}$ are necessary for $\mathbb{H}$ in the graph sequence of $f$. Thus, if $\mathbb{H}\in\Gamma$ and $B_\mathbb{H}$ is non-empty, we must have that $\mathbb{H}$ cannot be an induced subgraph of $\mathbb{G}_\alpha:=\langle g_1(\alpha),g_2(\alpha)\rangle$ for all $\alpha<\lambda$. Even if $B_\mathbb{H}$ is empty, we then have that $\mathbb{H}$ cannot even be a subgraph of any of the $\mathbb{G}_\alpha$ since in this case the right-hand side of \eqref{necdisteqn} is $\emptyset$, so $\mathbb{H}\nleq\mathbb{G}_\alpha$ for all $\mathbb{H}\in\Gamma$ and all $\alpha<\lambda$. Then the graphs $\mathbb{G}_\alpha$ are elements of $\sh(A_0)$ by Theorem~\ref{charshapes}. By Lemma~\ref{graphseqfromshtype}, there is an $(A_\alpha)$-type $p(\bar{x})$ that is realized if and only if $f$ has a multiplicative refinement, and, by assumption, all $(A_\alpha)$-types are realized in $\widehat{\M}$.

	($\Leftarrow$): Let $p(\bar{x})$ be an $(A_\alpha)$-type of $\widehat\M$ having cardinality $\lambda$ and choose a set of representatives $(r_\beta)_{\beta<\eta}$ for the parameters of $p(\bar{x})$ from $\prod_{\alpha<\lambda}A_\alpha$ and notice that these representatives witness $p(\bar{x})$ being graph-like. Let $f$ be a distribution for $p(\bar{x})$ using the same representatives. By Corollary~\ref{typelikeexts} it must be that $f$ is a refinement of graph-like distribution $\hat{f}$ with the property that $(\hat{g}_n)_{n\in\omega}$, the conjugate of $\hat{f}$, is such that $\hat{g}_1=g_1$ (where $g_1$ comes from the conjugate of $f$) and the graphs $\hat{\mathbb{G}}_\alpha:=\langle\hat{g}_1(\alpha),\hat{g}_2(\alpha)\rangle$ are induced subgraphs of $\langle k_1(\alpha),k_2(\alpha)\rangle$ where $(k_n)_{n\in\omega}$ is the \L o\'s conjugate of $p(\bar{x})$. In particular, for all $\alpha\in\ess(f)$ we know that $\hat{\mathbb{G}}_\alpha$ is isomorphic to a graph arising in $\sh(A_\alpha)$ by Lemma~\ref{shapeinducedlos}. Lemma~\ref{necviadist} guarantees that $\hat{f}$ satisfies \eqref{necdisteqn}, so $\hat{f}$ has a multiplicative refinement by assumption. Thus, by Fact~\ref{keislerfact}, the type $p(\bar{x})$ is realized.
\end{proof}

\begin{thm}\label{graphcompletenessAtypes}
    Let $\D$ be a regular ultrafilter on the infinite cardinal $\lambda$, $A$ be a graph-like $\varphi$-set in the structure $\M$, and $\Gamma$ be the set of $\leq$-minimal graph isomorphism types not occurring in $\ish(A)$. Then the following are equivalent:
    \begin{enumerate}
        \item Every $(A)$-type of $\widehat\M:=\M^\D$ with cardinality $\leq\lambda$ is realized.
        \item If $(\mathbb{G}_\alpha)_{\alpha<\lambda}$ is a sequence of (possibly infinite) graphs with the property that for every $\gamma\in\Gamma$ and $\alpha <\lambda$ we have that $\gamma\nleq\mathrm{isoType}(\mathbb{G}_\alpha)$ (i.e.\ every finite induced subgraph of $\mathrm{isoType}(\mathbb{G}_\alpha)$ is an element of $\ish(A)$), then the ultraproduct $\mathbb{G}:=\mathbb{G}_\alpha^\D$ has the property that every complete $H\se\mathbb{G}$ with $|H|=\lambda$ extends to an internal complete subgraph of $\mathbb{G}$.
    \end{enumerate}
\end{thm}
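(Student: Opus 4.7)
The plan is to handle the two implications separately. Direction (2) $\Rightarrow$ (1) will reduce to Theorem~\ref{realiffint} once I verify that the Loś conjugate sequence of any $(A)$-type satisfies the hypothesis of (2). Direction (1) $\Rightarrow$ (2) is harder and will proceed by constructing an $(A)$-type tailored to the given complete subgraph $H$, whose realization then yields the desired internal complete extension.

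For (2) $\Rightarrow$ (1), let $p(\bar{x})$ be an $(A)$-type of cardinality at most $\lambda$ with representatives $(\bar{r}_\beta)_{\beta<\lambda}$ drawn from $\prod_\alpha A$; after adjusting representatives on a $\D$-small set (replacing inconsistent values by a fixed $a^*\in A$ for which $\varphi(\bar{x},a^*)$ is consistent) one may assume $k_1(\alpha)=\lambda$ for every $\alpha$, so that $\ess(L)=\lambda$ and $\mathbb{G}_L=\mathbb{G}_\alpha^\D$. For any finite $\Delta\subseteq\lambda$, the pair $(\langle\Delta,k_2(\alpha)|_{[\Delta]^2}\rangle,\,\beta\mapsto\bar{r}_\beta(\alpha))$ realizes an element of $\sh(A)$ directly from the definitions of $k_2$ and $\sh(A)$, so every finite induced subgraph of $\langle k_1(\alpha),k_2(\alpha)\rangle$ has isomorphism type in $\ish(A)$ and thus contains no $\gamma\in\Gamma$ by Theorem~\ref{charshapes}. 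Applying (2) with $H:=\eta_L[\lambda]$, a complete subgraph of $\mathbb{G}_L$ of cardinality $\lambda$, produces an internal complete subgraph of $\mathbb{G}_L$ containing $\eta_L[\lambda]$; Theorem~\ref{realiffint} then gives realization of $p(\bar{x})$.

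For (1) $\Rightarrow$ (2), fix a sequence $(\mathbb{G}_\alpha)_{\alpha<\lambda}$ as in (2) and a complete $H=\{h_\beta/\D:\beta<\lambda\}\subseteq\mathbb{G}:=\mathbb{G}_\alpha^\D$ of cardinality $\lambda$, represented by $h_\beta\in\prod_\alpha G_\alpha$. Fix a regularizing family $(E_\beta)_{\beta<\lambda}\subseteq\D$ and write $\Delta_\alpha:=\{\beta:\alpha\in E_\beta\}$, a finite set; then $V_\alpha:=\{h_\beta(\alpha):\beta\in\Delta_\alpha\}$ is finite, so $\mathbb{G}_\alpha|_{V_\alpha}$ has isomorphism type in $\ish(A)$ by hypothesis, and by the Axiom of Choice I pick $\theta_\alpha\colon V_\alpha\to A$ witnessing this shape. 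Define
\[
    \bar{a}_\beta(\alpha):=\pw{\theta_\alpha(h_\beta(\alpha)),& \text{if }\alpha\in E_\beta\\ a^*,& \text{otherwise}}
\]
for some fixed $a^*\in A$ and let $p(\bar{x}):=\{\varphi(\bar{x},\bar{a}_\beta/\D):\beta<\lambda\}$. For any finite $\Delta\subseteq\lambda$, the intersection of $\bigcap_{\beta\in\Delta}E_\beta$ with the $\D$-large sets $\{\alpha:h_\beta(\alpha)\neq h_\gamma(\alpha)\}$ (from injectivity of $\beta\mapsto h_\beta/\D$) and $\{\alpha:h_\beta(\alpha)\mathrel{E}^{\mathbb{G}_\alpha}h_\gamma(\alpha)\}$ (from completeness of $H$) taken over $\beta\neq\gamma\in\Delta$ is $\D$-large, and on it the edge clause of Definition~\ref{shape} renders $\{\varphi(\bar{x},\bar{a}_\beta(\alpha)):\beta\in\Delta\}$ pairwise consistent in $\M$; graph-likeness of $A$ upgrades this to joint consistency, so $p(\bar{x})$ is an $(A)$-type.

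Invoking (1), fix a realization $\bar{b}\in\widehat{\M}$ of $p(\bar{x})$ with representative $\bar{b}_\alpha$, and set $K_\alpha:=\{v\in V_\alpha:\M\vDash\varphi(\bar{b}_\alpha,\theta_\alpha(v))\}$. For distinct $v,w\in K_\alpha$, $\bar{b}_\alpha$ witnesses the consistency of $\varphi(\bar{x},\theta_\alpha(v))\wedge\varphi(\bar{x},\theta_\alpha(w))$, so the edge clause of $\sh(A)$ forces $v\mathrel{E}^{\mathbb{G}_\alpha}w$; thus $K:=\prod_\alpha K_\alpha/\D$ is an internal complete subgraph of $\mathbb{G}$. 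For each $\beta<\lambda$, the $\D$-large set $E_\beta\cap\{\alpha:\M\vDash\varphi(\bar{b}_\alpha,\bar{a}_\beta(\alpha))\}$ is contained in $\{\alpha:h_\beta(\alpha)\in K_\alpha\}$, so $H\subseteq K$. The main subtlety is the piecewise choice of shape witnesses $\theta_\alpha$: the regularizing family keeps each $V_\alpha$ finite so that the hypothesis on finite induced subgraphs applies, and graph-likeness of $A$ is essential both for verifying $p(\bar{x})$ is a type and for deducing completeness of $K_\alpha$ from pairwise satisfaction of $\varphi(\bar{b}_\alpha,\cdot)$.
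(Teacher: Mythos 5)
Your proof is correct and follows the same basic skeleton as the paper's (both directions reduce to Theorem~\ref{realiffint} and a tailored $(A)$-type construction), but the $(1)\Rightarrow(2)$ direction takes a more careful and arguably more general route. The paper's proof begins by choosing, for each $\alpha$, a pair $(\mathbb{G}'_\alpha,\theta_\alpha)\in\sh(A)$ together with a graph isomorphism $\psi_\alpha\colon\mathbb{G}_\alpha\to\mathbb{G}'_\alpha$ --- a step that only makes literal sense when $\mathbb{G}_\alpha$ is finite, whereas the statement explicitly allows infinite factor graphs. You instead cut down to the finite set $V_\alpha$ using a regularizing family before invoking the hypothesis on finite induced subgraphs, which handles the possibly-infinite case cleanly and is closer to what the hypothesis of (2) actually provides. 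You also bypass Keisler's multiplicative-distribution machinery (Fact~\ref{keislerfact} and the conjugate analysis via Lemma~\ref{multviaseq}) in favor of reading the internal complete subgraph $K_\alpha$ directly off a representative of the realization, using the edge clause of $\sh(A)$ and graph-likeness of $A$ in both directions; this is slightly more elementary. Your $(2)\Rightarrow(1)$ is essentially the paper's argument via Theorem~\ref{realiffint}, with an explicit preliminary normalization of representatives (forcing $\ess(L)=\lambda$ via a fallback element $a^*$) and an explicit verification that the factor graphs $\langle k_1(\alpha),k_2(\alpha)\rangle$ of $\mathbb{G}_L$ satisfy the hypothesis of (2), a verification the paper leaves implicit.
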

\begin{proof}
    $(1)\Rightarrow(2)$: 
    We proceed by constructing an $(A)$-type $p(\bar{x})$ in $\widehat\M$ by taking advantage of the fact that the isomorphism types of the factors of $\mathbb{G}$ belong to $\ish(A)$. We then show that a multiplicative distribution for $p(\bar{x})$ in $\widehat\M$ gives instructions for extending $H$ to an internal complete subgraph of $\mathbb{G}$.
    
    For each $\alpha<\lambda$ choose a pair $(\mathbb{G}'_\alpha,\theta_\alpha)$ in $\sh(A)$ with a graph isomorphism $\psi_\alpha\colon\mathbb{G}_\alpha\to\mathbb{G}'_\alpha$. Enumerate the elements of $H$ as $(h_\beta)_{\beta<\lambda}$ and pick representatives $x_\beta\in\prod_{\alpha<\lambda}\mathbb{G}_\alpha$ for each $h_\beta$. For each $\alpha,\beta<\lambda$, define
    \[\bar{r}_\beta(\alpha)=\theta_\alpha(\psi_\alpha(x_\beta(\alpha)))\]
    and consider the collection of formulas $p(\bar{x})=\{\varphi(\bar{x},\bar{r}_\beta/\D):\beta<\lambda\}$. We claim that $p(\bar{x})$ is an $(A)$-type of $\widehat{\M}$. It will be enough to check that $p(\bar{x})$ is a type as the image of $\theta_\alpha$ is always a subset of $A$.
    
    If $\Delta\in\P_\omega(\lambda)$, then, because $A$ is graph-like, we have that
    \begin{align*}
    \llbracket\Delta\rrbracket^\D_{p(\bar{x})} &=
		\{\alpha\in\lambda:\forall \{\beta,\gamma\}\in[\Delta]^2,\,\M_\alpha\vDash\exists \bar{x},\,\varphi(\bar{x},\bar{r}_\beta(\alpha))\wedge\varphi(\bar{x},\bar{r}_\gamma(\alpha))\}\\
	&=\{\alpha\in\lambda: \forall \{\beta,\gamma\}\in[\Delta]^2,\, \mathbb{G}'_\alpha\vDash \psi_\alpha(x_\beta(\alpha))\mathrel{E} \psi_\alpha(x_\gamma(\alpha))\}\\
	&=\{\alpha\in\lambda: \forall \{\beta,\gamma\}\in[\Delta]^2,\, \mathbb{G}_\alpha\vDash x_\beta(\alpha)\mathrel{E} x_\gamma(\alpha)\}\\
	&=\bigcap_{\{\beta,\gamma\}\in[\Delta]^2}\{\alpha\in\lambda:\mathbb{G}_\alpha\vDash x_\beta(\alpha)\mathrel{E} x_\gamma(\alpha)\}
	\end{align*}
	Then, as $x_\beta/\D=h_\beta$ and $x_\gamma/\D=h_\gamma$ are neighbors in $\mathbb{G}$ and $[\Delta]^2$ is finite, we have that this last set is an element of $\D$ by \L o\'s theorem and the fact that ultrafilters are closed under finite intersection.
	
	By assumption, the type $p(\bar{x})$ is realized in $\widehat\M$ so, by Fact~\ref{keislerfact}, we may choose a multiplicative distribution $f$ for $p(\bar{x})$ and we may assume that $\ess(f)=\lambda$ (if not, let $f'(\Delta)=f(\Delta)$ if $\Delta\neq\{0\}$ and set $f'(\{0\})=f(\{0\})\cup(\lambda\sm\ess(f))$. Then $f'$ is a multiplicative distribution of $p(\bar{x})$ and $\ess(f')=\lambda$). We will use the information contained in $f$ to extend $H$.
	
	Let $(g_n)_{n\in\omega}$ be the conjugate of $f$. For each $\alpha<\lambda$, define $\mathbb{H}_\alpha$ to be the subgraph of $\mathbb{G}_\alpha$ induced by the set 
	\[\psi_\alpha^{-1}[\{\psi_\alpha(x_\beta(\alpha)):\beta\in g_1(\alpha)\}]=\{x_\beta(\alpha):\beta\in g_1(\alpha)\}.\]
	Because $\psi_\alpha$ and $\psi^{-1}_\alpha$ are graph isomorphisms and because the graph $\langle g_1(\alpha),g_2(\alpha)\rangle$ is complete by Lemma~\ref{multviaseq}, we must have that $\mathbb{H}_\alpha$ is complete for all $\alpha<\lambda$. Thus $\widehat{\mathbb{H}}:=\prod_{\alpha<\lambda}\mathbb{H}_\alpha/\D$ is complete by \L o\'s theorem and is internal by construction. Furthermore, $\widehat{\mathbb{H}}$ extends $H$ as $x_\beta(\alpha)\in \mathbb{H}_\alpha$ whenever $\alpha\in f(\{\beta\})$ and $f(\{\beta\})\in\D$.
    
    $(2)\Rightarrow(1)$: Let $p(\bar{x})$ be an $(A)$-type of $\widehat{\M}$ of cardinality $\lambda$. We then have that $p(\bar{x})$ is a graph-like type, witnessed by representations of the parameters coming from $A^\lambda$. The type $p(\bar{x})$ is realized in $\widehat\M$ if and only if the \L o\'s ultragraph $\mathbb{G}_L$ has the property that the complete subgraph induced by $\eta_L[\lambda]$ extends to an internal complete subgraph of $\mathbb{G}_L$ (see Theorem~\ref{realiffint}). 
\end{proof}

\section{Applications to the $\mathrm{SOP}$ Hierarchy}
\subsection{$\mathrm{SOP}_2$ and Comparability in Trees}

We know from the definition of $\mathrm{SOP}_2$ (Definition~\ref{sop2type}) that whenever $A$ is a $\varphi$-set whose elements form an $\mathrm{SOP}_2$-tree within the structure $\M$ then any $B\se A$ creates a consistent collection of formula 
\[p_B(\bar{x})=\{\varphi(\bar{x},\bar{b}):\bar{b}\in B\}\] 
if and only if $B$ is a subset of a branch of the $\mathrm{SOP}_2$-tree whose elements come from $A$. In other words, $p_B(\bar{x})$ is consistent if and only if every pair $\bar{b},\bar{b}'\in B$ is comparable in the $\mathrm{SOP}_2$-tree (that is, $B$ is a linear sub-order of the tree). We thus have that the $\varphi$-set $A$ is graph-like and the shape of $A$ is the collection of all finite graphs $\mathbb{G}$ that can be obtained from taking a finite (multi-)subset of the full binary tree. There are edges between the two distinct vertices $b$ $b'$ in $\mathbb{G}$ if and only if $b$ and $b'$ are comparable in the full binary tree. Fortunately, most of the combinatorial work of determining the isomorphism types of such graphs has already been done by \cite{wolk}.

\begin{defn}\label{sop2witness}
    We say that a tuple $(\varphi, A)$ \emph{witnesses $\mathrm{SOP}_2$ in a structure $\M$} if there is an $\mathrm{SOP}_2$-tree 
    \[(\varphi(\bar{x};\bar{y}),(\bar{a}_\eta)_{\eta\in 2^{<\omega}})\]
    in $\M$ such that $A=\{a_\eta:\eta\in 2^{<\omega}\}$.
\end{defn}

\begin{lem}\label{sop2shape}
	Suppose that $\varphi$ is a formula and $A$ is a $\varphi$-set such that the pair $(\varphi,A)$ witnesses $\mathrm{SOP}_2$ in the structure $\M$. Then $A$ is a graph-like $\varphi$-set and the set $\Gamma$ of $\leq$-minimal isomorphism types not in $\ish(A)$ are the two isomorphism types:
	\[C_4:=\vcenter{\hbox{\begin{tikzpicture}
		\tikzset{enclosed/.style={draw, circle, inner sep=0pt, minimum size=.1cm, fill=black}}
		 
		\node[enclosed] (1) at (0.75,.75) {};
		\node[enclosed] (2) at (1.5,.75) {};
		\node[enclosed] (3) at (0.75,0) {};
		\node[enclosed] (4) at (1.5,0) {};
		
		\draw (1) -- (2);
		\draw (2) -- (4);
		\draw (1) -- (3);
		\draw (3) -- (4);
	\end{tikzpicture}}}\quad
	\text{ and }\quad
	\ell_4:=\vcenter{\hbox{
	\begin{tikzpicture}
		\tikzset{enclosed/.style={draw, circle, inner sep=0pt, minimum size=.1cm, fill=black}}
		
		\node[enclosed] (5) at (2.5,0) {};
		\node[enclosed] (6) at (3.25,0) {};
		\node[enclosed] (7) at (3.25,.75) {};
		\node[enclosed] (8) at (2.5,.75) {};
		
		\draw (5) -- (6);
		\draw (6) -- (7);
		\draw (7) -- (8);
	\end{tikzpicture}}}\, .\]
\end{lem}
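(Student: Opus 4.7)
The plan is to exploit that for an $\mathrm{SOP}_2$-tree a finite set of parameters gives a consistent collection of instances of $\varphi$ iff the corresponding tree-indices form a chain in $2^{<\omega}$; I would then invoke the classical characterization of Wolk~\cite{wolk} to pin down the minimal forbidden graphs. The proof splits naturally into three stages.

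For the first stage, identifying $A$ with the nodes of the $\mathrm{SOP}_2$-tree witnessing that $(\varphi,A)$ witnesses $\mathrm{SOP}_2$, any finite $B\se A$ yields a consistent set of instances of $\varphi$ iff (the image of) $B$ is a chain in $2^{<\omega}$. Since in a tree-ordered set a finite subset is a chain iff every pair of its elements is comparable, the graph-like condition of Definition~\ref{glvarphiset} is immediate.

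For the second stage, by Definition~\ref{shape}, $\sh(A)$ consists of pairs $(\mathbb{G},\theta)$ in which the edges of $\mathbb{G}$ record tree-comparability of $\theta(i)$ and $\theta(j)$, so $\ish(A)$ is the class of comparability graphs of finite subsets of $2^{<\omega}$. Because every finite forest embeds order-preservingly into $2^{<\omega}$ (a short induction using that every node of $2^{<\omega}$ admits arbitrarily large antichains of descendants), this coincides with the class of comparability graphs of finite forests.

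For the third stage, I would first verify directly that neither $C_4$ nor $\ell_4$ arises in $\ish(A)$. Labelling the vertices of $\ell_4$ along the path as $v_1, v_2, v_3, v_4$, the relations $v_1\sim v_2\sim v_3$ together with $v_1\not\sim v_3$ force $v_1,v_3<v_2$ in the tree order, and then $v_4\sim v_3$ together with $v_4\not\sim v_2$ contradicts the fact that the set of elements above any fixed vertex of a tree is linearly ordered; the case for $C_4$ is analogous. Each proper induced subgraph of $C_4$ or $\ell_4$ has at most three vertices and is isomorphic to the path on three vertices or to an edge plus an isolated vertex, both of which lie in $\ish(A)$, so $C_4$ and $\ell_4$ are each individually $\leq$-minimal. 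To conclude that they are the \emph{only} $\leq$-minimal forbidden types I invoke Wolk's theorem~\cite{wolk}: a finite graph is the comparability graph of a forest iff it contains no induced $C_4$ and no induced path on four vertices $\ell_4$; hence every isomorphism type missing from $\ish(A)$ contains at least one of $C_4$ or $\ell_4$ as an induced subgraph. The main obstacle is the appeal to Wolk's theorem; the remaining components are a translation of the $\mathrm{SOP}_2$-tree structure into the framework of Definitions~\ref{glvarphiset} and~\ref{shape}, together with elementary arguments using the fact that above any vertex of a tree sits a chain.
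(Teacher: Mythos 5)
Your proposal follows essentially the same route as the paper: identify $\ish(A)$ with the class of comparability graphs of finite forests, then invoke Wolk's characterization. The only organizational difference is that you cite Wolk in the forbidden-subgraph form (``no induced $C_4$ or $\ell_4$''), whereas the paper states it as the diagonal property (for all distinct $x_0,x_1,x_2,x_3$, $x_0\mathrel{E}x_1\mathrel{E}x_2\mathrel{E}x_3\rightarrow(x_0\mathrel{E}x_2)\vee(x_1\mathrel{E}x_3)$) and then does a short case analysis to show that the minimal graphs violating it are exactly $C_4$ and $\ell_4$. These are equivalent formulations, and your direct verification that $C_4$ and $\ell_4$ cannot arise from a chain-antichain structure is a correct (if technically redundant once Wolk is invoked) sanity check.

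There is, however, one step the paper handles that your proposal omits. Definition~\ref{shape} does \emph{not} require $\theta$ to be injective, so $\sh(A)$ may contain pairs $(\mathbb{G},\theta)$ in which $\theta$ repeats a value; your second stage implicitly treats $\ish(A)$ as the comparability graphs of genuine \emph{subsets} of $2^{<\omega}$, which only covers the injective case. To close this gap one must observe that if $\theta(i)=\theta(j)$ with $i\neq j$ then $\varphi(\bar{x};\theta(i))$ and $\varphi(\bar{x};\theta(j))$ are literally the same formula, so by the clauses of Definition~\ref{shape} the vertices $i$ and $j$ are adjacent and have identical neighborhoods (``true twins''), and neither $C_4$ nor $\ell_4$ admits a pair of true twins; hence any isomorphism type realized with a non-injective $\theta$ is also realized with an injective one, and the forbidden graphs still cannot appear. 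This is exactly the argument in the last paragraph of the paper's proof. It is a small fix, but without it your identification of $\ish(A)$ — and hence the whole third stage — is not quite justified.
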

\begin{proof}
	Because $(\varphi, A)$ witnesses $\mathrm{SOP}_2$ in $\M$, there is a surjection $h:2^{<\omega}\to A$ with the property that a subset $B$ of $2^{<\omega}$ is linearly ordered if and only if the collection of formulas
	\[p_B(\bar{x})=\{\varphi(\bar{x},h(s)): s\in B\}\]
	is consistent in $\M$. If $s\in 2^{<\omega}$, we will denote $h(s)$ by $\bar{a}_s$. Let $\Delta\in\P_\omega(2^{<\omega})$. Then the set of formula $\{\varphi(\bar{x},\bar{a}_s):s\in\Delta\}$ is consistent (equivalently, realized) in $\M$ if and only if the set $\Delta$ is linearly ordered as a suborder of $2^{<\omega}$ if and only if the elements of $\Delta$ are pair-wise comparable in the ordering on $2^{<\omega}$. Thus the elements $(\mathbb{G},\theta)$ of $\sh(A)$ where $\theta\colon G \to A$ is injective are such that $\mathbb{G}$ is the graph of the comparability relation for a finite suborder of $2^{<\omega}$ (equivalently, $\mathbb{G}$ is the graph of the comparability relation for a finite forest of subtrees of $2^{<\omega}$; i.e.\ the connected components of $\mathbb{G}$ correspond to finite subtrees of $2^{<\omega}$). Every finite collection of finite trees can be embedded as a suborder of $2^{<\omega}$, so the shape of $A$ contains every possible disjoint union of finitely many comparability graphs arising from finite trees. We claim that no other isomorphism types appear in $\ish(A)$. We postpone the proof of the claim to study the structure of these isomorphism types in more detail.
	
	By \cite{wolk} a finite connected graph $\mathbb{G}=(G,E)$ is the comparability graph of a tree if and only if for all distinct $x_0,x_1,x_2,x_3\in G$ the formula \[x_0\mathrel{E}x_1\mathrel{E}x_2\mathrel{E}x_3\rightarrow (x_0\mathrel{E} x_2)\vee (x_1\mathrel{E} x_3)\] holds (called \emph{the diagonal property}). Clearly, every graph $\mathbb{G}$ with $|G|\leq 3$ satisfies the diagonal property. Suppose that a graph with vertices $\{0,1,2,3\}$ does not satisfy the diagonal property. We may suppose that $0\mathrel{E}1\mathrel{E} 2\mathrel{E} 3$. If these are the only edges, the graph is $\ell_4$ and clearly not diagonal. Now there are only 3 possible edges that can be added, $\{0,3\},\{1,3\},$ or $\{0,2\}$. If just $\{0,3\}$ is added, we have a graph isomorphic to $C_4$, which is not diagonal. We wish to show that the remaining possibilities are all diagonal. 
	The complete graph on 4 elements is the comparability graph of a tree that is a 4 element linear order. Up to isomorphism, the remaining possibilities are the comparability graphs of the two trees 
	\[
	\vcenter{\hbox{\begin{tikzpicture}
		\tikzset{enclosed/.style={draw, circle, inner sep=0pt, minimum size=.1cm, fill=black}}
		 
		\node[enclosed] (1) at (0,0) {};
		\node[enclosed] (2) at (0,0.3) {};
		\node[enclosed] (3) at (0,0.6) {};
		\node[enclosed] (4) at (.3,.3) {};
		
		\draw (1) -- (2);
		\draw (2) -- (3);
		\draw (1) -- (4);
	\end{tikzpicture}}}\gap
	\vcenter{\hbox{\begin{tikzpicture}
		\tikzset{enclosed/.style={draw, circle, inner sep=0pt, minimum size=.1cm, fill=black}}
		 
		\node[enclosed] (1) at (0,0) {};
		\node[enclosed] (2) at (0,0.3) {};
		\node[enclosed] (3) at (-.3,.6) {};
		\node[enclosed] (4) at (.3,.6) {};
		
		\draw (1) -- (2);
		\draw (2) -- (3);
		\draw (2) -- (4);
	\end{tikzpicture}}}
	\]
	and so are diagonal.
	
	If $\mathbb{G}$ is a non-diagonal graph with greater than $4$ vertices, there must be a 4-vertex induced subgraph of $\mathbb{G}$ that is not diagonal (pick the four vertices to be a set of vertices witnessing the fact that the graph is not diagonal). Thus a graph $\mathbb{G}$ is diagonal if and only if $\mathbb{G}$ is the disjoint union of a collection of comparability graphs of trees if and only if $C_4\nleq\mathbb{G}$ and $\ell_4\nleq\mathbb{G}$.
	
	It remains to check that $\ish(A)$ consists of precisely the finite graph isomorphism types that are diagonal. We already know that the $(\mathbb{G},\theta)\in\sh(A)$ with $\theta$ injective have the property that $\mathbb{G}$ is diagonal (such graphs are the comparability graphs of a finite subtree of the full binary tree). Suppose that $(\mathbb{G},\theta)\in\sh(A)$ and $\theta$ is not injective. Suppose that $\mathbb{G}$ is not diagonal. As before, we need only look at a set of vertices $V:=\{v_0,v_1,v_2,v_3\}$ that witness the failure of the diagonal property. In particular, $V$ must have the property that the subgraph of $\mathbb{G}$ induced by $V$ is either $C_4$ or $\ell_4$ as these are the only non-diagonal 4-vertex graphs. Because $(V,\theta\restriction V)$ is an element of $\sh(A)$ and because $V$ is diagonal, $\theta\restriction V$ is not injective. Suppose that $\theta(v_i)=\theta(v_j)$ for a distinct pair of indices $i,j\in\{0,1,2,3\}$. Then the two vertices $v_i$ and $v_j$ share the same neighbors (other than each other) and are neighbors of each other by the definition of $\sh(A)$. One can check that no distinct pair of vertices in $C_4$ or $\ell_4$ satisfy this condition. Thus $\mathbb{G}$ is diagonal.
\end{proof}

In order to use the information in Lemma~\ref{necviadist} to characterize the graph-like distributions of $\mathrm{SOP}_2$-types, we will need to determine the necessary sets for the two graph isomorphism types in $\ish(A)$.

\begin{lem}\label{sop2necsets}
	With $\varphi$, $A$, and $\Gamma$ as in Lemma~\ref{sop2shape}, the $\se$-minimal necessary sets for the elements of $\Gamma$ in $\sh(A)$ are the edges:
	\[\vcenter{\hbox{\begin{tikzpicture}[circle dotted/.style={line width = \wid, dash pattern= on .05mm off 1mm, line cap = round}]
		\tikzset{enclosed/.style={draw, circle, inner sep=0pt, minimum size=.1cm, fill=black}}
		 
		\node[enclosed] (1) at (0.75,.75) {};
		\node[enclosed] (2) at (1.5,.75) {};
		\node[enclosed] (3) at (0.75,0) {};
		\node[enclosed] (4) at (1.5,0) {};
		
		\draw (1) -- (2);
		\draw (2) -- (4);
		\draw (1) -- (3);
		\draw (3) -- (4);
		\draw [circle dotted] (1) -- (4);
		\draw [circle dotted] (2) -- (3);
	\end{tikzpicture}}}\quad
	\text{ and }\quad
	\vcenter{\hbox{
	\begin{tikzpicture}[circle dotted/.style={line width = \wid, dash pattern= on .05mm off 1mm, line cap = round}]
		\tikzset{enclosed/.style={draw, circle, inner sep=0pt, minimum size=.1cm, fill=black}}
		
		\node[enclosed] (5) at (2.5,0) {};
		\node[enclosed] (6) at (3.25,0) {};
		\node[enclosed] (7) at (3.25,.75) {};
		\node[enclosed] (8) at (2.5,.75) {};
		
		\draw (5) -- (6);
		\draw (6) -- (7);
		\draw (7) -- (8);
		\draw [circle dotted] (5) -- (7);
		\draw [circle dotted] (6) -- (8);
	\end{tikzpicture}}}\]
	where the dotted edges represent the elements of the corresponding necessary sets.
\end{lem}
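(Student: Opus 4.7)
The plan is to verify, for each $\mathbb{H}\in\Gamma$, that the indicated pair $B_{\mathbb{H}}$ of non-edges is necessary in the sense of Definition~\ref{necsets} and is $\subseteq$-minimal. Necessity will use the fact that $\mathrm{isoType}(\mathbb{H})\notin\ish(A)$ together with the Wolk description from the proof of Lemma~\ref{sop2shape} — namely, every graph in $\ish(A)$ is a disjoint union of comparability graphs of finite trees. Minimality (and indeed uniqueness) will be witnessed in each case by explicit embeddings into comparability graphs of small trees, all of which lie in $\sh(A)$.

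For $C_4$ the only non-edges are the two diagonals $\{1,3\}$ and $\{2,4\}$, so any injective graph homomorphism $\psi\colon C_4\to\mathbb{G}$ with $\mathrm{isoType}(\mathbb{G})\in\ish(A)$ must, by $\mathrm{isoType}(C_4)\in\Gamma$, carry at least one diagonal to an edge (else $\psi[C_4]$ would be an induced $C_4$ in $\mathbb{G}$, contradicting $C_4\nleq\mathrm{isoType}(\mathbb{G})$). For minimality, the comparability graph of the $4$-vertex tree with root $v_0$, child $v_1$, and children $v_2,v_3$ of $v_1$ is $K_4$ minus the single edge $\{v_2,v_3\}$, and $C_4$ embeds into this graph via $(1,2,3,4)\mapsto(v_2,v_0,v_3,v_1)$, making $\{1,3\}$ a non-edge of $\mathbb{G}$ and $\{2,4\}$ an edge; a relabeled embedding swaps these roles, so neither singleton is itself a necessary set.

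For $\ell_4=5{-}6{-}7{-}8$, given any injective homomorphism $\psi\colon\ell_4\to\mathbb{G}$ with $\mathrm{isoType}(\mathbb{G})\in\ish(A)$, set $(a,b,c,d)=(\psi(5),\psi(6),\psi(7),\psi(8))$; the three edges force $a,b,c,d$ to lie in a single tree-component $T$ with $a\sim b\sim c\sim d$ in the tree order. A short case analysis on $T$ shows that $\{b,d\}$ can fail to be comparable only when $b<c$ and $d<c$ (i.e.\ $b,d$ are both strict descendants of $c$), while $\{a,c\}$ can fail only when $a<b$ and $c<b$ (i.e.\ $a,c$ are both strict descendants of $b$); since these require $b<c$ and $c<b$ respectively, they cannot hold simultaneously, so at least one of $\{a,c\}, \{b,d\}$ is an edge of $\mathbb{G}$, establishing necessity. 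For minimality, use the two trees $T_1$ with root $b$, children $a,c$, and $d$ a child of $c$, and $T_2$ with root $c$, children $b,d$, and $a$ a child of $b$. Under $5\mapsto a, 6\mapsto b, 7\mapsto c, 8\mapsto d$ both give injective homomorphisms $\ell_4\to\mathbb{G}$; in $T_1$ we have $\{a,c\}$ a non-edge and $\{b,d\}$ an edge, while in $T_2$ the roles are reversed, so neither singleton subset of $\{\{5,7\},\{6,8\}\}$ is necessary. In both $T_1$ and $T_2$ the third non-edge $\{a,d\}$ is also non-adjacent, so these same two examples rule out the alternative 2-element subsets $\{\{5,7\},\{5,8\}\}$ and $\{\{5,8\},\{6,8\}\}$, leaving $B_{\ell_4}$ as the unique $\subseteq$-minimal necessary set.

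The main obstacle is the mutually-exclusive case analysis underlying necessity for $\ell_4$; once that is in hand, the $C_4$ necessity step and all the minimality verifications reduce to straightforward inspection of the trees described above.
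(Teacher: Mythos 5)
Your proof is correct, and it reaches the same conclusion by a somewhat more hands-on route than the paper. For $C_4$-necessity the two arguments coincide: the only non-edges are the two diagonals, so an injective homomorphism into a graph avoiding $C_4$ as an induced subgraph must hit at least one of them. For $\ell_4$-necessity, however, the paper gets the dichotomy in one line by applying Wolk's diagonal property to the image of the path $5\mbox{--}6\mbox{--}7\mbox{--}8$: if $\psi(5)\mathrel{E}\psi(6)\mathrel{E}\psi(7)\mathrel{E}\psi(8)$ in a diagonal graph, then $\psi(5)\mathrel{E}\psi(7)$ or $\psi(6)\mathrel{E}\psi(8)$. You instead re-derive that same dichotomy directly from the tree order (the two failure conditions force $b$ to sit strictly above $c$ and $c$ strictly above $b$, respectively), which is more self-contained but effectively reproves the relevant instance of the diagonal property. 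For $\subseteq$-minimality the paper merely asserts that adding any one dotted edge makes the forbidden graph diagonal; you exhibit explicit four-vertex trees and the witnessing injective homomorphisms, which is more concrete and additionally establishes uniqueness of the minimal necessary set for $\ell_4$ (ruling out the alternative two-element subsets through $\{5,8\}$) — a sharpening the paper states implicitly but does not argue. One cosmetic note: you label $C_4$ in cyclic order $1\mbox{--}2\mbox{--}3\mbox{--}4\mbox{--}1$ so your diagonals are $\{1,3\}$ and $\{2,4\}$, whereas the paper's figure labels the cycle $1\mbox{--}2\mbox{--}4\mbox{--}3\mbox{--}1$ so its dotted edges read $\{1,4\}$ and $\{2,3\}$; the content is identical, and under your labeling the embedding $(1,2,3,4)\mapsto(v_2,v_0,v_3,v_1)$ into $K_4$ minus $\{v_2,v_3\}$ is indeed a graph homomorphism sending exactly one diagonal to a non-edge, as claimed.
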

\begin{proof}
	By the diagonal characterization of graphs arising from the comparability relation on finite trees in \cite{wolk}, the analysis in the proof of Lemma~\ref{sop2shape}, and Lemma~\ref{necviadist}, the above sets are necessary for $\ell_4$ and $C_4$. Since both $C_4$ and $\ell_4$ become diagonal when any one of the dotted edges is added, these necessary sets are $\se$-minimal.
\end{proof}

This analysis allows us to give both a characterization of the distributions of $\mathrm{SOP}_2$-types as well as giving a new characterization of good ultrafilters in terms of which complete subgraphs of ultragraphs with factors coming from $\sh(A)$ can be extended to \emph{internal} complete subgraphs.

\begin{thm}\label{sop2goodequivalents}
	Let $\D$ be a regular ultrafilter on the infinite cardinal $\lambda$. Then the following are equivalent:
	\begin{enumerate}
		\item $\D$ is good.
		\item If $\M$ is a structure and $(\varphi,A)$ witness $\mathrm{SOP}_2$ in $\M$ then every $(A)$-type of cardinality $\leq\lambda$ in $\widehat{\M}:=\M^\D$ is realized in $\widehat{\M}$.
		\item For all sequences of graphs $(\mathbb{G}_\alpha)_{\alpha<\lambda}$ such that $C_4\nleq \mathbb{G}_\alpha$ and $\ell_4\nleq \mathbb{G}_\alpha$ for all $\alpha<\lambda$ the ultraproduct $\mathbb{G}:=\mathbb{G}_\alpha^\D$ has the property that every complete $H\se\mathbb{G}$ with $|H|=\lambda$ is contained within an internal complete subgraph of $\mathbb{G}$.
		\item Every graph-like function $f:P_\omega(\lambda)\to\D$ that satisfies
	\[f(\{x_0,x_1\})\cap f(\{x_1,x_2\})\cap f(\{x_2,x_3\})\se f(\{x_0,x_2\})\cup f(\{x_1,x_3\})\] for all distinct $x_0,x_1,x_2,x_3\in\lambda$ has a multiplicative refinement.
	\end{enumerate}
\end{thm}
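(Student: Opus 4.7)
The plan is to establish the four-way equivalence as a cycle $(1)\Rightarrow(2)\Rightarrow(3)\Rightarrow(4)\Rightarrow(1)$, but most of the work can be packaged into two iff-bridges: $(2)\iff(4)$ via Theorem~\ref{realAalphatype} applied to the constant sequence $A_\alpha=A$, and $(2)\iff(3)$ via Theorem~\ref{graphcompletenessAtypes} applied to the same $A$. The equivalence $(1)\iff(2)$ is then a direct appeal to the characterization of good ultrafilters in terms of saturation of $\mathrm{SOP}_2$-theories from \cite{sh500,mmss_pt}: an $(A)$-type in $\widehat{\M}$ is exactly an $\mathrm{SOP}_2$-type when $(\varphi,A)$ witnesses $\mathrm{SOP}_2$, so asserting (2) over every $\M$ is the same as asserting that $\D$ $\lambda^+$-saturates every $\mathrm{SOP}_2$-theory, which is equivalent to $\D$ being good.

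For $(2)\iff(4)$, I would first note by Lemma~\ref{sop2shape} that $A$ is a graph-like $\varphi$-set, that $\ish(A_\alpha)=\ish(A_\delta)$ holds trivially for the constant sequence $A_\alpha = A$, and that the set $\Gamma$ of $\leq$-minimal isomorphism types missing from $\ish(A)$ is $\{C_4,\ell_4\}$. By Lemma~\ref{sop2necsets}, the $\subseteq$-minimal necessary sets for $C_4$ and $\ell_4$ are in both cases the two diagonals $\{\{x_0,x_2\},\{x_1,x_3\}\}$ (after labelling the cycle or path $x_0,x_1,x_2,x_3$). Plugging these $\Gamma$ and necessary sets into Theorem~\ref{realAalphatype}, condition~\eqref{necdisteqn} becomes the pair of inclusions
\begin{align*}
f(\{x_0,x_1\})\cap f(\{x_1,x_2\})\cap f(\{x_2,x_3\})&\se f(\{x_0,x_2\})\cup f(\{x_1,x_3\}),\\
f(\{x_0,x_1\})\cap f(\{x_1,x_2\})\cap f(\{x_2,x_3\})\cap f(\{x_0,x_3\})&\se f(\{x_0,x_2\})\cup f(\{x_1,x_3\}).
\end{align*}
The second (from $C_4$) is implied by the first (from $\ell_4$) because it has a smaller left-hand side, so the single condition stated in (4) is equivalent to the conjunction. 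Theorem~\ref{realAalphatype} then gives $(2)\iff(4)$ verbatim.

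For $(2)\iff(3)$, I would apply Theorem~\ref{graphcompletenessAtypes} to the same $A$. The hypothesis that $(\mathbb{G}_\alpha)_{\alpha<\lambda}$ has no induced $\gamma\in\Gamma$ becomes precisely $C_4\nleq\mathrm{isoType}(\mathbb{G}_\alpha)$ and $\ell_4\nleq\mathrm{isoType}(\mathbb{G}_\alpha)$, by Lemma~\ref{sop2shape}. The conclusion of condition~(2) in Theorem~\ref{graphcompletenessAtypes} matches condition (3) of the present theorem exactly, so $(2)\iff(3)$ is immediate. Since each bridge is an iff and $(1)\iff(2)$ is external, the four conditions are all equivalent.

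The main obstacle is essentially bookkeeping: confirming that the single inclusion stated in (4) captures both forbidden induced subgraphs, and confirming that $(1)\iff(2)$ really does follow from the Malliaris--Shelah characterization rather than requiring a separate argument (it does, because (2) ranges over all $\M$ and all witnesses $(\varphi,A)$, hence over all $\mathrm{SOP}_2$-types in the sense of Definition~\ref{sop2type}). All the genuine content has been done in earlier sections; this theorem is essentially a specialization of the framework to the $\mathrm{SOP}_2$ shape.
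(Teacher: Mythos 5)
Your proposal matches the paper's proof essentially step-for-step: $(1)\iff(2)$ by citing the Malliaris--Shelah characterization of goodness via realization of $\mathrm{SOP}_2$-types, $(2)\iff(3)$ as a special case of Theorem~\ref{graphcompletenessAtypes}, and $(2)\iff(4)$ by combining Theorem~\ref{realAalphatype} with Lemma~\ref{sop2necsets} and observing that the $\ell_4$ inclusion subsumes the $C_4$ inclusion since the latter has a smaller left-hand side. The only cosmetic difference is that you spell out slightly more explicitly that $(A)$-types for $(\varphi,A)$ witnessing $\mathrm{SOP}_2$ range over exactly the $\mathrm{SOP}_2$-types of Definition~\ref{sop2type}, which the paper treats as immediate.
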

\begin{proof}
    $(1)$$\iff$$(2)$: This is a restatement of the fact that a regular ultrafilter $\D$ is good if and only if $\D$ realizes all $\mathrm{SOP}_2$-types over sets of size $\lambda$, which is proved by Malliaris and Shelah in \cite{mmss_pt} (see Conclusion~11.9 and Main Theorem~11.11).
    
   $(2)$$\iff$$(3)$: This is a special case of Theorem~\ref{graphcompletenessAtypes}.
    
    $(2)$$\iff$$(4)$: Combining Theorem~\ref{realAalphatype} and Lemma~\ref{sop2necsets} we have that every $(A)$-type of $\widehat{\M}$ of cardinality $\lambda$ is realized in $\widehat{\M}$ if and only if both
    \begin{align*}
        f(\{x_0,x_1\})\cap f(\{x_1,x_2\})\cap f(\{x_2,x_3\})&\se f(\{x_0,x_2\})\cup f(\{x_1,x_3\})\\
        f(\{x_0,x_3\})\cap f(\{x_0,x_1\})\cap f(\{x_1,x_2\})\cap f(\{x_2,x_3\})&\se f(\{x_0,x_2\})\cup f(\{x_1,x_3\})
    \end{align*}
    whenever $x_0,x_1,x_2,x_3$ are distinct elements of $\lambda$. As the intersection on the left-hand side of the second condition is always a subset of the left-hand side of the first condition, both conditions are satisfied if and only if the first condition is satisfied.
\end{proof}

\begin{rmrk}
    The condition on $f$ in statement $(4)$ of Theorem~\ref{sop2goodequivalents} is a direct translation of requiring that the distribution graph sequence for $f$ consists of graphs satisfying the diagonal property. That is, by Lemma~\ref{graphseqfromshtype}, statement $(4)$ requires that the distribution graph sequence of $f$ consists of induced subgraphs of the \L o\'s graph sequence for some $\mathrm{SOP}_2$-type in $\widehat\M$.
\end{rmrk}

\subsection{Cuts in Linear Orders and $\mathrm{SOP}$}

We make an analysis of types that witness a cut in an ultrapower of an infinite linear order being filled. This analysis follows the same basic structure as the analysis of $\mathrm{SOP}_2$-types presented above, but is slightly complicated by the shape corresponding to such types being more complicated. We first characterize the graphs $\mathbb{G}$ whose vertices are open intervals in some linear order $\mathbb{L}$ and which have an edge between two intervals $I_1$ and $I_2$ if and only if $I_1\cap I_2$ is nonempty in $\mathbb{L}$. Such graphs will be the elements of the shapes of the types being studied. Conveniently, the isomorphism types of these graphs will not depend on the linear order under consideration (see Lemma~\ref{linearordershape} below). Because of this, we will call such graphs interval intersection graphs without any reference to the linear order from which the intervals come from.

\begin{lem}\label{linearordershape}
	Suppose that $\mathbb{L}=(L,<)$ is an infinite linear order and that $\varphi(x;a,b)$ is the formula $a<x<b$. Then $L^2$ is a graph-like $\varphi$-set and the set $\Gamma$ of $\leq$-minimal graphs not in $\sh(L^2)$ consists exactly of the graphs/graph families:%

	\[\Rm{1}:=\vcenter{\hbox{\begin{tikzpicture}
		\tikzset{enclosed/.style={draw, circle, inner sep=0pt, minimum size=.1cm, fill=black}}
		 
		\node[enclosed] (1) at (0,0) {};
		\node[enclosed] (2) at (0,.75*\s) {};
		\node[enclosed] (3) at (0,1.5*\s) {};
		\node[enclosed] (4) at (-0.65*\s,-0.375*\s) {};
		\node[enclosed] (5) at (-1.3*\s,-.75*\s) {};
		\node[enclosed] (6) at (0.65*\s,-0.375*\s) {};
		\node[enclosed] (7) at (1.3*\s,-.75*\s) {};
		
		\draw (1) -- (2);
		\draw (2) -- (3);
		\draw (1) -- (4);
		\draw (4) -- (5);
		\draw (1) -- (6);
		\draw (6) -- (7);
	\end{tikzpicture}}}\gap
	\Rm{2}:=\vcenter{\hbox{\begin{tikzpicture}
		\tikzset{enclosed/.style={draw, circle, inner sep=0pt, minimum size=.1cm, fill=black}}
		 
		\node[enclosed] (1) at (0,0) {};
		\node[enclosed] (2) at (-0.75*\scale,0) {};
		\node[enclosed] (3) at (-1.5*\scale,0) {};
		\node[enclosed] (4) at (0,0.75*\scale) {};
		\node[enclosed] (5) at (0.75*\scale,0) {};
		\node[enclosed] (6) at (1.5*\scale,0) {};
		\node[enclosed] (7) at (0,-.75*\scale) {};
		
		\draw (1) -- (2);
		\draw (2) -- (3);
		\draw (1) -- (5);
		\draw (5) -- (6);
		\draw (4) -- (1);
		\draw (4) -- (2);
		\draw (4) -- (3);
		\draw (4) -- (5);
		\draw (4) -- (6);
		\draw (1) -- (7);
	\end{tikzpicture}}}\gap
	\Rm{3}_\ell:=\vcenter{\hbox{\begin{tikzpicture}
		\tikzset{enclosed/.style={draw, circle, inner sep=0pt, minimum size=.1cm, fill=black}}
		
		\newcommand\angleinit{-pi/3}
		\newcommand\anglec{pi/3}
		
		\foreach \x/\y in {1/$\ell$,2/$1$,3/$2$,4/$3$,5/$4$}
		{\node[enclosed, label=\y] (\x) at ({cos(deg(\angleinit+\x*\anglec))*\scaleIII},{sin(deg(\angleinit+\x*\anglec))*\scaleIII}) {};}
		
		\foreach \x [evaluate=\x as \y using int(\x+1)] in {1,2,3,4}
		{\draw (\x) -- (\y);}
		\draw [dashed] (5) edge[bend right=60] (1);
		
	\end{tikzpicture}}}
	\]
	\[
		\Rm{4}_m:=\vcenter{\hbox{\begin{tikzpicture}
		\tikzset{enclosed/.style={draw, circle, inner sep=0pt, minimum size=.1cm, fill=black}}
		 
		\node [enclosed] (bl) at (-3*\scaleIV,0) {};
		\node [enclosed] (br) at (3*\scaleIV,0) {};
		\node [enclosed] (u) at (0,1*\scaleIV) {};
		\node [enclosed] (uu) at (0,2*\scaleIV) {};
		\node [enclosed, label=below:$1$] (1) at (-2*\scaleIV,0) {};
		\node [enclosed, label=below:$2$] (2) at (-1*\scaleIV,0) {};
		\node [enclosed, label=below:$3$] (3) at (0,0) {};
		\node [enclosed, label=below:$m$] (n) at (2*\scaleIV,0) {};

		\foreach \x in {1,2,3,n}
		{\draw (u) -- (\x);}
		\draw (bl) -- (1);
		\draw (n) -- (br);
		\draw (uu) -- (u);
		\foreach \x [evaluate=\x as \y using int(\x+1)] in {1,2}
		{\draw (\x) -- (\y);}
		\draw [dashed] (3) -- (n);
		
	\end{tikzpicture}}}\gap
	\Rm{5}_n:=\vcenter{\hbox{\begin{tikzpicture}
		\tikzset{enclosed/.style={draw, circle, inner sep=0pt, minimum size=.1cm, fill=black}}
		 
		\newcommand\uinit{7*pi/6}
		\newcommand\oinit{pi}
		\newcommand\oscale{\scaleV}
		\newcommand\uscale{1.5*\scaleV}
		\newcommand\inc{pi/3}
		
		\foreach \lab/\init/\scalar in {u/\uinit/\uscale,o/\oinit/\oscale}
		{\foreach \n [evaluate=\n as \angel using {deg(\init-\n*\inc)}] in {1,2}
			{\node [enclosed] (\lab\n) at ({cos(\angel)*\scalar},{sin(\angel)*\scalar}) {};}
		}
		\node [enclosed] (u3) at ({cos(deg(\inc/2))*\uscale}, {sin(deg(\inc/2))*\uscale}) {};
		
		\foreach \x/\y in {1/$1$,2/$2$,3/$3$}
		{\node [enclosed, label=below:\y] (\x) at ({(\x-1)/3-\oscale},0) {};}
		\node [enclosed, label=below:$n$] (4) at (\oscale,0) {};	
		
		\foreach \top in {o1,o2}
		{\foreach \bottom in {1,2,3,4}
			{\draw (\top) -- (\bottom);}
		}
		\draw (o1) -- (o2);
		
		\foreach \x [evaluate=\x as \y using int(\x+1)] in {1,2}
		{\draw (\x) -- (\y);}
		\draw [dashed] (3) -- (4);
		
		\draw (1) -- (u1);
		\draw (o1) -- (u1);
		\draw (o1) -- (u2);
		\draw (o2) -- (u2);
		\draw (o2) -- (u3);
		\draw (4) -- (u3);
		
	\end{tikzpicture}}}\]
	where $\ell\geq 4$, $m\geq 2$, and $n\geq 1$ (in $\Rm{5}_1$ the nodes labeled $1$ and $n$ are the same node).
\end{lem}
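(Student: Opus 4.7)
The plan is to mirror the structure of the proof of Lemma~\ref{sop2shape}: first I would verify that $L^2$ is a graph-like $\varphi$-set via the one-dimensional Helly property, then identify the injective-$\theta$ elements of $\sh(L^2)$ as realizations of finite interval graphs in $\mathbb{L}$, then invoke a classical external characterization (the Lekkerkerker--Boland theorem on interval graphs) to pin down the $\leq$-minimal forbidden isomorphism types, and finally argue that allowing non-injective $\theta$ does not enlarge $\ish(L^2)$.

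For the Helly step, suppose $B=\{(a_1,b_1),\ldots,(a_n,b_n)\}\se L^2$ is pairwise consistent. I would let $k$ achieve $\max_i a_i$ and $\ell$ achieve $\min_i b_i$; applying the pairwise hypothesis to $\{k,\ell\}$ produces $x\in L$ with $\max_i a_i<x<\min_i b_i$, which simultaneously witnesses $\varphi(x;a_i,b_i)$ for every $i$. Thus $L^2$ is graph-like and the $(\mathbb{G},\theta)\in\sh(L^2)$ with $\theta$ injective are precisely the intersection graphs of finitely many nonempty open intervals in $\mathbb{L}$.

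To show $\ish(L^2)$ contains every finite interval graph, I would use that any infinite linear order contains a suborder of order type $\omega$ or $\omega^*$, fix such a strictly monotone sequence $(y_i)_{i\in\omega}$ in $L$, and, given any finite interval graph $\mathbb{G}$ with a standard closed integer-interval representation $[a_v,b_v]$ for $v\in G$, define open intervals $(y_{2a_v-1},y_{2b_v+1})$ in $\mathbb{L}$. The element $y_{2\max(a_u,a_v)}$ lies in the intersection exactly when the underlying closed intervals meet, so this reproduces the intersection pattern of $\mathbb{G}$ inside $\mathbb{L}$. Combined with the Lekkerkerker--Boland classification of the minimal non-interval graphs, this identifies the restriction of $\Gamma$ to injective-$\theta$ isomorphism types as exactly $\Rm{1}$, $\Rm{2}$, $\Rm{3}_\ell$ ($\ell\geq 4$), $\Rm{4}_m$ ($m\geq 2$), and $\Rm{5}_n$ ($n\geq 1$).

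The closing step handles non-injective $\theta$ by the same device as the final paragraph of the proof of Lemma~\ref{sop2shape}. If $\theta(v)=\theta(v')$ for distinct $v,v'$ in some $(\mathbb{G},\theta)\in\sh(L^2)$ then $v$ and $v'$ are adjacent (any interval intersects itself) and share all neighbors in $G\sm\{v,v'\}$. A direct inspection of the five candidate families shows that no two adjacent vertices have identical off-pair neighborhoods, so any appearance of one of these families as an induced subgraph of some $\mathbb{G}$ coming from $\sh(L^2)$ would force the restriction of $\theta$ to that copy to be injective, contradicting interval representability. I expect the main obstacle to lie in carefully invoking Lekkerkerker--Boland in the open-interval setting over an arbitrary infinite linear order and in verifying the twin-free property uniformly across the infinite families $\Rm{3}_\ell$, $\Rm{4}_m$, and $\Rm{5}_n$.
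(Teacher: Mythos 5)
Your overall structure parallels the paper's: verify the Helly property to see $L^2$ is graph-like, invoke Lekkerkerker--Boland for the forbidden list, realize every finite interval graph inside $\mathbb{L}$ via an $\omega$-suborder, and handle non-injective $\theta$ separately. Your twin-free argument for the last step is a genuinely different route from the paper's: the paper directly replaces any block of repeated intervals by a nested family of distinct intervals with the same intersection pattern, while you observe that the five forbidden families have no adjacent twins, so a forbidden induced copy would already force $\theta$ to be injective on that copy. Both routes are valid, and yours avoids the explicit nested-interval construction.

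The gap is in the $\mathbb{L}$-to-$\mathbb{R}$ transfer. Your $\omega$-suborder embedding gives $\ish(L^2)\supseteq\{\text{interval graph isomorphism types}\}$, but you never argue the converse: that the intersection graph of finitely many \emph{distinct} open intervals in an arbitrary infinite $\mathbb{L}$ must be an interval graph over $\mathbb{R}$. This is precisely where the twin-free reduction lands, and it is what the phrase ``contradicting interval representability'' silently assumes. It is not automatic: in $\mathbb{L}=\mathbb{Z}$ the intervals $(0,2)=\{1\}$ and $(1,3)=\{2\}$ are disjoint, yet any order-preserving map $\phi$ on the endpoints into $\mathbb{R}$ produces overlapping real intervals $(\phi(0),\phi(2))$ and $(\phi(1),\phi(3))$, because an $\mathbb{L}$-interval can be empty even when its endpoints are strictly separated. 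To repair this one must enlarge the finite set of endpoints to a suborder containing a witness $e_{ij}\in I_i\cap I_j$ for each meeting pair and then choose the real representatives so that both intersections (preserved by the witnesses) and disjointnesses (preserved by shrinking each real interval to the convex hull of the $\phi$-images of $I_i$'s members of that suborder) survive; the paper's proof builds exactly this enlarged suborder $\mathbb{M}$ with the $e_{ij}$. You flag this transfer as ``the main obstacle'' but do not supply it, and without it the claim that none of $\Rm{1},\Rm{2},\Rm{3}_\ell,\Rm{4}_m,\Rm{5}_n$ arise from distinct $\mathbb{L}$-intervals is unproved, so the lemma is not established. (The twin-free verification across the infinite families, which you also flag, is by contrast routine.)
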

\begin{proof}
	That $L^2$ is a graph-like $\varphi$-set follows from the fact that $\{\varphi(x;a_i,b_i):i\in\{0,1,\dots,n\}\}$ is consistent if and only if the interval $(\max_i a_i,\min_j b_j)$ is non-empty and that this is equivalent to the intersection $(a_i,b_i)\cap (a_j,b_j)$ being nonempty for all $i,j\in\{0,1,\dots,n\}$.
	
	The statement of the lemma is now equivalent to asking that the above graphs are the $\leq$-minimal graphs that cannot arise in the following way: the vertices are a finite collection of possibly repeated non-empty open intervals in $\mathbb{L}$ and there is an edge between two intervals if and only if their intersection is non-empty.
	
	This result (with \emph{distinct} intervals) is proved in the particular case that $\mathbb{L}=\mathbb{R}$ by Lekkekerker and Boland \cite{lekkekerker_boland}. Our strategy will be to reduce the general case to the case of distinct intervals in $\mathbb{R}$. That is, we show that $\ish(\mathbb{L}^2)$ is equal to the set of isomorphism types of interval intersection graphs in $\R$ arising from distinct intervals.
	
	First, we note that we may assume that $\mathbb{L}$ contains $(\omega,<)$ as a suborder (if not, the reverse order on $\mathbb{L}$ works). Suppose that $\mathbb{G}$ is the intersection graph for the collection of open intervals $\{(a_i,b_i):i\in\{0,1,\dots,n\}\}$ where $a_i,b_i\in \R$. We may then construct a set of intervals in $\omega$ by stretching out all of the $(a_i,b_i)$ and translating them to the positive half of $\R$ so that the images of $(a_i,b_i)$ and $(a_j,b_j)$ intersect in $\omega$ if and only if $(a_i,b_i)\cap(a_j,b_j)\neq\emptyset$ and hence our new intervals have an intersection graph isomorphic to the intersection graph of the $(a_i,b_i)$. Mapping these intervals to intervals of $\mathbb{L}$ via the inclusion $\omega\hookrightarrow\mathbb{L}$ does not change the intersection graph so long as we did not allow intersections to look like $(n,n+1)$ in $\omega$ (this can be achieved by choosing how much to stretch out the original intervals in $\mathbb{R}$). Thus the set of intersection graphs in $\mathbb{L}$ contains all of the intersection graphs in $\mathbb{R}$.
	
	For the other inclusion, suppose that $\mathbb{G}=(\{I_i:i\in\{0,1,\dots,n\}\},E)$ is the intersection graph for the (possible repeating) non-empty intervals $I_i$ in $\mathbb{L}$. Let $I_i=(a_i,b_i)$ and, for each $\{I_i,I_j\}\in E$, choose an element $e_{ij}\in I_i\cap I_j$. Define an order $\mathbb{M}$ with underlying set the union of the sets
	\begin{align*}
	    &\{a_i:i\in\{0,1,\dots,n\}\}\\
	    &\{b_i:i\in\{0,1,\dots,n\}\}\\
	    &\{e_{ij}:i<j,\{I_i,I_j\}\in E\}
	\end{align*}
	and with ordering given by $<^\mathbb{L}$. $\mathbb{M}$ is finite and so isomorphic to an initial segment of $\omega$. The map $\mathbb{M}\hookrightarrow\omega\hookrightarrow\R$ sends the intervals $I_i$ to intervals of $\R$ having the same intersection graph as the $I_i$. If the intervals $I_{i_0},\dots,I_{i_m}$ are equal (and none of the other intervals are equal to $I_{i_0}$), then we may take the distinct intervals 
	\[\left\{\left(a_{i_0}+\frac{j}{m+1},b_{i_0}-\frac{j}{m+1}\right):j\in\{0,\dots,m\}\right\}\]
	to be the intervals corresponding to the $I_{i_j}$ without changing the intersection graph. \qedhere
	
\end{proof}

For the next several propositions we fix the following suppositions: $\M$ is a structure with an infinite $L\se M$ (not necessarily definable!) and a definable binary relation $<$ on $\M$ such that $<$ restricts to a linear ordering on $L$ that contains a copy of $\omega$ (as an ordered set). Let $\varphi(x;y,z)$ be the formula $y<x<z$.

The following fact from \cite[Theorem~3]{lekkekerker_boland} will be useful for determining minimal necessary sets for the graphs that cannot arise as the intersection graph of a finite collection of intervals.

\begin{defn}
    A graph $\mathbb{G}$ has an \emph{irreducible cycle of length $n$} if there are $n$ distinct vertices $v_1,\dots,v_n$ of $\mathbb{G}$ such that $v_1v_2\ldots v_{n-1}v_nv_1$ is a path and there is an edge between $v_i$ and $v_j$ if and only if $i=j\pm 1\pmod{n}$ .
\end{defn}

\begin{fact}\label{asteroidalfact}
	A finite graph $\mathbb{G}$ is not able to be represented as the intersection graph of a finite collection of intervals if and only if it has at least one of the two following properties:
	\begin{enumerate}
		\item $\mathbb{G}$ has an irreducible cycle of length $\geq 4$.
		\item $\mathbb{G}$ has a collection of three vertices that are pairwise not neighbors and, for each of the three vertices $v$, there is a path $P$ between the other two vertices such that $v$ is not a neighbor of any vertex in $P$.
	\end{enumerate}
\end{fact}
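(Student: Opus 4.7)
The approach is to prove the necessity and sufficiency of the two forbidden configurations separately. By Lemma~\ref{linearordershape} I may work with intervals in $\mathbb{R}$ throughout.

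For necessity, suppose $\mathbb{G}$ is realized by intervals $\{I_v\}$. If $v_1\ldots v_n v_1$ is an irreducible cycle with $n\geq 4$, let $I_{v_k}$ have the smallest right endpoint $r$. Since both $I_{v_{k-1}}$ and $I_{v_{k+1}}$ intersect $I_{v_k}$ and each has right endpoint at least $r$, their intersections with $I_{v_k}$ contain points arbitrarily close to $r$, so $I_{v_{k-1}} \cap I_{v_{k+1}}$ is nonempty. This produces the chord $v_{k-1} v_{k+1}$ (non-consecutive on the cycle since $n \geq 4$), contradicting irreducibility. If $\{u,v,w\}$ is an asteroidal triple, pairwise non-adjacency forces $I_u, I_v, I_w$ to be pairwise disjoint, so one of them---say $I_v$---lies strictly between the other two. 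Any path from $u$ to $w$ in $\mathbb{G}$ corresponds to a chain of overlapping intervals starting left of $I_v$ and ending right of $I_v$; a left/right analysis of consecutive intervals in the chain forces some interval to meet $I_v$, making its vertex a neighbor of $v$ and contradicting the defining property of the asteroidal triple.

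For sufficiency, the absence of an irreducible cycle of length $\geq 4$ is precisely chordality, so by Gavril's theorem $\mathbb{G}$ admits a \emph{clique tree} $T$: a tree whose vertices are the maximal cliques of $\mathbb{G}$ such that, for each $w \in V(\mathbb{G})$, the cliques containing $w$ induce a connected subtree $T_w$ of $T$. Interval graphs are characterized among chordal graphs as those admitting a clique tree that is a path, equivalently those whose family $\{T_w\}_{w \in V(\mathbb{G})}$ satisfies the consecutive-ones property. Thus it suffices to show that if no clique tree of $\mathbb{G}$ can be chosen to be a path, then $\mathbb{G}$ contains an asteroidal triple. The plan is to locate a branching vertex of $T$ of degree at least three, pick a maximal clique in each of three distinct branches emanating from it, and use the pairwise separation provided by the tree structure to extract representatives $u,v,w$ that are pairwise non-adjacent in $\mathbb{G}$; paths routed through the remaining branches of $T$ translate back into paths in $\mathbb{G}$ avoiding the respective neighborhoods.

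The main obstacle is this extraction step in the sufficiency direction, which forms the technical core of Lekkerkerker and Boland's argument and requires a delicate case analysis to guarantee simultaneously the pairwise non-adjacencies and the neighborhood-avoidance of the witnessing paths. Since this step is purely combinatorial and orthogonal to the model-theoretic focus of the present paper, I would defer the full details to \cite{lekkekerker_boland} rather than reproduce them here.
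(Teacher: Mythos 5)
The paper does not prove this statement at all: it records it as a \emph{Fact} cited to \cite[Theorem~3]{lekkekerker_boland}, so there is no internal argument to compare against, and your decision to ultimately defer the hard direction to the same source is consistent with how the paper treats it. Your sketch of necessity is correct: the minimum-right-endpoint argument for chordless cycles of length $\geq 4$ and the left/right interpolation argument for asteroidal triples are both sound (the latter is essentially an intermediate-value argument on the chain of overlapping intervals). Two cautions on the sufficiency side. First, the clique-tree route you outline (Gavril's theorem plus the path-clique-tree/consecutive-ones characterization) is a legitimate modern proof strategy, but it is \emph{not} Lekkerkerker and Boland's argument, which predates both Gavril and Fulkerson--Gross and proceeds by a more direct combinatorial induction; so citing \cite{lekkekerker_boland} for ``the clique-tree extraction step'' is a mismatch, and you should instead cite them for the statement ``chordal plus asteroidal-triple-free implies interval graph'' wholesale. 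Second, your extraction plan has a subtlety worth naming: a single clique tree having a vertex of degree $\geq 3$ is not enough to produce an asteroidal triple, since interval graphs can admit non-path clique trees. The hypothesis must be that \emph{no} clique tree is a path, and the choice of a suitable clique tree (or an argument that works for an arbitrary one) is precisely where the delicacy lives, so the deferral is hiding the real content of the sufficiency direction rather than a routine finish.
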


\begin{defn}
    A graph $\mathbb{G}$ is called \emph{asteroidal} if it satisfies the condition of Fact~\ref{asteroidalfact}(2). If $\mathbb{G}$ is a graph and the vertices $A=\{v_1,v_2,v_3\}$ witness $\mathbb{G}$ being asteroidal, we will say that $A$ is an \emph{asteroidal set in $\mathbb{G}$}.
\end{defn}

We will use the notion of asteroidal sets to help us to determine the necessary sets for the minimal asteroidal graphs given in Lemma~\ref{linearordershape}. In particular, we know that if $A$ is asteroidal in the graph $\mathbb{G}$ and $\mathbb{G}'$ is an interval intersection graph extending $\mathbb{G}$ then there must be some $a\in A$ and $g\in\mathbb{G}$ such that $a$ and $g$ are neighbors in $\mathbb{G}'$ but not in $\mathbb{G}$ (see Lemma~\ref{asteroidallemma}). There does not appear to be a nice way other than this to determine what the necessary sets are, so the proofs determining the minimal necessary sets will largely proceed by ruling out all of the elements not in the necessary set one at a time.

\begin{lem}\label{asteroidallemma}
	Let $\mathbb{G}$ be an asteroidal graph and $A=\{v_1,v_2,v_3\}$ be asteroidal in $\mathbb{H}$ with paths $P_{v_i}$ between the nodes $A\sm\{v_i\}$ for each $1\leq i\leq3$ witnessing $A$ being asteroidal in $\mathbb{G}$. Then any interval intersection graph $\mathbb{G}'$ having $\mathbb{G}$ as a subgraph must induce a new edge in $\mathbb{G}$ that is in the set $\bigcup_i (\{v_i\}\times [P_{v_i}])$ where $[P_{v_i}]\se G$ is the set of vertices in the path $P_{v_i}$.
\end{lem}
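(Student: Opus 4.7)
The plan is to argue by contradiction: suppose no new edge in $\bigcup_i (\{v_i\}\times[P_{v_i}])$ appears in $\mathbb{G}'$, and derive that $\mathbb{G}'$ would then still be asteroidal (with the same witnessing triple $A$ and paths $P_{v_i}$), contradicting Fact~\ref{asteroidalfact} because $\mathbb{G}'$ is assumed to be an interval intersection graph.

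First I would observe that since $\mathbb{G}$ is a subgraph of $\mathbb{G}'$, each sequence $P_{v_i}$ still forms a path in $\mathbb{G}'$, because the consecutive edges needed to witness a path are preserved under adding vertices and edges (possible ``chord'' edges do not destroy path-hood of a fixed sequence of vertices). Thus the first structural requirement of asteroidality transfers from $\mathbb{G}$ to $\mathbb{G}'$ automatically.

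Next I would verify that, under the contradiction hypothesis, the triple $A = \{v_1, v_2, v_3\}$ remains pairwise non-adjacent in $\mathbb{G}'$ and that each $v_i$ remains non-adjacent to every vertex of $P_{v_i}$. The second statement is exactly the contradiction hypothesis applied to each $i$. For the first, note that for any $i \neq j$ the endpoint $v_j$ lies in $[P_{v_i}]$ (since $P_{v_i}$ joins $v_j$ and $v_k$ where $\{j,k\} = \{1,2,3\} \setminus \{i\}$); so an edge $\{v_i, v_j\}$ newly present in $\mathbb{G}'$ would lie in $\{v_i\} \times [P_{v_i}]$, contradicting our assumption. Hence $v_i$ and $v_j$ are still non-adjacent in $\mathbb{G}'$.

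Finally, I would conclude by applying Fact~\ref{asteroidalfact}(2): the triple $A$ together with the paths $P_{v_i}$ witnesses that $\mathbb{G}'$ is asteroidal, contradicting the hypothesis that $\mathbb{G}'$ is an interval intersection graph. The argument is essentially bookkeeping once the asteroidal framework is in place; the only mild subtlety is verifying that one can keep reusing the \emph{same} paths $P_{v_i}$ in $\mathbb{G}'$, which is immediate from $\mathbb{G} \subseteq \mathbb{G}'$ and the fact that extending the edge set cannot destroy an existing path.
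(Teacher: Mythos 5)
Your argument is correct and is exactly the paper's (very terse, one-sentence) proof, just spelled out: under the contradiction hypothesis the same triple $A$ and the same paths $P_{v_i}$ remain a witness to asteroidality in $\mathbb{G}'$, contradicting Fact~\ref{asteroidalfact}. The useful detail you fill in --- that $v_j\in[P_{v_i}]$ for $j\neq i$, so any new edge within $A$ already lies in $\bigcup_i(\{v_i\}\times[P_{v_i}])$ --- is correct and worth noting explicitly.
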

\begin{proof}
	If $\mathbb{G}'$ does not induce any new edges between elements of $A$ and the paths $P_{v_i}$, the $P_{v_i}$ will also witness the fact that $A$ is asteroidal in $\mathbb{G}'$, a contradiction.
\end{proof}

We start by analyzing the class of graphs labeled $\Rm{3}_\ell$ because knowing the necessary sets for this class of graphs will be useful in finding the minimal necessary sets for all of the other classes.

\begin{lem}\label{IIInecsets}
	The edges $A_\ell:=\{\{1,3\}\}\cup\{\{2,k\}:4\leq k\leq\ell\}$, or any translation of $A_\ell$ by an automorphism of $\Rm{3}_\ell$, forms a $\se$-minimal necessary set for $\Rm{3}_\ell$ in $L^2$.
\end{lem}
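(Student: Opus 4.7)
The plan is to verify that $A_\ell$ is necessary for $\Rm{3}_\ell$ in $L^2$ and that no proper subset of $A_\ell$ is necessary; for translates $\sigma(A_\ell)$ under automorphisms $\sigma$ of $\Rm{3}_\ell$ the same conclusion follows by relabeling vertices.

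For necessity, I use that every interval intersection graph is chordal (a classical fact that also follows from Fact~\ref{asteroidalfact}(1)). Suppose $\mathbb{G}\in\ish(L^2)$ admits an injective graph homomorphism $\psi\colon\Rm{3}_\ell\to\mathbb{G}$. The subgraph $H$ of $\mathbb{G}$ induced by $\psi[\Rm{3}_\ell]$ is chordal and contains the $\ell$-cycle $\psi(\Rm{3}_\ell)$, so the edges of $H$ beyond the cycle form a triangulation of that cycle. A routine induction on cycle length shows that in any triangulation $T$ of a cycle on $n\geq 3$ vertices every cycle edge lies in some triangle of $T$: for $n=3$ this is immediate, and for $n\geq 4$ chordality forces some chord in $T$ which splits the cycle into two shorter sub-cycles both triangulated by $T$, so the inductive hypothesis applies to whichever sub-cycle contains the given edge. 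Applied to the edge $\{1,2\}$ of $\Rm{3}_\ell$, this produces a triangle $\{1,2,x\}\se H$ with $x\in\{3,\ldots,\ell\}$; if $x=3$ then $\{1,3\}\in A_\ell$ is an edge of $\mathbb{G}$, and if $x\in\{4,\ldots,\ell\}$ then $\{2,x\}\in A_\ell$ is an edge of $\mathbb{G}$, establishing necessity.

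For $\se$-minimality, for each $e\in A_\ell$ I exhibit a finite collection of open intervals in $L$ whose intersection graph contains $\Rm{3}_\ell$ as a subgraph and has exactly $e$ among the $A_\ell$-edges. This produces an element of $\sh(L^2)$ showing $A_\ell\sm\{e\}$ is not necessary. For $e=\{1,3\}$, take $I_1=(0,10)$, $I_2=(7,12)$, $I_3=(5,15)$ pairwise intersecting, so $(5,7)\se I_1\cap I_3$ is disjoint from $I_2$; then take all of $I_4,\ldots,I_\ell$ equal to (or nested in) $(5,7)$, so each intersects $I_1$ and $I_3$ but not $I_2$. For $e=\{2,m\}$ with $m\in\{4,\ldots,\ell\}$, take $I_1=(0,10)$, $I_2=(5,25)$, $I_3=(20,30)$ with $I_1\cap I_3=\emptyset$ and $I_2$ meeting both; nest $I_4,\ldots,I_{m-1}$ inside $(27,35)$ and nest $I_{m+1},\ldots,I_\ell$ inside $(1,4)$ (both ranges disjoint from $I_2$); and let $I_m=(3,35)$ ``bridge'' across $I_2$ so as to meet both sub-chains simultaneously. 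In each case one verifies directly that all cycle edges are present and $e$ is the only $A_\ell$-edge added.

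The main obstacle is the bridging construction for $e=\{2,m\}$: the single interval $I_m$ must span across $I_2$ to connect the right sub-chain (meeting $I_3$) with the left sub-chain (meeting $I_1$), while every other $I_k$ with $k\geq 4$ must remain in one of the two connected components of $L\sm\overline{I_2}$. The construction above also accommodates the edge cases $m=4$ and $m=\ell$, in which one of the two sub-chains is empty.
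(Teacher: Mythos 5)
Your proof is correct, and it takes a genuinely different route from the paper's on both halves of the argument.

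For \emph{necessity}, the paper simply cites Lemma~1 of Lekkerkerker--Boland. You instead give a self-contained argument: interval graphs are chordal, the subgraph $H$ of $\mathbb{G}$ induced by $\psi[\Rm{3}_\ell]$ inherits chordality and contains the $\ell$-cycle $\psi(\Rm{3}_\ell)$ as a Hamiltonian cycle, and then a clean induction (split on a chord, recurse into the shorter sub-cycle containing the edge) shows every cycle edge sits in a triangle; applying this to $\{1,2\}$ forces either $\{1,3\}$ or $\{2,x\}$ with $x\in\{4,\dots,\ell\}$ into $\mathbb{G}$. This is a nice elementary replacement for the citation, and it covers the wrap-around case $x=\ell$ correctly since $\{2,\ell\}\in A_\ell$ when $\ell\geq4$.

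For \emph{$\se$-minimality}, the paper uses a uniform trick: for $e=\{i,j\}$, add every edge incident to $j$, making $j$ universal; deleting $j$ leaves a Hamiltonian path, which is trivially an interval graph, and $j$ can then be realized by the union of all remaining intervals. You instead hand-build intervals for each $e$, with a ``bridging'' interval $I_m=(3,35)$ spanning $I_2$ for the $\{2,m\}$ case. Your construction works once one pins down the nesting so that consecutive cycle intervals actually meet (easiest to take each auxiliary chain to consist of equal intervals, say all $(27,35)$ on one side and all $(1,4)$ on the other, so $I_3\cap I_4$ and $I_m\cap I_{m+1}$ are automatically nonempty), and you did check the degenerate cases $m=4$, $m=\ell$. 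The paper's universal-vertex construction buys uniformity and avoids that case analysis; your approach is more concrete and displays exactly which intervals witness each missing $A_\ell$-edge. Both are valid; neither strictly dominates.
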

\begin{proof}
	That this set is necessary is proven by Lekkekerker and Boland in \cite[Lemma~1]{lekkekerker_boland} (note that the word \emph{acyclic} in the referenced lemma means that the graph has no irreducible cycles of length \emph{greater than 3}, so every interval intersection graph is acyclic in this sense). It will then be enough to find an interval intersection graph $\mathbb{G}$ for each $e\in A_\ell$ such that $\mathbb{G}$ extends $\Rm{3}_\ell$, $e$ is an edge of $\mathbb{G}$, and no other $e'\in A_\ell$ is an edge in $\mathbb{G}$.
	
	Let $e=\{i,j\}$ where $i\in\{1,2\}$ and consider the graph $\mathbb{G}$ that is $\Rm{3}_\ell$ with all of the edges $\{j,k\}$ for $k\neq j$ added. If $e=\{1,3\}$, no new edges incident to $2$ are added and if $j\neq 3$ then no elements of $A_\ell$ are added except $e$. We claim that $\mathbb{G}$ is an interval intersection graph.
	
	Since $j$ is a neighbor of every element of $\mathbb{G}$, we may construct a collection of intervals that have an intersection graph isomorphic to the subgraph of $\mathbb{G}$ induced by $G\sm\{j\}$ and then take $j$ to be the union of these intervals. The induced subgraph looks like
	\[\vcenter{\hbox{\begin{tikzpicture} [circle dotted/.style={line width = \wid, dash pattern= on .05mm off 1mm, line cap = round}]
		\tikzset{enclosed/.style={draw, circle, inner sep=0pt, minimum size=.1cm, fill=black}}
		 
		\node[enclosed, label=above:$(j+1)$] (1) at (0,0) {};
		\node[enclosed, label=above:$(j+2)$] (2) at (3*\s,0) {};
		\node[enclosed, label=above:$\ell$] (3) at (6*\s,0) {};
		\node[enclosed, label=above:$1$] (4) at (9*\s,0) {};
		\node[enclosed, label=above:$2$] (5) at (12*\s,0) {};
		\node[enclosed, label=above:$(j-1)$] (6) at (15*\s,0) {};
		
		\draw (1) -- (2);
		\draw [dashed] (2) -- (3);
		\draw (3) -- (4);
		\draw (4) -- (5);
		\draw [dashed] (5) -- (6);
	\end{tikzpicture}}}\]
	which is the intersection graph of a collection of intervals that barely overlap at the endpoints.
	
	The definition of necessary sets for $\mathbb{G}$ in $S$ implies that any translation of a necessary set for $\mathbb{G}$ by an automorphism is a necessary set (in particular, composition with an automorphism of $\mathbb{G}$ is a permutation of the injective graph homomorphisms from $\mathbb{G}$ to any other graph).
\end{proof}

\begin{lem}
	The 6 edges (the two dotted edges below and their images under permutation of the `arms') shown below form a $\se$-minimal necessary set for $\Rm{1}$ in $L^2$.
	\[\vcenter{\hbox{\begin{tikzpicture} [circle dotted/.style={line width = \wid, dash pattern= on .05mm off 1mm, line cap = round}]
		\tikzset{enclosed/.style={draw, circle, inner sep=0pt, minimum size=.1cm, fill=black}}
		 
		\node[enclosed] (1) at (0,0) {};
		\node[enclosed] (2) at (0,.75*\s) {};
		\node[enclosed] (3) at (0,1.5*\s) {};
		\node[enclosed] (4) at (-0.65*\s,-0.375*\s) {};
		\node[enclosed] (5) at (-1.3*\s,-.75*\s) {};
		\node[enclosed] (6) at (0.65*\s,-0.375*\s) {};
		\node[enclosed] (7) at (1.3*\s,-.75*\s) {};
		
		\draw (1) -- (2);
		\draw (2) -- (3);
		\draw (1) -- (4);
		\draw (4) -- (5);
		\draw (1) -- (6);
		\draw (6) -- (7);
		\draw [circle dotted] (1) edge[bend right = 60] (3);
		\draw [circle dotted] (2) -- (4);
	\end{tikzpicture}}}\]
\end{lem}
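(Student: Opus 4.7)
My plan is to prove necessity first and $\se$-minimality second. The automorphism group of $\Rm{1}$ permutes the three arms, and under this action $B$ decomposes into two orbits: the center-to-leaf edges $\{\{1,3\},\{1,5\},\{1,7\}\}$ and the middle-to-middle edges $\{\{2,4\},\{2,6\},\{4,6\}\}$; so both parts of the argument reduce to considering one representative per orbit.

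For necessity I will use the fact that every induced subgraph of an interval intersection graph is itself an interval intersection graph; it then suffices to show that $\Rm{1}\cup S$ is not an interval intersection graph for every $S\se\binom{V(\Rm{1})}{2}\sm(E(\Rm{1})\cup B)$ (the case $S=\emptyset$ is immediate since $\Rm{1}\in\Gamma$). The non-edges of $\Rm{1}$ outside $B$ come in two flavors: leaf-to-leaf edges among the tips $\{3,5,7\}$, and leaf-to-middle-of-another-arm edges such as $\{3,4\}$. If $S$ contains a leaf-to-middle edge, I take $\{3,4\}\in S$ by symmetry and observe that the $4$-cycle $1$-$2$-$3$-$4$-$1$ has both diagonals $\{1,3\},\{2,4\}\in B$, making it irreducible in $\Rm{1}\cup S$. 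If $S$ consists only of leaf-to-leaf edges, I take $\{3,5\}\in S$ by symmetry and note that the $5$-cycle $3$-$2$-$1$-$4$-$5$-$3$ has chords $\{1,3\},\{3,4\},\{2,4\},\{2,5\},\{1,5\}$, each of which is either in $B$ or is a leaf-to-middle edge and hence not in $S$; the cycle is thus irreducible. Either way, Fact~\ref{asteroidalfact}(1) completes the argument.

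For $\se$-minimality I will produce two interval intersection graphs $\mathbb{G}_1,\mathbb{G}_2$ extending $\Rm{1}$ with $E(\mathbb{G}_1)\cap B=\{\{1,3\}\}$ and $E(\mathbb{G}_2)\cap B=\{\{2,4\}\}$. The first choice is $\mathbb{G}_1=\Rm{1}\cup\{\{1,3\}\}$, whose only cycle is the triangle $\{1,2,3\}$, so it contains no irreducible cycle of length $\geq 4$; enumerating the independent triples of vertices and checking Fact~\ref{asteroidalfact}(2) for each rules out asteroidal triples (the recurring obstruction for any triple containing both $5$ and $7$ is that every $5$-to-$7$ path in $\mathbb{G}_1$ must traverse vertex $1$). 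An explicit interval realization is $1=(0,10),\, 2=(1,3),\, 3=(2,4),\, 4=(8,12),\, 5=(10.5,13),\, 6=(-2,0.5),\, 7=(-3,-1)$. The second choice is $\mathbb{G}_2=\Rm{1}\cup\{\{2,4\},\{3,4\}\}$, whose only cycles are the triangles $\{1,2,4\}$ and $\{2,3,4\}$ and the $4$-cycle $1$-$2$-$3$-$4$-$1$ (which has chord $\{2,4\}$ and so is not irreducible); a parallel check of independent triples rules out asteroidal triples, and an explicit realization is $1=(0,4),\, 2=(2,6),\, 3=(4,7),\, 4=(3,8),\, 5=(7,10),\, 6=(-1,1),\, 7=(-2,-0.5)$. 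Since $\{3,4\}\notin B$, this graph meets $B$ only in $\{2,4\}$.

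The main obstacle will be the non-asteroidality verifications for $\mathbb{G}_1$ and $\mathbb{G}_2$: in each graph there are only a handful of independent triples, but for every candidate all three path-existence conditions in Fact~\ref{asteroidalfact}(2) must be checked.
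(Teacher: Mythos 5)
Your proof is correct, and the necessity half follows a genuinely different route than the paper's. The paper invokes Lemma~\ref{asteroidallemma} on the asteroidal triple of leaves to localize where a new edge must appear, and then cascades through the necessary sets already computed for $\Rm{3}_5$ and $\Rm{3}_4$ in Lemma~\ref{IIInecsets} to land on the two orbits of $B$. You instead reformulate necessity (using closure of interval graphs under induced subgraphs, which is Lemma~\ref{shclosedindsubg}) as the claim that $\Rm{1}\cup S$ is never an interval graph when $S$ is disjoint from $B$; you then split on whether $S$ contains a leaf-to-middle edge and in each case exhibit a chordless $4$- or $5$-cycle whose chords are all either in $B$ or are leaf-to-middle edges, neither of which can appear in $S$ in that case. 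Your cycle-checking is correct: for the $4$-cycle $1$--$2$--$3$--$4$--$1$ both chords $\{1,3\},\{2,4\}$ lie in $B$; for the $5$-cycle $3$--$2$--$1$--$4$--$5$--$3$ the chords $\{1,3\},\{2,4\},\{1,5\}$ lie in $B$ and $\{3,4\},\{2,5\}$ are leaf-to-middle. This is more elementary and self-contained, relying only on Fact~\ref{asteroidalfact}(1), whereas the paper's argument leans on Lemma~\ref{IIInecsets} but is more uniform with the other necessary-set lemmas. For $\se$-minimality the two arguments agree in substance: you use the same two extensions $\Rm{1}\cup\{\{1,3\}\}$ and $\Rm{1}\cup\{\{2,4\},\{3,4\}\}$ as the paper does (the paper's displayed interval families realize exactly these graphs), and your explicit interval assignments check out edge by edge. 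One small note: the hand-checking of asteroidal triples you flag as ``the main obstacle'' in your last paragraph is unnecessary, since an explicit interval realization already certifies that $\mathbb{G}_1$ and $\mathbb{G}_2$ are interval graphs; you only needed to verify the realizations, which you did.
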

\begin{proof}
	For $0\leq n\leq3$, we will call the vertices along the `arm' at the angle $\pi/2+2n\pi/3$ by $a_{n1}$ and $a_{n2}$ where $a_{n1}$ is the neighbor of the center vertex and $a_{n2}$ is the outermost vertex on the given arm. We will denote the center vertex by $c$. We then have that the set $A=\{a_{n2}:0\leq n\leq 2\}$ is asteroidal in $\Rm{1}$. By Lemma~\ref{asteroidallemma}, any intersection graph having $\Rm{1}$ as a subgraph must add an edge incident to one of the elements of $A$, so we will start our search for necessary edges here.
	
	Let $\mathbb{G}$ be an intersection graph and $i:\Rm{1}\to\mathbb{G}$ an injective graph homomorphism (we will denote $i(a_{kj})$ by $a'_{kj}$). We may assume that $a'_{02}$ has an edge that does not occur in $\mathbb{G}$ since any permutation of $A$ can be achieved by a graph automorphism of $\Rm{1}$. If $a'_{02}\mathrel{E}^{\mathbb{G}}a'_{12}$ then the set of vertices $\{a'_{12},a'_{02},a'_{01},c',a'_{11}\}$ can be taken to form a subgraph of $\mathbb{G}$ isomorphic to $\Rm{3}_5$ with the nodes labeled in the order given. From the necessary sets found for $\Rm{3}_n$, one of the edges $e_1:=\{a'_{02},a'_{11}\},\{a'_{12},a'_{01}\},e_2:=\{a'_{02},c'\}$ must also appear in $\mathbb{G}$. We note that the first two edges differ by an automorphism of $\Rm{1}$ so we may treat only one of them explicitly (note that the proposed necessary set is stable under automorphism of $\Rm{1}$). If the edge $e_1$ is in $\mathbb{G}$, then the set $\{a'_{02},a'_{11},c',a'_{01}\}$ forms a subgraph isomorphic to $\Rm{3}_4$, which means that either $e_2$ or $e_3=\{a'_{11},a'_{01}\}$ appear in $\mathbb{G}$.
	
	Thus the statement that either $e_1$ or $e_2$ appears in $\mathbb{G}$ can be reduced to the statement that either $e_2$ or $e_3$ (the dotted edges) appear in $\mathbb{G}$. To show the desired result it will be enough to check that there are two intersection graphs having $\Rm{1}$ as a subgraph with the edge $e_2$ but not $e_3$ and vice versa. To do this, we note that the following two graphs are interval intersection graphs in $\R$ (with labels indicating a possible set of intervals with the given intersection graph):
	\[
		\vcenter{\hbox{\begin{tikzpicture} [circle dotted/.style={line width = \wid, dash pattern= on .05mm off 1mm, line cap = round}, scale=2]
		\tikzset{enclosed/.style={draw, circle, inner sep=0pt, minimum size=.1cm, fill=black}}
		 
		\node[enclosed, label={[label distance=-1.25mm, xshift=-.45mm, yshift=-.5mm]176:\tiny $(4{,}\,9)$}] (1) at (0,0) {};
		\node[enclosed, label={[label distance=-0.75mm]left:\tiny $(6{,}\,7)$}] (2) at (0,.75*\s) {};
		\node[enclosed, label={[label distance=-0.75mm]left:\tiny $(6{,}\,7)$}] (3) at (0,1.5*\s) {};
		\node[enclosed, label={[label distance=-0.75mm, yshift=.5mm]left:\tiny $(2{,}\,5)$}] (4) at (-0.65*\s,-0.375*\s) {};
		\node[enclosed, label={[label distance=-0.75mm]left:\tiny $(1{,}\,3)$}] (5) at (-1.3*\s,-.75*\s) {};
		\node[enclosed,  label={[label distance=-0.75mm, yshift=.5mm]right:\tiny $(8{,}\,11)$}] (6) at (0.65*\s,-0.375*\s) {};
		\node[enclosed, label={[label distance=-0.75mm]right:\tiny $(10{,}\,12)$}] (7) at (1.3*\s,-.75*\s) {};
		
		\draw (1) -- (2);
		\draw (2) -- (3);
		\draw (1) -- (4);
		\draw (4) -- (5);
		\draw (1) -- (6);
		\draw (6) -- (7);
		\draw (1) edge[bend right = 60] (3);
	\end{tikzpicture}}}\gap
	\vcenter{\hbox{\begin{tikzpicture} [circle dotted/.style={line width = \wid, dash pattern= on .05mm off 1mm, line cap = round}, scale= 2]
		\tikzset{enclosed/.style={draw, circle, inner sep=0pt, minimum size=.1cm, fill=black}}
		 
		\node[enclosed, label={[label distance=-1.25mm, xshift=.45mm, yshift=-.5mm]4:\tiny $(7{,}\,10)$}] (1) at (0,0) {};
		\node[enclosed, label={[label distance=-0.75mm]right:\tiny $(5{,}\,8)$}] (2) at (0,.75*\s) {};
		\node[enclosed, label={[label distance=-0.75mm]right:\tiny $(4{,}\,6)$}] (3) at (0,1.5*\s) {};
		\node[enclosed, label={[label distance=-0.75mm, yshift=.5mm]left:\tiny $(2{,}\,8)$}] (4) at (-0.65*\s,-0.375*\s) {};
		\node[enclosed, label={[label distance=-0.75mm]left:\tiny $(1{,}\,3)$}] (5) at (-1.3*\s,-.75*\s) {};
		\node[enclosed,  label={[label distance=-0.75mm, yshift=.5mm]right:\tiny $(9{,}\,12)$}] (6) at (0.65*\s,-0.375*\s) {};
		\node[enclosed, label={[label distance=-0.75mm]right:\tiny $(11{,}\,13)$}] (7) at (1.3*\s,-.75*\s) {};
		
		\draw (1) -- (2);
		\draw (2) -- (3);
		\draw (1) -- (4);
		\draw (4) -- (5);
		\draw (1) -- (6);
		\draw (6) -- (7);
		\draw (2) -- (4);
		\draw (3) edge[bend right=20] (4);
	\end{tikzpicture}}}
	\]
	The other edges belong in the $\se$-minimal necessary set by symmetric arguments.
\end{proof}

\begin{lem}
	The dotted edges in \[
	\vcenter{\hbox{\begin{tikzpicture} [circle dotted/.style={line width = \wid, dash pattern= on .05mm off 1mm, line cap = round}]
		\tikzset{enclosed/.style={draw, circle, inner sep=0pt, minimum size=.1cm, fill=black}}
		 
		\node[enclosed, label={[label distance=0mm,xshift=-1mm,yshift=-.2mm]290:$v_4$}] (1) at (0,0) {};
		\node[enclosed, label=below:$v_3$] (2) at (-0.75*\scale,0) {};
		\node[enclosed, label=below:$v_2$] (3) at (-1.5*\scale,0) {};
		\node[enclosed, label=right:$v_1$] (4) at (0,0.75*\scale) {};
		\node[enclosed, label=below:$v_5$] (5) at (0.75*\scale,0) {};
		\node[enclosed, label=below:$v_6$] (6) at (1.5*\scale,0) {};
		\node[enclosed, label=right:$v_7$] (7) at (0,-.75*\scale) {};
		
		\draw (1) -- (2);
		\draw (2) -- (3);
		\draw (1) -- (5);
		\draw (5) -- (6);
		\draw (4) -- (1);
		\draw (4) -- (2);
		\draw (4) -- (3);
		\draw (4) -- (5);
		\draw (4) -- (6);
		\draw (1) -- (7);
		
		\draw [circle dotted] (4) edge[bend right = 110, looseness = 4] (7);
		\draw [circle dotted] (1) edge[bend right = 60] (3);
		\draw [circle dotted] (2) edge[bend left = 30] (5);
		\draw [circle dotted] (1) edge[bend left = 60] (6);
		
	\end{tikzpicture}}}
	\]
	form a necessary set of minimal cardinality for $\Rm{2}$ in $L^2$.
\end{lem}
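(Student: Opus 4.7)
I plan to realize $\Rm{2}$ as a (not necessarily induced) subgraph of an arbitrary interval intersection graph $\mathbb{G}$ via open intervals $I_1,\ldots,I_7\subseteq\R$, and to argue by cases on the relative position of the two distinguished intervals $I_1$ (the ``universal'' vertex adjacent to the entire spine $v_2v_3v_4v_5v_6$) and $I_4$ (the vertex carrying the pendant $v_7$). Since $I_1\cap I_4\neq\emptyset$ is required, the cases are containment or partial overlap. If $I_4\subseteq I_1$ then $I_7\cap I_4\neq\emptyset$ gives $I_7\cap I_1\neq\emptyset$, producing the dotted edge $\{v_1,v_7\}$. If $I_1\subseteq I_4$, then $I_2\cap I_1\neq\emptyset$ already gives $I_2\cap I_4\neq\emptyset$ (and similarly $I_6\cap I_4\neq\emptyset$), producing $\{v_2,v_4\}$ or $\{v_4,v_6\}$. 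The remaining case is a genuine partial overlap, and by the automorphism of $\Rm{2}$ swapping $v_2\leftrightarrow v_6$ and $v_3\leftrightarrow v_5$, I may assume $a_1<a_4<b_1<b_4$.

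In the partial-overlap subcase I would then examine the position of $I_5$ (which must meet both $I_4$ and $I_6$). If $I_5\subseteq I_4$ then $I_6\cap I_5\neq\emptyset$ gives $I_6\cap I_4\neq\emptyset$, producing $\{v_4,v_6\}$. Otherwise $I_5$ extends outside $I_4$; the simultaneous constraints that $I_5$ meet $I_1$ but not $I_3$ (in order to avoid $\{v_3,v_5\}$), together with $I_3$'s obligation to meet both $I_4$ and $I_1$ and $I_6$'s obligation to meet $I_5$ and $I_1$ without touching $I_4$, can be chased through endpoint inequalities. Each escape route --- $I_5$ extending past $b_4$ (so that $I_6$ can reach $I_5$ only past $b_4$ and thus misses $I_1$) or $I_5$ beginning before $a_4$ (so that $I_3\cap I_5\neq\emptyset$) --- culminates in one of the four dotted edges being present, completing the proof of necessity.

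For the minimum-cardinality claim I would exhibit, for each dotted edge $e\in B$, an explicit interval realization $I_1,\ldots,I_7$ of $\Rm{2}$ as a subgraph of an interval intersection graph containing $e$ as its only dotted edge. The example for $\{v_1,v_7\}$ uses $I_4\subseteq I_1$ with a small $I_7$ inside $I_1$ and the spine intervals arranged tightly, yielding no non-dotted extras; the examples for $\{v_2,v_4\}$ and $\{v_4,v_6\}$ use partial-overlap configurations with $I_4$ extending past the left (resp.\ right) of $I_1$, and can also be arranged with no non-dotted extras. The example for $\{v_3,v_5\}$ uses a partial overlap tuned so that $I_3$ and $I_5$ meet outside $I_4$, but this construction inevitably produces one non-dotted extra (either $\{v_3,v_6\}$ or $\{v_2,v_5\}$, depending on which side $I_4$ extends). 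Since the first three examples have no non-dotted extras, any necessary set must contain each of $\{v_1,v_7\}$, $\{v_2,v_4\}$, and $\{v_4,v_6\}$; and since the fourth (in either orientation) contains only $\{v_3,v_5\}$ plus one non-dotted extra, it must contain at least one of $\{v_3,v_5\}$, $\{v_3,v_6\}$, or $\{v_2,v_5\}$. Thus every necessary set has cardinality at least four.

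The main obstacle is the partial-overlap subcase of the necessity argument: the endpoint inequalities branch into several overlapping sub-sub-cases and tracking four or five endpoint relationships simultaneously in order to rule out every interval configuration in which no dotted edge appears is delicate. A secondary subtlety is that the $\{v_3,v_5\}$ example inevitably carries a non-dotted extra, so the minimum-cardinality argument must be careful to show that the specific extras can be permuted by the $v_2\leftrightarrow v_6$, $v_3\leftrightarrow v_5$ automorphism rather than combining to push the lower bound above four.
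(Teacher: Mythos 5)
Your interval-coordinate approach is a genuinely different route from the paper's. The paper first uses the asteroidal triple $\{v_2,v_6,v_7\}$ together with Lemma~\ref{asteroidallemma} to confine the search to new edges incident to those three vertices, and then dispatches the surviving candidates by combinatorial cycle arguments: each $C_4$ in an interval intersection graph must be chorded, and the $\Rm{3}_5$ pattern forces one of the edges already computed in Lemma~\ref{IIInecsets}. That route is bookkeeping-light and reuses earlier work, whereas your direct endpoint-chasing on $I_1,\ldots,I_7$ is more elementary and self-contained. Your argument for the lower bound on cardinality is essentially the paper's (the three singly-forced edges plus the $\{v_3,v_5\}$ example that carries exactly one non-dotted extra, permutable under the $v_2\leftrightarrow v_6$, $v_3\leftrightarrow v_5$ automorphism), and that half is correct.

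The necessity half, however, is not yet a proof, and the sketch overstates one step in precisely the sub-case you flag as delicate. When $a_5<a_4$ you assert this holds ``so that $I_3\cap I_5\neq\emptyset$,'' but that implication is not immediate: $I_3$ could sit entirely to the right of $I_5$. The missing chain is: assume $I_3\cap I_5=\emptyset$; since $b_3>a_4>a_5$, this forces $a_3\geq b_5>a_4$; then $I_2$ must meet $I_3$ and $I_1$ while avoiding $I_4$ (else $\{v_2,v_4\}$ appears), and $b_2>a_3>a_4$ rules out $b_2\leq a_4$, so $a_2\geq b_4>b_1$, contradicting $I_2\cap I_1\neq\emptyset$. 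The other escape route ($b_5>b_4$ with $a_5\geq a_4$) does close cleanly as you describe. Until this chain, or an equivalent one, is written out, you have not shown that $B=\{\{v_1,v_7\},\{v_2,v_4\},\{v_3,v_5\},\{v_4,v_6\}\}$ is necessary, and hence you cannot yet conclude that a necessary set of cardinality $4$ exists at all -- your lower bound of $4$ does not by itself establish the claimed equality.
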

\begin{proof}
	As before, we recognize that the set $A=\{v_2,v_6,v_7\}$ is asteroidal in $\Rm{2}$ and start our analysis with edges that are adjacent to elements of $A$. We start by noticing that the edges $\{v_2,v_4\},\{v_4,v_6\},$ and $\{v_1,v_7\}$ must be in any necessary set as $\Rm{2}$ becomes an interval intersection graph in $\R$ when any one of these edges is added. For example, the collections of intervals given by:
	\[
	\vcenter{\hbox{\begin{tikzpicture} [circle dotted/.style={line width = \wid, dash pattern= on .05mm off 1mm, line cap = round}, scale=1.5]
		\tikzset{enclosed/.style={draw, circle, inner sep=0pt, minimum size=.1cm, fill=black}}
		 
		\node[enclosed, label={[label distance=0mm,xshift=-1mm,yshift=-.2mm]290:\tiny $(2{,}\,8)$}] (1) at (0,0) {};
		\node[enclosed, label=below:\tiny $(4{,}\,6)$] (2) at (-0.75*\scale,0) {};
		\node[enclosed, label=below:\tiny $(4{,}\,6)$] (3) at (-1.5*\scale,0) {};
		\node[enclosed, label={[yshift=-.5mm]above:\tiny $(5{,}\,10)$}] (4) at (0,0.75*\scale) {};
		\node[enclosed, label=below:\tiny $(7{,}\,10)$] (5) at (0.75*\scale,0) {};
		\node[enclosed, label=below:\tiny $(9{,}\,10)$] (6) at (1.5*\scale,0) {};
		\node[enclosed, label=right:\tiny $(1{,}\,3)$] (7) at (0,-.75*\scale) {};
		
		\draw (1) -- (2);
		\draw (2) -- (3);
		\draw (1) -- (5);
		\draw (5) -- (6);
		\draw (4) -- (1);
		\draw (4) -- (2);
		\draw (4) -- (3);
		\draw (4) -- (5);
		\draw (4) -- (6);
		\draw (1) -- (7);
		
		\draw (1) edge[bend right = 60] (3);
	\end{tikzpicture}}}
	\vcenter{\hbox{\begin{tikzpicture} [circle dotted/.style={line width = \wid, dash pattern= on .05mm off 1mm, line cap = round}, scale=1.5]
		\tikzset{enclosed/.style={draw, circle, inner sep=0pt, minimum size=.1cm, fill=black}}
		 
		\node[enclosed, label={[label distance=0mm,xshift=-1mm,yshift=-.2mm]290:\tiny $(3{,}\,8)$}] (1) at (0,0) {};
		\node[enclosed, label=below:\tiny $(1{,}\,4)$] (2) at (-0.75*\scale,0) {};
		\node[enclosed, label={[xshift=.5mm]below:\tiny $(1{,}\,2)$}] (3) at (-1.5*\scale,0) {};
		\node[enclosed, label={[yshift=-.5mm]above:\tiny $(1{,}\,10)$}] (4) at (0,0.75*\scale) {};
		\node[enclosed, label=below:\tiny $(7{,}\,10)$] (5) at (0.75*\scale,0) {};
		\node[enclosed, label=below:\tiny $(9{,}\,10)$] (6) at (1.5*\scale,0) {};
		\node[enclosed, label=right:\tiny $(5{,}\,6)$] (7) at (0,-.75*\scale) {};
		
		\draw (1) -- (2);
		\draw (2) -- (3);
		\draw (1) -- (5);
		\draw (5) -- (6);
		\draw (4) -- (1);
		\draw (4) -- (2);
		\draw (4) -- (3);
		\draw (4) -- (5);
		\draw (4) -- (6);
		\draw (1) -- (7);
		
		\draw (4) edge[bend right = 30] (7);
		
	\end{tikzpicture}}}
	\]
	(note that the case of the edge $\{v_4,v_6\}$ being added is symmetric to the case of $\{v_2,v_4\}$).
	
	It remains to find any intersection graphs $\mathbb{G}$ with $\Rm{2}$ as a subgraph and which fail to contain any of the edges so far found. The remaining choices for adding an edge incident to an element of $A$ are (up to symmetry) $e_1=\{v_2,v_7\}$, $e_2=\{v_2,v_5\}$, and $e_3=\{v_2,v_6\}$. Note that the edge $\{v_3,v_7\}$ (or the symmetric $\{v_5,v_7\}$) can be added to $\Rm{2}$ without changing the fact that $A$ is asteroidal. Thus another edge incident to an element of $A$ must be still be added to produce an intersection graph, allowing us to ignore the edge $\{v_3,v_7\}$.
	
	Suppose first that an intersection graph $\mathbb{G}$ extends $\Rm{2}$ and has the edge $e_1$. Then the vertices $\{v_1,v_4,v_7,v_2\}$ form a cycle of length 4, and so either the edge $\{v_1,v_7\}$ or the edge $\{v_2,v_4\}$ belong to $\mathbb{G}$. Both of these edges are already accounted for, so all intersection graphs with the edge $\{v_2,v_7\}$ are already accounted for.
	
	Now suppose that $\mathbb{G}$ is an intersection graph containing the edge $\{v_2,v_5\}$. But then $\mathbb{G}$ contains the 4-cycle $v_2,v_3,v_4,v_5$ and so must contain either the edge $\{v_3,v_5\}$ or the edge $\{v_2,v_4\}$. The latter is already covered, so adding $\{v_3,v_5\}$ to the necessary set will cover all intersection graphs with the edge $\{v_2,v_5\}$ (and, by a symmetric argument, all intersection graphs with the edge $\{v_3,v_6\}$). It remains to check that the edge $\{v_3,v_5\}$ occurs in an intersection graph not having any of the other edges in the proposed necessary set. The graph $\Rm{2}$ with the edges $\{v_2,v_5\}$ and $\{v_3,v_5\}$ added accomplishes this goal. In particular, the collection of real intervals
	\[
	\vcenter{\hbox{\begin{tikzpicture} [circle dotted/.style={line width = \wid, dash pattern= on .05mm off 1mm, line cap = round}, scale=1.5]
		\tikzset{enclosed/.style={draw, circle, inner sep=0pt, minimum size=.1cm, fill=black}}
		 
		\node[enclosed, label={[label distance=0mm,xshift=-1mm,yshift=-.2mm]290:\tiny $(2{,}\,7)$}] (1) at (0,0) {};
		\node[enclosed, label=below:\tiny $(6{,}\,9)$] (2) at (-0.75*\scale,0) {};
		\node[enclosed, label={[xshift=.5mm]below:\tiny $(8{,}\,10)$}] (3) at (-1.5*\scale,0) {};
		\node[enclosed, label={[yshift=-.5mm]above:\tiny $(4{,}\,13)$}] (4) at (0,0.75*\scale) {};
		\node[enclosed, label=below:\tiny $(5{,}\,12)$] (5) at (0.75*\scale,0) {};
		\node[enclosed, label=below:\tiny $(11{,}\,14)$] (6) at (1.5*\scale,0) {};
		\node[enclosed, label=right:\tiny $(1{,}\,3)$] (7) at (0,-.75*\scale) {};
		
		\draw (1) -- (2);
		\draw (2) -- (3);
		\draw (1) -- (5);
		\draw (5) -- (6);
		\draw (4) -- (1);
		\draw (4) -- (2);
		\draw (4) -- (3);
		\draw (4) -- (5);
		\draw (4) -- (6);
		\draw (1) -- (7);
		
		\draw (3) edge[bend left = 60] (5);
		\draw (2) edge[bend left = 30] (5);
		
	\end{tikzpicture}}}
	\]
	has this intersection graph.
	
	Finally suppose that $\mathbb{G}$ is an intersection graph extending $\Rm{2}$ and containing the edge $\{v_2,v_6\}$. Then the vertices $\{v_2,v_6,v_5,v_4,v_3\}$ form a subgraph isomorphic to $\Rm{3}_5$ labels occurring in the given order, so one of the edges $\{v_2,v_5\}$, $\{v_6,v_4\}$, or $\{v_6,v_3\}$ must also be in $\mathbb{G}$. Any graph containing the first two of these edges has already been explicitly covered, and any intersection graph with $\{v_3,v_6\}$ is covered by an argument symmetric to the argument for graphs containing $\{v_2,v_5\}$. Thus every such $\mathbb{G}$ already has an edge in our proposed necessary set.
	
	This necessary set is of minimal cardinality because all necessary sets must contain all but one of the proposed edges
\end{proof}

\begin{lem}
	The dotted edges in
	\[ 
	\vcenter{\hbox{\begin{tikzpicture}[circle dotted/.style={dash pattern= on .05mm off 1mm, line cap = round, line width = \wid}]
		\tikzset{enclosed/.style={draw, circle, inner sep=0pt, minimum size=.1cm, fill=black}}
		 
		\node [enclosed, label=left:$\beta$] (bl) at (-3*\scaleIV,0) {};
		\node [enclosed, label=right:$\delta$] (br) at (3*\scaleIV,0) {};
		\node [enclosed, label={[label distance=-2mm]135:$\gamma$}] (u) at (0,1*\scaleIV) {};
		\node [enclosed, label=above:$\alpha$] (uu) at (0,2*\scaleIV) {};
		\node [enclosed, label=below:$1$] (1) at (-2*\scaleIV,0) {};
		\node [enclosed, label=below:$2$] (2) at (-1*\scaleIV,0) {};
		\node [enclosed, label=below:$3$] (3) at (0,0) {};
		\node [enclosed, label=below:$m$] (n) at (2*\scaleIV,0) {};

		\foreach \x in {1,2,3,n}
		{\draw (u) -- (\x);}
		\draw (bl) -- (1);
		\draw (n) -- (br);
		\draw (uu) -- (u);
		\foreach \x [evaluate=\x as \y using \x+1] in {1,2}
		{\draw (\x) -- (\y);}
		\draw [dashed] (3) -- (n);

		\draw [circle dotted] (1) edge[bend right = 50] (n);
		\draw [circle dotted] (u) -- (bl);
		\draw [circle dotted] (u) -- (br);
		\draw [circle dotted] (uu) -- (1);
		\draw [circle dotted] (uu) -- (2);
		\draw [circle dotted] (uu) -- (n);
		\draw [circle dotted] (uu) edge[bend left = 20] (3);
		
	\end{tikzpicture}}}
	\]
	form a necessary set of minimal cardinality for $\Rm{4}_m$ in $L^2$.
\end{lem}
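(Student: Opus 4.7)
The plan is to follow the same template used for $\Rm{1}$ and $\Rm{2}$: identify a convenient asteroidal triple in $\Rm{4}_m$, apply Lemma~\ref{asteroidallemma} to localize the new edges, and then reduce to the listed set via irreducible-cycle arguments combined with the $\Rm{3}_\ell$ necessary sets from Lemma~\ref{IIInecsets}. First I would check that $\{\alpha,\beta,\delta\}$ is asteroidal in $\Rm{4}_m$: the path $\alpha,\gamma,1,\beta$ avoids the neighborhood of $\delta$, the path $\alpha,\gamma,m,\delta$ avoids the neighborhood of $\beta$, and the path $\beta,1,2,\ldots,m,\delta$ avoids the neighborhood of $\alpha$ (using that in $\Rm{4}_m$ the vertex $\alpha$ is adjacent only to $\gamma$). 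By Lemma~\ref{asteroidallemma}, every interval intersection graph $\mathbb{G}$ extending $\Rm{4}_m$ must contain at least one new edge incident to $\alpha$, $\beta$, or $\delta$ that attaches it to its corresponding witnessing path.

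The second step is a case analysis on such an added edge. An edge of the form $\{\beta,j\}$ with $2\leq j\leq m$ creates the irreducible cycle $\beta,1,2,\ldots,j,\beta$ of length $j+1$, so by Fact~\ref{asteroidalfact} together with Lemma~\ref{IIInecsets} a further chord is forced, and an inductive propagation shows the only extensions avoiding every $\{\beta,j\}$ must contain either $\{\gamma,\beta\}$ or $\{1,m\}$; the symmetric analysis produces $\{\gamma,\delta\}$ or $\{1,m\}$ from edges of the form $\{\delta,j\}$. For an edge $\{\alpha,v\}$ with $v$ on the path, the $4$-cycles $\alpha,\gamma,v,v'$ together with already-treated edges quickly rule out all such $v$ except the path endpoints and the immediately neighboring vertices, forcing the four listed attachments $\{\alpha,1\},\{\alpha,2\},\{\alpha,3\},\{\alpha,m\}$; the rigorous version of this step invokes the $\Rm{3}_\ell$-reduction with $\ell$ depending on the position of $v$ along the path.

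Finally I would verify minimality by producing, for each of the seven listed dotted edges $e$, an explicit collection of real intervals realizing $\Rm{4}_m \cup \{e\}$ and no other dotted edge, in the style of the interval pictures used for $\Rm{1}$ and $\Rm{2}$: $\gamma$ is taken as a long umbrella interval containing the staircase of path intervals $1,2,\ldots,m$, with $\beta$ and $\delta$ attached outside, and $\alpha$ placed as a thin spike through $\gamma$ nudged to one end as dictated by $e$. The main obstacle I expect is the combinatorial bookkeeping in the $\{\alpha,v\}$ case analysis — specifically, arguing that once the three ``shortcut'' edges $\{\gamma,\beta\}$, $\{\gamma,\delta\}$, and $\{1,m\}$ are absent from $\mathbb{G}$, every intermediate edge $\{\alpha,j\}$ with $4\leq j\leq m-1$ can still be avoided by a suitable choice of $\alpha$'s interval, so that exactly the four endpoint-adjacent attachments are genuinely forced and no intermediate ones slip in. Minimal cardinality is automatic from the explicit constructions, since each of them exhibits $\Rm{4}_m$ together with a single dotted edge as an intersection graph, so no listed edge can be dropped from the necessary set.
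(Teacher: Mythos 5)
Your first paragraph is on target: the asteroidal triple $\{\alpha,\beta,\delta\}$ with witness paths $P_\alpha = \beta,1,\dots,m,\delta$, $P_\beta = \alpha,\gamma,m,\delta$, and $P_\delta = \alpha,\gamma,1,\beta$ is exactly what the paper uses, and Lemma~\ref{asteroidallemma} then localizes the forced new edge to the set $\{\alpha\}\times[P_\alpha]\cup\{\beta\}\times[P_\beta]\cup\{\delta\}\times[P_\delta]$. But the rest of the proposal goes off the rails in two substantive ways.

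First, you misread the statement itself. The dashed segment between the nodes labelled $3$ and $m$ in the figure is a schematic for the continuation of the path $3,4,\dots,m$, so the dotted edges from $\alpha$ are $\{\alpha,i\}$ for \emph{every} $1\le i\le m$, not just the four shown. Your claim that ``every intermediate edge $\{\alpha,j\}$ with $4\le j\le m-1$ can still be avoided by a suitable choice of $\alpha$'s interval, so that exactly the four endpoint-adjacent attachments are genuinely forced'' is false: for each $j$, the graph $\Rm{4}_m\cup\{\{\alpha,j\}\}$ is already an interval intersection graph (slide $\alpha$ to a thin interval meeting only $\gamma$ and the $j$-th path interval), so every one of the $m$ edges $\{\alpha,i\}$ is indispensable, i.e.\ belongs to \emph{every} necessary set. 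This is precisely the paper's first step: $\{\beta,\gamma\}$, $\{\gamma,\delta\}$, and $\{\alpha,i\}$ for each $i$ make $\Rm{4}_m$ into an intersection graph when added singly, hence are in every necessary set.

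Second, your second paragraph drops the restriction you just derived. Having noted that the new edge from $\beta$ must attach to $[P_\beta]=\{\alpha,\gamma,m,\delta\}$, you then analyze edges $\{\beta,j\}$ for $2\le j\le m$ via irreducible cycles; but for $j<m$ these edges do not attach $\beta$ to $P_\beta$ at all, so Lemma~\ref{asteroidallemma} gives you no purchase on them, and the ``inductive propagation'' you gesture at is not the mechanism that closes the argument. What actually remains after the indispensable edges, and after ruling out $\{\alpha,\beta\}$ and $\{\alpha,\delta\}$ via the 4-cycle $\alpha,\gamma,1,\beta$ (resp.\ $\alpha,\gamma,m,\delta$), are exactly $\{\beta,m\}$, $\{\delta,1\}$, and $\{\beta,\delta\}$; each of these is shown to force $\{1,m\}$ by a short cycle argument (a 4-cycle $\beta,m,\gamma,1$ for the first two, a 5-cycle $\beta,\delta,m,\gamma,1$ for the last). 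Your minimal-cardinality remark also needs this structure: $\se$-minimality of the displayed set does not by itself rule out a smaller necessary set elsewhere; minimality follows because the $m+2$ indispensable edges must appear in every necessary set, at least one more edge is needed to cover the intersection graphs having none of them (witnessed by identifying $\beta$ with $\delta$ and collapsing $1,\dots,m$ to a single interval, a construction the paper reduces to the path case from Lemma~\ref{IIInecsets}), and $\{1,m\}$ alone covers all three residual cases.
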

\begin{proof}
	As before, we start by identifying the set $A=\{\alpha,\beta,\delta\}$ as being asteroidal in $\Rm{4}_m$.
	The edges $\{\beta,\gamma\}$, $\{\gamma,\delta\}$, and $\{\alpha,i\}$ for each $1\leq i\leq m$ must be in every necessary set for $\Rm{4}_m$ as $\Rm{4}_m$ becomes an intersection graph as soon as any one of the above edges is added. Up to symmetry, representative examples of these intersection graphs are:
	\[ 
	\vcenter{\hbox{\begin{tikzpicture}[circle dotted/.style={dash pattern= on .05mm off 1mm, line cap = round, line width = \wid}]
		\tikzset{enclosed/.style={draw, circle, inner sep=0pt, minimum size=.1cm, fill=black}}
		 
		\node [enclosed, label=left:\tiny {$(3,4)$}] (bl) at (-1.5*\scaleIV,0) {};
		\node [enclosed, label=right:\tiny {$(7,8)$}] (br) at (1.5*\scaleIV,0) {};
		\node [enclosed, label={[label distance=-2mm, xshift=.3mm]45:\tiny {$(1,6)$}}] (u) at (0,1*\scaleIV) {};
		\node [enclosed, label=above:\tiny {$(1,2)$}] (uu) at (0,2*\scaleIV) {};
		\node [enclosed, label=below:\tiny {$(3,6)$}] (1) at (-.5*\scaleIV,0) {};
		\node [enclosed, label=below:\tiny {$(5,8)$}] (2) at (.5*\scaleIV,0) {};

		\foreach \x in {1,2}
		{\draw (u) -- (\x);}
		\draw (bl) -- (1);
		\draw (2) -- (br);
		\draw (uu) -- (u);
		\foreach \x [evaluate=\x as \y using \x+1] in {1}
		{\draw (\x) -- (\y);}
		
		\draw (u) edge (bl);

	\end{tikzpicture}}}\quad
	\vcenter{\hbox{\begin{tikzpicture}[circle dotted/.style={dash pattern= on .05mm off 1mm, line cap = round, line width = \wid}]
		\tikzset{enclosed/.style={draw, circle, inner sep=0pt, minimum size=.1cm, fill=black}}
		 
		\node [enclosed, label=left:\tiny {$(1,2)$}] (bl) at (-2*\scaleIV,0) {};
		\node [enclosed, label=right:\tiny {$(11,12)$}] (br) at (2*\scaleIV,0) {};
		\node [enclosed, label={[label distance=-2mm, xshift=.3mm]45:\tiny {$(3,10)$}}] (u) at (0,1*\scaleIV) {};
		\node [enclosed, label=above:\tiny {$(6,7)$}] (uu) at (0,2*\scaleIV) {};
		\node [enclosed, label=below:\tiny {$(1,5)$}] (1) at (-1*\scaleIV,0) {};
		\node [enclosed, label=below:\tiny {$(4,9)$}] (2) at (0*\scaleIV,0) {};
		\node [enclosed, label=below:\tiny {$(8,12)$}] (3) at (1*\scaleIV,0) {};

		\foreach \x in {1,2,3}
		{\draw (u) -- (\x);}
		\draw (bl) -- (1);
		\draw (3) -- (br);
		\draw (uu) -- (u);
		\foreach \x [evaluate=\x as \y using \x+1] in {1,2}
		{\draw (\x) -- (\y);}
		
		\draw (uu) edge[bend right=50] (2);

	\end{tikzpicture}}}
	\]
	It remains to show that any intersection graph extending $\Rm{4}_m$ and not containing any of the aforementioned edges must contain the edge $\{1,m\}$ (and that such intersection graphs do exist).
	
	If $\mathbb{G}$ extends $\Rm{4}_m$ and $\{\alpha,\beta\}$ is in $\mathbb{G}$ then $\{\alpha,\beta,1,\gamma\}$ forms a 4-cycle and so either $\{\alpha,1\}$ is in $\mathbb{G}$ or $\{\beta,\gamma\}$ is in $\mathbb{G}$. However, both of these options are already covered, so $\{\alpha,\beta\}$ cannot be in a $\se$-minimal necessary set. A symmetric argument gets the same result for the edge $\{\alpha,\delta\}$.
	
	Now, in order to avoid $A$ being asteroidal, an intersection graph extending $\Rm{4}_m$ and having none of the previously mentioned edges must have one of the edges $\{\beta,\delta\}$, $\{\beta,m\}$, or $\{\delta,1\}$. If $\{\beta,\delta\}$ is an edge of $\mathbb{G}$ then the vertices $\{\beta,\delta,m,\gamma,1\}$ form a 5-cycle so one of the edges $\{1,m\},\{\gamma,\delta\},$ or $\{\gamma,\beta\}$ must appear in $\mathbb{G}$, but we have already ruled out the latter two, so $\{1,m\}$ must be in $\mathbb{G}$.
	
	If $\{\beta,m\}$ is in $\mathbb{G}$ then the vertices $\{\beta,m,\gamma,1\}$ form a 4-cycle and so either $\{\beta,\gamma\}$ is in $\mathbb{G}$ or $\{1,m\}$ is in $\mathbb{G}$. We already know that graphs with $\{\beta,\gamma\}$ are covered, so we must have that $\{1,m\}$ is in $\mathbb{G}$. A symmetric argument for $\{\delta,1\}$ also shows that $\{1,m\}$ must be an edge in $\mathbb{G}$.
	
	The result now follows from the fact that the extension of $\Rm{4}_m$ with the vertices $\{\beta,\delta\}\cup\{1,\dots,m\}$ inducing a complete subgraph and otherwise having only the edges from $\Rm{4}_m$ is an intersection graph. An example of intervals producing such a graph could be given by setting all of the vertices labelled $1,\dots,m$ to the same non-empty interval and also identify the vertices labelled $\beta$ and $\delta$. These identifications lead to a quotient graph of the proposed graph that is isomorphic to a graph that is a single irreducible path, which was shown to be an intersection graph in Lemma~\ref{IIInecsets}.
\end{proof}

\begin{lem}\label{Vnecsets}
	The dotted edges in
	\[
		\vcenter{\hbox{\begin{tikzpicture}[circle dotted/.style={line width = \wid, dash pattern= on .05mm off 1mm, line cap = round}, scale=1.5]
		\tikzset{enclosed/.style={draw, circle, inner sep=0pt, minimum size=.1cm, fill=black}}
		 
		\newcommand\uinit{7*pi/6}
		\newcommand\oinit{pi}
		\newcommand\oscale{\scaleV}
		\newcommand\uscale{1.5*\scaleV}
		\newcommand\inc{pi/3}
		
		\node [enclosed, label=above left:$\delta$] (o1) at ({cos(deg(2*pi/3))*\oscale},{sin(deg(2*pi/3))*\oscale}) {};
		\node [enclosed, label=above right:$\epsilon$] (o2) at ({cos(deg(pi/3))*\oscale},{sin(deg(pi/3))*\oscale}) {};
		
		\node [enclosed, label=left:$\alpha$] (u1) at ({cos(deg(5*pi/6))*\uscale},{sin(deg(5*pi/6))*\uscale}) {};
		\node [enclosed, label={[yshift=-1.3mm]above:$\beta$}] (u2) at ({cos(deg(pi/2))*\uscale},{sin(deg(pi/2))*\uscale}) {};
		\node [enclosed, label=right:$\gamma$] (u3) at ({cos(deg(\inc/2))*\uscale}, {sin(deg(\inc/2))*\uscale}) {};
		
		\foreach \x/\y in {1/$1$,2/$2$,3/$3$}
		{\node [enclosed, label=below:\y] (\x) at ({(\x-1)/3-\oscale},0) {};}
		\node [enclosed, label=below:$n$] (4) at (\oscale,0) {};	
		
		\foreach \top in {o1,o2}
		{\foreach \bottom in {1,2,3,4}
			{\draw (\top) -- (\bottom);}
		}
		\draw (o1) -- (o2);
		
		\foreach \x [evaluate=\x as \y using \x+1] in {1,2}
		{\draw (\x) -- (\y);}
		\draw [dashed] (3) -- (4);
		
		\draw (1) -- (u1);
		\draw (o1) -- (u1);
		\draw (o1) -- (u2);
		\draw (o2) -- (u2);
		\draw (o2) -- (u3);
		\draw (4) -- (u3);
		
		\foreach \x in {1,2,3}
		{\draw [circle dotted] (u2) edge[bend left = 10] (\x);}
		\draw [circle dotted] (u2) edge[bend right = 10] (4);
		
		\draw [circle dotted] (o1) edge[bend left = 80, looseness = 2.4] (u3);
		\draw [circle dotted] (o2) edge[bend right = 80, looseness = 2.4] (u1);
	\end{tikzpicture}}}
	\]
	form the unique necessary set of minimal cardinality for $\Rm{5}_n$ in $L^2$.
\end{lem}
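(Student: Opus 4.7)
The plan is to follow the pattern established by the preceding lemmas, combining the asteroidal-set obstruction (Lemma~\ref{asteroidallemma}) with $4$-cycle chord arguments (Lemma~\ref{IIInecsets} at $\ell=4$). First I would verify that $A = \{\alpha,\beta,\gamma\}$ is asteroidal in $\Rm{5}_n$, witnessed by the paths $P_\alpha\colon \beta - \epsilon - \gamma$, $P_\beta\colon \alpha - 1 - 2 - \cdots - n - \gamma$, and $P_\gamma\colon \alpha - \delta - \beta$, each avoiding the closed neighborhood of the third vertex of $A$. By Lemma~\ref{asteroidallemma}, any interval intersection graph $\mathbb{G}$ extending $\Rm{5}_n$ must contain a new edge of the form $\{v, u\}$ with $v \in A$ and $u \in P_v$; enumerating shows these candidates are exactly
\[\{\alpha,\epsilon\},\ \{\gamma,\delta\},\ \{\beta, i\}\text{ for }1 \le i \le n,\ \{\alpha,\beta\},\ \{\alpha,\gamma\},\ \{\beta,\gamma\},\]
whose first $n+2$ entries are precisely the dotted edges.

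For the three $A$-internal candidates I would invoke Lemma~\ref{IIInecsets} at $\ell = 4$. Adding $\{\alpha,\beta\}$ completes (together with the existing edges $\beta - \epsilon$, $\epsilon - 1$, $1 - \alpha$) the $4$-cycle $\alpha - \beta - \epsilon - 1 - \alpha$, whose diagonals $\{\alpha,\epsilon\}$ and $\{\beta,1\}$ are both dotted; adding $\{\alpha,\gamma\}$ completes $\alpha - \gamma - \epsilon - \delta - \alpha$ with diagonals $\{\alpha,\epsilon\}$ and $\{\gamma,\delta\}$, both dotted; adding $\{\beta,\gamma\}$ completes $\beta - \gamma - n - \delta - \beta$ with diagonals $\{\beta,n\}$ and $\{\gamma,\delta\}$, both dotted. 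Hence in every case $\mathbb{G}$ contains a dotted edge, so the proposed set is necessary for $\Rm{5}_n$ in $L^2$. The $\alpha \leftrightarrow \gamma$, $\delta \leftrightarrow \epsilon$, $i \leftrightarrow n{+}1{-}i$ symmetry of $\Rm{5}_n$ cuts the bookkeeping roughly in half.

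For minimality of cardinality and uniqueness I would exhibit, for each dotted edge $e$, a concrete interval realization in $\mathbb{R}$ of an intersection graph that extends $\Rm{5}_n$, contains $e$ as a new edge, and contains no other dotted edge (and no other non-edge of $\Rm{5}_n$ that could support an alternative necessary set of the same cardinality). For $\{\beta,i\}$ start from the standard realization of $\Rm{5}_n$ with $\delta, \epsilon$ as two long intervals sandwiching short intervals $1,\dots,n$ and $\beta$ a short central interval, then shift $\beta$'s interval so that it overlaps only the interval labeled $i$; the constructions for $\{\delta,\gamma\}$ and $\{\epsilon,\alpha\}$ similarly stretch $\delta$ (resp.\ $\epsilon$) across one of the $A$-vertices. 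These are direct analogues of constructions used for $\Rm{1}$, $\Rm{2}$, and $\Rm{4}_m$. The main obstacle is the combinatorial care needed in designing each of these $n+2$ realizations so as not to accidentally introduce extraneous edges; once this is done, every necessary set is forced to contain every dotted edge, so the dotted set is the unique necessary set of minimum cardinality.
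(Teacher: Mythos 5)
Your proposal follows the paper's proof closely: identify $\{\alpha,\beta,\gamma\}$ as asteroidal with the three witnessing paths, enumerate the candidate new edges via Lemma~\ref{asteroidallemma}, reduce the three $A$-internal candidates $\{\alpha,\beta\},\{\alpha,\gamma\},\{\beta,\gamma\}$ to dotted edges via $4$-cycle chord arguments, and establish minimality and uniqueness by realizing $\Rm{5}_n$ plus a single dotted edge as an interval intersection graph. One small slip to fix: there is no ``standard realization of $\Rm{5}_n$'' to start from, since $\Rm{5}_n$ is one of the minimal non-interval graphs; as in the paper you must build the interval representation of $\Rm{5}_n + e$ from scratch for each dotted $e$, checking that $e$ is the only new edge introduced.
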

\begin{proof}
	We first note that the set $A=\{\alpha,\beta,\gamma\}$ is asteroidal in $\Rm{5}_n$. We also note that if any one of the edges $\{\gamma,\delta\}$, $\{\alpha,\epsilon\}$, or $\{\beta,i\}$ for some $1\leq i\leq n$ are added to $\Rm{5}_n$, then the resulting graph is an intersection graph, and so every necessary set for $\Rm{5}_n$ contains the edges $\{\alpha,\epsilon\}$, $\{\gamma,\delta\}$, and $\{\beta,i\}$ for each $1\leq i\leq n$. Representative examples of such graphs are (up to symmetry):
	\[
		\hbox{\begin{tikzpicture}[circle dotted/.style={line width = \wid, dash pattern= on .05mm off 1mm, line cap = round}, scale=1.25]
		\tikzset{enclosed/.style={draw, circle, inner sep=0pt, minimum size=.1cm, fill=black}}
		 
		\newcommand\iscale{\scaleV}
		\newcommand\jscale{1.5*\scaleV}
		
		\node [enclosed, label={[label distance=-1mm]above left:\tiny ${(1,8)}$}] (o1) at ({cos(deg(2*pi/3))*\iscale},{sin(deg(2*pi/3))*\iscale}) {};
		\node [enclosed, label={[label distance=-1mm]above right:\tiny ${(1,9)}$}] (o2) at ({cos(deg(pi/3))*\iscale},{sin(deg(pi/3))*\iscale}) {};
		
		\node [enclosed, label=left:\tiny ${(3,7)}$] (u1) at ({cos(deg(5*pi/6))*\jscale},{sin(deg(5*pi/6))*\jscale}) {};
		\node [enclosed, label={[label distance=-.5mm, yshift=-1.3mm]above left:\tiny ${(1,2)}$}] (u2) at ({cos(deg(pi/2))*\jscale},{sin(deg(pi/2))*\jscale}) {};
		\node [enclosed, label=right:\tiny ${(10,11)}$] (u3) at ({cos(deg(pi/6))*\jscale}, {sin(deg(pi/6))*\jscale}) {};
		
	    \node [enclosed, label=below:\tiny ${(3,6)}$] (1) at (-\iscale,0) {};
	    \node [enclosed, label=below:\tiny ${(5,9)}$] (2) at (0,0) {};
	    \node [enclosed, label=below:\tiny ${(7,11)}$] (3) at (\iscale,0) {};

		\foreach \top in {o1,o2}
		{\foreach \bottom in {1,2,3}
			{\draw (\top) -- (\bottom);}
		}
		\draw (o1) -- (o2);
		
		\foreach \x [evaluate=\x as \y using \x+1] in {1,2}
		{\draw (\x) -- (\y);}
		
		\draw (1) -- (u1);
		\draw (o1) -- (u1);
		\draw (o1) -- (u2);
		\draw (o2) -- (u2);
		\draw (o2) -- (u3);
		\draw (3) -- (u3);

		\draw (o2) edge[bend right = 80, looseness = 2] (u1);
	\end{tikzpicture}}\quad
	\hbox{\begin{tikzpicture}[circle dotted/.style={line width = \wid, dash pattern= on .05mm off 1mm, line cap = round}, scale=1.25]
		\tikzset{enclosed/.style={draw, circle, inner sep=0pt, minimum size=.1cm, fill=black}}
		 
		\newcommand\iscale{\scaleV}
		\newcommand\jscale{1.5*\scaleV}
		
		\node [enclosed, label={[label distance=-1mm]above left:\tiny ${(1,10)}$}] (o1) at ({cos(deg(2*pi/3))*\iscale},{sin(deg(2*pi/3))*\iscale}) {};
		\node [enclosed, label={[label distance=-1mm]above right:\tiny ${(3,12)}$}] (o2) at ({cos(deg(pi/3))*\iscale},{sin(deg(pi/3))*\iscale}) {};
		
		\node [enclosed, label=left:\tiny ${(1,2)}$] (u1) at ({cos(deg(5*pi/6))*\jscale},{sin(deg(5*pi/6))*\jscale}) {};
		\node [enclosed, label={[yshift=-.5mm]above:\tiny ${(6,7)}$}] (u2) at ({cos(deg(pi/2))*\jscale},{sin(deg(pi/2))*\jscale}) {};
		\node [enclosed, label=right:\tiny ${(11,12)}$] (u3) at ({cos(deg(pi/6))*\jscale}, {sin(deg(pi/6))*\jscale}) {};
		
	    \node [enclosed, label=below:\tiny ${(3,5)}$] (1) at (-\iscale,0) {};
	    \node [enclosed, label=below:\tiny ${(4,9)}$] (2) at (0,0) {};
	    \node [enclosed, label=below:\tiny ${(8,10)}$] (3) at (\iscale,0) {};

		\foreach \top in {o1,o2}
		{\foreach \bottom in {1,2,3}
			{\draw (\top) -- (\bottom);}
		}
		\draw (o1) -- (o2);
		
		\foreach \x [evaluate=\x as \y using \x+1] in {1,2}
		{\draw (\x) -- (\y);}
		
		\draw (1) -- (u1);
		\draw (o1) -- (u1);
		\draw (o1) -- (u2);
		\draw (o2) -- (u2);
		\draw (o2) -- (u3);
		\draw (3) -- (u3);

		\draw (u2) -- (2);
	\end{tikzpicture}}
	\]
	
	Suppose that $\mathbb{G}$ is an intersection graph extending $\Rm{5}_n$ and containing the edge $\{\alpha,\beta\}$. Then the vertices $\{\alpha,\beta,\epsilon,1\}$ form a 4-cycle, and so one of the edges $\{\beta,1\}$ or $\{\alpha,\epsilon\}$ must be contained in $\mathbb{G}$ as well. However, we already know that both of these edges must be in the necessary set.
	
	If $\mathbb{G}$ contains the edge $\{\alpha,\gamma\}$ then the vertices $\{\alpha,\gamma,\epsilon,\delta\}$ form a 4-cycle, so one of the edges $\{\alpha,\epsilon\}$ or $\{\gamma,\delta\}$ must be in $\mathbb{G}$. Again, we already know that both of these edges must be in every necessary set.
	
	At this point, no other edges can prevent $A$ from being asteroidal in an extension, so we have covered all the required cases.
\end{proof}

Now that we have computed the minimal necessary sets for each of the minimal graphs that cannot appear as induced subgraphs of interval intersection graphs, we can state a theorem for $\mathrm{SOP}$ that is an analogue of Theorem~\ref{sop2goodequivalents}.

\begin{thm}
	Let $\D$ be a regular ultrafilter on the infinite cardinal $\lambda$. The following are equivalent:
	\begin{enumerate}
		\item $\D$ is good.
		\item $\D$ saturates some theory with $\mathrm{SOP}$.
		\item If $(\mathbb{G}_\alpha)_{\alpha<\lambda}$ is a sequence of finite graphs such that $\Rm{1},\Rm{2},\Rm{3}_\ell,\Rm{4}_m,\Rm{5}_n\nleq\mathbb{G}_\alpha$ for each $\alpha$ and $\ell\geq4,m\geq2,$ and $n\geq 1$, then every complete $\mathbb{H}\se\mathbb{G}_\alpha^\D=:\mathbb{G}$ with $|\mathbb{H}|\leq\lambda$ has some internal complete subgraph of $\mathbb{G}$ containing it.
		\item Every graph-like function $f:\P_\omega(\lambda)\to\D$ that satisfies the conditions given by specializing the subset relation~\eqref{necdisteqn} to the necessary sets found for $\mathbb{H}=\Rm{1},\Rm{2},\Rm{3}_\ell,\Rm{4}_m,\Rm{5}_n$ in Lemmas~\ref{IIInecsets}--\ref{Vnecsets} has a multiplicative refinement.
	\end{enumerate}
\end{thm}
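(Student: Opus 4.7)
The plan is to run the circle $(1)\iff (2)$, $(3)\iff (4)$, $(1)\Rightarrow (3)$, $(3)\Rightarrow (1)$, pivoting through the characterization that both $(3)$ and $(4)$ are equivalent to the assertion that every $(L^2)$-type of cardinality at most $\lambda$ is realized in every ultrapower $\M^\D$, whenever $\M$ contains an infinite linear order $L$ and $\varphi(x;y,z)$ is $y<x<z$. By Lemma~\ref{linearordershape}, the $\varphi$-set $A:=L^2$ is graph-like with $\leq$-minimal forbidden induced subgraphs exactly $\Rm{1},\Rm{2},\Rm{3}_\ell,\Rm{4}_m,\Rm{5}_n$ ($\ell\geq 4,m\geq 2, n\geq 1$), and the necessary sets for these graphs are computed in Lemmas~\ref{IIInecsets}--\ref{Vnecsets}.

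For $(3)\iff (4)$: apply Theorem~\ref{realAalphatype} to the constant sequence $A_\alpha=L^2$, substituting the necessary sets from Lemmas~\ref{IIInecsets}--\ref{Vnecsets}; the resulting containment conditions on $f$ are precisely those of $(4)$, and the conclusion is realization of every $(L^2)$-type. Apply Theorem~\ref{graphcompletenessAtypes} to the same $A$; condition $(2)$ of that theorem reads exactly as $(3)$ above (interval intersection graphs are precisely the finite graphs avoiding the forbidden list as induced subgraphs), and its equivalent is again realization of every $(L^2)$-type. Thus $(3)$ and $(4)$ coincide through this common pivot.

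For $(1)\iff (2)$, I invoke Shelah's theorem from \cite{sh500} that every theory with $\mathrm{SOP}$ is maximal in Keisler's order; saturating a single such theory is thus equivalent to being good. The implication $(1)\Rightarrow (3)$ is immediate: a good ultrafilter $\lambda^+$-saturates every structure, hence in particular realizes every $(L^2)$-type of size at most $\lambda$. For $(3)\Rightarrow (1)$, take $\M \vDash \mathrm{DLO}$. Every $1$-type over a parameter set of size at most $\lambda$ in $\widehat\M=\M^\D$ is determined by a cut $\{a_i<x:i<\kappa\}\cup\{x<b_j:j<\mu\}$ with $\kappa+\mu\leq\lambda$, and is realized if and only if the $(L^2)$-type $\{a_i<x<b_j:i<\kappa,\,j<\mu\}$ is realized (one-sided unbounded cases being trivial by regularity of $\D$ after adjoining an outer witness). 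By $(3)$, all such types are realized, so $\widehat\M$ is $\lambda^+$-saturated; since $\mathrm{DLO}$ has $\mathrm{SOP}$, we obtain $(2)$, and hence $(1)$ via the Shelah equivalence.

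The main obstacle is the passage $(3)\Rightarrow (1)$: while the reduction of cut types to $(L^2)$-types is essentially bookkeeping, the step from saturation of $\mathrm{DLO}$ to goodness relies crucially on the external fact that $\mathrm{DLO}$ is Keisler-maximal. All other implications are direct applications of the structural results developed in the preceding sections, and the computational content—identifying the forbidden interval intersection graphs and their necessary sets—has already been carried out in Lemma~\ref{linearordershape} and Lemmas~\ref{IIInecsets}--\ref{Vnecsets}.
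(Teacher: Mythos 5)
Your proof is correct and follows essentially the same route as the paper's: the equivalences $(3)\iff(4)$ are obtained by pivoting through realization of $(L^2)$-types via Theorem~\ref{realAalphatype} and Theorem~\ref{graphcompletenessAtypes} together with Lemma~\ref{linearordershape} and Lemmas~\ref{IIInecsets}--\ref{Vnecsets}, and the connection to goodness goes through Shelah's maximality of $\mathrm{SOP}$ theories. You spell out the $(3)\Rightarrow(1)$ direction via the DLO cut argument, which the paper leaves implicit in its citation of \cite[Thm.~2.9]{sh500}, but the underlying structure of the argument is the same.
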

\begin{proof}
    $(1)$$\iff$$(2)$: Follows from e.g. \cite[Thm.~2.9]{sh500}.
    
    $(2)$$\iff$$(3)$: A special case of Theorem~\ref{graphcompletenessAtypes}.
    
    $(2)$$\iff$$(4)$: Follows from Theorem~\ref{realAalphatype} and Lemmas~\ref{IIInecsets}--\ref{Vnecsets}.
\end{proof}

\subsection{$\mathrm{SOP}_n$ for $n>2$}

In this section, our goal is to show that graph-like distributions corresponding to types whose realizations would fill a cut in an ultrapower of an infinite linear order (that is, types that characteristically appear in theories with $\mathrm{SOP}$) also appear as graph-like distributions for types that appear in theories with $\mathrm{SOP}_3$, regardless of whether they also have $\mathrm{SOP}$. In essence this will show that distributions in theories with $\mathrm{SOP}_n$ for $n\geq 3$ are not only complicated enough to guarantee that the theory is Keisler maximal but are complicated enough to mimic the distributions of theories where models contain infinite linear orders. This argument is a close cousin to the argument made in \cite[Thm~2.9]{sh500} that theories with $\mathrm{SOP}_3$ are Keisler maximal, but is more elementary.

\begin{thm}\label{sop3}
Suppose that $\D$ is a regular ultrafilter on the infinite cardinal $\lambda$, $\mathbb{L}=(L,<)$ is a linear order, $\varphi(x;y,z)$ is the formula $y<x<z$, and $p(\bar{x})=\{\varphi(\bar{x};a_\beta,b_\beta):\beta<\lambda\}$ is a type of $\mathbb{L}^\D$. Then, for every distribution $f$ of $p(\bar{x})$ and graph-like distribution $\hat{f}$ extending $f$ and every complete first-order theory $T$ with $\mathrm{SOP}_3$, there is a model $\mathbb{M}\vDash T$ and a graph-like type $q(\bar{y})$ of $\mathbb{M}^\D$ with a distribution $\hat{f}'$ satisfying $\hat{f}=\hat{f}'$ (up to bijection between $p(\bar{x})$ and $q(\bar{y})$).
\end{thm}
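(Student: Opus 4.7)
The plan is to import the Shelah encoding of interval intersection into $\mathrm{SOP}_3$ theories. Fix a sufficiently saturated $\mathbb{M}\vDash T$ containing an indiscernible sequence $(\bar{c}_n)_{n<\omega}$ witnessing $\mathrm{SOP}_3$ via formulas $\psi_0,\psi_1$ so that $\theta(\bar{y};\bar{z}_1,\bar{z}_2):=\psi_0(\bar{y};\bar{z}_1)\wedge\psi_1(\bar{y};\bar{z}_2)$ satisfies the property: for any finite family $\{(n_i,m_i)\}$ of integer pairs, $\bigwedge_i\theta(\bar{y};\bar{c}_{n_i},\bar{c}_{m_i})$ is consistent in $\mathbb{M}$ iff $\max_i n_i<\min_i m_i$. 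This is the standard Helly-style encoding of interval intersection via an indiscernible sequence, and is the engine of Shelah's argument in \cite[Thm.~2.9]{sh500} that $\mathrm{SOP}_3$ implies Keisler-maximality; it is exactly where we want to trade the use of the bi-interpretability ordering for a more elementary encoding.

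Write the representatives of $p(\bar{x})$ as $\bar{r}_\beta(\alpha)=(r^a_\beta(\alpha),r^b_\beta(\alpha))$ and let $(g_n)_{n\in\omega}$ be the conjugate of $\hat{f}$. For each $\alpha<\lambda$, the set $S_\alpha:=\{r^a_\beta(\alpha),r^b_\beta(\alpha):\beta\in g_1(\alpha)\}\se L$ is finite; fix any order-preserving injection $\iota_\alpha\colon S_\alpha\hookrightarrow\omega$. For $\beta\in g_1(\alpha)$ define $\bar{r}'_\beta(\alpha):=(\bar{c}_{\iota_\alpha(r^a_\beta(\alpha))},\bar{c}_{\iota_\alpha(r^b_\beta(\alpha))})$, and for $\beta\notin g_1(\alpha)$ assign an arbitrary default. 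Let $q(\bar{y}):=\{\theta(\bar{y};\bar{r}'_\beta/\D):\beta<\lambda\}$ and set $\hat{f}':=\hat{f}$ under the identification $\varphi(\bar{x};a_\beta,b_\beta)\leftrightarrow\theta(\bar{y};\bar{r}'_\beta/\D)$. This immediately makes the vertex and edge sequences of $\hat{f}'$ literally equal to $g_1$ and $g_2$, so the graph-like identity for $\hat{f}$ carries over to $\hat{f}'$ without modification.

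It then remains to verify that $q(\bar{y})$ is a (graph-like) type of $\mathbb{M}^\D$ and that $\hat{f}'$ refines its \L o\'s map. Since $\hat{f}$ is graph-like its distribution graph sequence consists of interval intersection graphs, so for any $\Delta\in\P_\omega(\lambda)$ with $\alpha\in\hat{f}(\Delta)$ the intervals $\{(r^a_\beta(\alpha),r^b_\beta(\alpha))\}_{\beta\in\Delta}$ pairwise intersect in $\mathbb{L}$ and hence share a common point by the Helly property on a line; this forces $\max_{\beta\in\Delta}\iota_\alpha(r^a_\beta(\alpha))<\min_{\beta\in\Delta}\iota_\alpha(r^b_\beta(\alpha))$, whence the $\theta$-formulas are jointly consistent in $\mathbb{M}$ by the $\mathrm{SOP}_3$ encoding. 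Thus $\hat{f}'(\Delta)\se L_q(\Delta)$, and Lemma~\ref{glseqdist} yields that $\hat{f}'$ is a graph-like distribution for $q(\bar{y})$; that $q(\bar{y})$ itself is a graph-like type follows because its \L o\'s conjugate inherits the graph-like identity from that of interval intersection configurations.

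The main obstacle is purely in isolating precisely the property of $\mathrm{SOP}_3$ one needs: every finite interval-intersection configuration on $\omega$ must be realizable via $\theta$-consistency inside one fixed $\mathbb{M}$. Once this is in place, the $\iota_\alpha$ may be chosen independently across $\alpha$ (no coherence across indices is required), and \L o\'s's theorem assembles the coordinate-wise agreements. The definition of the bijection $p\leftrightarrow q$ is on the nose, so no nontrivial shuffling of the distribution is needed and the equation $\hat{f}=\hat{f}'$ holds literally.
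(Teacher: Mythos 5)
Your proof is correct and takes essentially the same underlying engine as the paper (the $\mathrm{SOP}_3$ encoding $\theta(\bar{y};\bar{c}_n,\bar{c}_m)$ jointly consistent iff $\max n < \min m$ plus 1D Helly), but it differs in one genuine respect worth noting. You transport the \emph{actual} interval endpoints $\{r^a_\beta(\alpha),r^b_\beta(\alpha)\}\se L$ into $\omega$ by an arbitrary order-preserving injection $\iota_\alpha$, so the intersection pattern of your $\omega$-intervals mirrors the \L o\'s conjugate $k_2(\alpha)$ of $p$ rather than the distribution edges $g_2(\alpha)$ of $\hat{f}$. The paper instead builds \emph{fresh abstract} intervals $i_\alpha\colon\lambda\to I(\lambda)$ that realize the edge relation of $\mathbb{G}_\alpha$ exactly (using Lemma~\ref{linearordershape} to know $\mathbb{G}_\alpha$ is an interval intersection graph), which makes $L_q$ coincide with the prescribed combinatorics of $\hat{f}$ more tightly. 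In your version $L_q(\Delta)$ can be strictly larger than $L_p(\Delta)$ (e.g., when $\max r^a_\beta(\alpha)<\min r^b_\beta(\alpha)$ but no element of $L$ sits in between), but as you correctly observe this is harmless: you only need $\hat{f}(\Delta)\se L_q(\Delta)$, and that follows since $\hat{f}(\Delta)\se L_p(\Delta)$ guarantees a genuine common point in $L$. Your version is arguably cleaner because it never needs to invoke the realizability of $\mathbb{G}_\alpha$ as an interval graph, only the refinement $\hat{f}\se L_p$.

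One small gap: you write ``for $\beta\notin g_1(\alpha)$ assign an arbitrary default.'' For $q(\bar{y})$ to be a \emph{graph-like} type (i.e.\ for $L_q(\Delta)=\bigcap_{\Phi\in[\Delta]^2}L_q(\Phi)$ to hold for the chosen representatives), the default parameter at index $\alpha$ must still be of the form $(\bar{c}_n,\bar{c}_m)$ from the indiscernible sequence; otherwise the $\max n<\min m$ characterization of joint consistency --- which is exactly where the graph-like identity comes from --- is unavailable at those coordinates, and ``pairwise consistent implies jointly consistent'' is not automatic. Taking a fixed default such as $(\bar{c}_0,\bar{c}_1)$ for all $\beta\notin g_1(\alpha)$ repairs this. (The paper avoids this issue by defining $i_\alpha$ as a total function $\lambda\to I(\lambda)$, so every parameter is an interval.) With that one adjustment the argument is complete.
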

\begin{proof}
	Fix representations of $a_\beta,b_\beta$ and a distribution $f$ of $p(\bar{x})$ given these representations. Since $p$ is graph-like under any choice of representations in $L$, there is a graph-like distribution $\hat{f}$ extending $f$. Let the distribution graph sequence of $\hat{f}$ be $(\mathbb{G}_\alpha)_{\alpha<\lambda}$ where each $\mathbb{G}_\alpha$ is a finite intersection graph (see Lemma~\ref{linearordershape}). Let $I(\lambda)$ be the collection of open intervals $I$ in $\lambda$ with the property that $\min(I)$ is a successor ordinal. For each $\alpha<\lambda$ let $i_\alpha:\lambda\to I(\lambda)$ be a function such that $i_\alpha\restriction_{\mathbb{G}_\alpha}$ is a bijection and $i_\alpha(\beta)$ and $i_\alpha(\gamma)$ intersect if and only if $\beta$ and $\gamma$ are neighbors in $\mathbb{G}_\alpha$. Furthermore, we require that no two intervals in the image of $i_\alpha$ have endpoints in common. Let the functions $\ell,r:I(\lambda)\to\lambda$ be the functions defined by \[\ell(I)=\inf(I)\;\text{ and }\;r(I)=\sup(I).\]
	The function $\ell(I)$ is well-defined by the requirement that $\min(I)$ be a successor ordinal.
	
	By \cite{sh500}, $T$ has $\mathrm{SOP}_3$ if and only if there is a model $\mathbb{M}\vDash T$ containing an indiscernible sequence $(\bar{a}_i)_{i\in\omega}$ and formulas $\theta(\bar{x},\bar{y})$ and $\eta(\bar{x},\bar{y})$ such that 
	\begin{enumerate}
		\item the set $\{\theta(\bar{x},\bar{y}),\eta(\bar{x},\bar{y})\}$ is contradictory,
		\item for some sequence $(b_j)_{j\in\omega}$ we have
			\[i\leq j\Rightarrow \theta(\bar{b}_j,\bar{a}_i)\;\,\text{ and }\;\,i>j\Rightarrow \eta(\bar{b}_j,\bar{a}_i)\]
		\item for $i<j$, the set $\{\theta(\bar{x},\bar{a}_j),\eta(\bar{x},\bar{a}_i)\}$ is contradictory.
	\end{enumerate}
	Using compactness, we may extend the sequences $\bar{a}$ and $\bar{b}$ to sequences indexed by $\lambda$. Let $(\bar{a}_i)_{i\in\lambda}$, $(\bar{b}_j)_{j\in\lambda}$, $\theta(\bar{x},\bar{y})$, and $\eta(\bar{x},\bar{y})$ be as above. We then define 
	\[\psi(\bar{x};\bar{y},\bar{z})\equiv \theta(\bar{x},\bar{y})\wedge\eta(\bar{x},\bar{z}).\] Furthermore, define $(\bar{c}_\beta)_{\beta\in\lambda}$ and $(\bar{d}_\beta)_{\beta\in\lambda}$ in $\mathbb{M}^\D$ by the representations
	\[\bar{c}_\beta[\alpha]=\bar{a}_{\ell(i_{\alpha}(\beta))}\;\text{ and }\;\bar{d}_\beta[\alpha]=\bar{a}_{r(i_{\alpha}(\beta))}.\] 
	We claim that the set of formula
	\[q(\bar{x})=\{\psi(\bar{x};\bar{c}_\beta,\bar{d}_\beta):\beta<\lambda\}\] 
	is a graph-like type with distribution $\hat{f}$.
	
	In order to check that $q(\bar{x})$ is graph-like, suppose that $\Delta\in\P_\omega(\lambda)$ is such that $|\Delta|\geq 2$. We wish to show that 
	\[L_q(\Delta)=\bigcap_{\Phi\in[\Delta]^2}L_q(\Phi),\]
	where $L_q$ is the \L o\'s map for $q(\bar{x})$. In order to do this, we will instead show that 
	\begin{equation}\label{sop3L}
	    L_q(\Delta)=\Big\{\alpha\in\lambda:\bigcap i_\alpha[\Delta]\neq\emptyset\Big\},
	\end{equation}
	which will be enough because of the fact that an intersection of open intervals is non-empty if and only if the intersection of every pair of intervals in the collection is non-empty. Suppose that $j\in\bigcap i_\alpha[\Delta]$. Then we have that $\mathbb{M}\vDash \psi(\bar{b}_j;\bar{c}_\beta[\alpha],\bar{d}_\beta[\alpha])$ for each $\beta\in\Delta$ because $\ell(i_\alpha(\beta))<j<r(i_\alpha(\beta))$ by the definition of $j$. Now suppose that $\bigcap i_\alpha[\Delta]=\emptyset$. Then there is some distinct pair $\beta,\gamma\in\Delta$ such that $r(i_\alpha(\beta))\leq \ell(i_\alpha(\gamma))$. Furthermore, because the intervals in the image of $i_\alpha$ cannot repeat endpoints, this inequality is strict. In particular, this means that the formula
	\[\psi(\bar{x};\bar{c}_\beta[\alpha],\bar{d}_\beta[\alpha])\wedge\psi(\bar{x};\bar{c}_\gamma[\alpha],\bar{d}_\gamma[\alpha])\]
	cannot be satisfied in $\mathbb{M}$.
	
	We check that $\hat{f}$ is a distribution of the type $q(\bar{x})$ with the given representations. By the fact that $q(\bar{x})$ and $\hat{f}$ are graph-like, it will be enough to check that ${\hat{f}(\Delta)\se L(\Delta)}$ for all $\Delta\in\P_\omega(\lambda)$ with ${|\Delta|\leq 2}$. However, this follows immediately from Equation \eqref{sop3L} and the fact that $\hat{f}$ is a distribution for $p(\bar{x})$.
\end{proof}

\begin{cor}
	Theorem~\ref{sop3} is true whenever $T$ is replaced with a theory with $\mathrm{SOP}_n$ for $n>3$.
\end{cor}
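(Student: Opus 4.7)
The plan is to derive the corollary as an essentially immediate consequence of the $\mathrm{SOP}_n$ hierarchy established by Shelah in \cite{sh500}. Recall that Shelah proved the implications $\mathrm{SOP}_{n+1}\Rightarrow\mathrm{SOP}_n$ for all $n\geq 3$, so by a trivial induction every theory $T$ with $\mathrm{SOP}_n$ for some $n>3$ also has $\mathrm{SOP}_3$. Once this observation is in hand, we simply feed $T$ into Theorem~\ref{sop3} and extract the very same model $\mathbb{M}\vDash T$, graph-like type $q(\bar{y})$, and distribution $\hat{f}'=\hat{f}$ that the $n=3$ case produces.

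Concretely, the proof would be structured as follows. First, I would fix a theory $T$ with $\mathrm{SOP}_n$ for $n>3$, a distribution $f$ for $p(\bar{x})$, and a graph-like extension $\hat{f}$ of $f$, as in the statement of Theorem~\ref{sop3}. Next, I would cite \cite{sh500} for the implication $\mathrm{SOP}_n\Rightarrow\mathrm{SOP}_3$, yielding that $T\vDash\mathrm{SOP}_3$. Finally I would apply Theorem~\ref{sop3} to $T$, $f$, and $\hat{f}$, producing $\mathbb{M}\vDash T$ together with $q(\bar{y})$ and $\hat{f}'$ with the claimed properties. Nothing in the construction inside Theorem~\ref{sop3} depends on how $T$ acquired its $\mathrm{SOP}_3$-witness, so no further bookkeeping is required.

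There is essentially no obstacle here: the corollary is a soft consequence of monotonicity in the $\mathrm{SOP}$-hierarchy rather than a new combinatorial construction. The only conceivable alternative would be to construct, for each $n>3$, a bespoke pair of formulas $\theta_n,\eta_n$ directly from an $\mathrm{SOP}_n$-witness mirroring the roles of $\theta,\eta$ in the proof of Theorem~\ref{sop3}; but this is strictly more work and yields nothing beyond what the hierarchy already supplies. Hence the short proof via $\mathrm{SOP}_n\Rightarrow\mathrm{SOP}_3$ is both the cleanest and most natural route.
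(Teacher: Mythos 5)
Your proposal is correct and is precisely the paper's argument: both invoke Shelah's implication $\mathrm{SOP}_n\Rightarrow\mathrm{SOP}_3$ for $n\geq 3$ from \cite{sh500} and then apply Theorem~\ref{sop3} directly. The paper states this in a single line; your write-up is just a more verbose rendering of the same reduction.
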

\begin{proof}
	By \cite{sh500}, $\mathrm{SOP}_n\Rightarrow\mathrm{SOP}_3$ whenever $n\geq 3$.
\end{proof}

Given the difficulty in proving that theories with $\mathrm{SOP}_2$ are maximal in Keisler's order, we believe that the above theorem is not true for $T$ with $\mathrm{SOP}_2$. We note that if this conjecture is true, then $\mathrm{SOP}_3$ and $\mathrm{SOP}_2$ must be different on the level of theories as has been previously conjectured by Shelah.

\section*{Acknowledgements}
\noindent Much thanks goes to \'Agnes Szendrei for boundless advice given throughout the writing process and for suggesting looking at distributions related to $\mathrm{SOP}_n$ for $n>2$. This material is based upon work supported by the National Science Foundation grant no. DMS1500254.

\bibliographystyle{abbrvnat}
\bibliography{Graphlike_Distributions}

\end{document}